\theoremstyle{plain}
\newtheorem{thm}{Theorem}
\newtheorem{prop}[thm]{Proposition}
\newtheorem{lem}[thm]{Lemma}
\newtheorem{cor}[thm]{Corollary}
\theoremstyle{definition}
\newtheorem{defi}[thm]{Definition}
\newtheorem{rem}[thm]{Remark}
\newtheorem{clm}[thm]{Claim}
\numberwithin{thm}{section}  % 定理番号を「定理2.3」のように印刷
\DeclareSymbolFontAlphabet{\mathbb}{AMSb}
\DeclareSymbolFontAlphabet{\mathbbl}{bbold}
\titleformat*{\section}{\large\bfseries}
\newcommand{\N}{{\Bbb N}}
\newcommand{\Z}{{\Bbb Z}}
\newcommand{\Q}{{\Bbb Q}}
\newcommand{\C}{{\Bbb C}}
\renewcommand{\S}{{\mathbb S}}
\newcommand{\1}{{\mathbbm 1}}
\newcommand{\Ker}{{\mathrm{Ker}}}
\newcommand{\cB}{{\mathfrak B}}
\newcommand{\cO}{{\cal O}}
\newcommand{\ol}{\overline}
\newcommand{\ul}{\underline}
\newcommand{\wh}{\widehat}
\newcommand{\wt}{\widetilde}
\title{$q$-de Rham complexes of higher level}
\author{Kimihiko Li}
\date{}
\begin{document}

\maketitle

Address: The University of Tokyo, Tokyo, Japan.

kimihiko@g.ecc.u-tokyo.ac.jp 

\begin{abstract}
In this article, we construct two kinds of de Rham-like complexes which compute the cohomology of complete crystals on the higher-level $q$-crystalline site, which was introduced in a previous article of the author. One complex is the $q$-analog of the higher de Rham complex constructed by Miyatani, and another complex is the $q$-analog of the jet complex constructed by Le Stum-Quir\'os. The complexes we construct can also be regarded as the higher-level analogs of the $q$-de Rham complex. 
\end{abstract} 

\vspace{0.1in}
Mathematics Subject Classification: Primary: 14F30; Secondary: 14F20.

\vspace{0.1in}
Keywords: crystalline cohomology, crystalline site, de Rham complex.

\tableofcontents

\phantomsection
\section*{Introduction}
\addcontentsline{toc}{section}{Introduction}

This article is the continuation of \cite{Li23}. It is devoted to giving certain $q$-analogs of the de Rham complexes that compute the cohomology of complete crystals on the higher-level $q$-crystalline site.

We fix a prime $p$. In \cite{Ber74}, Berthelot constructed a fundamental $p$-adic cohomology theory called crystalline cohomology. It is defined for a scheme $X$ over a $p$-adic formal PD-scheme $S$, and when $X$ is embeddable into a smooth $p$-adic formal scheme over $S$, it is quasi-isomorphic to a certain de Rham complex by the Poincar\'e lemma. Recently, Bhatt and Scholze introduced a kind of generalization of crystalline cohomology, called $q$-crystalline cohomology. It is defined for a $p$-completely smooth affine formal scheme 
$X$ over $R/I_R$, where $(R,I_R)$ is a $q$-PD pair. 
When $X$ is embeddable smoothly over $R$ in a suitable sense, they constructed a certain $q$-de Rham complex which computes the $q$-crystalline cohomology. When $q=1$, these recover the classical crystalline theory. 

On the other hand, for a non-negative integer $m$, Berthelot also introduced crystalline cohomology of level $m$ in \cite{Ber96} to consider the crystalline cohomology theory in a more general situation. When $m=0$, it recovers the usual crystalline cohomology. In \cite{LQ01}, Le Stum and Quir\'{o}s constructed a certain complex called the jet complex, and showed that it computes the $m$-crystalline cohomology by considering the corresponding Poincar\'{e} lemma. However, the jet complex may not be bounded above in general, and their proof of the local freeness of each term is not correct. To overcome these difficulties, Miyatani constructed in \cite{Miy15} another complex, called the higher de Rham complex, and showed that it computes a direct sum of finitely many copies of $m$-crystalline cohomology by considering the corresponding Poincar\'{e} lemma. The higher de Rham complex is bounded and locally free, and these properties were used to prove the finiteness of $m$-crystalline cohomology. However, this complex is not accurate enough to compute only one copy of $m$-crystalline cohomology.

In a previous article \cite{Li23}, we constructed the level-$m$ $q$-crystalline site $(X/R)_{m\text{-}q\text{-crys}}$ for a $q$-PD pair $(R,I_R)$ and a smooth and separated $p$-adic formal scheme $X$ over $R/J_R$, where $J_R := (\phi^m)^{-1}(I_R)$ is defined by using the Frobenius lift $\phi$ of $R$. This can be considered as a generalization of both the $q$-crystalline site and the $m$-crystalline site. We also proved that a certain category of crystals on the $m$-$q$-crystalline site $(X/R)_{m\text{-}q\text{-crys}}$ is equivalent to that on the $q$-crystalline site $(X'/R)_{q\text{-crys}}$, where $X' := X \widehat{\times}_{\text{Spf}(R/J_R),(\phi^m)^*} \text{Spf}(R/I_R)$ is the pullback of $X$ by the $m$-fold iteration $\phi^m$ of the Frobenius lift.

The purposes of the present article are to develop the level-$m$ $q$-crystalline cohomology theory, and to construct the $q$-analogs of the complexes in \cite{Miy15} and \cite{LQ01} that compute the cohomology of complete crystals on the level-$m$ $q$-crystalline site. When $q = 1$, these recover the usual level-$m$ crystalline theory, and when $m = 0$, the $q$-analogs of these complexes coincide with the $q$-de Rham complex in \cite{BS22} in the situation we will consider.

Let us explain the content of each section. In Section \ref{qanbc}, we define the $q$-analog of the binomial coefficients of higher level (Definition \ref{qhbc}), which will frequently appear in the formulas of differential calculus related to the level-$m$ $q$-crystalline theory. When $q=1$, these recover the usual binomial coefficients of higher level in \cite{Ber96}.

In Section \ref{mqanc}, we first calculate a certain $q^{p^m}$-PD envelope (Theorem \ref{qpde}), which is crucial in the level-$m$ $q$-crystalline cohomology theory. We also construct a natural $q$-analog of the $m$-PD polynomial algebra, and show that in a certain situation, the $q^{p^m}$-PD envelope above coincides with this $m$-$q$-PD polynomial algebra (Proposition \ref{rg}), which is much better behaved. Based on these results, we develop the theory of the $m$-$q^{p^m}$-crystalline site $(\ol{A}/R)_{m\text{-}q^{p^m}\text{-crys}}$ for a morphism of $q^{p^m}$-PD pairs \linebreak $(R, I_R) \to (A, I_A)$ with fixed rank one \'{e}tale coordinates $\ul{x} = (x_1, \dots, x_d)$ in $A$ which satisfies  $
I_{A} = \ol{I_R A}^{\rm cl}$, where $\ol{A} := A/J_A$ and $\ol{I_R A}^{\rm cl}$ is the closure of $I_R A$ in $A$ for the $(p, (p)_{q^{p^m}})$-adic topology. We generally consider the complete crystals (Definition \ref{cpfcr}) developed by Tian \cite{Tia23} as coefficients. Then we can show the higher-level $q$-analogs of the results related to the linearizations and stratifications in the classical theory.

In Section \ref{hple}, we first do some differential calculus related to the $m$-$q$-PD polynomial algebra $\wh{A \langle \ul{\xi} \rangle}_{(m), q, \ul{x}}$. Next, we construct the higher $q$-de Rham complex (Definition \ref{mqdr}). Finally, we prove the Poincar\'{e} lemma for the higher $q$-de Rham complex (Corollary \ref{gspl}), which states that this complex computes a direct sum of $p^{md}$ copies of $m$-$q^{p^m}$-crystalline cohomology of complete crystals. These are essentially the $q$-analogs of the results in \cite{Miy15}. But we may need some non-trivial $q$-analogs of the constructions and calculations since the $q$-analogs of some usual calculations, such as the binomial formula, may not work in complete generality.

In Section \ref{qanj}, we first construct the $q$-jet complex (Definition \ref{mqjc}). Next, we construct the homotopy map $h$ on the $q$-jet complex (Proposition \ref{holo}), called the integration of differential forms. Finally, we use this homotopy map $h$ to show the Poincar\'{e} lemma for the $q$-jet complex (Corollary \ref{jcgs}), which states that this complex computes $m$-$q^{p^m}$-crystalline cohomology of complete crystals. These are essentially the $q$-analogs of the results in \cite{LQ01}. In contrast to the difficulty of the description of the homotopy map $h$, these $q$-analogs can be considered by simply replacing the basis of the $m$-PD polynomial algebra $\ul{\xi}^{{ \{ \ul{k} \} }_{(m)}}$ and the binomial coefficient of higher level $\left\langle
\begin{array}{@{}c@{}}
\ul{k} \\ \ul{k}'
\end{array}
\right\rangle_{(m)}$ with their $q$-analogs $\ul{\xi}^{ \{ \ul{k} \}_{(m), q}}$ and $ \left\langle
\begin{array}{@{}c@{}}
\ul{k} \\ \ul{k}'
\end{array}
\right\rangle_{(m), q }$ respectively. However, some of the cases in the proofs of \cite{LQ01} were not fully considered, so we give detailed proofs for the completeness of the present article.

Finally, the author would like to express his sincere gratitude to his supervisor \linebreak Atsushi Shiho who introduced him to the field of higher-level theory, patiently answered many questions and carefully read the drafts of this article. The author is also grateful to Bernard Le Stum for useful conversations and helpful advice.
The author was partially supported by WINGS-FMSP (World-leading Innovative Graduate Study for Frontiers of Mathematical Sciences and Physics) program at the Graduate School of Mathematical Sciences, the University of Tokyo.
This work was supported by the Grant-in-Aid for Scientific Research (KAKENHI No. 22J10387) and the Grant-in-Aid for JSPS DC2 fellowship.

Throughout this article, the set of natural numbers $\N$ means the set of non-negative integers.

\section{$q$-analog of the binomial coefficients of higher level}
\label{qanbc}

%In the higher level theory of crystalline cohomology, the higher level version of the binomial coefficients are frequently used.

In this section, we define a $q$-analog of the binomial coefficients of higher level, following \cite{Ber96}. First, we review the usual binomial coefficients of higher level. Fix a non-negative integer $m$ in order to consider the level-$m$ theory. 

\begin{defi}
\label{hbc}
Let $k, k', k'' \in \N$  such that $k = k'+k''$, and let $r,r',r'',s,s',s'' \in \N$ such that
\begin{align*}
&k = p^mr+s \ \ \ \ \ \ (0 \le s < p^m), \\
&k' = p^mr'+s' \ \ \ \ (0 \le s' < p^m), \\
&k'' = p^mr''+s'' \ \ (0 \le s'' < p^m).
\end{align*}
We sometimes denote the integer $r$ by $\left\lfloor \frac{k}{p^m} \right\rfloor$ and call it the integer part of $\frac{k}{p^m}$.
Recall that the usual \textit{binomial coefficient} is defined as follows:
\[
\begin{pmatrix} k \\ k' \end{pmatrix} := \frac{k!}{k'!k''!}.
\]
Following \cite{Ber96}, we define the \textit{binomial coefficients of higher level} as follows:
\[
\begin{Bmatrix} k \\ k' \end{Bmatrix}_{(m)} := \frac{r!}{r'!r''!} \ \ \text{and} \ \ 
\left\langle
\begin{array}{@{}c@{}}
k \\
k'
\end{array}
\right\rangle_{(m)}
 := \begin{pmatrix} k \\ k' \end{pmatrix} \begin{Bmatrix} k \\ k' \end{Bmatrix} ^{-1}_{(m)} .
\]
\end{defi}

We have the following lemma.

\begin{lem}
\label{hb}
{\rm (\cite[Lemme 1.1.3]{Ber96})} For all $m,k,k' \in \N$ such that $k' \le k$, we have
\[
\begin{Bmatrix} k \\ k' \end{Bmatrix}_{(m)} \in \N, \ \ \ 
\left\langle
\begin{array}{@{}c@{}}
k \\
k'
\end{array}
\right\rangle_{(m)}
\in \Z_{(p)}.
\]
\end{lem}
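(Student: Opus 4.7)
The plan is to verify the two integrality claims separately. For the first, I would use the constraint $k = k' + k''$ together with the bounds $0 \le s, s', s'' < p^m$ to relate $r$ to $r' + r''$. Writing $p^m r + s = p^m(r' + r'') + (s' + s'')$, we see that either $r = r' + r''$ (when $s' + s'' < p^m$) or $r = r' + r'' + 1$ (when $s' + s'' \ge p^m$); in either case $r \ge r' + r''$. Consequently, $r!/(r'!\,r''!)$ equals either the classical binomial coefficient $\binom{r}{r'}$ or $(r'' + 1) \binom{r}{r'}$, both of which are natural numbers.

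For the second statement, I would compare $p$-adic valuations via Legendre's formula $v_p(n!) = \sum_{j \ge 1} \lfloor n/p^j \rfloor$. The key identity $\lfloor r/p^j \rfloor = \lfloor k/p^{m+j} \rfloor$ yields, after re-indexing, $v_p(k!) - v_p(r!) = \sum_{j=1}^m \lfloor k/p^j \rfloor$, and analogous identities hold for the pairs $(k', r')$ and $(k'', r'')$. Subtracting gives
\[
v_p\left( \binom{k}{k'} \right) - v_p\left( \begin{Bmatrix} k \\ k' \end{Bmatrix}_{(m)} \right) = \sum_{j=1}^m \bigl( \lfloor k/p^j \rfloor - \lfloor k'/p^j \rfloor - \lfloor k''/p^j \rfloor \bigr).
\]
Each summand is non-negative by the elementary inequality $\lfloor (a+b)/N \rfloor \ge \lfloor a/N \rfloor + \lfloor b/N \rfloor$, applied with $a = k'$, $b = k''$, $N = p^j$. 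Hence the left-hand side is non-negative, and since $\left\langle \begin{array}{@{}c@{}} k \\ k' \end{array} \right\rangle_{(m)}$ is a priori a ratio of two integers, it lies in $\Z_{(p)}$.

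No substantial obstacle arises here: both parts reduce to routine manipulations of Euclidean division and Legendre's formula. The only mild subtlety is the case analysis in the first part depending on whether $s' + s''$ crosses the threshold $p^m$, and this is resolved immediately by tracking the resulting carry.
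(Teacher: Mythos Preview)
Your proof is correct. The paper does not supply its own proof of this statement; it simply cites \cite[Lemme~1.1.3]{Ber96}, so there is no in-paper argument to compare against. That said, your case split on whether $r = r' + r''$ or $r = r' + r'' + 1$ is exactly the dichotomy the paper later uses when proving the $q$-analog (Lemma~\ref{qhb}), so your approach is well aligned with the methods employed elsewhere.
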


We also use the usual conventions on multi-indices: for a non-negative integer $d$ and for $\ul{k} = (k_1, \dots, k_d)$ and $\ul{k}' = (k'_1, \dots, k'_d)$ in $\N^d$ satisfying $\ul{k}' \le \ul{k}$, i.e., $k'_i \le k_i$ for all $i = 1, \dots, d$, we define
\[
\begin{pmatrix} \ul{k} \\ \ul{k}' \end{pmatrix} := \prod_{i=1}^{d} \begin{pmatrix} k_i \\ k'_i \end{pmatrix}, \  \begin{Bmatrix} \ul{k} \\ \ul{k}' \end{Bmatrix}_{(m)} := \prod_{i=1}^{d}  \begin{Bmatrix} k_i \\ k'_i \end{Bmatrix}_{(m)} \ \ \text{and} \ \ 
\left\langle
\begin{array}{@{}c@{}}
\ul{k} \\ \ul{k}'
\end{array}
\right\rangle_{(m)}
 := \prod_{i=1}^{d}  \left\langle
\begin{array}{@{}c@{}}
k_i \\ k'_i
\end{array}
\right\rangle_{(m)}.
\]

On the other hand, we may consider various $q$-analogs of the usual concepts on integers, such as $q$-integers and $q$-binomial coefficients, which recover the usual concepts when $q = 1$. A useful reference on this subject is \cite{LQ15}.

We fix an associative ring $R$ and an element $q$ in $R$. If $n \in \N$, the $q$-analog of $n$ is
\[
(n)_{q} := 1+ q + \dots + q^{n-1}.
\]
We will also say that $(n)_q$ is a \textit{$q$-integer} of $R$. When $R = \Z$ is the ring of integers and $q=1$, we recover the usual natural number $n$. 

The following formula is frequently used:

\begin{prop}
\label{nnp}
For all $n,n' \in \N$, we have
\[
(nn')_{q} = (n)_{q}(n')_{q^n}.
\]
\end{prop}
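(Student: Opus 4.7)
The plan is to prove the identity by expanding both sides explicitly and matching terms via Euclidean division. By definition,
\[
(nn')_q = \sum_{k=0}^{nn'-1} q^k,
\]
while the right-hand side expands as
\[
(n)_q \cdot (n')_{q^n} = \Big(\sum_{i=0}^{n-1} q^i\Big)\Big(\sum_{j=0}^{n'-1} q^{jn}\Big) = \sum_{i=0}^{n-1}\sum_{j=0}^{n'-1} q^{i+jn}.
\]
To finish, I would invoke the division algorithm: every integer $k$ with $0 \le k < nn'$ admits a unique decomposition $k = i + jn$ with $0 \le i < n$ and $0 \le j < n'$, so the map $(i,j) \mapsto i + jn$ is a bijection between the index sets appearing on the two sides. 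Hence the two expressions agree term by term as elements of $R$.

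Two quick remarks on the setting. Since $R$ is only assumed to be associative, one should confirm that noncommutativity causes no trouble, but this is immediate: both factors $(n)_q$ and $(n')_{q^n}$ lie in the commutative subring $\Z[q] \subseteq R$ generated by $q$, so the expansion of the product and the reordering of terms above are legitimate. There is no substantive obstacle here; the content is really just a bookkeeping of division with remainder transported into the ring $R$ through the substitution sending the formal variable to $q$.
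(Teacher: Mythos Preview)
Your proof is correct and is essentially the same as the paper's: the paper simply writes out the two sums and asserts their equality directly, which amounts to the same Euclidean-division bijection you spell out. Your remark about commutativity is a harmless addition the paper omits.
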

\begin{proof}
We can check it directly from the definition:
\[
\sum_{j=0}^{nn'-1} q^j = \left( \sum_{j'=0}^{n-1} q^{j'} \right) \left( \sum_{j''=0}^{n'-1} q^{nj''} \right) . \qedhere
\] 
\end{proof}

Note that in general, for $j \in \N$, we can consider the $q^{p^j}$-integer $(n)_{q^{p^j}}$ (that is, the $q$-integer when the element $q$ is $q^{p^j}$). Then the following lemma is frequently used:

\begin{lem}
\label{gcdj}
Let $n, j \in \N$ such that {\rm gcd}$(n,p) = 1$, then $(n)_{q^{p^j}}$ is invertible in $\Z [ q ]_{(p,q-1)}$. 
\end{lem}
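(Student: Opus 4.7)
The plan is to use the local structure of the ring $\Z[q]_{(p,q-1)}$. This is a local ring: its maximal ideal is generated by the images of $p$ and $q-1$, and the residue field is $\Z[q]/(p,q-1) \cong \F_p$. Consequently, an element $f(q) \in \Z[q]$ becomes a unit in $\Z[q]_{(p,q-1)}$ if and only if $f(q) \notin (p,q-1)$, which is equivalent to $f(1) \not\equiv 0 \pmod{p}$.

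Thus it suffices to evaluate $(n)_{q^{p^j}}$ under the quotient map $\Z[q] \to \F_p$ sending $q \mapsto 1$. By the definition
\[
(n)_{q^{p^j}} = 1 + q^{p^j} + q^{2p^j} + \cdots + q^{(n-1)p^j},
\]
setting $q = 1$ gives $n$, and reducing modulo $p$ yields $n \bmod p$. Since $\gcd(n,p) = 1$ by hypothesis, this is nonzero in $\F_p$, and therefore $(n)_{q^{p^j}}$ is invertible in $\Z[q]_{(p,q-1)}$.

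There is no real obstacle here; the only conceptual point is identifying the residue field of $\Z[q]_{(p,q-1)}$ with $\F_p$ so that the invertibility test reduces to checking a single numerical congruence. Once this is in place, the claim is immediate from the specialization $q \rightsquigarrow 1$, independent of the exponent $p^j$.
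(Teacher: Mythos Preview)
Your proof is correct and follows the same idea as the paper's: reduce modulo $q-1$ to get $(n)_{q^{p^j}} \equiv n$, then use that $n$ is a unit in $\Z_{(p)}$ since $\gcd(n,p)=1$. You spell out the local-ring structure a bit more explicitly, but the argument is identical.
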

\begin{proof}
We always have $ (n)_{q^{p^j}} \equiv n$ modulo $q-1$. Since $n$ is invertible in $\Z_{(p)}$, the result follows.
\end{proof}

By using $q$-integers, we can define the \textit{$q$-analog of the factorial} of $n \in \N$:
\[
(n)_{q}! := \prod_{j=0}^{n-1} (n-j)_{q}.
\]

The $q$-analog of the binomial coefficients can be defined inductively as follows:

\begin{defi}
\label{qbco}
The \textit{$q$-binomial coefficients} are defined by induction for $k,k' \in \N$ via Pascal identities
\[
\begin{pmatrix} k \\ k' \end{pmatrix}_{q} = \begin{pmatrix} k-1 \\ k'-1 \end{pmatrix}_{q} + q^{k'} \begin{pmatrix} k-1 \\ k' \end{pmatrix}_{q}
\]
with
\[
\begin{pmatrix} 0 \\ k' \end{pmatrix}_{q} = 
\begin{cases}
1 \ \ \ \text{if} \ k'=0, \\
0 \ \ \ \text{otherwise}.
\end{cases}
\]
\end{defi}

Note that if $R = \Z[q]$, we see that all $q$-binomial coefficients belong to $\Z [ q ]$ by induction. If $R = \Q ( q )$, then we have the following $q$-analog of the familiar expression, which is frequently used in the computation of $q$-binomial coefficients:

\begin{prop}
\label{qiff}
{\rm (\cite[Proposition 2.6]{LQ15})} If $R = \Q ( q )$, then for all $k,k' \in \N $ such that $k' \le k$,
\[
\begin{pmatrix} k \\ k' \end{pmatrix}_{q} = \frac{(k)_{q}!}{(k')_{q}!(k-k')_{q}!}.
\]
\end{prop}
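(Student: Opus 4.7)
The plan is to prove the formula by induction on $k$, using the Pascal-type recursion given in Definition \ref{qbco} as the engine. Throughout, the identity is an equality in $\Q(q)$, so division by the nonzero elements $(j)_q!$ is allowed.

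For the base case $k=0$, the only possibility is $k'=0$, and the defining condition gives $\binom{0}{0}_q = 1$, which agrees with $(0)_q!/(0)_q!(0)_q! = 1$ under the convention $(0)_q! = 1$.

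For the inductive step, assume the formula holds for $k-1$ and all $k' \le k-1$. Two easy boundary cases must be handled separately: when $k' = 0$, Pascal's identity together with $\binom{k-1}{-1}_q = 0$ (a convention forced by Definition \ref{qbco}) yields $\binom{k}{0}_q = \binom{k-1}{0}_q = 1$, matching the right-hand side; when $k' = k$, a similar one-step recursion gives $\binom{k}{k}_q = \binom{k-1}{k-1}_q = 1$. For the generic range $0 < k' < k$, applying Pascal's identity and the induction hypothesis yields
\[
\binom{k}{k'}_q = \frac{(k-1)_q!}{(k'-1)_q!(k-k')_q!} + q^{k'}\,\frac{(k-1)_q!}{(k')_q!(k-1-k')_q!}.
\]
Factoring out $(k-1)_q!/\bigl((k')_q!(k-k')_q!\bigr)$ reduces the claim to the scalar identity
\[
(k')_q + q^{k'}(k-k')_q = (k)_q,
\]
after which the right-hand side becomes $(k-1)_q!(k)_q/\bigl((k')_q!(k-k')_q!\bigr) = (k)_q!/\bigl((k')_q!(k-k')_q!\bigr)$, as desired.

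The one genuine computation is the scalar identity displayed above, which is immediate from the definition of the $q$-integer: splitting the geometric sum as
\[
(k)_q = \sum_{j=0}^{k-1} q^j = \sum_{j=0}^{k'-1} q^j + \sum_{j=k'}^{k-1} q^j = (k')_q + q^{k'}\sum_{i=0}^{k-k'-1} q^i = (k')_q + q^{k'}(k-k')_q.
\]
There is no real obstacle here; the only thing to be careful about is bookkeeping the boundary cases $k' = 0$ and $k' = k$ so that one stays within the range where the inductive hypothesis applies and where the Pascal recursion of Definition \ref{qbco} is valid.
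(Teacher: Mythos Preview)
Your proof is correct. The paper does not give its own proof of this proposition; it simply cites \cite[Proposition 2.6]{LQ15}, so your induction on $k$ via the Pascal recursion is exactly the standard argument one would supply, and there is nothing to compare.
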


Now we want to define the $q$-analogs of 
\[
\begin{Bmatrix} k \\ k' \end{Bmatrix}_{(m)} \ \text{and} \ \left\langle
\begin{array}{@{}c@{}}
k \\
k'
\end{array}
\right\rangle_{(m)}.
\]
By taking into account Definition \ref{hbc} and Proposition \ref{nnp}, we may make the following definition:

\begin{defi}
\label{qhbc}
Let $k, k', k'' \in \N$ such that $k = k'+k''$, and let $r,r',r'',s,s',s'' \in \N$ such that
\begin{align*}
&k = p^mr+s \ \ \ \ \ \ (0 \le s < p^m), \\
&k' = p^mr'+s' \ \ \ \ (0 \le s' < p^m), \\
&k'' = p^mr''+s'' \ \ (0 \le s'' < p^m).
\end{align*}
We assume that $R = \Q ( q )$. Then we define the \textit{$q$-analog of  the binomial coefficients of higher level} as follows:
\[
\begin{Bmatrix} k \\ k' \end{Bmatrix}_{(m),q} := \frac{(r)_{q^{p^m}}!}{(r')_{q^{p^m}}!(r'')_{q^{p^m}}!} \ \ \text{and} \ \ 
\left\langle
\begin{array}{@{}c@{}}
k \\
k'
\end{array}
\right\rangle_{(m),q}
 := \begin{pmatrix} k \\ k' \end{pmatrix}_{q} \begin{Bmatrix} k \\ k' \end{Bmatrix} ^{-1}_{(m),q}.
\]
We can also consider the multi-indices case: if $\ul{k} = (k_1, \dots, k_d)$ and $\ul{k}' = (k'_1, \dots, k'_d)$ in $\N^d$ satisfy $\ul{k}' \le \ul{k}$, i.e., if $k'_i \le k_i$ for all $i = 1, \dots, d$, then we define
\[
\begin{pmatrix} \ul{k} \\ \ul{k}' \end{pmatrix}_{ q } := \prod_{i=1}^{d} \begin{pmatrix} k_i \\ k'_i \end{pmatrix}_{q}, \  \begin{Bmatrix} \ul{k} \\ \ul{k}' \end{Bmatrix}_{(m), q } := \prod_{i=1}^{d}  \begin{Bmatrix} k_i \\ k'_i \end{Bmatrix}_{(m),q} \ \ \text{and} \ \ 
\left\langle
\begin{array}{@{}c@{}}
\ul{k} \\ \ul{k}'
\end{array}
\right\rangle_{(m), q }
 := \prod_{i=1}^{d}  \left\langle
\begin{array}{@{}c@{}}
k_i \\ k'_i
\end{array}
\right\rangle_{(m), q}.
\]

\end{defi}

%In $q$-crystalline theory, we have to consider the $(p,q-1)$-adic topology. Now we focus on the ring $\Z [ q ]_{(p,q-1)}$. 
Then we have the following lemma, which is the $q$-analog of Lemma \ref{hb}:

\begin{lem}
\label{qhb}
Let $m \in \N$, and let $\ul{k},\ul{k}' \in \N^d$ such that $\ul{k}' \le \ul{k}$. Then we have
\[
\begin{Bmatrix} \ul{k} \\ \ul{k}' \end{Bmatrix}_{(m), q } \in \Z [ q ], \ \ \ 
\left\langle
\begin{array}{@{}c@{}}
\ul{k} \\ \ul{k}'
\end{array}
\right\rangle_{(m), q }
\in \Z [ q ]_{(p,q-1)}.
\]
In particular, $\begin{Bmatrix} \ul{k} \\ \ul{k}' \end{Bmatrix}_{(m), q }$ and $\left\langle
\begin{array}{@{}c@{}}
\ul{k} \\ \ul{k}'
\end{array}
\right\rangle_{(m), q }$ are well-defined as elements of any $\Z [ q ]_{(p,q-1)}$-algebra $R$.
\end{lem}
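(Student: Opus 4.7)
The plan is to reduce to the case $d = 1$, since the multi-index statements follow immediately from the single-index ones by multiplicativity. So we work with $k, k' \in \N$, $k'' := k - k'$, and the associated $r, r', r''$ and $s, s', s''$.

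For the first assertion, observe that $k = k' + k''$ forces either $r = r' + r''$ (when $s' + s'' < p^m$) or $r = r' + r'' + 1$ (when $s' + s'' \ge p^m$). In these two cases,
\[
\begin{Bmatrix} k \\ k' \end{Bmatrix}_{(m),q} = \begin{pmatrix} r \\ r' \end{pmatrix}_{q^{p^m}} \quad \text{and} \quad \begin{Bmatrix} k \\ k' \end{Bmatrix}_{(m),q} = (r)_{q^{p^m}} \begin{pmatrix} r-1 \\ r' \end{pmatrix}_{q^{p^m}}
\]
respectively, both manifestly in $\Z[q^{p^m}] \subseteq \Z[q]$.

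For the second assertion, I plan to compare cyclotomic factorizations inside the local ring $\Z[q]_{(p, q-1)}$. Two observations will be crucial. First, an irreducible $\pi(q) \in \Z[q]$ is a unit in this localization iff $p \nmid \pi(1)$; since $\Phi_{p^j}(1) = p$ for $j \ge 1$ while $\Phi_d(1)$ is coprime to $p$ for every other $d \ge 2$, the only cyclotomic factors (with $d \ge 2$) that fail to be units are the $\Phi_{p^j}(q)$ for $j \ge 1$. Second, the classical identity $\Phi_{np}(q) = \Phi_n(q^p)$ for $p \mid n$, iterated, yields $\Phi_{p^i}(q^{p^m}) = \Phi_{p^{i+m}}(q)$ for all $i \ge 1$, while for $d$ not a $p$-power the factor $\Phi_d(q^{p^m})$ decomposes into cyclotomic factors $\Phi_{d'}(q)$ whose indices $d'$ all have at least two distinct prime divisors, hence are units. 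Combined with $(n)_q = \prod_{d \mid n,\, d > 1} \Phi_d(q)$, this yields
\[
\begin{pmatrix} k \\ k' \end{pmatrix}_q \equiv \prod_{j \ge 1} \Phi_{p^j}(q)^{a_j} \quad \text{and} \quad \begin{Bmatrix} k \\ k' \end{Bmatrix}_{(m),q} \equiv \prod_{i \ge 1} \Phi_{p^{i+m}}(q)^{b_i}
\]
modulo units in $\Z[q]_{(p, q-1)}$, where $a_j := \lfloor k/p^j \rfloor - \lfloor k'/p^j \rfloor - \lfloor k''/p^j \rfloor$ and $b_i := \lfloor r/p^i \rfloor - \lfloor r'/p^i \rfloor - \lfloor r''/p^i \rfloor$, both non-negative.

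Taking the quotient, the exponent of $\Phi_{p^j}(q)$ in $\left\langle \begin{array}{c} k \\ k' \end{array} \right\rangle_{(m), q}$ equals $a_j \ge 0$ for $1 \le j \le m$, while for $j > m$ it equals $a_j - b_{j-m}$, which vanishes because the bound $0 \le s < p^m$ gives $\lfloor k/p^{m+i} \rfloor = \lfloor r/p^i \rfloor$ (and analogously for $k', k''$). Therefore the quotient equals a product of $\Phi_{p^j}(q)$'s with non-negative exponents times a unit in $\Z[q]_{(p, q-1)}$, establishing the second assertion. The main obstacle is the cyclotomic bookkeeping, especially the identity $\Phi_{p^i}(q^{p^m}) = \Phi_{p^{i+m}}(q)$ and the exact cancellation $a_j = b_{j-m}$ for $j > m$; once these are in place, non-negativity of the surviving exponents (for $j \le m$) is automatic by Kummer's theorem applied to $\binom{k}{k'}$.
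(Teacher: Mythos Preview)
Your argument is correct and takes a genuinely different route from the paper. The paper proceeds by an explicit case split on whether $r = r'+r''$ or $r = r'+r'' + 1$, and in each case manipulates the $q$-factorials directly, reducing via an auxiliary claim that $(p^m r + s)_q$ equals $(s)_q$ times a unit in $\Z[q]_{(p,q-1)}$ whenever $0 < s < p^m$; this is elementary but somewhat laborious. Your approach instead computes, for each $j \ge 1$, the exponent of the irreducible $\Phi_{p^j}(q)$ in numerator and denominator and checks non-negativity of the difference via the identity $\lfloor k/p^{m+i}\rfloor = \lfloor r/p^i\rfloor$. This is more conceptual, avoids the case split entirely, and makes the cancellation for $j > m$ transparent. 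One small inaccuracy: your claim that for $d$ not a $p$-power, $\Phi_d(q^{p^m})$ factors into $\Phi_{d'}(q)$ with each $d'$ having at least two distinct prime divisors is not quite right (for $p \nmid d$ one has $\Phi_d(q^{p^m}) = \prod_{j=0}^{m} \Phi_{dp^j}(q)$, and the factor $\Phi_d(q)$ can have a prime-power index, e.g.\ $d = \ell$ with $\ell \ne p$). The conclusion you need---that $\Phi_d(q^{p^m})$ is a unit in $\Z[q]_{(p,q-1)}$---is still correct, since $\Phi_d(q^{p^m})\big|_{q=1} = \Phi_d(1)$ is coprime to $p$ for any $d \ge 2$ that is not a power of $p$; you should replace the ``two distinct prime divisors'' justification by this direct evaluation.
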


\begin{proof}
Since the multi-indices case of the binomial coefficients is defined as the products of the one-index case, it is sufficient to show the assertion for  non-negative integers $k, k'$. We follow the notations in Definition \ref{qhbc}, so let $k''$ be the non-negative integer with $k = k'+k''$. Then there are two cases we need to consider: the case $r= r' + r''$ (where $s= s' + s''$), and the case $r= r' + r'' + 1$ (where $p^m + s = s' + s''$).

First we consider the element $\begin{Bmatrix} \ul{k} \\ \ul{k}' \end{Bmatrix}_{(m), q }$. In the case $r= r' + r''$, we see from the definition that
\[
\begin{Bmatrix} k \\ k' \end{Bmatrix}_{(m), q } = \begin{pmatrix} r \\ r' \end{pmatrix}_{q^{p^m}}.
\]
In the case $r= r' + r'' + 1$, we can check that
\[
\begin{Bmatrix} k \\ k' \end{Bmatrix}_{(m), q } = \frac{(r' + r'')_{q^{p^m}}!(r' + r''+1)_{q^{p^m}}}{(r')_{q^{p^m}}!(r'')_{q^{p^m}}!} = \begin{pmatrix} r' + r'' \\ r' \end{pmatrix}_{q^{p^m}} \cdot (r' + r''+1)_{q^{p^m}}.
\]
It follows by induction that all the $q^{p^m}$-binomial coefficients as in Definition \ref{qbco} belong to $\Z [ q^{p^m} ] \subset \Z [ q ]$. So we see that
\[
\begin{Bmatrix} k \\ k' \end{Bmatrix}_{(m), q } \in \Z [ q ].
\]

To prove the rest, thanks to Lemma \ref{gcdj}, we only need to consider the terms $(pl)_{q^{p^j}}$ in the definition of $\left\langle
\begin{array}{@{}c@{}}
\ul{k} \\ \ul{k}'
\end{array}
\right\rangle_{(m), q }$, where $l$ is a non-negative integer. For the calculations below, the next claim is useful:

\begin{clm}
\label{clmus}
Let $r \in \N$, and let $s$ be an integer such that $0 < s < p^m$. Then, in the ring $\Z [ q ]_{(p,q-1)}$, we have
\[
(p^m r + s)_q = u \cdot (s)_q
\]
for a unit $u \in \Z [ q ]_{(p,q-1)}$.
\end{clm}

We prove the claim. Let $s = p^n l$, where $l,n$ are non-negative integers and gcd$(l,p) = 1$. Then by Proposition \ref{nnp}, we have:
\begin{align*}
(p^m r + s)_q &= (p^n(p^{m-n} r + l))_q = (p^n)_q(p^{m-n} r + l)_{q^{p^n}}, \\
(s)_q &= (p^n l)_q = (p^n)_q (l)_{q^{p^n}}.
\end{align*}
As $m-n > 0$, we see that gcd$(p^{m-n} r + l,p) = 1$. So  $(p^{m-n} r + l)_{q^{p^n}}$ is a unit. As $(l)_{q^{p^n}}$ is also a unit by Lemma \ref{gcdj}, the claim follows.

Now we consider the element $\left\langle
\begin{array}{@{}c@{}}
\ul{k} \\ \ul{k}'
\end{array}
\right\rangle_{(m), q }$. We first consider the case $r= r' + r''$. By Proposition \ref{qiff}, we can check that 
\begin{align*}
\begin{pmatrix} k \\ k' \end{pmatrix}_{q} &= \frac{(p^m r + s)_{q}!}{(p^m r' + s')_{q}!(p^m r'' + s'')_{q}!}  \\
&= \frac{(p^m r)_{q}!}{(p^m r')_{q}!(p^m r'')_{q}!} \cdot \frac{\prod_{j=1}^s(p^m r+j)_{q}}{\prod_{j'=1}^{s'}(p^m r'+j')_{q}\prod_{j''=1}^{s''}(p^m r''+j'')_{q}}.
\end{align*}
Since $0 < j,j',j'' < p^m$, by using Claim \ref{clmus}, we see that up to a unit in $\Z [ q ]_{(p,q-1)}$, the second term of the right hand side is equal to
\[
\frac{\prod_{j=1}^s(j)_{q}}{\prod_{j'=1}^{s'}(j')_{q}\prod_{j''=1}^{s''}(j'')_{q}} = \begin{pmatrix} s \\ s' \end{pmatrix}_{q} \in \Z [ q ].
\]
So it suffices to consider the first term of the right hand side, and we are reduced to proving the statement for $k = p^mr, k' = p^mr', k'' = p^mr''$. Then we have
\[
\begin{Bmatrix} k \\ k' \end{Bmatrix}_{(m),q} = \frac{(r)_{q^{p^m}}!}{(r')_{q^{p^m}}!(r'')_{q^{p^m}}!}.
\]
On the other hand, by Proposition \ref{nnp}, for all non-negative integer $l$, we have
\[
(p^m l)_q = (p^m)_q (l)_{q^{p^m}}.
\]
So, by cancelling the terms $(l)_{q^{p^m}}$ in $\begin{Bmatrix} k \\ k' \end{Bmatrix}_{(m),q}$ with the terms $(p^m l)_q$ in $\begin{pmatrix} k \\ k' \end{pmatrix}_{q}$, we have
{\fontsize{9pt}{10pt}\selectfont
\begin{align*}
\left\langle
\begin{array}{@{}c@{}}
k \\ k'
\end{array}
\right\rangle_{(m), q } &= \frac{\prod_{l=0}^{r-1} \left( (p^m)_q \cdot \prod_{j=1}^{p^m-1}(p^m l+j)_{q} \right) }{\left( \prod_{l'=0}^{r'-1} \left( (p^m)_q \cdot \prod_{j'=1}^{p^m-1}(p^m l'+j')_{q} \right) \right) \left( \prod_{l''=0}^{r''-1} \left( (p^m)_q \cdot \prod_{j''=1}^{p^m-1}(p^m l''+j'')_{q} \right) \right) } \\
&= \frac{((p^m)_q)^r \prod_{l=0}^{r-1} \prod_{j=1}^{p^m-1}(p^m l+j)_{q}}{((p^m)_q)^{r'+r''}\left( \prod_{l'=0}^{r'-1}  \prod_{j'=1}^{p^m-1}(p^m l'+j')_{q} \right) \left( \prod_{l''=0}^{r''-1} \prod_{j''=1}^{p^m-1}(p^m l''+j'')_{q} \right) } \\
&= \frac{\prod_{l=0}^{r-1} \prod_{j=1}^{p^m-1}(p^m l+j)_{q}}{\left( \prod_{l'=0}^{r'-1}  \prod_{j'=1}^{p^m-1}(p^m l'+j')_{q} \right) \left( \prod_{l''=0}^{r''-1} \prod_{j''=1}^{p^m-1}(p^m l''+j'')_{q} \right) }.
\end{align*}}

\noindent
By using Claim \ref{clmus} again, we see that up to a unit in $\Z [ q ]_{(p,q-1)}$, this is equal to
\[
\frac{ \left( \prod_{j=1}^{p^m-1}(j)_{q} \right)^r }{\left(  \prod_{j'=1}^{p^m-1}(j')_{q} \right)^{r'} \left(  \prod_{j''=1}^{p^m-1}(j'')_{q} \right)^{r''} } = 1.
\]

It remains to consider the case $r= r' + r'' + 1$. By Proposition \ref{qiff}, we can check that 
{ \allowdisplaybreaks
\begin{align*}
\begin{pmatrix} k \\ k' \end{pmatrix}_{q} &= \frac{(p^m r + s)_{q}!}{(p^m r' + s')_{q}!(p^m r'' + s'')_{q}!}  \\
&= \frac{(p^m (r'+r''))_{q}!}{(p^m r')_{q}!(p^m r'')_{q}!} \cdot \frac{\prod_{j=1}^{p^m+s}(p^m (r'+r'')+j)_{q}}{\left( \prod_{j'=1}^{s'}(p^m r'+j')_{q} \right) \left( \prod_{j''=1}^{s''}(p^m r''+j'')_{q} \right) }  \\
&= \begin{pmatrix} p^m (r'+r'') \\ p^m r' \end{pmatrix}_{q} \cdot \frac{\prod_{j=1}^{p^m+s}(p^m (r'+r'')+j)_{q}}{\left( \prod_{j'=1}^{s'}(p^m r'+j')_{q} \right) \left( \prod_{j''=1}^{s''}(p^m r''+j'')_{q} \right) }.
\end{align*}
We also have
\begin{align*}
\begin{Bmatrix} k \\ k' \end{Bmatrix}_{(m),q} &= \frac{(r'+r'')_{q^{p^m}}!}{(r')_{q^{p^m}}!(r'')_{q^{p^m}}!} \cdot (r'+r''+1)_{q^{p^m}} \\
&= \begin{Bmatrix} p^m (r'+r'') \\ p^m r' \end{Bmatrix}_{(m),q} \cdot (r'+r''+1)_{q^{p^m}}.
\end{align*}}
So, the element $\left\langle
\begin{array}{@{}c@{}}
k \\ k'
\end{array}
\right\rangle_{(m), q }$ is equal to
{\fontsize{9pt}{10pt}\selectfont
\begin{align*}
\left\langle
\begin{array}{@{}c@{}}
p^m (r'+r'') \\ p^m r'
\end{array}
\right\rangle_{(m), q } \cdot \frac{\left( \prod_{j=1}^{p^m-1}(p^m (r'+r'')+j)_{q} \right) \left( \prod_{j'''=p^m+1}^{p^m+s}(p^m (r'+r'')+j''')_{q} \right)}{\left( \prod_{j'=1}^{s'}(p^m r'+j')_{q} \right) \left( \prod_{j''=1}^{s''}(p^m r''+j'')_{q} \right) }  \cdot \frac{(p^m (r'+r''+1))_{q}}{(r'+r''+1)_{q^{p^m}}}.
\end{align*}}

\noindent
The first term belongs to $\Z [ q ]_{(p,q-1)}$ by the case $r= r' + r''$. For the third term, by Proposition \ref{nnp}, we see that 
\[
(p^m (r'+r''+1))_{q} = (p^m)_{q}(r'+r''+1)_{q^{p^m}}.
\]
So the third term is equal to $(p^m)_{q}$. Now, we consider the second term. By using Claim \ref{clmus}, we see that up to a unit in $\Z [ q ]_{(p,q-1)}$, the second term is equal to
\[
\frac{\left( \prod_{j=1}^{p^m-1}(j)_{q} \right) \left( \prod_{j'''=p^m+1}^{p^m+s}(j''')_{q} \right)}{\left( \prod_{j'=1}^{s'}(j')_{q} \right) \left( \prod_{j''=1}^{s''}(j'')_{q} \right) }.
\] After multiplying the third term $(p^m)_{q}$, we get $\begin{pmatrix} p^m+s \\ s' \end{pmatrix}_{q} \in \Z [ q ]$, so the result follows.
\end{proof}

%(Add the Remark?: If the definition of $q$-analog is not appropriate, then this lemma is not true)

\section{$q$-analog of the crystalline site of higher level}
\label{mqanc}

A $q$-analog of the crystalline cohomology theory called the $q$-crystalline cohomology theory was introduced by Bhatt and Scholze \cite{BS22}. On the other hand, the higher-level crystalline cohomology theory was introduced by Berthelot in \cite{Ber96} and it was developed by Le Stum-Quir\'{o}s and Miyatani in \cite{LQ01}, \cite{Miy15}. The author constructed in \cite{Li23} the higher-level $q$-crystalline cohomology theory which was compatible with these two theories.

In this section, we introduce some more tools about higher-level $q$-crystalline theory, which will later be used in order to construct the complexes that compute the cohomology.

%Now, we want to consider the theory of $q$-analog of the crystalline site of higher level. This will be the $q$-analog of the theory of crystalline site of higher level as in \cite{Ber96}, \cite{LQ01}, \cite{Miy15}. This will also be the higher level analog of the theory of $q$-crystalline cohomology as in \cite{GLQ23b} in dimension one case, and as in \cite{Hou23} in general dimension case.

First, we review the theory of twisted divided polynomials. Our setting is based on that of Chapter 3 in \cite{Hou23}, which is the higher dimensional version of \cite{GLQ23b}. As before, fix a non-negative integer $d$. Let $R$ be a $\delta$-$\Z [ q ]_{(p,q-1)}$-algebra, where $q$ is seen as a rank one element of $R$, i.e., $\delta(q) = 0$. We endow all modules with their $(p,q-1)$-adic topology. When we consider (classical or derived) completion, it will always be with respect to the $(p,q-1)$-adic topology. Let $A$ be a classically $(p,q-1)$-complete $R$-algebra (in particular, $A$ is derived $(p,q-1)$-complete) with fixed  \'{e}tale coordinates $\ul{x} = (x_1, \dots, x_d)$ (i.e., the $x_i$'s are the images of the indeterminates $X_i$'s under some \'{e}tale map $R[ \ul{X} ] \to A$). By Lemma 2.18 of \cite{BS22}, there exists a unique structure of $\delta$-$R$-algebra on $A$ such that $x_1, \dots, x_d$ are rank one elements. 

Note that, as a $\delta$-ring, $R$ is endowed with a Frobenius lift $\phi$ which satisfies \linebreak $\phi(q) = q^p$.  As a $\delta$-$R$-algebra, $A$ is endowed with a Frobenius lift $\phi$ which is semilinear with respect to the Frobenius lift $\phi$ of $R$, and satisfies $\phi(x_i) = x_i^p$ for $i = 1, \dots , d$. 

To consider more general $\delta$-$R$-algebras than $A$ above, let $B$ be a $\delta$-$R$-algebra endowed with a sequence of elements $\ul{x} = (x_1, \dots, x_d)$ which are not necessarily \'{e}tale coordinates nor rank one elements.
We will make several definitions for this general $\delta$-$R$-algebra $B$, which have additional explicit descriptions and structures when the $x_i$'s are all rank one elements (in particular when there exists a morphism of $\delta$-$R$-algebras $A \to B$ such that the $x_i$'s in $B$ are the images of the $x_i$'s in $A$).

We consider the polynomial ring $B[ \ul{\xi} ] := B[ \xi_1, \dots, \xi_d ]$ endowed with the unique  $\delta$-structure over $B$ such that, for $i = 1, \dots , d$, the element $x_i + \xi_i$ has rank one. We call this $\delta$-structure  the \textit{symmetric $\delta$-structure}. When the $x_i$'s in $B$ are all rank one elements, the $\delta$-structure is given by:
\begin{equation}
\delta(\xi_i) = \sum_{j=1}^{p-1} \frac{1}{p} \begin{pmatrix} p \\ j \end{pmatrix} x_i^{p-j} \xi_i^j.
\end{equation}
This corresponds to the following Frobenius lift:
\begin{equation}
\label{lof}
\phi(\xi_i) = (\xi_i + x_i)^p - x_i^p.
\end{equation}

%We give a remark about these structures. There exists a commutative diagram 

%\[
%\begin{tikzcd}
%A[ \ul{\xi} ]  \arrow[r,"\phi"]\arrow[d] & A[ \ul{\xi} ] \arrow[d] \\
%A \otimes_R A\arrow[r,"\phi \otimes \phi"] & A \otimes_R A
%\end{tikzcd}
%\]

%where the vertical map sends $\xi_i$ to $1 \otimes x_i - x_i \otimes 1$. So,  roughly speaking, the elements $\xi_i$ can be considered as something like $1 \otimes x_i - x_i \otimes 1$. On the other hand, we can extend the endomorphism $\ul{\sigma}$ on $A$ to $\ul{\sigma} \otimes \text{Id}_A$ on $A \otimes_R A$. Note that $(\sigma_i \otimes \text{Id}_A)(1 \otimes x_i - x_i \otimes 1) = 1 \otimes x_i - q x_i \otimes 1$. So it is natural to consider the following $q$-analog of the usual powers called the \textit{twisted powers}: in general, for elements $\ul{y} = (y_1, \dots, y_d)$ of A, we define

Recall that in Section 1 of \cite{GLQ22b}, Gros-Le Stum-Quir\'{o}s introduced the notion of \textit{twisted powers} in one dimension. We can naturally extend it to the higher dimensional case. First, for $\ul{y} = (y_1, \dots, y_d) \in B^d$ and $k \in \N$, we set
\[
\xi_i^{(k)_{q, y_i}} := \prod_{j=0}^{k-1} (\xi_i + (j)_{q} y_i) \in B[ \ul{\xi} ].
\]
Then we can consider the multi-indices version of the twisted powers: for $\ul{k} \in \N^d$, we set 
\[
\ul{\xi}^{(\ul{k})_{q, \ul{y}}} := \prod_{i=1}^d \xi_i^{(k_i)_{q, y_i}}.
\]
Following Section 2 of \cite{GLQ23b}, if $y_i := (1-q) x_i$ for $i = 1, \dots , d$, then we drop the indices $y_i, \ul{y}$ and simply denote them by $\xi_i^{(k)_{q}},  \ul{\xi}^{(\ul{k})_{q}}$ respectively:
{\fontsize{11pt}{10pt}\selectfont
\begin{align*}
\xi_i^{(k)_{q}} := \xi_i^{(k)_{q, (1-q) x_i}} := \prod_{j=0}^{k-1} (\xi_i + (j)_{q} (1-q) x_i) &= \prod_{j=0}^{k-1} (\xi_i + (1-q^j) x_i) = \prod_{j=0}^{k-1} (\xi_i + x_i -q^j x_i), \\
\ul{\xi}^{(\ul{k})_{q}} &:= \prod_{i=1}^d \xi_i^{(k_i)_{q}}.
\end{align*}}

\noindent
Since each $\xi_i^{(k_i)_{q}}$ is a monic polynomial in $\xi_i$, we see that the $\ul{\xi}^{(\ul{k})_{q}}$'s for $\ul{k} \in \N^d$ form a basis of the $B$-module $B[ \ul{\xi} ]$. We will need later the following multiplication rule with respect to this basis, which was shown in Lemma 1.2 of \cite{GLQ22b}:
\begin{equation}
\label{mrul}
\xi_i^{(k)_{q}}\xi_i^{(k')_{q}} = \sum_{0 \le j \le k,k'} (j)_q! q^{\frac{j(j-1)}{2}}\begin{pmatrix} k \\ j \end{pmatrix}_{q} \begin{pmatrix} k' \\ j \end{pmatrix}_{q} (q-1)^j x_i^j \xi_i^{(k+k'-j)_{q}}.
\end{equation}

In Section 2 of \cite{GLQ22b}, Gros-Le Stum-Quir\'{o}s introduced the ring of \textit{twisted divided polynomials} in one dimension. We define the higher dimensional version $B \langle \ul{\xi} \rangle_{q, \ul{x}}$ of it as follows: as a $B$-module, it is free on abstract generators $\ul{\xi}^{[\ul{k}]_{q}}$ indexed by $\ul{k} \in \N^d$. We call the $\ul{\xi}^{[\ul{k}]_{q}}$'s the \textit{twisted divided powers}. By Proposition 2.1 and 2.2 of \cite{GLQ22b}, we can endow $B \langle \ul{\xi} \rangle_{q, \ul{x}}$ with a ring structure using the multiplication rule
\begin{equation}
\xi_i^{[k]_{q}}\xi_i^{[k']_{q}} = \sum_{0 \le j \le k,k'} q^{\frac{j(j-1)}{2}}\begin{pmatrix} k+k'-j \\ k \end{pmatrix}_{q} \begin{pmatrix} k \\ j \end{pmatrix}_{q} (q-1)^j x_i^j \xi_i^{[k+k'-j]_{q}}.
\end{equation}
Using (\ref{mrul}), we can check that the $B$-linear map
\begin{align}
\label{tdpb}
B[ \ul{\xi} ] &\to B \langle \ul{\xi} \rangle_{q, \ul{x}} \\
\xi_i^{(k)_{q}} &\mapsto (k)_{q}! \xi_i^{[k]_{q}} \notag
\end{align}
is a morphism of $B$-algebras. We see from this map that the $\ul{\xi}^{[\ul{k}]_{q}}$'s recover the usual divided powers when $q = 1$. 
We also denote by $I^{[1]}$ the augmentation ideal generated by all $\ul{\xi}^{[\ul{k}]_{q}}$ with  $| \ul{k} | := k_1+ \cdots+ k_d  \geq 1$. 

When the $x_i$'s in $B$ are all rank one elements, we can endow $B \langle \ul{\xi} \rangle_{q, \ul{x}}$ with the unique $\delta$-structure such that (\ref{tdpb}) is a morphism of $\delta$-$B$-algebras: we can define the Frobenius lift $\phi$ on $B \langle \ul{\xi} \rangle_{q, \ul{x}}$ by using Definition 7.10 of \cite{GLQ22b} and the arguments in Section 2 of \cite{GLQ23b} for several variables. When $B$ is $p$-torsion-free (in particular when $R = \Z [ q ]_{(p,q-1)}$ and $B = \Z [ q ]_{(p,q-1)} [\ul{x}]$),  this corresponds to a unique $\delta$-structure on $B \langle \ul{\xi} \rangle_{q, \ul{x}}$. In general, we can use the $\delta$-structure on $\Z [ q ]_{(p,q-1)} [\ul{x}] \langle \ul{\xi} \rangle_{q, \ul{x}}$ to define the $\delta$-structure on $B \langle \ul{\xi} \rangle_{q, \ul{x}}$ by using the isomorphism
\[
B \langle \ul{\xi} \rangle_{q, \ul{x}} \simeq B \otimes_{\Z [ q ]_{(p,q-1)} [\ul{x}]} \Z [ q ]_{(p,q-1)} [\ul{x}] \langle \ul{\xi} \rangle_{q, \ul{x}},
\]
where the $\delta$-structure on the right hand side is defined by using Remark 2.7 of \cite{BS22}.

Next, we recall the notion of $q$-PD pair. Our setting is based on that of  Section 7 in \cite{GLQ23b}.

\begin{defi}
\label{dfqpd}
A \textit{$q$-PD pair} is a derived $(p,(p)_q)$-complete $\delta$-pair $(D,I_D)$ consisting of a $\delta$-$R$-algebra $D$ and an ideal $I_D$ which satisfy the following conditions:
\begin{enumerate}
\item For any $f \in I_D$, $\phi (f) - (p)_q \delta(f) \in (p)_qI_D$.
\item The ring $D$ is bounded, i.e., $D$ is $(p)_q$-torsion-free and $D/((p)_q)$ has bounded $p^\infty$-torsion.
\item $D/I_D$ is classically $p$-complete.
\end{enumerate}
By condition 1 and the $(p)_q$-torsion-freeness of $D$ in condition 2, we can introduce the map
\begin{align*}
\gamma: I_D &\to D  \\
f &\mapsto \frac{\phi(f)}{(p)_q} - \delta(f).
\end{align*}
Then, condition 1 means that $\gamma (I_D) \subset I_D$.
\end{defi}

\begin{rem}
\label{rqpd}
\begin{enumerate}
\item For $m \in \N$, we have the congruence:
\[
(p)_{q^{p^m}} \equiv p \  \text{mod} \  q-1.
\]
We can also check that
\[
(p)_{q^{p^m}}(q-1)^{p^m} \equiv (p)_{q^{p^m}}(q^{p^m}-1) \equiv q^{p^{m+1}}-1 \equiv (q-1)^{p^{m+1}} \ \text{mod} \ p,
\]
so we have the following congruence:
\[
(p)_{q^{p^m}} \equiv (q-1)^{p^{m}(p-1)} \ \text{mod} \ p.
\]
It follows that $(p)_{q^{p^m}} \in (p, q-1)$ and $(q-1)^{p^{m}(p-1)} \in (p, (p)_{q^{p^m}})$. Therefore, the $(p, q-1)$-adic topology, the $(p, (p)_q)$-adic topology, the $(p, (p)_{q^{p^m}})$-adic topology and the $(p, q^{p^m}-1)$-adic topology coincide for $m \in \N$.
\item By Lemma 3.7 of \cite{BS22}, a $q$-PD pair $(D,I_D)$ is actually classically $(p, (p)_q)$-complete. So, by Remark \ref{rqpd}.1, we see that the completeness of the $q$-PD pair $(D,I_D)$ coincides with the classical $(p, q-1)$-adic completeness assumption on $A$ we made at the beginning of this section.
\end{enumerate}
\end{rem}

For any $\delta$-ring $C$, we can consider the $\delta$-envelope of a $C$-algebra (see Definition 1.1 in \cite{GLQ23b}). In this article, we mainly consider the following two cases: one is the $\delta$-envelope $C[\ul{X}]^{\delta}$ of the polynomial ring $C[\ul{X}]$, which can be identified with the free $\delta$-ring on generators $X_1, \dots, X_d$. For the elements $\ul{f}= (f_1, \dots, f_d) \in C^d$ and $g \in C$, we can also consider the $\delta$-envelope $C[\ul{f}/g]^{\delta}$ of the $C$-algebra $C[\ul{f}/g]$. This can be identified with $C \otimes_{C[\ul{X}]^{\delta}} C[\ul{Y}]^{\delta}$, 
where the structural maps on the left and on the right are defined by $X_i \mapsto f_i,$ $X_i \mapsto gY_i$  respectively. 

We can also consider the $q$-PD envelope (for more details, see Lemma 16.10 in \cite{BS22}). 

\begin{defi}
\label{dfqpd2}
{\rm (cf. \cite[Definition 3.3, Definition 4.8]{GLQ23b})}
Let $(C, I_C)$ be a $\delta$-pair. Then (if it exists) its $q$-PD envelope $(C^{[ \ ]_q}, I_C^{[ \ ]_q})$ is a $q$-PD pair that is universal for morphisms to  $q$-PD pairs: there exists a morphism of $\delta$-pairs $(C, I_C) \to (C^{[ \ ]_q}, I_C^{[ \ ]_q})$ such that any morphism $(C, I_C) \to (C', I_{C'})$ to a $q$-PD pair extends uniquely to $(C^{[ \ ]_q}, I_C^{[ \ ]_q})$.
\end{defi}

Now, we can give an explicit description of the $q$-PD envelope in a specific case, which will be useful later. 

\begin{thm}
\label{qpdb}
{\rm (cf. \cite[Theorem 3.5, Proposition 4.9]{GLQ23b})} If $B$ is a bounded $\delta$-$R$-algebra with elements $\ul{x} = (x_1, \dots, x_d) \in B^d$ and $B [ \ul{\xi}]$ is endowed with the symmetric $\delta$-structure, then $\left(B [\ul{\xi}] \left[ \frac{\phi(\ul{\xi})}{(p)_q} \right]^{\delta, \wedge}, K\right)$ is the $q$-PD envelope of $(B [ \ul{\xi}], (\xi_1, \dots, \xi_d))$, where $K$ is the minimal $(p, (p)_q)$-complete ideal containing $(\xi_1, \dots, \xi_d)$ and stable under the operation $\gamma$. Moreover, if the $x_i$'s in $B$ are all rank one elements, then we have an isomorphism of $\delta$-$R$-algebras
\[
\wh{B \langle \ul{\xi} \rangle}_{q, \ul{x}} \simeq B [\ul{\xi}] \left[\frac{\phi(\ul{\xi})}{(p)_q} \right]^{\delta, \wedge}
\]
so that $(\wh{B \langle \ul{\xi} \rangle}_{q, \ul{x}}, \ol{I^{[1]}}^{\rm cl})$ is also the $q$-PD envelope of $(B [ \ul{\xi}], (\xi_1, \dots, \xi_d))$, where $\ol{I^{[1]}}^{\rm cl}$ denotes the closure of $I^{[1]} \wh{B \langle \ul{\xi} \rangle}_{q, \ul{x}}$ in $\wh{B \langle \ul{\xi} \rangle}_{q, \ul{x}}$ for the $(p,(p)_{q})$-adic topology. 
\end{thm}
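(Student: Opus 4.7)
The plan is to adapt the one-variable arguments of Theorem 3.5 and Proposition 4.9 of \cite{GLQ23b} to $d$ variables. Set $D := B[\ul{\xi}][\phi(\ul{\xi})/(p)_q]^{\delta,\wedge}$. First I would verify that $(D,K)$ is a $q$-PD pair; second I would establish its universal property among $q$-PD pairs receiving maps from $(B[\ul{\xi}],(\xi_1,\dots,\xi_d))$; and third, when the $x_i$'s are rank one, I would identify $D$ with the completed twisted divided power algebra $\wh{B\langle\ul{\xi}\rangle}_{q,\ul{x}}$.

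For the $q$-PD structure on $(D,K)$, the condition $\gamma(K)\subset K$ (equivalent to $\phi(f)-(p)_q\delta(f)\in (p)_q K$ for $f\in K$) is built into the definition of $K$ as the minimal $(p,(p)_q)$-adically closed ideal containing the $\xi_i$'s and stable under $\gamma$. Boundedness and $(p)_q$-torsion-freeness of $D$ follow from general preservation properties of the $\delta$-envelope construction (cf.~Lemma 2.11 of \cite{BS22}) together with the subsequent derived $(p,(p)_q)$-completion, using the assumed boundedness of $B$; this boundedness also ensures that the derived completion agrees with the classical one. Classical $p$-completeness of $D/K$ then follows from the classical $(p,(p)_q)$-adic completeness of $D$ and the closedness of $K$.

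For the universal property, given a morphism of $\delta$-pairs $(B[\ul{\xi}],(\xi_1,\dots,\xi_d))\to (C',I_{C'})$ to a $q$-PD pair, each $\xi_i$ maps into $I_{C'}$, so Definition \ref{dfqpd}(1) guarantees that $\phi(\xi_i)/(p)_q = \delta(\xi_i)+\gamma(\xi_i)$ exists canonically in $C'$. This uniquely extends the map to the $\delta$-envelope $B[\ul{\xi}][\phi(\ul{\xi})/(p)_q]^\delta$ and then to its classical $(p,(p)_q)$-adic completion, using Remark \ref{rqpd}.2. The image of $K$ lies in $I_{C'}$ because the latter is closed, contains the $\xi_i$'s, and is $\gamma$-stable.

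Finally, in the rank one case, $(\wh{B\langle\ul{\xi}\rangle}_{q,\ul{x}},\ol{I^{[1]}}^{\rm cl})$ can be checked directly to be a $q$-PD pair receiving $(B[\ul{\xi}],(\xi_i))$, so the universal property yields a canonical morphism $D\to\wh{B\langle\ul{\xi}\rangle}_{q,\ul{x}}$ over $B[\ul{\xi}]$. To upgrade this to an isomorphism, I would first reduce by base change to the universal $p$-torsion-free case $B=\Z[q]_{(p,q-1)}[\ul{x}]$, where the $\delta$-structure is determined uniquely. There, using the multiplication rule (\ref{mrul}) and the map $\xi_i^{(k)_q}\mapsto (k)_q!\,\xi_i^{[k]_q}$, one expresses $\phi(\xi_i)/(p)_q$ explicitly in terms of twisted divided powers and, conversely, shows that every $\xi_i^{[k]_q}$ is a $\delta$-polynomial in $\phi(\xi_i)/(p)_q$ modulo lower-order terms. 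The main obstacle is this last combinatorial identification: checking that iterating $\gamma$ on the $\xi_i$'s, with multi-variable interactions handled via the symmetric $\delta$-structure, recovers exactly the twisted divided power basis after $(p,(p)_q)$-completion. This should ultimately reduce to the single-variable computation of \cite{GLQ23b} applied one coordinate at a time, once the compatibility of the multi-variable symmetric $\delta$-structure with the product structure on $\wh{B\langle\ul{\xi}\rangle}_{q,\ul{x}}$ has been verified.
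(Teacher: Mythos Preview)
Your proposal is correct and follows essentially the same approach as the paper: verify that the completed $\delta$-envelope is a $q$-PD pair and check the universal property for the first assertion, then identify it with the completed twisted divided polynomial algebra in the rank one case for the second. The paper's own proof is much terser---it simply invokes the universal property for the first part and cites Th\'{e}or\`{e}me~3.3.5 of \cite{Hou23} (the higher-dimensional version of the one-variable result you propose to adapt) for the second---so your expanded outline is effectively unpacking what those citations contain.
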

\begin{proof}
The first assertion follows from the assumptions and the universal properties of $\left(B [\ul{\xi}] \left[\frac{\phi(\ul{\xi})}{(p)_q} \right]^{\delta, \wedge}, K\right)$. The second assertion follows from Th\'{e}or\`{e}me 3.3.5 of \cite{Hou23}.
\end{proof}

Let $B$ be a bounded $\delta$-$R$-algebra and let $A \to B$ be a morphism of $R$-algebras (not necessarily a morphism of $\delta$-rings). By abuse of notation, we shall denote the images of the $x_i$'s in $B$ by the same symbols. Note that the $x_i$'s in $B$ are not necessarily rank one elements. There is a unique $\delta$-structure on the ring $B \wh{\otimes}_R A$ (where $\wh{\otimes}$ here means the classical $(p, q-1)$-adic completion) compatible with the $\delta$-structure on $B$ and that on $A$ by Remark 2.7 and Lemma 2.17 of \cite{BS22}. We can consider a morphism of $\delta$-$B$-algebras
\begin{align*}
B[ \ul{\xi} ] &\to B \wh{\otimes}_R A  \\
\xi_i &\mapsto 1 \otimes x_i - x_i \otimes 1.
\end{align*}
Let $J$ be the kernel of multiplication $B \wh{\otimes}_R A \twoheadrightarrow B$. Then we have the following lemma:

\begin{lem}
\label{qpdc}
{\rm (cf. \cite[Lemma 7.2]{GLQ23b})} If two morphisms of $\delta$-pairs \[ u_1, u_2 : (B \wh{\otimes}_R A, J) \to (D, I_D) \] to a $q$-PD pair coincide when restricted to $B[ \ul{\xi} ]$, then they are equal.
\end{lem}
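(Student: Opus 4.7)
Both $u_1$ and $u_2$ extend uniquely to morphisms of $q$-PD pairs $v_1,v_2:(E,I_E)\to (D,I_D)$ from the $q$-PD envelope $(E,I_E)$ of $(B\wh{\otimes}_R A,J)$. The natural morphism of $\delta$-pairs $(B[\ul{\xi}],(\ul{\xi}))\to (B\wh{\otimes}_R A,J)$ sending $\xi_i\mapsto 1\otimes x_i-x_i\otimes 1$ induces, via the first part of Theorem \ref{qpdb}, a morphism of $q$-PD pairs $(B[\ul{\xi}][\phi(\ul{\xi})/(p)_q]^{\delta,\wedge},K)\to (E,I_E)$, so that the hypothesis $u_1|_{B[\ul{\xi}]}=u_2|_{B[\ul{\xi}]}$ lifts to an agreement of $v_1$ and $v_2$ on the image of this morphism. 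In particular $v_1$ and $v_2$ coincide on $B$ and on each twisted divided power $\ul{\xi}^{[\ul{k}]_q}$ (which belongs to this image).

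The key step of the plan is to prove, inside $E$, a $q$-analog of Grothendieck's Taylor expansion: for every $a\in A$, the image of $1\otimes a$ satisfies $1\otimes a=\sum_{\ul{k}\in\N^d}\partial^{[\ul{k}]}(a)\cdot \ul{\xi}^{[\ul{k}]_q}$, where the $\partial^{[\ul{k}]}$ are the iterated $q$-divided partial derivatives associated with the étale coordinates $\ul{x}$ of $A$, and the series converges $(p,q-1)$-adically in $E$. Granting this identity, the coefficients $\partial^{[\ul{k}]}(a)$ lie in $A\subset B$, so $v_1,v_2$ agree on them; they also agree on each $\ul{\xi}^{[\ul{k}]_q}$; by continuity, they agree on the whole sum, giving $u_1(1\otimes a)=u_2(1\otimes a)$ in $D$. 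Together with the agreement on $B$, which comes directly from $B\subset B[\ul{\xi}]$, this yields $u_1=u_2$.

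The main obstacle is establishing the Taylor expansion. It amounts to showing that the putative formula $\rho:A\to E$, $a\mapsto \sum_{\ul{k}}\partial^{[\ul{k}]}(a)\ul{\xi}^{[\ul{k}]_q}$, defines an $R$-algebra map extending $a\mapsto 1\otimes a$. Multiplicativity $\rho(ab)=\rho(a)\rho(b)$ is a $q$-analog of the Leibniz rule for twisted divided powers, whose verification uses the multiplication formula (\ref{mrul}) together with the combinatorial identities for $q$-binomial coefficients of Section \ref{qanbc}. Convergence in the $(p,q-1)$-adic topology follows from $q$-PD nilpotence bounds on $\ul{\xi}^{[\ul{k}]_q}$ as $|\ul{k}|\to\infty$. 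In the classical limit $q=1$ this is Grothendieck's standard Taylor expansion in PD crystalline theory; the $q$-twisting introduces cross-terms whose cancellation is the principal technical input, extending the one-variable argument of \cite[Lemma 7.2]{GLQ23b} to the multi-index setting.
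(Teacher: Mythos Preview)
Your approach diverges substantially from the paper's and, as written, has a genuine gap.

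The paper does \emph{not} use any Taylor expansion. Instead it observes that the two maps $u_1,u_2$ agreeing on $B[\ul{\xi}]$ induce a single morphism of $\delta$-pairs from the fibered coproduct
\[
(B\wh{\otimes}_R A,J)\otimes_{(B[\ul{\xi}],(\ul{\xi}))}(B\wh{\otimes}_R A,J)\longrightarrow (D,I_D),
\]
and then uses \'etaleness of $R[\ul{x}]\to A$ in a structural way: the diagonal of an \'etale morphism is open and closed, so $A\wh{\otimes}_{R[\ul{x}]}A\simeq N\times A$ with $N$ the kernel of multiplication. Base-changing to $B$ yields an isomorphism of $\delta$-pairs
\[
(B\wh{\otimes}_R A,J)\otimes_{(B[\ul{\xi}],(\ul{\xi}))}(B\wh{\otimes}_R A,J)\;\simeq\;(M,M)\times(B\wh{\otimes}_R A,J),
\]
and the factor $(M,M)$ has trivial $q$-PD envelope (since $1\in M$). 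Hence any map to a $q$-PD pair kills $M$ and factors through the second projection, forcing $u_1=u_2$. This is also the argument of \cite[Lemma~7.2]{GLQ23b}; your remark that you are ``extending the one-variable argument'' of that reference is a misreading --- their proof is the structural one just described, not a Taylor-series computation.

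Your route, by contrast, attempts to prove the identity $1\otimes a=\sum_{\ul{k}}\partial^{[\ul{k}]}(a)\,\ul{\xi}^{[\ul{k}]_q}$ in the $q$-PD envelope $E$ of $(B\wh{\otimes}_R A,J)$. Two concrete problems:
\begin{itemize}
\item[(i)] At this point in the paper the envelope $E$ has not been identified; indeed Theorem~\ref{qpd}, which computes $E$, \emph{uses} Lemma~\ref{qpdc}. You could try to avoid $E$ and work directly in $D$, but then you must show that the two $\delta$-ring maps $A\to D$ given by $a\mapsto u_i(1\otimes a)$ and $a\mapsto w(\theta(a))$ coincide. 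They agree on $R[\ul{x}]$, but deducing agreement on all of $A$ from \'etaleness requires exactly the kind of splitting argument the paper uses --- your Taylor formula does not sidestep it.
\item[(ii)] Your convergence claim is incorrect as stated. In $\wh{B\langle\ul{\xi}\rangle}_{q,\ul{x}}$ (the $(p,q-1)$-adic completion of a free $B$-module on the $\ul{\xi}^{[\ul{k}]_q}$) the basis elements $\ul{\xi}^{[\ul{k}]_q}$ do \emph{not} tend to $0$; there are no ``$q$-PD nilpotence bounds'' on them. Convergence of $\sum_{\ul{k}}\partial^{[\ul{k}]}(a)\,\ul{\xi}^{[\ul{k}]_q}$ must come from decay of the coefficients $\partial^{[\ul{k}]}(a)$, which in turn is extracted from the \'etale lifting construction of $\theta$ (via $\xi_i^{p}\in(p,(p)_q)$), not from any property of the divided powers themselves.
\end{itemize}
In short, the ``main obstacle'' you flag --- establishing the Taylor expansion --- is essentially equivalent to Theorem~\ref{qpd}, so your plan inverts the logical order of the paper without supplying an independent argument. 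The paper's splitting-of-the-diagonal proof is both shorter and logically prior.
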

\begin{proof}
We follow the proof of \cite{GLQ23b}. It is sufficient to show that any morphism of $\delta$-pairs
\[
(B \wh{\otimes}_R A, J) \otimes_{(B[ \ul{\xi} ], (\ul{\xi}))} (B \wh{\otimes}_R A, J) \to (D, I_D)
\]
factors through the multiplication map
\[
(B \wh{\otimes}_R A, J) \otimes_{(B[ \ul{\xi} ], (\ul{\xi}))} (B \wh{\otimes}_R A, J) \to (B \wh{\otimes}_R A, J).
\]
Let $N$ be the kernel of the multiplication from $Q:= A \wh{\otimes}_{R[ \ul{x} ]} A$ to $A$. Since $\ul{x}$ are  \'{e}tale coordinates on $A$, by Tag 02FL of \cite{Sta}, we have an isomorphism $Q \simeq N \times A$ of rings such that the maps $N \to Q$ and $Q \to A$ correspond to the inclusion and the projection respectively. By Lemma 1.3 of \cite{GLQ23b}, we see that $N$ is a $\delta$-ring. So if we let $M := B \wh{\otimes}_R N$, then there exists a sequence of isomorphisms of $\delta$-rings
\[
(B \wh{\otimes}_R A)\wh{\otimes}_{B[ \ul{\xi} ]}(B \wh{\otimes}_R A) \simeq B \wh{\otimes}_{R} Q \simeq B \wh{\otimes}_{R} (N \times A) \simeq M \times (B \wh{\otimes}_R A),
\]
which fits into the following isomorphism of exact sequences
\[
\begin{tikzcd}[column sep=0.6cm]
0 \arrow[r] & (B \wh{\otimes}_R A) \wh{\otimes}_{B[ \ul{\xi} ]} J + J \wh{\otimes}_{B[ \ul{\xi} ]} (B \wh{\otimes}_R A) \arrow[r]\arrow[d,"\simeq"] & (B \wh{\otimes}_R A)\wh{\otimes}_{B[ \ul{\xi} ]}(B \wh{\otimes}_R A) \arrow[r]\arrow[d,"\simeq"] & B \arrow[r]\arrow[d,"\simeq"] & 0 \\
0 \arrow[r] & M \times J \arrow[r] & M \times (B \wh{\otimes}_R A) \arrow[r] & B \arrow[r] & 0.
\end{tikzcd}
\]
Here the upper right map is induced from the multiplication map, and the lower right map is the composition of the projection to $B \wh{\otimes}_R A$ and the multiplication map. Therefore, we have an isomorphism of $\delta$-pairs
\begin{align*}
&(B \wh{\otimes}_R A, J) \otimes_{(B[ \ul{\xi} ], (\ul{\xi}))} (B \wh{\otimes}_R A, J) \\
= &((B \wh{\otimes}_R A)\wh{\otimes}_{B[ \ul{\xi} ]}(B \wh{\otimes}_R A), (B \wh{\otimes}_R A) \wh{\otimes}_{B[ \ul{\xi} ]} J + J \wh{\otimes}_{B[ \ul{\xi} ]} (B \wh{\otimes}_R A)) \\
\simeq &(M \times (B \wh{\otimes}_R A), M \times J) \\
= & (M,M) \times (B \wh{\otimes}_R A, J).
\end{align*}
It remains to show that any morphism of $\delta$-pairs
\[
u : (M,M) \times (B \wh{\otimes}_R A, J) \to (D, I_D)
\]
factors through $(B \wh{\otimes}_R A, J)$. If we denote the image of $(1,0), (0,1)$ under $u$ by $e_1, e_2$ and put $(D_i, I_{D_i}) = (e_iD ,e_iI_D)$ for $i = 1,2$, then the morphism $u$ can be written as the product  $u' \times u''$ with 
\begin{align*}
&u' : (M,M) \to (D_1, I_{D_1}), \\
&u'' : (B \wh{\otimes}_R A, J) \to (D_2, I_{D_2}),
\end{align*}
and by Lemma 1.3 of \cite{GLQ23b}, these are morphisms of $\delta$-pairs. The map $u'$ factors through the $q$-PD envelope of $(M,M)$, which is the zero ring by Examples 2 after the Lemma 4.10 of \cite{GLQ23b}.
\end{proof}

Keep the assumption that $B$ is bounded and that there exists a morphism of $R$-algebras $f: A \to B$. We can consider the morphism of $\delta$-$R$-algebras
\begin{align}
\label{eten}
R[ \ul{X} ] &\to B [\ul{\xi}] \left[\frac{\phi(\ul{\xi})}{(p)_q} \right]^{\delta, \wedge} \\
X_i &\mapsto \xi_i + x_i. \notag
\end{align}
Since $R[ \ul{X} ] \to A$ is \'{e}tale (in particular, it is $(p, (p)_q)$-completely \'{e}tale), and since $\xi_i^p = (p)_q \cdot \frac{\phi(\xi_i)}{(p)_q}-p\delta(\xi_i)$, there exists a unique morphism $\theta$ making the following  diagram commute:
\[
\begin{tikzcd}
A \arrow[dr,dashed,"\theta"]\arrow[r,"f"] & B [\ul{\xi}] \left[\frac{\phi(\ul{\xi})}{(p)_q} \right]^{\delta, \wedge}/(\ul{\xi})  \\
R[ \ul{X} ]  \arrow[u]\arrow[r,"(\ref{eten})"] & B [\ul{\xi}] \left[\frac{\phi(\ul{\xi})}{(p)_q} \right]^{\delta, \wedge}. \arrow[u,twoheadrightarrow]
\end{tikzcd}
\]
It follows from Lemma 3.4 of \cite{GLQ23a} that $\theta$ is a morphism of $\delta$-$A$-algebras. By the linear extension of $\theta$, we get a morphism of $\delta$-rings $\wt{\theta} : B \wh{\otimes}_R A \to B [\ul{\xi}] \left[\frac{\phi(\ul{\xi})}{(p)_q} \right]^{\delta, \wedge}$. 
For an element $\sum_{j=1}^{r} b_j \otimes a_j \in J$, by the constructions of $J$ and $\wt{\theta}$, we can check that $\wt{\theta}(\sum_{j=1}^{r} b_j \otimes a_j) = \sum_{j=1}^{r} b_j( f(a_j)+y_j) = \sum_{j=1}^{r} b_j y_j \in (\ul{\xi})$ for some  $y_j$'s in $(\ul{\xi})$. Therefore, the morphism of $\delta$-rings $\wt{\theta}$ uniquely extends to a morphism of $\delta$-pairs $\wt{\theta} : (B \wh{\otimes}_R A, J) \to \left(B [\ul{\xi}] \left[\frac{\phi(\ul{\xi})}{(p)_q} \right]^{\delta, \wedge}, K\right)$. 
When the $x_i$'s in $B$ are all rank one elements, the morphism $\wt{\theta}$ coincides with the linear extension and the completion of the morphism $A \otimes_R A \to \wh{A \langle \ul{\xi} \rangle}_{q, \ul{x}}$ in D\'{e}finition 3.4.2 of \cite{Hou23}, since these are uniquely characterized by the requirement $1 \otimes x_i - x_i \otimes 1 \mapsto \xi_i$.
Now we have the higher dimensional version of Theorem 7.3 of \cite{GLQ23b} in this more general setting:

\begin{thm}
\label{qpd}
{\rm (cf. \cite[Theorem 7.3]{GLQ23b})} If $B$ is a bounded $\delta$-$R$-algebra and $A \to B$ is a morphism of $R$-algebras, then $\left(B [\ul{\xi}] \left[\frac{\phi(\ul{\xi})}{(p)_q} \right]^{\delta, \wedge}, K\right)$ is the $q$-PD envelope of $(B \wh{\otimes}_R A, J)$, where $K$ is the minimal $(p, (p)_q)$-complete ideal containing $(\xi_1, \dots, \xi_d)$ and stable under the operation $\gamma$. Moreover, if the $x_i$'s in $B$ are all rank one elements, then  $(\wh{B \langle \ul{\xi} \rangle}_{q, \ul{x}}, \ol{I^{[1]}}^{\rm cl})$ is also the $q$-PD envelope of $(B \wh{\otimes}_R A, J)$.
\end{thm}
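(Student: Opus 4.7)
The plan is to verify the universal property of the $q$-PD envelope directly, using Theorem \ref{qpdb} for the ``one-variable'' case $(B[\ul{\xi}], (\ul{\xi}))$ and Lemma \ref{qpdc} for uniqueness. The morphism $\wt{\theta} : (B \wh{\otimes}_R A, J) \to \left(B [\ul{\xi}] \left[\frac{\phi(\ul{\xi})}{(p)_q} \right]^{\delta, \wedge}, K\right)$ has already been constructed, so what remains is to prove that it satisfies the universal property among morphisms of $\delta$-pairs to $q$-PD pairs.

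For existence, I would take any morphism of $\delta$-pairs $u : (B \wh{\otimes}_R A, J) \to (D, I_D)$ to a $q$-PD pair. Composing with the canonical map $B[\ul{\xi}] \to B \wh{\otimes}_R A$ sending $\xi_i \mapsto 1 \otimes x_i - x_i \otimes 1$, one obtains a morphism of $\delta$-pairs $(B[\ul{\xi}], (\ul{\xi})) \to (D, I_D)$, since each $\xi_i$ maps into $J$ and then into $I_D$. By Theorem \ref{qpdb}, this extends uniquely to a morphism of $\delta$-pairs $v : \left(B [\ul{\xi}] \left[\frac{\phi(\ul{\xi})}{(p)_q} \right]^{\delta, \wedge}, K\right) \to (D, I_D)$. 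Both $v \circ \wt{\theta}$ and $u$ are morphisms of $\delta$-pairs $(B \wh{\otimes}_R A, J) \to (D, I_D)$; by the construction of $\wt{\theta}$ (which sends $1 \otimes x_i - x_i \otimes 1$ to $\xi_i$), they agree on the subalgebra $B[\ul{\xi}] \subset B \wh{\otimes}_R A$. Lemma \ref{qpdc} then forces $v \circ \wt{\theta} = u$.

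For uniqueness of the factorization, suppose $v_1, v_2$ are two morphisms $\left(B [\ul{\xi}] \left[\frac{\phi(\ul{\xi})}{(p)_q} \right]^{\delta, \wedge}, K\right) \to (D, I_D)$ with $v_1 \circ \wt{\theta} = v_2 \circ \wt{\theta} = u$. Restricting to the image of $B[\ul{\xi}]$ one gets the same morphism $(B[\ul{\xi}], (\ul{\xi})) \to (D, I_D)$, so by the universal property in Theorem \ref{qpdb} applied to this ``one-variable'' source, we must have $v_1 = v_2$. This establishes the first statement.

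For the second statement, when all $x_i$'s in $B$ are rank one, Theorem \ref{qpdb} gives the isomorphism $\wh{B \langle \ul{\xi} \rangle}_{q, \ul{x}} \simeq B [\ul{\xi}] \left[\frac{\phi(\ul{\xi})}{(p)_q} \right]^{\delta, \wedge}$ of $\delta$-$R$-algebras under which $\ol{I^{[1]}}^{\rm cl}$ corresponds to $K$; the conclusion is then immediate from the first part. The main obstacle, as always in this kind of argument, is verifying that the restriction-to-$B[\ul{\xi}]$ maneuver really determines the map out of $B \wh{\otimes}_R A$, which is precisely why Lemma \ref{qpdc} is indispensable: without it, one cannot control values of $\wt{\theta}$ on tensors $1 \otimes a$ for $a \in A$ that are not in the image of the polynomial ring $R[\ul{X}]$. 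Since the heavy lifting for both Theorem \ref{qpdb} and Lemma \ref{qpdc} is already done, the proof reduces to a formal diagram chase.
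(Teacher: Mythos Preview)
Your proof is correct and follows essentially the same approach as the paper: existence of the factorization via Theorem~\ref{qpdb} applied to the restriction along $B[\ul{\xi}] \to B\wh{\otimes}_R A$, then Lemma~\ref{qpdc} to identify $v\circ\wt{\theta}$ with $u$, and the second assertion via the isomorphism in Theorem~\ref{qpdb}. You are slightly more explicit than the paper in spelling out the uniqueness of $v$, but the argument is the same.
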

\begin{proof}
Let $u : (B \wh{\otimes}_R A, J) \to (D, I_D)$ be a morphism to a $q$-PD pair. The restriction of $u$ to $(B [\ul{\xi}], (\xi_1, \dots, \xi_d))$ extends uniquely to a morphism \[ v : \left(B [\ul{\xi}] \left[\frac{\phi(\ul{\xi})}{(p)_q} \right]^{\delta, \wedge}, K\right) \to (D, I_D). \] Since the diagram
\[
\begin{tikzcd}
(B [\ul{\xi}], (\xi_1, \dots, \xi_d)) \arrow[d]\arrow[dr] & \\
(B \wh{\otimes}_R A, J) \arrow[r,"\wt{\theta}"] & \left(B [\ul{\xi}] \left[\frac{\phi(\ul{\xi})}{(p)_q} \right]^{\delta, \wedge}, K\right)
\end{tikzcd}
\]
is commutative, we can apply Lemma \ref{qpdc} to the maps $u_1 = u$ and
\[
u_2 : (B \wh{\otimes}_R A, J) \xrightarrow{\wt{\theta}} \left(B [\ul{\xi}] \left[\frac{\phi(\ul{\xi})}{(p)_q} \right]^{\delta, \wedge}, K\right) \xrightarrow{v} (D, I_D).
\]
The second assertion follows from the description in Theorem \ref{qpdb}.
\end{proof}

Fix a non-negative integer $m$. To develop the level-$m$ $q^{p^m}$-crystalline cohomology theory (that is, the level-$m$ $q$-crystalline cohomology theory when the element $q$ is $q^{p^m}$), we first make the following definitions, which generalize Definitions \ref{dfqpd} and \ref{dfqpd2} (the case $m = 0$) by simply replacing $q$ by $q^{p^m}$:

\begin{defi}
\label{dfqpdm}
A \textit{$q^{p^m}$-PD pair} is a derived $(p,(p)_{q^{p^m}})$-complete $\delta$-pair $(D,I_D)$ consisting of a $\delta$-$R$-algebra $D$ and an ideal $I_D$ which satisfy the following conditions:
\begin{enumerate}
\item For any $f \in I_D$, $\phi (f) - (p)_{q^{p^m}} \delta(f) \in (p)_{q^{p^m}}I_D$.
\item The ring $D$ is $(p)_{q^{p^m}}$-bounded, i.e., $D$ is $(p)_{q^{p^m}}$-torsion-free and $D/((p)_{q^{p^m}})$ has bounded $p^\infty$-torsion.
\item $D/I_D$ is classically $p$-complete.
\end{enumerate}
\end{defi}

\begin{defi}
\label{pqpmpdenv}
Let $(C, I_C)$ be a $\delta$-pair. Then (if it exists) its $q^{p^m}$-PD envelope $(C^{[ \ ]_{q^{p^m}}}, I_C^{[ \ ]_{q^{p^m}}})$ is a $q^{p^m}$-PD pair that is universal for morphisms to  $q^{p^m}$-PD pairs: there exists a morphism of $\delta$-pairs $(C, I_C) \to (C^{[ \ ]_{q^{p^m}}}, I_C^{[ \ ]_{q^{p^m}}})$ such that any morphism $(C, I_C) \to (C', I_{C'})$ to a $q^{p^m}$-PD pair extends uniquely to $(C^{[ \ ]_{q^{p^m}}}, I_C^{[ \ ]_{q^{p^m}}})$.
\end{defi}

\begin{rem}
\label{pqpmrem}
Thanks to Remark \ref{rqpd}.1, the $(p, (p)_{q^{p^m}})$-adic topology we consider coincides with the $(p,q-1)$-adic topology.
\end{rem}

For the arguments later about the level-$m$ $q^{p^m}$-crystalline cohomology theory, we are interested in what is the $q^{p^m}$-PD envelope of $(B \wh{\otimes}_R A, \phi^m(J)(B \wh{\otimes}_R A))$. 
We can calculate it explicitly by using the base change of Theorem \ref{qpdb}. First note that the element $\phi^{m}(x_i) + \phi^{m}(\xi_i)$ has rank one:
%when $x_i$'s in $B$ are all rank one elements, we can show the equality
%\begin{align}
%\label{indph}
%\phi^{m}(\xi_i) = (\xi_i + x_i)^{p^{m}} - x_i^{p^{m}}
%\end{align}
%by induction: if for a positive integer $m'$, we have
%\[
%\phi^{m'}(\xi_i) = (\xi_i + x_i)^{p^{m'}} - x_i^{p^{m'}},
%\]
%then we can check that
%\begin{align*}
%\phi^{m'+1}(\xi_i) &= (((\xi_i + x_i)^p - x_i^p) + x_i^p)^{p^{m'}} - (x_i^p)^{p^{m'}} \\
%&= (\xi_i + x_i)^{p^{m'+1}} - x_i^{p^{m'+1}}.
%\end{align*}
%By (\ref{indph}), it is easy to see that 
\[
\phi(\phi^{m}(x_i) + \phi^{m}(\xi_i)) = \phi^{m+1}(x_i + \xi_i) = (x_i + \xi_i)^{p^{m+1}} = (\phi^{m}(x_i + \xi_i))^p.
\]
In particular, when the $x_i$'s in $B$ are all rank one elements, we see that the element $x_i^{p^m} + \phi^{m}(\xi_i)$ has rank one.
So, if we now take the elements $q, \ul{x}, \ul{\xi}$ which were used in the construction of the twisted divided polynomials to be the rank one elements $q^{p^m},\ul{x}^{p^m}$ and the elements $\phi^{m}(\ul{\xi})$, we can construct the ring $\Z [ q ]_{(p,q-1)}[\ul{x}^{p^m}]\langle \phi^m(\ul{\xi}) \rangle_{q^{p^m}, \ul{x}^{p^m}}$.
Let 
$B[ \ul{\xi} ] \langle \phi^m(\ul{\xi}) \rangle_{q^{p^m}, \ul{x}^{p^m}}$ be the base change of $\Z [ q ]_{(p,q-1)}[\ul{x}^{p^m}]\langle \phi^m(\ul{\xi}) \rangle_{q^{p^m}, \ul{x}^{p^m}}$ along the map of rings
\begin{align}
\label{bcpm}
\Z [ q ]_{(p,q-1)}[\ul{x}^{p^m}][\phi^m(\ul{\xi})] \to B[ \ul{\xi} ].
\end{align}
As a $B[ \ul{\xi} ]$-module, $B[ \ul{\xi} ] \langle \phi^m(\ul{\xi}) \rangle_{q^{p^m}, \ul{x}^{p^m}}$ is generated by $(\phi^m(\ul{\xi}))^{[\ul{k}]_{q^{p^m}, (1-q^{p^m})\ul{x}^{p^m}}}$ indexed by $\ul{k} \in \N^d$, which are heuristically the images of $\ul{\xi}^{[\ul{k}]_{q}}$ under $\phi^m$. When the $x_i$'s in $B$ are all rank one elements, the morphism (\ref{bcpm}) is a map of $\delta$-rings, so we can endow $B[ \ul{\xi} ] \langle \phi^m(\ul{\xi}) \rangle_{q^{p^m}, \ul{x}^{p^m}}$ with a $\delta$-structure.

The following theorem is the higher-level version of Theorem \ref{qpdb}:

\begin{thm}
\label{qpdd}
If $B$ is a $(p)_{q^{p^m}}$-bounded $\delta$-$R$-algebra with elements $\ul{x} = (x_1, \dots, x_d) \in B^d$  and $B [ \ul{\xi}]$ is endowed with the symmetric $\delta$-structure, then $\left(B [\ul{\xi}] \left[\frac{\phi^{m+1}(\ul{\xi})}{(p)_{q^{p^m}}} \right]^{\delta, \wedge}, K_{(m)}\right)$ is the $q^{p^m}$-PD envelope of $(B [ \ul{\xi}], (\phi^m(\xi_1), \dots, \phi^m(\xi_d)))$, where $K_{(m)}$ is the minimal $(p, (p)_{q^{p^m}})$-complete ideal containing $(\phi^m(\xi_1), \dots, \phi^m(\xi_d))$ and stable under the operation $\gamma$. Moreover, if the $x_i$'s in $B$ are all rank one elements, then we have an isomorphism of $\delta$-$R$-algebras
\[
B[ \ul{\xi} ] \langle \phi^m(\ul{\xi}) \rangle_{q^{p^m}, \ul{x}^{p^m}}^{\wedge} \simeq B [\ul{\xi}] \left[\frac{\phi^{m+1}(\ul{\xi})}{(p)_{q^{p^m}}} \right]^{\delta, \wedge}
\]
so that \[ (B[ \ul{\xi} ] \langle \phi^m(\ul{\xi}) \rangle_{q^{p^m}, \ul{x}^{p^m}}^{\wedge}, \ol{I^{[p^m]}}^{\rm cl}) \] is also the $q^{p^m}$-PD envelope of $(B [ \ul{\xi}], (\phi^m(\xi_1), \dots, \phi^m(\xi_d)))$, where $I^{[p^m]}$ is the ideal of $B[ \ul{\xi} ] \langle \phi^m(\ul{\xi}) \rangle_{q^{p^m}, \ul{x}^{p^m}}$ generated by all $(\phi^m(\ul{\xi}))^{[\ul{k}]_{q^{p^m}, (1-q^{p^m})\ul{x}^{p^m}}}$ for $| \ul{k} |  \geq 1$.
\end{thm}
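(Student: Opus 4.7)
The result is the level-$m$ analog of Theorem \ref{qpdb}, and the plan is to adapt the argument of that theorem with $q$ replaced systematically by $q^{p^m}$ and with $\phi^m(\ul{\xi})$ playing the role of $\ul{\xi}$. The key underlying observation is that for any $q^{p^m}$-PD pair $(D, I_D)$ and any $f \in I_D$, condition 1 of Definition \ref{dfqpdm} together with the $(p)_{q^{p^m}}$-torsion-freeness of $D$ gives $\phi(f)/(p)_{q^{p^m}} = \gamma(f) + \delta(f) \in D$; so once the ideal generators $\phi^m(\xi_i)$ are sent into $I_D$, their Frobenius images $\phi^{m+1}(\xi_i)$ become canonically divisible by $(p)_{q^{p^m}}$.

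For the first assertion, I would verify the universal property directly. Given a morphism of $\delta$-pairs $u : (B[\ul{\xi}], (\phi^m(\ul{\xi}))) \to (D, I_D)$ to a $q^{p^m}$-PD pair, the observation above forces $u$ to extend uniquely to $B[\ul{\xi}][\phi^{m+1}(\ul{\xi})/(p)_{q^{p^m}}]$, then (by the universal property of the $\delta$-envelope) to its $\delta$-envelope, and finally (by classical $(p, (p)_{q^{p^m}})$-completeness of $D$) to the completion. Moreover $K_{(m)}$ necessarily maps into $I_D$ because $I_D$ is $\gamma$-stable and closed for the $(p, (p)_{q^{p^m}})$-adic topology. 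To show that $\left(B[\ul{\xi}][\phi^{m+1}(\ul{\xi})/(p)_{q^{p^m}}]^{\delta,\wedge}, K_{(m)}\right)$ is itself a $q^{p^m}$-PD pair, I would mimic \cite[Theorem 3.5, Proposition 4.9]{GLQ23b} with $q$ replaced by $q^{p^m}$: $(p)_{q^{p^m}}$-boundedness is preserved under $\delta$-localization and classical completion, $K_{(m)}$ is $\gamma$-stable by construction, and the quotient is classically $p$-complete.

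For the second assertion, when the $x_i$'s are rank one, so is $x_i^{p^m} + \phi^m(\xi_i) = \phi^m(x_i + \xi_i)$, which is exactly the rank-one input needed to define the twisted divided polynomial algebra $\langle \phi^m(\ul{\xi}) \rangle_{q^{p^m}, \ul{x}^{p^m}}$ over $\Z[q]_{(p,q-1)}[\ul{x}^{p^m}]$. By construction, $B[\ul{\xi}] \langle \phi^m(\ul{\xi}) \rangle_{q^{p^m}, \ul{x}^{p^m}}$ is its base change along $\Z[q]_{(p,q-1)}[\ul{x}^{p^m}][\phi^m(\ul{\xi})] \to B[\ul{\xi}]$. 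Applying Théorème 3.3.5 of \cite{Hou23} at level $q^{p^m}$ to the universal case and then base-changing and classically $(p, (p)_{q^{p^m}})$-completing yields the isomorphism $B[\ul{\xi}] \langle \phi^m(\ul{\xi}) \rangle_{q^{p^m}, \ul{x}^{p^m}}^{\wedge} \simeq B[\ul{\xi}][\phi^{m+1}(\ul{\xi})/(p)_{q^{p^m}}]^{\delta,\wedge}$. The main obstacle will be identifying $\ol{I^{[p^m]}}^{\rm cl}$ with $K_{(m)}$ under this isomorphism: this should follow by checking that both ideals solve the same universal problem (namely, the kernel of the universal map to a $q^{p^m}$-PD pair built from $(B[\ul{\xi}], (\phi^m(\ul{\xi})))$), but verifying that the classical completion and the $\delta$-envelope commute with the base change cleanly enough to force this identification is the subtle step.
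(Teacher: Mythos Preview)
Your approach is essentially the same as the paper's. For the first assertion the paper simply says it follows from the universal property, which is exactly what you spell out in more detail. For the second assertion the paper reduces, via stability of both constructions under base change from $\Z[q]_{(p,q-1)}[\ul{x}^{p^m}][\phi^m(\ul{\xi})]$, to the universal case and then applies Theorem~\ref{qpdb} with the substitution $q \mapsto q^{p^m}$, $\ul{x} \mapsto \ul{x}^{p^m}$, $\ul{\xi} \mapsto \phi^m(\ul{\xi})$; this is precisely your plan of invoking \cite{Hou23} at level $q^{p^m}$ over the universal ring and base-changing. Your worry about identifying $\ol{I^{[p^m]}}^{\rm cl}$ with $K_{(m)}$ is not really a subtle step: once you know the ring isomorphism and that both pairs satisfy the universal property of the $q^{p^m}$-PD envelope, uniqueness of envelopes forces the ideals to correspond under that isomorphism, and the paper treats it no more explicitly than that.
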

\begin{proof}
The first assertion follows from the assumptions and the universal properties of $\left(B [\ul{\xi}] \left[\frac{\phi^{m+1}(\ul{\xi})}{(p)_{q^{p^m}}} \right]^{\delta, \wedge}, K_{(m)}\right)$. We prove the second assertion. By the definition, the formation of $B[ \ul{\xi} ] \langle \phi^m(\ul{\xi}) \rangle_{q^{p^m}, \ul{x}^{p^m}}$ is stable under the base change of a $\Z [ q ]_{(p,q-1)}[\ul{x}^{p^m}][\phi^m(\ul{\xi})]$-algebra. Since the formation of $q^{p^m}$-PD envelopes is also stable under the base change, it is enough to consider the $q^{p^m}$-PD envelope of \[ (\Z [ q ]_{(p,q-1)}[\ul{x}^{p^m}][\phi^m(\ul{\xi})], (\phi^m(\xi_1), \dots, \phi^m(\xi_d))). \] If we now take the elements $q, \ul{x}, \ul{\xi}$ in Theorem \ref{qpdb} to be respectively $q^{p^m},\ul{x}^{p^m},\phi^{m}(\ul{\xi})$, 
 we get the $q^{p^m}$-PD envelope $(\Z [ q ]_{(p,q-1)}[\ul{x}^{p^m}]\langle \phi^m(\ul{\xi}) \rangle_{q^{p^m}, \ul{x}^{p^m}}^{\wedge}, \ol{I^{[p^m]}}^{\rm cl})$.
\end{proof}

Keep the assumption that $B$ is $(p)_{q^{p^m}}$-bounded, and let $A \to B$ be a morphism of $R$-algebras. In the same way as the level-$0$ case, we can consider the morphism of $\delta$-$R$-algebras
\begin{align}
\label{etena}
R[ \ul{X} ] &\to B [\ul{\xi}] \left[\frac{\phi^{m+1}(\ul{\xi})}{(p)_{q^{p^m}}} \right]^{\delta, \wedge} \\
X_i &\mapsto \xi_i + x_i. \notag
\end{align}
Since $R[ \ul{X} ] \to A$ is \'{e}tale (in particular, it is $(p, (p)_{q^{p^m}})$-completely \'{e}tale), and since $\xi_i^{p^{m+1}} = (p)_{q^{p^m}} \cdot \frac{\phi^{m+1}(\xi_i)}{(p)_{q^{p^m}}} - pz_i$ for some elements $z_i$'s in $B [\ul{\xi}]$ by Remark 2.13 of \cite{BS22}, 
there exists a unique morphism $\theta_{(m)}$ making the following diagram commute:
\[
\begin{tikzcd}
A \arrow[dr,dashed,near end,"\theta_{(m)}"]\arrow[r] & B [\ul{\xi}] \left[\frac{\phi^{m+1}(\ul{\xi})}{(p)_{q^{p^m}}} \right]^{\delta, \wedge}/(\ul{\xi})  \\
R[ \ul{X} ]  \arrow[u]\arrow[r,"(\ref{etena})"] & B [\ul{\xi}] \left[\frac{\phi^{m+1}(\ul{\xi})}{(p)_{q^{p^m}}} \right]^{\delta, \wedge}. \arrow[u,twoheadrightarrow]
\end{tikzcd}
\]
It follows from Lemma 3.4 of \cite{GLQ23a} that $\theta_{(m)}$ is a morphism of $\delta$-$A$-algebras.
We call $\theta_{(m)}$ the \textit{$q$-Taylor map of level $m$}. By the linear extension of $\theta_{(m)}$, we get a morphism of $\delta$-rings
$\wt{\theta}_{(m)} : B \wh{\otimes}_R A \to B [\ul{\xi}] \left[\frac{\phi^{m+1}(\ul{\xi})}{(p)_{q^{p^m}}} \right]^{\delta, \wedge}$. As we did before Theorem \ref{qpd} in the level-0 case, we can check that $\wt{\theta}_{(m)}(J) \subset (\ul{\xi})$. Therefore, the morphism of $\delta$-rings $\wt{\theta}_{(m)}$ uniquely extends to a morphism of $\delta$-pairs \[ \wt{\theta}_{(m)} : (B \wh{\otimes}_R A, \phi^m(J)(B \wh{\otimes}_R A)) \to \left(B [\ul{\xi}] \left[\frac{\phi^{m+1}(\ul{\xi})}{(p)_{q^{p^m}}} \right]^{\delta, \wedge}, K_{(m)}\right). \] We have the higher-level version of Lemma \ref{qpdc}:

\begin{lem}
\label{qpdf}
If two morphisms of $\delta$-pairs $u_1, u_2 : (B \wh{\otimes}_R A, \phi^m(J)(B \wh{\otimes}_R A)) \to (D, I_D)$ to a $q^{p^m}$-PD pair coincide when restricted to $B[ \ul{\xi} ]$, then they are equal.
\end{lem}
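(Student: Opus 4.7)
The strategy mirrors the proof of Lemma \ref{qpdc}, adapted to the higher-level setting by replacing $J$ with $\phi^m(J)(B \wh{\otimes}_R A)$ and $(p)_q$ with $(p)_{q^{p^m}}$ throughout. Writing $T := B \wh{\otimes}_R A$, note that the image of $\phi^m(\xi_i) \in B[\ul{\xi}]$ in $T$ lies in $\phi^m(J) T$, so both $u_i$ send $\phi^m(\xi_i)$ into $I_D$. The assumption that $u_1, u_2$ coincide on $B[\ul{\xi}]$ thus ensures that both $u_i$ restrict to a common morphism of $\delta$-pairs $(B[\ul{\xi}], (\phi^m(\ul{\xi}))) \to (D, I_D)$. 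It therefore suffices to show that any morphism of $\delta$-pairs
\[
(T, \phi^m(J) T) \otimes_{(B[\ul{\xi}], (\phi^m(\ul{\xi})))} (T, \phi^m(J) T) \to (D, I_D)
\]
factors through the multiplication map to $(T, \phi^m(J) T)$.

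As in the proof of Lemma \ref{qpdc}, \'{e}taleness of $\ul{x}$ on $A$ (via Tag 02FL of \cite{Sta}) gives $A \wh{\otimes}_{R[\ul{x}]} A \simeq N \times A$ with $N$ the kernel of multiplication, and $N$ inherits a $\delta$-ring structure by Lemma 1.3 of \cite{GLQ23b}. Setting $M := B \wh{\otimes}_R N$, base change produces the ring isomorphism $T \wh{\otimes}_{B[\ul{\xi}]} T \simeq M \times T$ in which multiplication corresponds to the second projection. The analogous isomorphism of exact sequences (with $\phi^m(J) T$ replacing $J$) identifies the tensor product of $\delta$-pairs with $(M, M) \times (T, \phi^m(J) T)$: the second factor picks up $\phi^m(J) T$ under the projection, while the first picks up all of $M$ since the kernel of multiplication is contained in the ideal generated by $T \wh{\otimes}_{B[\ul{\xi}]} \phi^m(J) T + \phi^m(J) T \wh{\otimes}_{B[\ul{\xi}]} T$.

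It remains to show that any morphism of $\delta$-pairs $u : (M, M) \times (T, \phi^m(J) T) \to (D, I_D)$ factors through the second factor. Writing $e_1, e_2$ for the images of the orthogonal idempotents $(1, 0), (0, 1)$ and setting $(D_i, I_{D_i}) := (e_i D, e_i I_D)$, one obtains $u = u' \times u''$ with $u' : (M, M) \to (D_1, I_{D_1})$ and $u'' : (T, \phi^m(J) T) \to (D_2, I_{D_2})$ morphisms of $\delta$-pairs by Lemma 1.3 of \cite{GLQ23b}. Then $u'$ factors through the $q^{p^m}$-PD envelope of $(M, M)$, which is the zero ring: the image of $1_M$ must lie in $I_{D_1}$, so condition 1 of Definition \ref{dfqpdm} gives $1 = \phi(1) - (p)_{q^{p^m}} \delta(1) \in (p)_{q^{p^m}} I_{D_1} \subset (p, (p)_{q^{p^m}}) D_1$, forcing $D_1 = 0$ by $(p)_{q^{p^m}}$-boundedness and derived $(p, (p)_{q^{p^m}})$-completeness. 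Hence $e_1 = 0$ and $u$ factors through the second factor, yielding $u_1 = u_2$.

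The delicate point is the identification of the tensor-pair ideal with $M \times \phi^m(J) T$ under $T \wh{\otimes}_{B[\ul{\xi}]} T \simeq M \times T$. Unlike the level-$0$ case, where the computation $B \wh{\otimes}_{B[\ul{\xi}]} B = B$ pinpoints this identification, in the level-$m$ case $\xi_i$ itself is not annihilated modulo $\phi^m(J) T$; only $\phi^m(\xi_i)$ is, and one must verify carefully that the multiplication kernel $M$ nonetheless lies in the ideal generated by $\phi^m(J) T$ on both sides of the tensor.
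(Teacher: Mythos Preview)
Your proof is correct and follows the same approach as the paper's. The paper likewise reduces to showing that any morphism out of $(M,M)\times(T,\phi^m(J)T)$ factors through the second factor, and then invokes that the $q^{p^m}$-PD envelope of $(M,M)$ is the zero ring.

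One small remark on the ``delicate point'' you flag at the end: the paper handles the ideal identification more directly by writing the $M$-component of the ideal as $\phi^m(M)M$ and then observing $\phi^m(M)M = M$ (since $1_M\in M$ and $\phi^m(1_M)=1_M$). This immediately dispels the worry that the level-$m$ ideal might fail to pick up all of $M$; no separate verification that ``$M$ lies in the ideal generated by $\phi^m(J)T$ on both sides'' is needed. Your explicit argument that the $q^{p^m}$-PD envelope of $(M,M)$ vanishes is fine; the paper simply cites the corresponding example in \cite{GLQ23b}.
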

\begin{proof}
By the proof of Lemma \ref{qpdc}, we have an isomorphism of $\delta$-pairs
\[
(B \wh{\otimes}_R A, J) \otimes_{(B[ \ul{\xi} ], (\ul{\xi}))} (B \wh{\otimes}_R A, J) \simeq (M,M) \times (B \wh{\otimes}_R A, J).
\]
Therefore, we also have an isomorphism of $\delta$-pairs
\begin{align*}
&(B \wh{\otimes}_R A, \phi^m(J)(B \wh{\otimes}_R A)) \otimes_{(B[ \ul{\xi} ], (\phi^m(\ul{\xi})))} (B \wh{\otimes}_R A, \phi^m(J)(B \wh{\otimes}_R A)) \\
\simeq &(M,\phi^m(M)M) \times (B \wh{\otimes}_R A, \phi^m(J)(B \wh{\otimes}_R A)) \\
\simeq &(M,M) \times (B \wh{\otimes}_R A, \phi^m(J)(B \wh{\otimes}_R A)).
\end{align*}
So it is enough to show that any morphism of $\delta$-pairs
\[
u: (M,M) \times (B \wh{\otimes}_R A, \phi^m(J)(B \wh{\otimes}_R A)) \to (D, I_D)
\]
factors through $(B \wh{\otimes}_R A, \phi^m(J)(B \wh{\otimes}_R A))$. As in the proof of Lemma \ref{qpdc}, this follows immediately from the fact that the $q^{p^m}$-PD envelope of $(M,M)$ is the zero ring.
\end{proof}

%\begin{defi}
%\label{qt}
%The \textit{$q$-Taylor map} of level $m$ is the composite $\theta$ (add the details?).

Now we have the higher-level version of Theorem \ref{qpd}.

\begin{thm}
\label{qpde}
If $B$ is a $(p)_{q^{p^m}}$-bounded $\delta$-$R$-algebra and $A \to B$ is a morphism of $R$-algebras, then \[ \left(B [\ul{\xi}] \left[\frac{\phi^{m+1}(\ul{\xi})}{(p)_{q^{p^m}}} \right]^{\delta, \wedge}, K_{(m)}\right) \] is the $q^{p^m}$-PD envelope of $(B \wh{\otimes}_R A, \phi^m(J)(B \wh{\otimes}_R A))$, where $K_{(m)}$ is the minimal \linebreak $(p, (p)_{q^{p^m}})$-complete ideal containing $(\phi^m(\xi_1), \dots, \phi^m(\xi_d))$ and stable under the operation $\gamma$. Moreover, if the $x_i$'s in $B$ are all rank one elements, then \[ (B[ \ul{\xi} ] \langle \phi^m(\ul{\xi}) \rangle_{q^{p^m}, \ul{x}^{p^m}}^{\wedge}, \ol{I^{[p^m]}}^{\rm cl}) \] is also the $q^{p^m}$-PD envelope of $(B \wh{\otimes}_R A, \phi^m(J)(B \wh{\otimes}_R A))$.
\end{thm}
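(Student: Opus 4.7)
The plan is to mirror the proof of Theorem \ref{qpd}, replacing Theorem \ref{qpdb} and Lemma \ref{qpdc} by their higher-level counterparts Theorem \ref{qpdd} and Lemma \ref{qpdf}. Concretely, I would take an arbitrary morphism of $\delta$-pairs
\[
u : (B \wh{\otimes}_R A, \phi^m(J)(B \wh{\otimes}_R A)) \to (D, I_D)
\]
to a $q^{p^m}$-PD pair and show it factors uniquely through $\left(B[\ul{\xi}]\left[\frac{\phi^{m+1}(\ul{\xi})}{(p)_{q^{p^m}}}\right]^{\delta,\wedge}, K_{(m)}\right)$.

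First I would observe that the composite $B[\ul{\xi}] \to B \wh{\otimes}_R A \xrightarrow{u} D$ sends $\phi^m(\xi_i) \mapsto \phi^m(1 \otimes x_i - x_i \otimes 1) \in \phi^m(J) \cdot D \subset I_D$, so the restriction of $u$ gives a morphism of $\delta$-pairs
\[
(B[\ul{\xi}], (\phi^m(\xi_1), \dots, \phi^m(\xi_d))) \to (D, I_D).
\]
By Theorem \ref{qpdd}, this restriction extends uniquely to a morphism
\[
v : \left(B[\ul{\xi}]\left[\tfrac{\phi^{m+1}(\ul{\xi})}{(p)_{q^{p^m}}}\right]^{\delta,\wedge}, K_{(m)}\right) \to (D, I_D).
\]

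Next I would form the two morphisms $u_1 := u$ and $u_2 := v \circ \wt{\theta}_{(m)}$ from $(B \wh{\otimes}_R A, \phi^m(J)(B \wh{\otimes}_R A))$ to $(D, I_D)$. By construction $\wt{\theta}_{(m)}$ restricted to $B[\ul{\xi}]$ is the structural inclusion into the $q^{p^m}$-PD envelope, so $u_1$ and $u_2$ agree on $B[\ul{\xi}]$. Lemma \ref{qpdf} then forces $u_1 = u_2$, giving the required factorization; uniqueness of $v$ at the level of $\delta$-pairs guarantees uniqueness of the factorization. This establishes the universal property, hence the first assertion. The second assertion then follows immediately from the explicit isomorphism of $\delta$-$R$-algebras provided by Theorem \ref{qpdd}.

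The argument is essentially formal once Lemma \ref{qpdf} and Theorem \ref{qpdd} are in hand, so no single step is a serious obstacle; the one point that needs care is verifying that $\wt{\theta}_{(m)}$ does land in the $q^{p^m}$-PD envelope as a morphism of $\delta$-pairs (i.e.\ that $\wt{\theta}_{(m)}(\phi^m(J)(B \wh{\otimes}_R A)) \subset K_{(m)}$), but this has already been arranged in the construction of $\wt{\theta}_{(m)}$ right before the statement. The only conceptual subtlety is that one must use the higher-level matching $\phi^m(J) \leftrightarrow (\phi^m(\xi_1), \dots, \phi^m(\xi_d))$ consistently in place of the level-$0$ matching $J \leftrightarrow (\xi_1, \dots, \xi_d)$, which is exactly what Lemma \ref{qpdf} is designed for.
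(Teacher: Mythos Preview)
Your proposal is correct and follows essentially the same argument as the paper: restrict $u$ to $(B[\ul{\xi}],(\phi^m(\xi_1),\dots,\phi^m(\xi_d)))$, extend uniquely to $v$ via Theorem \ref{qpdd}, then apply Lemma \ref{qpdf} to $u_1=u$ and $u_2=v\circ\wt{\theta}_{(m)}$; the second assertion is read off from Theorem \ref{qpdd}. The paper's proof is slightly terser (it packages the compatibility of $\wt{\theta}_{(m)}$ with the inclusion $B[\ul{\xi}]\hookrightarrow B[\ul{\xi}]\left[\frac{\phi^{m+1}(\ul{\xi})}{(p)_{q^{p^m}}}\right]^{\delta,\wedge}$ as a commutative triangle), but the content is identical.
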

\begin{proof}
Let $u : (B \wh{\otimes}_R A, \phi^m(J)(B \wh{\otimes}_R A)) \to (D, I_D)$ be a morphism to a $q^{p^m}$-PD pair. The restriction of $u$ to $(B [\ul{\xi}], (\phi^m(\xi_1), \dots, \phi^m(\xi_d)))$ extends uniquely to a morphism $v : \left(B [\ul{\xi}] \left[\frac{\phi^{m+1}(\ul{\xi})}{(p)_{q^{p^m}}} \right]^{\delta, \wedge}, K_{(m)}\right) \to (D, I_D)$. Since the diagram
\[
\begin{tikzcd}
(B [\ul{\xi}], (\phi^m(\xi_1), \dots, \phi^m(\xi_d))) \arrow[d]\arrow[dr] & \\
(B \wh{\otimes}_R A, \phi^m(J)(B \wh{\otimes}_R A)) \arrow[r,"\wt{\theta}_{(m)}"] & \left(B [\ul{\xi}] \left[\frac{\phi^{m+1}(\ul{\xi})}{(p)_{q^{p^m}}} \right]^{\delta, \wedge}, K_{(m)}\right)
\end{tikzcd}
\]
is commutative, we can apply Lemma \ref{qpdf} to the maps $u_1 = u$ and
\[
u_2 : (B \wh{\otimes}_R A, \phi^m(J)(B \wh{\otimes}_R A)) \xrightarrow{\wt{\theta}_{(m)}} \left(B [\ul{\xi}] \left[\frac{\phi^{m+1}(\ul{\xi})}{(p)_{q^{p^m}}} \right]^{\delta, \wedge}, K_{(m)}\right) \xrightarrow{v} (D, I_D).
\]
The second assertion follows from the description in Theorem \ref{qpdd}.
\end{proof}

On the other hand, in the higher-level crystalline cohomology theory, we can  consider the higher-level analog of the usual PD polynomial algebra. We can consider the \textit{divided power of higher level}  $\ul{\xi}^{{ \{ \ul{k} \} }_{(m)}}$ which satisfies
\[
\prod_{i=1}^d \left\lfloor \frac{k_i}{p^m} \right\rfloor! \xi_i^{ \{ k_i \}_{(m)}}= \ul{\xi}^{  \ul{k}  },
\]
and the \textit{$m$-PD polynomial algebra} which  is free as a module on the generators $\ul{\xi}^{{ \{ \ul{k} \} }_{(m)}}$ indexed by $\ul{k} \in \N^d$. 

Now, for a $\delta$-$R$-algebra $B$ (that need not be $(p)_{q^{p^m}}$-bounded, nor have a morphism of $R$-algebras $A \to B$), we can  consider the natural $q$-analog of the $m$-PD polynomial algebra, which we call the \textit{$m$-$q$-PD polynomial algebra} $B \langle \ul{\xi} \rangle_{(m), q, \ul{x}}$, as follows: as a $B$-module, it is free on abstract generators $\ul{\xi}^{ \{ \ul{k} \}_{(m), q}}$ indexed by $\ul{k} \in \N^d$. 
We first check the following, which is a higher-level analog of Proposition 2.1 of \cite{GLQ22b} in our more general setting:

\begin{prop}
\label{film}
{\rm (cf. \cite[Proposition 2.1]{GLQ22b})} There exists a unique morphism of $B$-modules
\begin{align}
\label{filma}
B[ \ul{\xi} ] &\to B \langle \ul{\xi} \rangle_{(m), q, \ul{x}} \\
\ul{\xi}^{(\ul{k})_{q}} &\mapsto \prod_{i=1}^d \left(\left\lfloor \frac{k_i}{p^m} \right\rfloor\right)_{q^{p^m}} ! \xi_i^{ \{ k_i \}_{(m), q}}. \notag
\end{align}
It is an isomorphism if all positive $q$-integers are invertible in $R$.
\end{prop}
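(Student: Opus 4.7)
The plan is as follows. The text has already recorded that the elements $\ul{\xi}^{(\ul{k})_{q}}$ for $\ul{k} \in \N^{d}$ form a $B$-basis of $B[\ul{\xi}]$, since each $\xi_{i}^{(k_{i})_{q}}$ is monic of degree $k_{i}$ in $\xi_{i}$. Existence and uniqueness of the $B$-linear map (\ref{filma}) are then immediate: a $B$-module map out of a free $B$-module is freely and uniquely determined by its values on any prescribed basis, and (\ref{filma}) simply assigns such values.

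For the isomorphism assertion, recall that the target $B\langle \ul{\xi}\rangle_{(m),q,\ul{x}}$ is by construction the free $B$-module on the abstract generators $\ul{\xi}^{\{\ul{k}\}_{(m),q}}$, $\ul{k}\in\N^{d}$. The map (\ref{filma}) sends the basis element $\ul{\xi}^{(\ul{k})_{q}}$ to the scalar multiple $c_{\ul{k}}\cdot \ul{\xi}^{\{\ul{k}\}_{(m),q}}$, where
\[
c_{\ul{k}} := \prod_{i=1}^{d}\bigl(\lfloor k_{i}/p^{m}\rfloor\bigr)_{q^{p^{m}}}!,
\]
so it is diagonal with respect to these two bases. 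It is therefore a $B$-module isomorphism as soon as every $c_{\ul{k}}$ is a unit of $B$. Under the hypothesis that every positive $q$-integer is invertible in $R$, it suffices to verify that each factor $(n)_{q^{p^{m}}}$ with $n\ge 1$ is a unit of $R$. This follows at once from the identity $(np^{m})_{q}=(p^{m})_{q}(n)_{q^{p^{m}}}$ supplied by Proposition \ref{nnp}, since both $(np^{m})_{q}$ and $(p^{m})_{q}$ are units by assumption; hence each $c_{\ul{k}}$ is a unit and the images $\{c_{\ul{k}}\,\ul{\xi}^{\{\ul{k}\}_{(m),q}}\}_{\ul{k}\in\N^{d}}$ form another $B$-basis of the target.

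The proposition is essentially a bookkeeping observation, contingent only on identifying two matching free $B$-bases on source and target and on checking invertibility of the diagonal scalars relating them. I do not anticipate any serious obstacle; the only point requiring a line of justification is the transfer of invertibility from $(n)_{q}$ to $(n)_{q^{p^{m}}}$, which is handled by Proposition \ref{nnp} as above.
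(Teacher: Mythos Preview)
Your proof is correct and follows essentially the same approach as the paper's own proof, which simply records that both sides are free $B$-modules on bases indexed by $\N^{d}$ and that the map is diagonal with respect to these bases. Your additional line justifying the invertibility of $(n)_{q^{p^{m}}}$ via Proposition \ref{nnp} is a welcome clarification of a point the paper leaves implicit.
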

\begin{proof}
This follows immediately from the facts that $\ul{\xi}^{(\ul{k})_{q}}$ indexed by $\ul{k} \in \N^d$ form a basis of $B[ \ul{\xi} ]$, and that $\ul{\xi}^{ \{ \ul{k} \}_{(m), q}}$ indexed by $\ul{k} \in \N^d$ form a basis of $B \langle \ul{\xi} \rangle_{(m), q, \ul{x}}$ by definition.
\end{proof}

To endow $B \langle \ul{\xi} \rangle_{(m), q, \ul{x}}$ with a ring structure such that (\ref{filma}) is a morphism of $B$-algebras, we need to check the following quite involved multiplication rule, although it is not necessary for later computations:

\begin{prop}
\label{mr}
{\rm (cf. \cite[Proposition 2.2]{GLQ22b})} Let $k, k'$ be non-negative integers. The multiplication rule
{\fontsize{9pt}{10pt}\selectfont
\[
\xi_i^{ \{ k \}_{(m), q}}\xi_i^{ \{ k' \}_{(m), q}} = \sum_{0 \le j \le k,k'} \frac{(j)_q!}{\left( \left\lfloor \frac{j}{p^m} \right\rfloor \right)_{q^{p^m}}!} q^{\frac{j(j-1)}{2}}\begin{Bmatrix} k+k'-j \\ k \end{Bmatrix}_{(m),q} \begin{pmatrix} k \\ j \end{pmatrix}_{q} 
\left\langle
\begin{array}{@{}c@{}}
k' \\
j
\end{array}
\right\rangle_{(m),q}
 (q-1)^j x_i^j \xi_i^{ \{ k+k'-j \}_{(m), q}}
\]
}

\noindent
defines a structure of a commutative $B$-algebra on $B \langle \ul{\xi} \rangle_{(m), q, \ul{x}}$ such that the linear map {\rm (\ref{filma})} is a morphism of $B$-algebras.
\end{prop}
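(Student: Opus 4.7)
The overall strategy is to reduce to one variable, establish the rule generically where (\ref{filma}) is an isomorphism, verify integrality of the resulting coefficients, and then descend to arbitrary $R$ by a universal argument.

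Since the basis $\ul{\xi}^{\{\ul{k}\}_{(m), q}}$ and the proposed multiplication rule both factor as products across the variables $\xi_1, \ldots, \xi_d$, it suffices to construct the algebra structure in the one-variable case $d = 1$ and then take a $d$-fold tensor product. For $d = 1$, I would first work under the temporary assumption that all positive $q$-integers are invertible in $R$ (for instance $R = \Q(q)$); then by Proposition \ref{film}, the map (\ref{filma}) is a $B$-module isomorphism onto $B[\xi]$, and transporting the polynomial ring structure of $B[\xi]$ through this isomorphism equips $B\langle \xi\rangle_{(m), q, x}$ with a commutative $B$-algebra structure for which (\ref{filma}) is tautologically a morphism of $B$-algebras, with unit $\xi^{\{0\}_{(m), q}} \mapsto 1$. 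To extract the explicit formula, I would apply (\ref{filma}) to both sides of identity (\ref{mrul}) for $\xi^{(k)_q}\xi^{(k')_q}$, divide through by $(\lfloor k/p^m\rfloor)_{q^{p^m}}!\,(\lfloor k'/p^m\rfloor)_{q^{p^m}}!$, use the factorization $\begin{pmatrix}k'\\j\end{pmatrix}_{q} = \begin{Bmatrix}k'\\j\end{Bmatrix}_{(m), q}\left\langle \begin{array}{@{}c@{}}k'\\j\end{array}\right\rangle_{(m), q}$ coming from Definition \ref{qhbc}, and reorganize the leftover factorial ratio into $\begin{Bmatrix}k+k'-j\\k\end{Bmatrix}_{(m), q}$ times $(j)_q!/\left(\lfloor j/p^m\rfloor\right)_{q^{p^m}}!$. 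This recovers precisely the coefficient stated in the proposition.

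The main obstacle is then to verify that each coefficient in the formula already lies in $\Z[q]_{(p, q-1)}$, so that the same rule makes sense over an arbitrary $\delta$-$\Z[q]_{(p, q-1)}$-algebra $R$. By Lemma \ref{qhb}, the factors $\begin{Bmatrix}k+k'-j\\k\end{Bmatrix}_{(m), q}\in \Z[q]$ and $\left\langle \begin{array}{@{}c@{}}k'\\j\end{array}\right\rangle_{(m), q}\in \Z[q]_{(p, q-1)}$ are already integral. The delicate factor is $(j)_q!/\left(\lfloor j/p^m\rfloor\right)_{q^{p^m}}!$: writing $j = p^m r + s$ with $0\le s < p^m$, grouping the multiples of $p^m$ inside $(j)_q!$, and applying Proposition \ref{nnp} in the form $(lp^m)_q = (p^m)_q (l)_{q^{p^m}}$, this ratio simplifies to
\[
\frac{(j)_q!}{(r)_{q^{p^m}}!} = ((p^m)_q)^{r}\prod_{\substack{1\le i\le j \\ p^m\,\nmid\, i}}(i)_q \in \Z[q].
\]

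Finally, since all the coefficients of the proposed multiplication are \emph{universal} elements of $\Z[q]_{(p, q-1)}[x]$, the axioms of associativity, commutativity, unitality, and compatibility with (\ref{filma}) are polynomial identities in those universal coefficients. They hold in the generic case over $\Q(q)[x]$ by construction, hence in $\Z[q]_{(p, q-1)}[x]$ via the injection $\Z[q]_{(p, q-1)}[x]\hookrightarrow \Q(q)[x]$, and hence in any $B$ by base change along the $\Z[q]_{(p, q-1)}$-algebra structure and $x\mapsto x$. Tensoring $d$ copies together produces the $B$-algebra $B\langle \ul{\xi}\rangle_{(m), q, \ul{x}}$ with its multi-index multiplication rule, and the map (\ref{filma}) is a morphism of $B$-algebras by the same descent.
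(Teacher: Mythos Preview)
Your proposal is correct and follows essentially the same approach as the paper: reduce to $d=1$, pass to the generic case $R=\Q(q)$, $B=\Q(q)[x]$ where (\ref{filma}) is an isomorphism, derive the formula from (\ref{mrul}) via the factorial/binomial identity, check that $(j)_q!/\bigl(\lfloor j/p^m\rfloor\bigr)_{q^{p^m}}!\in\Z[q]$ so the coefficients lie in $\Z[q]_{(p,q-1)}$, and descend along $\Z[q]_{(p,q-1)}[x]\to B$. The paper records the integrality of $(j)_q!/\bigl(\lfloor j/p^m\rfloor\bigr)_{q^{p^m}}!$ as a remark just before the proof and then displays the single factorial identity needed; your write-up simply spells out the same steps in a bit more detail.
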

Note that, as in the proof of Lemma \ref{qhb}, by using the formula $(p^ml)_q = (p^m)_q (l)_{q^{p^m}}$, we can cancel the terms $(l)_{q^{p^m}}$ in $\left(\left\lfloor \frac{j}{p^m} \right\rfloor\right)_{q^{p^m}}!$ with the terms $(p^m l)_q$ in $(j)_q!$ for $1 \le l \le \left\lfloor \frac{j}{p^m} \right\rfloor$ and show that $\frac{(j)_q!}{\left(\left\lfloor \frac{j}{p^m} \right\rfloor\right)_{q^{p^m}}!} \in \Z[q]$. So this multiplication rule is well-defined over a $\Z [ q ]_{(p,q-1)}$-algebra.
\begin{proof}
As the multi-indices case of the twisted powers is defined as the products of the one-index case, it is enough to show the assertion in the case $d=1$.
By using the maps
\begin{align*}
\Q(q) \hookleftarrow &\Z [ q ]_{(p,q-1)} \to R \\
\Q(q)[x] \hookleftarrow &\Z [ q ]_{(p,q-1)}[x] \to B,
\end{align*}
we may assume that $R=\Q(q)$ and $B = \Q(q)[x]$. In particular, all positive $q$-integers are invertible in $B$, so the map (\ref{filma}) is an isomorphism. Then, using (\ref{mrul}), we can check that the multiplications on both sides coincide because of the following equality:
\begin{align*}
&\left(\left\lfloor \frac{k+k'-j}{p^m} \right\rfloor\right)_{q^{p^m}}! \left(\left\lfloor \frac{j}{p^m} \right\rfloor\right)_{q^{p^m}}! (j)_q! \begin{pmatrix} k \\ j \end{pmatrix}_{q} \begin{pmatrix} k' \\ j \end{pmatrix}_{q}   \\
= &\left(\left\lfloor \frac{k}{p^m} \right\rfloor\right)_{q^{p^m}}! \left(\left\lfloor \frac{k'}{p^m} \right\rfloor\right)_{q^{p^m}}! (j)_q!
\begin{Bmatrix} k+k'-j \\ k \end{Bmatrix}_{(m),q}  \begin{pmatrix} k \\ j \end{pmatrix}_{q} 
\left\langle
\begin{array}{@{}c@{}}
k' \\
j
\end{array}
\right\rangle_{(m),q}.
\end{align*}
\end{proof}

When the $x_i$'s in $B$ are all rank one elements, we can endow $B \langle \ul{\xi} \rangle_{(m), q, \ul{x}}$ with the unique $\delta$-structure such that (\ref{filma}) is a morphism of $\delta$-$B$-algebras. This will be done in Proposition \ref{rg}. First of all,  we can construct a morphism of $B$-modules
\begin{align}
\label{filmb}
B \langle \ul{\xi} \rangle_{(m), q, \ul{x}} &\to B \langle \ul{\xi} \rangle_{q, \ul{x}} \\
\ul{\xi}^{\{ \ul{k} \}_{(m), q}} &\mapsto \prod_{i=1}^d \frac{(k_i)_q!}{\left(\left\lfloor \frac{k_i}{p^m} \right\rfloor\right)_{q^{p^m}}!}  \xi_i^{ [ k_i ]_{q}}. \notag
\end{align}
By the argument before the proof of Proposition \ref{mr}, we see that $\frac{(k_i)_q!}{\left(\left\lfloor \frac{k_i}{p^m} \right\rfloor\right)_{q^{p^m}}!} \in \Z[q]$, so this map is well-defined. To check that this is a morphism of $B$-algebras, as in the proof of Proposition \ref{mr}, we may assume that $R=\Q(q)$ and $B = \Q(q)[\ul{x}]$. The maps (\ref{tdpb}), (\ref{filma}) and (\ref{filmb}) fit into the following commutative diagram:
\[
\begin{tikzcd}
B [\ul{\xi}] \arrow[r,"(\ref{filma})"]\arrow[dr,"(\ref{tdpb})"'] & B \langle \ul{\xi} \rangle_{(m), q, \ul{x}} \arrow[d,"(\ref{filmb})"] \\
 & B \langle \ul{\xi} \rangle_{q, \ul{x}}.
\end{tikzcd}
\]
Both (\ref{tdpb}) and  (\ref{filma}) are isomorphisms of $B$-algebras, since, by the assumption, all positive $q$-integers are invertible in $R$. Therefore, the same holds for (\ref{filmb}). 

First, we consider the $\delta$-structure when $B = \Z [ q ]_{(p,q-1)}[\ul{x}]$. Then the map (\ref{filmb}) is injective (not necessarily isomorphic, since not all positive $q$-integers are invertible in $\Z [ q ]_{(p,q-1)}[\ul{x}]$). By the arguments in Section 2 of \cite{GLQ23b}, we have the formula of the Frobenius lift $\phi$ on $\Z [ q ]_{(p,q-1)}[\ul{x}] \langle \ul{\xi} \rangle_{q, \ul{x}}$:
\[
\phi(\xi_i^{ [ k ]_{q}}) = \sum_{ j = k}^{pk} (p)_q^k b_{k,j} x_i^{pk-j} \xi_i^{ [ j ]_{q}},
\]
where the detailed expressions of $b_{k,j} \in \Z [q ]$ are quite complicated. To see that $\phi$ restricts to an endomorphism of $\Z [ q ]_{(p,q-1)}[\ul{x}] \langle \ul{\xi} \rangle_{(m), q, \ul{x}}$, it is enough to show that the coefficients of $\frac{(j)_q!}{\left(\left\lfloor \frac{j}{p^m} \right\rfloor\right)_{q^{p^m}}!} \xi_i^{ [ j ]_{q}}$ in the formula
{\fontsize{10pt}{10pt}\selectfont
\[
\phi \left( \frac{(k)_q!}{\left(\left\lfloor \frac{k}{p^m} \right\rfloor\right)_{q^{p^m}}!} \xi_i^{ [ k ]_{q}} \right) = \sum_{ j = k}^{pk} (p)_q^k \cdot \frac{(k)_{q^p}!}{\left(\left\lfloor \frac{k}{p^m} \right\rfloor\right)_{q^{p^{m+1}}}!} \cdot \frac{\left(\left\lfloor \frac{j}{p^m} \right\rfloor\right)_{q^{p^m}}!}{(j)_q!} b_{k,j} x_i^{pk-j} \left( \frac{(j)_q!}{\left(\left\lfloor \frac{j}{p^m} \right\rfloor\right)_{q^{p^m}}!} \xi_i^{ [ j ]_{q}} \right)
\]
}

\noindent
are the elements in $\Z [ q ]_{(p,q-1)}[\ul{x}]$. By the argument before the proof of Proposition \ref{mr}, we see that
\[
\frac{\left(\left\lfloor \frac{j}{p^m} \right\rfloor\right)_{q^{p^m}}!}{(j)_q!} = \frac{1}{\prod_{l=0}^{\left\lfloor \frac{j}{p^m} \right\rfloor-1} \left( (p^m)_q \cdot \prod_{u=1}^{p^m-1}(p^m l+u)_{q} \right)} \cdot \frac{1}{\prod_{u'=1}^{j-p^m \left\lfloor \frac{j}{p^m} \right\rfloor} \left(p^m \left\lfloor \frac{j}{p^m} \right\rfloor+u' \right)_q},
\]
which is equal to $\frac{1}{\prod_{l=0}^{\left\lfloor \frac{j}{p^m} \right\rfloor-1} (p^m)_q}$ up to a unit in $\Z [ q ]_{(p,q-1)}$ by Lemma \ref{gcdj}. Since this denominator is biggest when $j = pk$,
 it suffices to show that when $j = pk$, the element
\[
(p)_q^k \cdot \frac{(k)_{q^p}!}{\left(\left\lfloor \frac{k}{p^m} \right\rfloor\right)_{q^{p^{m+1}}}!} \cdot \frac{\left(\left\lfloor \frac{pk}{p^m} \right\rfloor\right)_{q^{p^m}}!}{(pk)_q!}
\]
belongs to $\Z [ q ]_{(p,q-1)}$.
Thanks again to Lemma \ref{gcdj}, for any non-negative integer $m'$, we can ignore the terms $(n)_{q^{p^{m'}}}$ when $\gcd(n,p) = 1$. Then the result follows immediately from the equations
\begin{align*}
(pl)_q = (p)_q (l)_{q^p} \ \ \ &\text{for} \ l = 1, \dots, k,  \\
(pl')_{q^{p^m}} = (p)_{q^{p^m}} (l')_{q^{p^{m+1}}} \ \ \ &\text{for} \ l' = 1, \dots, \left\lfloor \frac{k}{p^m} \right\rfloor.
\end{align*}
As $\Z [ q ]_{(p,q-1)}[\ul{x}]$ is $p$-torsion-free, the Frobenius lift $\phi$ on $\Z [ q ]_{(p,q-1)}[\ul{x}] \langle \ul{\xi} \rangle_{(m), q, \ul{x}}$ corresponds to a unique $\delta$-structure. Now we consider a general $B$ where the $x_i$'s in $B$ are all rank one elements. Then we can  define the $\delta$-structure on $B \langle \ul{\xi} \rangle_{(m), q, \ul{x}}$ by using the isomorphism
\[
B \langle \ul{\xi} \rangle_{(m), q, \ul{x}} \simeq B \otimes_{\Z [ q ]_{(p,q-1)} [\ul{x}]} \Z [ q ]_{(p,q-1)}[\ul{x}] \langle \ul{\xi} \rangle_{(m), q, \ul{x}}.
\]

The ring $B[ \ul{\xi} ] \langle \phi^m(\ul{\xi}) \rangle_{q^{p^m}, \ul{x}^{p^m}}$ that appears in Theorem \ref{qpde} is the ring we are particularly interested in, but its construction is quite complicated. 
%It is difficult to make  calculations of $B[ \ul{\xi} ] \langle \phi^m(\ul{\xi}) \rangle_{q^{p^m}, \ul{x}^{p^m}}$ which are not related to the Frobenius lift $\phi$. 
On the other hand, the $m$-$q$-PD polynomial algebra $B \langle \ul{\xi} \rangle_{(m), q, \ul{x}}$ is the natural $q$-analog of the usual $m$-PD polynomial algebra. So we can consider the $q$-analogs of the usual calculations for $B \langle \ul{\xi} \rangle_{(m), q, \ul{x}}$. However, the relation between these rings is not clear from the definitions. Actually, these two rings are isomorphic as $\delta$-$B$-algebras:

\begin{prop}
\label{rg}
Let
\begin{align*}
\Z [ q ]_{(p,q-1)}[\ul{x}][ \ul{\xi} ] \langle \phi^m(\ul{\xi}) \rangle_{q^{p^m}, \ul{x}^{p^m}} &\hookrightarrow \Z_{(p)} ( q )[\ul{x}][ \ul{\xi} ] \\
(\phi^m(\ul{\xi}))^{[\ul{k}]_{q^{p^m}, (1-q^{p^m})\ul{x}^{p^m}}} &\mapsto \prod_{i=1}^d \frac{(\phi^m(\xi_i))^{(k_i)_{q^{p^m}, (1-q^{p^m})\ul{x}^{p^m}}}}{(k_i)_{q^{p^m}}!}
\end{align*}
and
\begin{align*}
\Z [ q ]_{(p,q-1)}[\ul{x}]\langle \ul{\xi} \rangle_{(m), q, \ul{x}} &\hookrightarrow \Z_{(p)} ( q )[\ul{x}][ \ul{\xi} ] \\
\ul{\xi}^{\{ \ul{k} \}_{(m), q}} &\mapsto \prod_{i=1}^d \frac{ \xi_i^{ ( k_i )_{q}}}{\left(\left\lfloor \frac{k_i}{p^m} \right\rfloor\right)_{q^{p^m}}!} \notag
 \notag
\end{align*}
be the natural morphisms of $\delta$-rings. Then,
the ring $\Z [ q ]_{(p,q-1)}[\ul{x}][ \ul{\xi} ] \langle \phi^m(\ul{\xi}) \rangle_{q^{p^m}, \ul{x}^{p^m}}$ is isomorphic to the ring $\Z [ q ]_{(p,q-1)}[\ul{x}]\langle \ul{\xi} \rangle_{(m), q, \ul{x}}$ as $\delta$-subrings of $\Z_{(p)} ( q )[\ul{x}][ \ul{\xi} ]$. In particular, by the base change to a $\delta$-$R$-algebra $B$ with elements $\ul{x} = (x_1, \dots, x_d) \in B^d$, we can construct an isomorphism of $B$-algebras
\[
B[ \ul{\xi} ] \langle \phi^m(\ul{\xi}) \rangle_{q^{p^m}, \ul{x}^{p^m}}  \simeq B \langle \ul{\xi} \rangle_{(m), q, \ul{x}}.
\]
If the $x_i$'s in $B$ are all rank one elements, then this is an isomorphism of $\delta$-$B$-algebras.
\end{prop}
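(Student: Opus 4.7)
My plan is to show that the two natural morphisms of $\delta$-rings land on the same subring of $T := \Z_{(p)}(q)[\ul{x}][\ul{\xi}]$; this equality of subrings immediately upgrades to an isomorphism of $\delta$-rings since both embeddings are given as $\delta$-morphisms, and the $\delta$-structure is then inherited from that of $T$. The higher-dimensional case reduces to $d=1$ because both constructions are built as tensor products across the coordinate directions. Denote by $S_1, S_2 \subset T$ the two subring images.

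The key identity is
\[
\phi^m(\xi^{(k)_q}) \;=\; \phi^m(\xi)^{(k)_{q^{p^m},(1-q^{p^m})x^{p^m}}},
\]
obtained by applying $\phi^m$ factor-by-factor to $\xi^{(k)_q} = \prod_{j=0}^{k-1}(\xi + (1-q^j)x)$: since $x$ and $\xi + x$ are rank one, $\phi^m(x) = x^{p^m}$ and $\phi^m(\xi+x) = (\xi+x)^{p^m}$, and since $\phi^m((j)_q) = (j)_{q^{p^m}}$, each factor is sent to $\phi^m(\xi) + (1-q^{jp^m})x^{p^m}$. This identity is what translates between the two families of generators.

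For the inclusion $S_2 \subseteq S_1$, write $k = p^m r + s$ with $0 \le s < p^m$ and split
\[
\xi^{(k)_q} \;=\; A(\xi)\prod_{l=0}^{r-1} B_l(\xi), \qquad B_l(\xi) = \prod_{j=0}^{p^m-1}\bigl((\xi+x) - q^{lp^m+j}x\bigr),
\]
where $A(\xi) \in \Z[q][x][\xi]$ has degree $s$. The $q$-binomial theorem expands $B_l(\xi)$ as the leading polynomial $(\xi+x)^{p^m} - q^{lp^m}x^{p^m} = \phi^m(\xi) + (1-q^{lp^m})x^{p^m}$ plus cross terms whose coefficients are $\Z[q]$-multiples of $\binom{p^m}{i}_q$ for $1 \le i \le p^m - 1$, and hence lie in the ideal $(p)_q$. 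The ``leading part'' of $\prod_l B_l$ is therefore exactly $\phi^m(\xi)^{(r)_{q^{p^m},(1-q^{p^m})x^{p^m}}}$, whose image in $T$ divided by $(r)_{q^{p^m}}!$ is a generator of $S_1$. For each mixed term arising from the product expansion, the $(p)_q$ factors together with the factorisation $(p^m\ell)_q = (p^m)_q(\ell)_{q^{p^m}}$ and Lemma~\ref{gcdj} absorb the denominator $(r)_{q^{p^m}}!$ in $\Z[q]_{(p,q-1)}$, via the same bookkeeping used in the proof of Lemma~\ref{qhb}. A symmetric expansion of $\phi^m(\xi)^{(k)_{q^{p^m},\ldots}} = \prod_{j=0}^{k-1}((\xi+x)^{p^m} - q^{jp^m}x^{p^m})$ in terms of $\xi^{(n)_q}$ and matching of denominators $(k)_{q^{p^m}}!$ against the $(\lfloor n/p^m\rfloor)_{q^{p^m}}!$'s yields the inclusion $S_1 \subseteq S_2$.

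The main obstacle is precisely the careful tracking of $q$-integer divisibilities in these expansions, which does not collapse to a single clean identity and requires iterated use of Lemma~\ref{gcdj}, Proposition~\ref{nnp}, and the kind of $(p)_q$-cancellation arguments appearing in Lemma~\ref{qhb}. Once the equality $S_1 = S_2$ is established in the universal case $B = \Z[q]_{(p,q-1)}[\ul{x}]$, base change along $\Z[q]_{(p,q-1)}[\ul{x}] \to B$ transports the equality into the stated $B$-algebra isomorphism; and in the rank-one case, since both $B[\ul{\xi}]\langle\phi^m(\ul{\xi})\rangle_{q^{p^m},\ul{x}^{p^m}}$ and $B\langle\ul{\xi}\rangle_{(m),q,\ul{x}}$ are equipped with their $\delta$-structures precisely by base change from this universal setting (as defined earlier in the excerpt), the isomorphism automatically respects them.
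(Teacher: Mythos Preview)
Your overall structure (reduce to $d=1$, compare images inside $T = \Z_{(p)}(q)[x][\xi]$, then base-change) is right, and the identity $\phi^m(\xi^{(k)_q}) = (\phi^m(\xi))^{(k)_{q^{p^m},(1-q^{p^m})x^{p^m}}}$ is correct and useful. But the core divisibility step is wrong on two counts.

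First, the claim that $\binom{p^m}{i}_q \in ((p)_q)$ for all $1\le i\le p^m-1$ is false once $m\ge 2$: the $(p)_q$-adic valuation of $\binom{p^m}{i}_q$ is $p^{m-1}-\lfloor i/p\rfloor-\lfloor (p^m-i)/p\rfloor$, which vanishes whenever $p\mid i$ (e.g.\ $p=2$, $m=2$, $i=2$: $\binom{4}{2}_q=(1+q^2)(1+q+q^2)$ is not divisible by $1+q$). So your ``cross terms lie in $(p)_q$'' assertion fails, and hence $B_l(\xi)\equiv \phi^m(\xi)+(1-q^{lp^m})x^{p^m}\pmod{(p)_q}$ is false for $m\ge 2$.

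Second, and more seriously, even if you had powers of $(p)_q$, they are irrelevant to the denominator $(r)_{q^{p^m}}!$: up to units in $\Z[q]_{(p,q-1)}$, the non-unit irreducible factors of $(r)_{q^{p^m}}!$ are the cyclotomic polynomials $(p)_{q^{p^{m}}},(p)_{q^{p^{m+1}}},\ldots$ (via $(p^nl)_{q^{p^m}}=(p)_{q^{p^m}}\cdots(p)_{q^{p^{m+n-1}}}(l)_{q^{p^{m+n}}}$), and none of these is $(p)_q$. So no amount of $(p)_q$-divisibility will ``absorb'' $(r)_{q^{p^m}}!$.

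The paper's proof fixes both problems at once. It first observes that $\{\xi^{\{k\}_{(m),q}}\}_k$ and $\{\xi^{s}(\phi^m(\xi))^{[r]_{q^{p^m}}}\}_k$ are both $\Z[q]_{(p,q-1)}[x]$-bases with the \emph{same} leading coefficients as polynomials in $\xi$; this triangularity reduces everything to a single inclusion, namely $(\phi^m(\xi))^{(r)_{q^{p^m}}}\in (r)_{q^{p^m}}!\cdot \Z[q]_{(p,q-1)}[x]\langle\xi\rangle_{(m),q,x}$. For that, one must produce divisibility by $(p)_{q^{p^{m+n-1}}}$ for each $n\ge 1$, and this is done by grouping the factors of $(\phi^m(\xi))^{(r)_{q^{p^m}}}$ into blocks of length $p^n$ and proving the congruence
\[
\prod_{u=0}^{p^n-1}\bigl(\phi^m(\xi)+(1-q^{up^m+vp^{m+n}})x^{p^m}\bigr)\;\equiv\;\xi^{(p^{m+n})_q}\pmod{(p)_{q^{p^{m+n-1}}}},
\]
checked by specializing $q$ to a primitive $p^{m+n}$-th root of unity. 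The point is that the relevant congruences live modulo $(p)_{q^{p^{m+n-1}}}$ for \emph{all} $n\ge 1$, not modulo $(p)_q$, and you need a separate block decomposition for each $n$. Your ``symmetric expansion'' for $S_1\subseteq S_2$ is also unsubstantiated; the paper avoids it entirely via the triangularity argument.
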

\begin{proof}
Let $\ul{k}, \ul{r}, \ul{s} \in \N^d$ such that $k_i = p^m r_i + s_i \ (0 \le s_i < p^m)$ for all $i = 1, \dots, d$.
 By construction, $\{ \ul{\xi}^{ \{ \ul{k} \}_{(m), q}} \}_{\ul{k} \in \N^d}$ is a basis of $\Z [ q ]_{(p,q-1)}[\ul{x}] \langle \ul{\xi} \rangle_{(m), q, \ul{x}}$ as a $\Z [ q ]_{(p,q-1)}[\ul{x}]$-module. Since we have
{\fontsize{9pt}{10pt}\selectfont
\[
\Z [ q ]_{(p,q-1)}[\ul{x}][ \ul{\xi} ] \langle \phi^m(\ul{\xi}) \rangle_{q^{p^m}, \ul{x}^{p^m}} = \Z [ q ]_{(p,q-1)}[\ul{x}][ \ul{\xi} ] \otimes_{\Z [ q ]_{(p,q-1)}[\ul{x}^{p^m}][\phi^m(\ul{\xi})]} \Z [ q ]_{(p,q-1)}[\ul{x}^{p^m}]\langle \phi^m(\ul{\xi}) \rangle_{q^{p^m}, \ul{x}^{p^m}},
\]
}

\noindent
and $\{ \ul{\xi}^{\ul{s}}(\phi^m(\ul{\xi}))^{(\ul{r})_{q^{p^m}, (1-q^{p^m})\ul{x}^{p^m}}} \}_{\ul{k} \in \N^d}$ is a basis of $\Z [ q ]_{(p,q-1)}[\ul{x}][ \ul{\xi} ]$ as a $\Z [ q ]_{(p,q-1)}[\ul{x}]$-module, by moving $(\phi^m(\ul{\xi}))^{(\ul{r})_{q^{p^m}, (1-q^{p^m})\ul{x}^{p^m}}}$ to the right of the tensor product, we see that $\{ \ul{\xi}^{\ul{s}}(\phi^m(\ul{\xi}))^{[\ul{r}]_{q^{p^m}, (1-q^{p^m})\ul{x}^{p^m}}} \}_{\ul{k} \in \N^d}$ is a basis of $\Z [ q ]_{(p,q-1)}[\ul{x}][ \ul{\xi} ] \langle \phi^m(\ul{\xi}) \rangle_{q^{p^m}, \ul{x}^{p^m}}$ as a $\Z [ q ]_{(p,q-1)}[\ul{x}]$-module.  By considering the images of these bases in $\Z_{(p)} ( q )[\ul{x}][ \ul{\xi} ]$, we see that as polynomials in variables $\ul{\xi}$,  the degrees of $\ul{\xi}^{ \{ \ul{k} \}_{(m), q}}$ and $\ul{\xi}^{\ul{s}}(\phi^m(\ul{\xi}))^{[\ul{r}]_{q^{p^m}, (1-q^{p^m})\ul{x}^{p^m}}}$ are both equal  to $| \ul{k} |$, and have a unique term of the highest degree with the same leading coefficients. So, if we are able to check that for any $\ul{r} \in \N^d$, the element
$(\phi^m(\ul{\xi}))^{[\ul{r}]_{q^{p^m}, (1-q^{p^m})\ul{x}^{p^m}}}$
belongs to $\Z [ q ]_{(p,q-1)}[\ul{x}]\langle \ul{\xi} \rangle_{(m), q, \ul{x}}$, then by induction, we can also write $\ul{\xi}^{ \{ \ul{k} \}_{(m), q}}$ as a linear combination of the $\ul{\xi}^{\ul{s}}(\phi^m(\ul{\xi}))^{[\ul{r}]_{q^{p^m}, (1-q^{p^m})\ul{x}^{p^m}}}$'s, so the result follows.
As the multi-indices case of the twisted powers is defined as the products of the one-index case, it is sufficient to show the assertion in the case $d=1$, i.e., for any $r \in \N$, the element $(\phi^m(\xi))^{[r]_{q^{p^m}, (1-q^{p^m})x^{p^m}}}$
belongs to $\Z [ q ]_{(p,q-1)}[x]\langle \xi \rangle_{(m), q, x}$.
Equivalently, we need to show that
\begin{equation}
\label{pxrq}
(\phi^m(\xi))^{(r)_{q^{p^m}, (1-q^{p^m})x^{p^m}}} \in (r)_{q^{p^m}}!\Z [ q ]_{(p,q-1)}[x]\langle \xi \rangle_{(m), q, x}.
\end{equation}

Thanks to Lemma \ref{gcdj}, in the factorial $(r)_{q^{p^m}}!$, we only need to consider the terms $(p^n l)_{q^{p^m}}$, where $l,n$ are positive integers with $p^n l \le r$ such that gcd$(l,p) = 1$. By Proposition \ref{nnp}, we have
\begin{equation}
\label{pnlq}
(p^n l)_{q^{p^m}}= (p)_{q^{p^m}}(p)_{q^{p^{m+1}}} \cdots (p)_{q^{p^{m+n-1}}} (l)_{q^{p^{m+n}}},
\end{equation}
where the term $(l)_{q^{p^{m+n}}}$ can be ignored by Lemma \ref{gcdj}. Note that $\Z [ q ]_{(p,q-1)}$ is a UFD, and we can check that for $j \in \N$, each $(p)_{q^{p^j}}$ is an irreducible polynomial in $\Z [ q ]_{(p,q-1)}$ that is different from each other: we fix a compatible sequence of $p$-power roots of unity $\{ \zeta_{p^{j+1}} \}_{j \in \N}$ as elements of $\C$, which can be used to analyze the terms $(p)_{q^{p^j}}$. The roots of $(p)_{q^{p^j}}$ in $\C$ in variable $q$ are primitive $p^{j+1}$-th roots of unity, so $(p)_{q^{p^j}}$ is the $p^{j+1}$-th cyclotomic polynomial, which is irreducible by the classical result. Therefore, if we want to show (\ref{pxrq}), it is important to count the number of irreducible factors $(p)_{q^{p^j}}$. For this purpose, the next claim is useful:

\begin{clm}
\label{clpin}
Let $n$ be a positive integer, and let $v$ be a non-negative integer. Then:
\[
\prod_{u=0}^{p^n-1} (\phi^m(\xi)+ (1 - q^{u p^m + v p^{m+n}}) x^{p^m}) - \xi^{{(p^{m+n})}_q} \in (p)_{q^{p^{m+n-1}}} \Z [ q ][x][\xi].
\]
\end{clm}

We prove the claim. It is enough to check that, if we assign $q = \zeta_{p^{m+n}}$ in $\C$, then the element in the claim is equal to 0. Now, the twisted power $\xi^{{(p^{m+n})}_q}$ can be factored as the product
\[
\xi^{{(p^{m+n})}_q}  = \prod_{u'=0}^{p^n-1} \prod_{v'=0}^{p^m-1} (\xi + (1-q^{u' + p^n v'})x).
\]
So it is sufficient to show that for $u \in \N$, if we assign $q = \zeta_{p^{m+n}}$, then the element
\begin{equation}
\label{pmxp}
(\phi^m(\xi)+ (1 - q^{u p^m + v p^{m+n}}) x^{p^m}) - \prod_{v'=0}^{p^m-1} (\xi + (1-q^{u + p^n v'})x) 
\end{equation}
is equal to 0. Since the elements $x, \xi + x$ are rank one elements, we see that:
\[
\phi^m(\xi) + x^{p^m}= \phi^m(\xi+x) = (\xi + x)^{p^m}.
\]
So for the left term in (\ref{pmxp}), we can check that:
\[
\phi^m(\xi)+ (1 - q^{u p^m + v p^{m+n}}) x^{p^m} = (\xi + x)^{p^{m}} - q^{u p^m + v p^{m+n}} x^{p^m}.
\]
If we assign $q = \zeta_{p^{m+n}}$, then we have the element
\[
(\xi + x)^{p^{m}} - \zeta_{p^{m+n}}^{u p^m} x^{p^m} = \prod_{v''=0}^{p^m-1} (\xi + x - \zeta_{p^{m+n}}^{u+p^nv''}x),
\]
which is equal to the right term of (\ref{pmxp}) when $q = \zeta_{p^{m+n}}$. This proves the claim.

Now we want to show (\ref{pxrq}). We first note that
\[
\xi^{{(p^{m+n})}_q} = (p^n)_{q^{p^m}} \xi^{ \{ p^{m+n} \}_{(m), q}} = (p)_{q^{p^m}}(p)_{q^{p^{m+1}}} \cdots (p)_{q^{p^{m+n-1}}} \xi^{ \{ p^{m+n} \}_{(m), q}}.
\]
So, by using Claim \ref{clpin}, we see that:
\begin{equation}
\label{uzpn}
\prod_{u=0}^{p^n-1} (\phi^m(\xi)+ (1 - q^{u p^m + v p^{m+n}}) x^{p^m}) \in (p)_{q^{p^{m+n-1}}} \Z [ q ]_{(p,q-1)}[x]\langle \xi \rangle_{(m), q, x}.
\end{equation}
Notice that, for $l_n,s_n \in \N$ such that $0 \le s_n < p^n$, the element
 \[ (\phi^m(\xi))^{(p^n l_n + s_n)_{q^{p^m}, (1-q^{p^m})x^{p^m}}} \] can be factored as the product
\[
\left( \prod_{v=0}^{l_n-1} \prod_{u=0}^{p^n-1} (\phi^m(\xi)+ (1 - q^{u p^m + v p^{m+n}}) x^{p^m}) \right) \left( \prod_{u''=0}^{s_n-1} (\phi^m(\xi)+ (1 - q^{u'' p^m + l_n p^{m+n}}) x^{p^m}) \right).
\]
So, by applying (\ref{uzpn}) to the left term, we see that
\begin{equation}
\label{pmxpa}
(\phi^m(\xi))^{(p^n l_n + s_n)_{q^{p^m}, (1-q^{p^m})x^{p^m}}} \in (p)_{q^{p^{m+n-1}}}^{l_n} \Z [ q ]_{(p,q-1)}[x]\langle \xi \rangle_{(m), q, x}.
\end{equation}
Since $\Z [ q ]_{(p,q-1)}$ is a UFD, and since monic irreducible polynomials $(p)_{q^{p^{m+n-1}}}$ in $\Z [ q ]_{(p,q-1)}$ are distinct for $n \in \N$, if we set $c$ as the largest integer that satisfies $p^c \le r$, we see that 
\begin{equation}
\label{pmxpb}
(\phi^m(\xi))^{(r)_{q^{p^m}, (1-q^{p^m})x^{p^m}}} \in \left( \prod_{n=1}^{c} (p)_{q^{p^{m+n-1}}}^{l_n} \right) \Z [ q ]_{(p,q-1)}[x]\langle \xi \rangle_{(m), q, x}.
\end{equation}

On the other hand, for each positive integer $n$, if we consider \[ r= p^n l_n + s_n \ (0 \le s_n < p^n) \] as before, then by using (\ref{pnlq}), we see that the number of irreducible factors $(p)_{q^{p^{m+n-1}}}$ in $(r)_{q^{p^m}}!$, which appeared in (\ref{pxrq}), is equal to $l_n$. Together with (\ref{pmxpb}), we can check that (\ref{pxrq}) holds, so the result follows.
\end{proof}

%By the argument so far, we see that the ``complete $m$-$q$-PD envelope'' of the $\delta$-pair $(P,I)$ is $\widehat{A \langle \ul{\xi} \rangle}_{(m), q, \ul{x}}$. So, in the rest of the article, we can mainly concentrate on the calculations about $\widehat{A \langle \ul{\xi} \rangle}_{(m), q, \ul{x}}$. 

%When $x_i$'s in $B$ are all rank one elements, the morphism $\wt{\theta}$ coincides with the completion and the linear extension of the morphism $A \otimes A \to \wh{A \langle \ul{\xi} \rangle}_{q, \ul{x}}$ in D\'{e}finition 3.4.2 of \cite{Hou23}, since these are uniquely characterized by the requirement $1 \otimes x_i - x_i \otimes 1 \mapsto \xi_i$.
%We can consider the ``right $A$-module structure'' on $\widehat{A \langle \ul{\xi} \rangle}_{(m), q, \ul{x}}$ as in Section 5 of \cite{GLQ23b}, which called the $q$-Taylor map. For the twisted algebra $(P, \ul{\sigma}_P := \ul{\sigma} \otimes \text{Id}_A)$, we let $\xi_i := 1 \otimes x_i - x_i \otimes 1 \in I$. For $n \in \N$, we let
%\[
%I^{(n+1)} := I \sigma(I) \cdots \sigma^n(I)
%\]
%be the $(n+1)$th \textit{twisted power} of the ideal $I$ and denote by $\ol{I^{(n+1)}}^{\text{cl}}$ the closure of its image in $\wh{P}$. (Really? Details are in Houedry's thesis?)

%(Add the Remark about relative Frobenius?)

Now, we consider the theory of $m$-$q^{p^m}$-crystalline site. Our setting is based on that of Section 1 and 4 in \cite{GLQ23a}. First, we begin with the following general construction. Let $T$ be any site and let $\wt{T}$ be the corresponding topos. If we denote by $\1 := \{0\}$ the final category, then there exists a unique cocontinuous functor $e_T : T \to \1$. By the  identification of the category of sheaves on $\1$ and the category $\S$ets of sets, we obtain the final morphism of topoi
\[
e_T : \wt{T} \to \S\text{ets}.
\]
We have $e_{T*} (E) = \Gamma(T,E)$ and $H^k(T,E) = R^k e_{T*} (E)$.

Next, we recall the notion of $m$-$q^{p^m}$-crystalline site in \cite{Li23}, in the situation we will use later. Let $(R, I_R) \to (A, I_A)$ be a morphism of $q^{p^m}$-PD pairs with fixed rank one  \'{e}tale coordinates $\ul{x} = (x_1, \dots, x_d)$ in $A$. We also assume that \begin{align*}
I_{A} = \ol{I_R A}^{\rm cl}.
\end{align*}
For a $q^{p^m}$-PD pair $(D, I_D)$, let $J_D := (\phi^m)^{-1}(I_D)$.  

\begin{defi}
\label{mqpmc}
Let $\ol{A} := A/J_A$. We define the  $m$-$q^{p^m}$-\textit{crystalline site} $(\ol{A}/R)_{m\text{-}q^{p^m}\text{-crys}}$ of $\ol{A} / R$ as follows.  
Objects are maps $(R,I_R) \to (D,I_D)$ of $q^{p^m}$-PD pairs together with an $R/J_R$-algebra map $\ol{A} \to D/J_D$.
We shall often denote such an object by $(D,I_D)$ if no confusion arises. A morphism is a map of $q^{p^m}$-PD pairs $(D,I_D) \to (D',I_{D'})$ over $(R,I_R)$ such that the induced morphism
\[
D/J_D \to D'/J_{D'}
\]
 is compatible with the maps $\ol{A} \to D/J_D$, $\ol{A} \to D'/J_{D'}$. 
A map ($D,I_D$) $\to$ ($D',I_{D'}$) in  $(\ol{A}/R)_{m\text{-}q^{p^m}\text{-crys}}$ is a cover if it is a $(p,(p)_{q^{p^m}})$-completely faithfully flat map and satisfies
\begin{align*}
I_{D'} = \ol{I_D D'}^{\rm cl}.
\end{align*}
The structural ring $\cO^{(m)}_{\ol{A}/R}$ of the site $(\ol{A}/R)_{m\text{-}q^{p^m}\text{-crys}}$ is given by $\cO^{(m)}_{\ol{A}/R}((D,I_D)) = D$. By the same argument as in the proof of Corollary 3.12 of \cite{BS22}, we can check that $\cO^{(m)}_{\ol{A}/R}$ is a sheaf.
\end{defi}

We can also consider the slice category $q^{p^m}\text{-CRYS}_{/A}$ over $(A,I_A)$ as follows: objects are maps $(A, I_A) \to (D, I_D)$ of $q^{p^m}$-PD pairs, and the notion of morphism is the obvious one. We endow $q^{p^m}\text{-CRYS}_{/A}$ with the flat topology as in Definition \ref{mqpmc}. Then we have the final morphism of topoi
\[
e_A : \wt{q^{p^m}\text{-CRYS}_{/A}} \to \S\text{ets}.
\]
Since the $q^{p^m}$-PD pair $(A, I_A)$ is the final object of the category $q^{p^m}\text{-CRYS}_{/A}$, we see that $e_{A*} (E) = E_A$ is the realization of the sheaf $E$ on $A$. The structural ring $\cO_{q^{p^m}\text{-CRYS}_{/A}}$ of the site $q^{p^m}\text{-CRYS}_{/A}$ is given by $\cO_{q^{p^m}\text{-CRYS}_{/A}}((D,I_D)) = D$. In particular, we have $\cO_{q^{p^m}\text{-CRYS}_{/A}}((A,I_A)) = A$. As above, $\cO_{q^{p^m}\text{-CRYS}_{/A}}$ is a sheaf. So we can consider $e_A$ as a morphism of ringed topoi
\[
e_A : (\wt{q^{p^m}\text{-CRYS}_{/A}},\cO_{q^{p^m}\text{-CRYS}_{/A}}) \to (\S\text{ets},A).
\]
We have $e_{A*} (E) = E_A$ as above, and $e^*_A (M)$ is the sheaf associated to the presheaf $(D,I_D) \mapsto D \otimes_A M$. We define $L \wh{e}^*_A$ as the derived $(p, (p)_{q^{p^m}})$-completion of  $L e^*_A$.

Let $C$ be a commutative ring. We denote by ${\bf FMod}^{\wedge}_{(p,(p)_{q^{p^m}})}(C)$ the category of derived $(p,(p)_{q^{p^m}})$-complete  $(p,(p)_{q^{p^m}})$-completely flat $C$-modules. Then we have the following lemma:

\begin{lem}
\label{fmod}
\phantom{ }
\begin{enumerate}
\item {\rm (cf. \cite[Lemma 2.7]{Tia23})} Let $M,N \in {\bf FMod}^{\wedge}_{(p,(p)_{q^{p^m}})}(A)$. Then the derived $(p, (p)_{q^{p^m}})$-completion of $M \otimes^{L}_A N$ is $(p,(p)_{q^{p^m}})$-completely flat and discrete, and it coincides with the classical $(p, (p)_{q^{p^m}})$-completion of $M \otimes_A N$.
\item {\rm (cf. \cite[Lemma 3.7.(i)]{MT20})} Let $(A, I_A) \to (D, I_D)$ be a map of $q^{p^m}$-PD pairs and let $M \in {\bf FMod}^{\wedge}_{(p,(p)_{q^{p^m}})}(A)$. Then the derived $(p, (p)_{q^{p^m}})$-completion of $D \otimes^{L}_A M$ is $(p,(p)_{q^{p^m}})$-completely flat and discrete, and it coincides with the classical $(p, (p)_{q^{p^m}})$-completion of $D \otimes_A M$.
\end{enumerate}
\end{lem}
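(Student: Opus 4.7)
The plan is to prove both parts by reducing to finite level and exploiting the standard dictionary between derived completion and the tower of quotients by $(p^n, (p)_{q^{p^m}}^n)$ for $n \geq 1$. Throughout, I abbreviate $I := (p, (p)_{q^{p^m}})$ and $A_n := A/(p^n, (p)_{q^{p^m}}^n)$; for any $A$-module or complex $M$ I write $M_n := M \otimes^L_A A_n$.

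For part 1, the key observation is the following general criterion: if $C \in D(A)$ is derived $I$-complete and if, for every $n$, the complex $C_n$ is concentrated in degree $0$ and flat over $A_n$, then $C$ itself is concentrated in degree $0$ and $I$-completely flat. Granting this, I would argue that since $M$ and $N$ are $I$-completely flat, their mod-$(p^n, (p)_{q^{p^m}}^n)$ reductions $M_n$ and $N_n$ are flat $A_n$-modules concentrated in degree $0$, so
\[
(M \otimes_A^L N)_n \simeq M_n \otimes_{A_n}^L N_n \simeq M_n \otimes_{A_n} N_n
\]
is a flat $A_n$-module in degree $0$. Applying the criterion to the derived $I$-completion $\widehat{M \otimes_A^L N}$ shows it is discrete and $I$-completely flat. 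To identify it with the classical completion of $M \otimes_A N$, I would use that the tower $\{M_n \otimes_{A_n} N_n\}_n$ has surjective transition maps (because the transitions are tensor products of surjections of flat modules), so the $R\lim$ computing the derived completion degenerates to the ordinary limit, which by definition is the classical $I$-adic completion of $M \otimes_A N$.

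For part 2, the argument is essentially the same, replacing $N$ by $D$. The point is that $M$ is $I$-completely flat over $A$ while $D$ is only a $q^{p^m}$-PD pair, so $D$ need not itself be flat over $A$; however this is irrelevant since flatness is used only on one factor. Concretely,
\[
(D \otimes_A^L M)_n \simeq D_n \otimes_{A_n}^L M_n \simeq D_n \otimes_{A_n} M_n,
\]
which is flat as a $D_n$-module by base change of the flat $A_n$-module $M_n$. The same $R\lim \Rightarrow \lim$ argument then identifies the derived completion with the classical $(p,(p)_{q^{p^m}})$-adic completion of $D \otimes_A M$, and gives $I$-complete flatness over $D$.

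The main obstacle, and the only substantive content behind the routine reductions, is justifying the criterion that derived $I$-completeness together with $A_n$-flatness at each finite level implies discreteness and $I$-complete flatness of the derived-complete object. This is essentially a Mittag-Leffler argument using the Koszul complex on the generators of $I$ (or equivalently on $p$ and $(p)_{q^{p^m}}$ separately), combined with the fact that a derived $I$-complete module with vanishing $I$-completed derived category cohomology in negative degrees is discrete. Both \cite[Lemma 2.7]{Tia23} and \cite[Lemma 3.7.(i)]{MT20} handle exactly this type of criterion in closely related settings (the ideal $(p, (p)_q)$), and in our case the arguments go through unchanged after the harmless substitution $q \leadsto q^{p^m}$, thanks to Remark \ref{rqpd}.1 which ensures that the $(p,(p)_{q^{p^m}})$-adic topology coincides with the $(p,q-1)$-adic topology and is therefore well behaved for the boundedness hypotheses on $A$ and $D$.
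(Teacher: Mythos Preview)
Your proposal is correct and ultimately rests on the same external inputs as the paper's proof, namely \cite[Lemma 2.7]{Tia23} and \cite[Lemma 3.7.(i)]{MT20}. The paper, however, does not unpack the finite-level/Mittag--Leffler argument at all: it simply observes that for a $q^{p^m}$-PD pair $(A,I_A)$ the pair $(A,((p)_{q^{p^m}}))$ is a bounded prism (and similarly for $D$), so that $(A,(p,(p)_{q^{p^m}}))$ is a pair of prismatic type in the sense of \cite{Tia23}, and then invokes the two cited lemmas verbatim. Your approach is more self-contained (you sketch the actual mechanism behind those lemmas), while the paper's route is shorter because it identifies the precise structural hypothesis---boundedness of the associated prism---that makes the cited statements apply directly without the substitution $q \leadsto q^{p^m}$ needing separate justification.
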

\begin{proof}
If $(A, I_A) \to (D, I_D)$ is a map of $q^{p^m}$-PD pairs, then \[ (A, ((p)_{q^{p^m}})) \to (D, ((p)_{q^{p^m}})) \] is a map of bounded prisms. So 1. follows from Lemma 2.7 of \cite{Tia23} by considering the pair of prismatic type $(A, (p, (p)_{q^{p^m}}))$, and 2. follows from Lemma 3.7.(i) of \cite{MT20} by considering $I = ((p)_{q^{p^m}})$.
\end{proof}

In this article, when we consider the derived $(p, (p)_{q^{p^m}})$-completion of the derived tensor product, we always consider the cases in Lemma \ref{fmod}. So we shall simply denote it by $\wh{\otimes}$ when there is no risk of confusion. 

We consider the notion of complete crystal in the sense of  Definition 3.3 of \cite{Tia23}:

\begin{defi}
\label{cpfcr}
A \textit{complete crystal} on $(\ol{A}/R)_{m\text{-}q^{p^m}\text{-crys}}$ (resp. on $q^{p^m}\text{-CRYS}_{/A}$) is a sheaf of $\cO^{(m)}_{\ol{A}/R}$-modules (resp. $\cO_{q^{p^m}\text{-CRYS}_{/A}}$-modules) $E$ such that for each object $(D,I_D)$ of the site, the evaluation $E_D$ of $E$ at $(D,I_D)$ belongs to ${\bf FMod}^{\wedge}_{(p,(p)_{q^{p^m}})}(D)$, and for any morphism $f: (D,I_D) \to (D', I_{D'})$ in the site, the canonical linearized transition map
\[
c_f(E) : D' \wh{\otimes}_D E_D \to E_{D'}
\]
is an isomorphism. We denote by \[{\bf CR}((\ol{A}/R)_{m\text{-}q^{p^m}\text{-crys}},\cO^{(m)}_{\ol{A}/R}) \text{ (resp. } {\bf CR}(q^{p^m}\text{-CRYS}_{/A},\cO_{q^{p^m}\text{-CRYS}_{/A}})) \] the category of complete crystals.
\end{defi}

\begin{prop}
\label{eas}
{\rm (cf. \cite[Proposition 4.1]{GLQ23a})}
The functors $Re_{A*}$ and $L \wh{e}^*_A$ induce an equivalence between   ${\bf CR}(q^{p^m}\text{-}{\rm CRYS}_{/A},\cO_{q^{p^m}\text{-}{\rm CRYS}_{/A}})$ and ${\bf FMod}^{\wedge}_{(p,(p)_{q^{p^m}})}(A)$.
\end{prop}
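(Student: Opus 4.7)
The plan is to exhibit $e_{A*}$ and $\wh{e}^*_A$ as quasi-inverse equivalences between complete crystals on $q^{p^m}\text{-CRYS}_{/A}$ and ${\bf FMod}^{\wedge}_{(p,(p)_{q^{p^m}})}(A)$, and then to check that on complete crystals the higher derived functors vanish so that $Re_{A*} = e_{A*}$, and that $\wh{e}^*_A$ already coincides with $L\wh{e}^*_A$ when applied to a $(p,(p)_{q^{p^m}})$-completely flat module.

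First I would define $\wh{e}^*_A(M)$ for $M \in {\bf FMod}^{\wedge}_{(p,(p)_{q^{p^m}})}(A)$ as the sheaf associated to the presheaf $(D,I_D) \mapsto D \wh{\otimes}_A M$; by Lemma \ref{fmod}.2 each value is discrete, $(p,(p)_{q^{p^m}})$-completely flat, and agrees with the classical $(p,(p)_{q^{p^m}})$-completion of $D \otimes_A M$, so the presheaf takes values in ${\bf FMod}^{\wedge}_{(p,(p)_{q^{p^m}})}(D)$. With this in hand, the counit is immediate: evaluating $\wh{e}^*_A(M)$ at the final object $(A,I_A)$ yields $A \wh{\otimes}_A M = M$. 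For the unit, given a complete crystal $E$, evaluating $\wh{e}^*_A(E_A)$ at an arbitrary object $(D,I_D)$ with structural morphism $f : (A,I_A) \to (D,I_D)$ gives $D \wh{\otimes}_A E_A$, and the natural map to $E_D$ is exactly the linearized transition map $c_f(E)$, which is an isomorphism by the definition of a complete crystal.

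The main obstacle is to justify that the presheaf above is already a sheaf for the flat topology and that its higher cohomology vanishes on covers (which both ensures $Re_{A*}$ equals $e_{A*}$ on the essential image, and shows that $\wh{e}^*_A$ lands in the category of complete crystals rather than just in sheaves). Concretely, for any cover $f:(D,I_D) \to (D',I_{D'})$ in $q^{p^m}\text{-CRYS}_{/A}$, the augmented \v{C}ech complex
\[
0 \to D \wh{\otimes}_A M \to D' \wh{\otimes}_A M \to (D' \wh{\otimes}_D D') \wh{\otimes}_A M \to \cdots
\]
must be exact. This reduces to $(p,(p)_{q^{p^m}})$-completely faithfully flat descent for discrete completely flat modules, which is available by the same mechanism used in \cite[Corollary 3.12]{BS22} and in the sheaf property of $\cO^{(m)}_{\ol{A}/R}$ noted in Definition \ref{mqpmc}, combined with the fact from Lemma \ref{fmod} that each completed tensor product under consideration is discrete so classical faithfully flat descent applies after reducing modulo $(p,(p)_{q^{p^m}})^n$ and passing to the limit. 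Once this descent is established, both the sheaf condition on $\wh{e}^*_A(M)$ and the vanishing of $R^k e_{A*}$ on complete crystals for $k \geq 1$ follow from the standard \v{C}ech-to-derived-functor spectral sequence, completing the proof of the equivalence.
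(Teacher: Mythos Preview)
Your proposal is correct and follows essentially the same approach as the paper: define $\wh{e}^*_A(M)$ on objects by $(D,I_D)\mapsto D\wh{\otimes}_A M$, use Lemma~\ref{fmod}.2 to see this lands in ${\bf FMod}^{\wedge}_{(p,(p)_{q^{p^m}})}(D)$ and coincides with $L\wh{e}^*_A(M)$, verify the unit and counit directly, and justify the sheaf property and the vanishing of higher $R^i e_{A*}$ by completely faithfully flat descent. The only cosmetic difference is that the paper outsources the descent steps to \cite[Proposition~2.9.(i)]{Tia23} (for the equalizer/sheaf condition) and \cite[Lemma~4.12]{Tia23} (for the vanishing of higher cohomology), whereas you spell out the \v{C}ech argument and cite \cite[Corollary~3.12]{BS22}; the underlying mechanism is the same.
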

\begin{proof}
Let $M \in {\bf FMod}^{\wedge}_{(p,(p)_{q^{p^m}})}(A)$. We consider the presheaf $\wh{e}^*_A(M)$ of $\cO_{q^{p^m}\text{-CRYS}_{/A}}$-modules that sends an object $(D,I_D)$ in $q^{p^m}\text{-CRYS}_{/A}$ to $D \wh{\otimes}_{A} M$. By Lemma \ref{fmod}.2, we have $D \wh{\otimes}_{A} M \in {\bf FMod}^{\wedge}_{(p,(p)_{q^{p^m}})}(D)$.  If $(D,I_D) \to (D', I_{D'})$ is a flat cover in $q^{p^m}\text{-CRYS}_{/A}$, then by Proposition 2.9.(i) of \cite{Tia23}, we have an equalizer diagram of $D$-modules
\[
D \wh{\otimes}_{A} M \to D' \wh{\otimes}_{A} M \rightrightarrows D' \wh{\otimes}_{D} D' \wh{\otimes}_{A} M, 
\]
so $\wh{e}^*_A(M)$ is a sheaf. Again by Lemma \ref{fmod}.2 and the definition of $L \wh{e}^*_A$, we see that $L \wh{e}^*_A(M) = \wh{e}^*_A(M)$. The canonical linearized transition maps associated to $\wh{e}^*_A(M)$ is clearly isomorphisms, so we have $L \wh{e}^*_A(M) \in {\bf CR}(q^{p^m}\text{-CRYS}_{/A},\cO_{q^{p^m}\text{-CRYS}_{/A}})$.

Now, for $E \in {\bf CR}(q^{p^m}\text{-CRYS}_{/A},\cO_{q^{p^m}\text{-CRYS}_{/A}})$, we show that $R^i e_{A*} E = 0$ for $i > 0$. We need to show that $H^i (q^{p^m}\text{-CRYS}_{/A},E) = 0$ for $i >0$, but this can be checked as in Lemma 4.12 of \cite{Tia23}.

We can calculate that $e_{A*}\wh{e}^*_A(M) = A \wh{\otimes}_{A} M = M$ and $(\wh{e}^*_Ae_{A*}(E))_D = D \wh{\otimes}_{A} E_A = E_D$, so $e_{A*}$ and $\wh{e}^*_A$ induce an equivalence between ${\bf CR}(q^{p^m}\text{-CRYS}_{/A},\cO_{q^{p^m}\text{-CRYS}_{/A}})$ and ${\bf FMod}^{\wedge}_{(p,(p)_{q^{p^m}})}(A)$.
\end{proof}

Now, we consider the localization functor
\[
j_A : q^{p^m}\text{-CRYS}_{/A} \to (\ol{A}/R)_{m\text{-}q^{p^m}\text{-crys}}
\]
that sends a map $(A, I_A) \to (D, I_D)$ of $q^{p^m}$-PD pairs to a map $(R,I_R) \to (D,I_D)$ of $q^{p^m}$-PD pairs together with an $R/J_R$-algebra map $\ol{A} \to D/J_D$. This is a morphism of sites which extends to a morphism of topoi
\[
j_A : \wt{q^{p^m}\text{-CRYS}_{/A}} \to \wt{(\ol{A}/R)}_{m\text{-}q^{p^m}\text{-crys}}.
\]

\begin{defi}
If $M \in {\bf FMod}^{\wedge}_{(p,(p)_{q^{p^m}})}(A)$, then the \textit{linearization} of $M$ is \[ L^{(m)}(M) := j_{A*}\wh{e}^*_A(M), \] which is a sheaf on $(\ol{A}/R)_{m\text{-}q^{p^m}\text{-crys}}$.
\end{defi}

To compute the linearization, we need to consider the product in $(\ol{A}/R)_{m\text{-}q^{p^m}\text{-crys}}$. For a $\delta$-pair $(C,J)$, we denote by $\ol{J}^{\rm cl}$ the closure of $J$ in $C$ for the $(p, (p)_{q^{p^m}})$-adic topology. 

\begin{prop}
\label{prod}
If $(B,I_B)$ is an object of $(\ol{A}/R)_{m\text{-}q^{p^m}\text{-crys}}$, then  \[
\left(B [\ul{\xi}]\left[\frac{\phi^{m+1}(\ul{\xi})}{(p)_{q^{p^m}}}\right]^{\delta, \wedge}, \ol{K_{(m)} + I_B B [\ul{\xi}]\left[\frac{\phi^{m+1}(\ul{\xi})}{(p)_{q^{p^m}}}\right]^{\delta, \wedge}}^{\rm cl}\right)
\]
is the coproduct of $(B,I_B)$ and $(A,I_A)$ in $(\ol{A}/R)_{m\text{-}q^{p^m}\text{-crys}}$. Moreover, if there exists a morphism $(A,I_A) \to (B,I_B)$ in $(\ol{A}/R)_{m\text{-}q^{p^m}\text{-crys}}$, then it is isomorphic to 
\[
\left(\wh{B \langle \ul{\xi} \rangle}_{(m), q, \ul{x}}, \ol{\ol{I^{[p^m]}}^{\rm cl}+I_B \wh{B \langle \ul{\xi} \rangle}}^{\rm cl}_{(m), q, \ul{x}}\right).
\]
\end{prop}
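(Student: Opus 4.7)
The plan is to reduce the assertion about the coproduct in the site to Theorem \ref{qpde} applied to $B\wh\otimes_R A$. Since $R[\ul X]\to A$ is \'etale and $B$ is classically $(p,(p)_{q^{p^m}})$-complete (Remark \ref{rqpd}), the structural map $\ol A\to B/J_B$ lifts to a morphism of $R$-algebras $\sigma:A\to B$. The resulting construction of the coproduct will turn out to be independent of the choice of $\sigma$ by the universal property. Using $\sigma$, the completed tensor product $B\wh\otimes_R A$ is endowed with its multiplication map $B\wh\otimes_R A\to B$, whose kernel I denote by $J$.

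Next, I would show that the data of a cocone $(B,I_B)\to(D',I_{D'})\leftarrow(A,I_A)$ in $(\ol A/R)_{m\text{-}q^{p^m}\text{-crys}}$ is equivalent to the data of a morphism of $q^{p^m}$-PD pairs $(B\wh\otimes_R A,\,\phi^m(J)(B\wh\otimes_R A))\to(D',I_{D'})$. The nontrivial content is the site-theoretic compatibility: the two composites $A\to D'/J_{D'}$ (direct via the morphism $(A,I_A)\to(D',I_{D'})$, versus $A\xrightarrow{\sigma}B\to D'/J_{D'}$) must agree, which forces every element $\sigma(a)\otimes 1-1\otimes a\in J$ to map into $J_{D'}=(\phi^m)^{-1}(I_{D'})$; equivalently, $\phi^m(J)$ maps into $I_{D'}$. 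Since $B$ is $(p)_{q^{p^m}}$-bounded as part of being a $q^{p^m}$-PD pair, Theorem \ref{qpde} identifies the universal such pair with $\bigl(B[\ul\xi]\bigl[\tfrac{\phi^{m+1}(\ul\xi)}{(p)_{q^{p^m}}}\bigr]^{\delta,\wedge},K_{(m)}\bigr)$. Incorporating the requirement that the image of $I_B$ also land in $I_{D'}$ and passing to the $(p,(p)_{q^{p^m}})$-adic closure yields the ideal $\ol{K_{(m)}+I_B B[\ul\xi]\bigl[\tfrac{\phi^{m+1}(\ul\xi)}{(p)_{q^{p^m}}}\bigr]^{\delta,\wedge}}^{\rm cl}$ exactly as stated.

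For the second assertion, a morphism $(A,I_A)\to(B,I_B)$ in the site supplies $\sigma$ as a morphism of $\delta$-rings; hence the rank one elements $\ul x$ of $A$ map to rank one elements of $B$. The second half of Theorem \ref{qpde} then identifies the envelope with $(B[\ul\xi]\langle\phi^m(\ul\xi)\rangle_{q^{p^m},\ul x^{p^m}}^\wedge,\,\ol{I^{[p^m]}}^{\rm cl})$, and Proposition \ref{rg}, applied via base change from $\Z[q]_{(p,q-1)}[\ul x]$, gives an isomorphism of $\delta$-$B$-algebras with $\wh{B\langle\ul\xi\rangle}_{(m),q,\ul x}$. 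Tracking the ideal through this chain of isomorphisms yields the claimed form $\ol{\ol{I^{[p^m]}}^{\rm cl}+I_B\wh{B\langle\ul\xi\rangle}_{(m),q,\ul x}}^{\rm cl}$.

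The main obstacle will be the careful translation in the second paragraph: matching the site-theoretic compatibility with the precise $q^{p^m}$-PD-pair condition $\phi^m(J)\subset I_{D'}$, and verifying that different choices of the lift $\sigma$ produce canonically isomorphic envelopes (any two lifts differ on elements lying in $J$, and the envelope absorbs this ambiguity since such differences are already killed in $K_{(m)}$). The remaining steps are formal invocations of Theorem \ref{qpde} and Proposition \ref{rg}.
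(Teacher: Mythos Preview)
Your proposal is correct and follows the paper's route: lift $\ol A\to B/J_B$ to an $R$-algebra map $A\to B$, identify the coproduct universal property with that of the $q^{p^m}$-PD envelope of $(B\wh\otimes_R A,\phi^m(J)(B\wh\otimes_R A))$ via Theorem \ref{qpde}, enlarge the ideal by $I_B$, and invoke Proposition \ref{rg} for the second assertion. The only details the paper spells out that your outline leaves implicit are the reason the lift exists (for $y\in J_B$ one has $y^{p^{m+1}}=\phi^{m+1}(y)-pz\in(p,(p)_{q^{p^m}})B$, so $(p,(p)_{q^{p^m}})$-completeness suffices for the \'etale lifting) and the appeal to \cite[Lemma 3.2, Corollary 4.4]{GLQ23b} to confirm that the enlarged ideal $\ol{K_{(m)}+I_B B'}^{\rm cl}$ is still a $q^{p^m}$-PD ideal.
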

In particular when $(B,I_B)=(A,I_A)$, we use the identification of $A [\ul{\xi}]\left[\frac{\phi^{m+1}(\ul{\xi})}{(p)_{q^{p^m}}}\right]^{\delta, \wedge}$ and $\wh{A \langle \ul{\xi} \rangle}_{(m), q, \ul{x}}$ without comment.
\begin{proof}
We denote the structural map $\ol{A} \to B/J_B =: \ol{B}$ by $\ol{f}$. First, we can choose the liftings of the images of the $x_i$'s under the map $A \to \ol{A} \xrightarrow{\ol{f}} \ol{B}$ and construct the map $R[ \ul{X} ] \to B$. For any element $y \in J_B$, we can check that $y^{p^{m+1}} = \phi^{m+1}(y) - pz$
 for an element $z$ in $B$ by Remark 2.13 of \cite{BS22}, and that $\phi^{m+1}(y) \in (p)_{q^{p^m}}B$ by the definitions of the ideal $J_B$ and the $q^{p^m}$-PD pair.
Since $R[ \ul{X} ] \to A$ is $(p, (p)_{q^{p^m}})$-completely \'{e}tale, by the arguments as in the proofs of Proposition 1.5.5 of \cite{Ber74} and Th\'{e}or\`{e}me 18.1.2 of \cite{EGA4}, 
there exists a unique morphism of $R$-algebras $f: A \to B$ making the following diagram commute:
\[
\begin{tikzcd}
A \arrow[drr,dashed,"f"]\arrow[r,twoheadrightarrow] & \ol{A} \arrow[r,"\ol{f}"] & \ol{B}  \\
R[ \ul{X} ]  \arrow[u]\arrow[rr] && B, \arrow[u,twoheadrightarrow]
\end{tikzcd}
\]
so we can consider the $\delta$-ring $B \wh{\otimes}_R A$ and its ideal $J$ as before, 
where $J$ is the kernel of multiplication $B \wh{\otimes}_R A \twoheadrightarrow B$. By Theorem \ref{qpde}, we can construct the $q^{p^m}$-PD envelope $\left(B [\ul{\xi}]\left[\frac{\phi^{m+1}(\ul{\xi})}{(p)_{q^{p^m}}}\right]^{\delta, \wedge}, K_{(m)}\right)$ of $(B \wh{\otimes}_R A, \phi^m(J)(B \wh{\otimes}_R A))$. Since $I_B$ is a $q^{p^m}$-PD ideal of $B$, by Lemma 3.2 and Corollary 4.4 of \cite{GLQ23b}, we see that 
\[ (B', I_{B'}) :=  \left(B [\ul{\xi}]\left[\frac{\phi^{m+1}(\ul{\xi})}{(p)_{q^{p^m}}}\right]^{\delta, \wedge}, \ol{K_{(m)} + I_B B [\ul{\xi}]\left[\frac{\phi^{m+1}(\ul{\xi})}{(p)_{q^{p^m}}}\right]^{\delta, \wedge}}^{\rm cl}\right) \] 
is a $q^{p^m}$-PD pair. As we assume that $I_{A} = \ol{I_R A}^{\rm cl}$, we have the morphisms of $q^{p^m}$-PD pairs  $\theta_{(m)} : (A,I_A) \to (B', I_{B'})$ and $ (B,I_B) \to (B', I_{B'})$. 
Since  we have \linebreak $\phi^m(\xi_i) \in K_{(m)}$ for any $i=1, \dots, d$, we see that $\xi_i \in J_{B'}$. By the construction of $B' = B [\ul{\xi}]\left[\frac{\phi^{m+1}(\ul{\xi})}{(p)_{q^{p^m}}}\right]^{\delta, \wedge}$, we can check that $\ol{B'} := B'/J_{B'}$ is a quotient of $\ol{B}$, and by the construction of $\theta_{(m)}$, we see that $\ol{\theta}_{(m)} : \ol{A} \to \ol{B'}$ is a quotient of $\ol{f}$. Therefore, the maps $\theta_{(m)} : (A,I_A) \to (B', I_{B'})$ and $ (B,I_B) \to (B', I_{B'})$ are actually morphisms in $(\ol{A}/R)_{m\text{-}q^{p^m}\text{-crys}}$.
It remains to show that for any morphisms 
\begin{align*}
&g: (A,I_A) \to (C, I_{C}) \\
&h: (B,I_B) \to (C, I_{C})
\end{align*}
in $(\ol{A}/R)_{m\text{-}q^{p^m}\text{-crys}}$, these morphisms uniquely extend to a morphism \[ (B', I_{B'}) \to (C, I_{C}) \] in $(\ol{A}/R)_{m\text{-}q^{p^m}\text{-crys}}$.

We first have a unique morphism of $\delta$-rings
\begin{align*}
F: B \wh{\otimes}_R A &\to C \\
\sum_{j=1}^{r} b_j \otimes a_j &\mapsto \sum_{j=1}^{r} h(b_j)g(a_j)
\end{align*}
that extends the underlying rings maps of $g,h$. We can also consider the following $\ol{A}$-algebras map induced by $g,h$:
\begin{align*}
\ol{g} : \ol{A} &\to C/J_C =: \ol{C}, \\
\ol{h}: \ol{B} &\to \ol{C}.
\end{align*}
then we have $\ol{h} \circ \ol{f} = \ol{g}$. Now, for any element $\sum_{j=1}^{r} b_j \otimes a_j \in J$, we have \[ \sum_{j=1}^{r} b_j \cdot f(a_j)=0 \] by definition. So we have
\[
\ol{F\left(\sum_{j=1}^{r} b_j \otimes a_j\right)} = \sum_{j=1}^{r} \ol{h}(\ol{b_j})\ol{g}(\ol{a_j}) = \ol{h} \left(\sum_{j=1}^{r} \ol{b_j} \cdot \ol{f}(\ol{a_j})\right) =0,
\]
namely, $F(\sum_{j=1}^{r} b_j \otimes a_j) \in J_C$. Therefore, the morphism of $\delta$-rings $F$ uniquely extends to a morphism of $\delta$-pairs:
\[
F: (B \wh{\otimes}_R A, \phi^m(J)(B \wh{\otimes}_R A)) \to (C, I_C).
\]
By the universal property of $q^{p^m}$-PD envelope, we can get a unique morphism of $q^{p^m}$-PD pairs $(B',K_{(m)}) \to (C,I_C)$  extending $F$, and it actually extends to a unique morphism of $q^{p^m}$-PD pairs $(B',I_{B'}) \to (C,I_C)$. Since $\ol{B'}$ is a quotient of $\ol{B}$, it is easy to check that this is a morphism in $(\ol{A}/R)_{m\text{-}q^{p^m}\text{-crys}}$.

%We want to consider a certain envelope of a $\delta$-pair $(C,J_C)$ with respect to the $m$-$q^{p^m}$-crystalline site. If this envelope exists, it may be defined as a triple $(C^{[ \ ]_{(m),q}},J_C^{[ \ ]_{(m),q}},I_C^{[ \ ]_{(m),q}})$ which satisfies the following conditions:
%\begin{enumerate}
%\item $(C^{[ \ ]_{(m),q}},I_C^{[ \ ]_{(m),q}})$ is a $q^{p^m}$-PD pair.
%\item The ideal $J_C^{[ \ ]_{(m),q}} = (\phi^m)^{-1}(I_C^{[ \ ]_{(m),q}})$.
%\end{enumerate}
%There should also exists a morphism of $\delta$-pairs $(C,J_C) \to (C^{[ \ ]_{(m),q}},J_C^{[ \ ]_{(m),q}})$ that is universal for morphisms $(C,J_C) \to (C',J_{C'})$ to triples $(C',J_{C'},I_{C'})$ which satisfying the conditions as above. 

%We can check that if this envelope exists, it actually comes from the  $q^{p^m}$-PD envelope of $(C, \phi^m(J_C)C)$: for a morphism $(C,J_C) \to (C',J_{C'})$ to the triple $(C',J_{C'},I_{C'})$ which satisfying the conditions as above, we can get a morphism $(C, \phi^m(J_C)C) \to (C',I_{C'})$ to a $q^{p^m}$-PD pair. This morphism extends uniquely to the $q^{p^m}$-PD envelope $(C^{[ \ ]_{(m),q}},I_C^{[ \ ]_{(m),q}})$ of $(C, \phi^m(J_C)C)$. If we set $J_C^{[ \ ]_{(m),q}} = (\phi^m)^{-1}(I_C^{[ \ ]_{(m),q}})$, then we can get a morphism of $\delta$-pairs $(C,J_C) \to (C^{[ \ ]_{(m),q}},J_C^{[ \ ]_{(m),q}})$. So this triple $(C^{[ \ ]_{(m),q}},J_C^{[ \ ]_{(m),q}},I_C^{[ \ ]_{(m),q}})$ satisfies the requirements.

For the second assertion, if we assume that there exists a morphism \[ (A,I_A) \to (B,I_B) \] in $(\ol{A}/R)_{m\text{-}q^{p^m}\text{-crys}}$, then the  $x_i$'s in $B$ are all rank one elements. Then the result follows from Theorem \ref{qpde} and Proposition \ref{rg}.
\end{proof}

\begin{prop}
\label{mqcov}
For any object $(B,I_B)$ in $(\ol{A}/R)_{m\text{-}q^{p^m}\text{-crys}}$, we can find a map $i_1: (B,I_B) \to (B', I_{B'})$ and a map $i_2 :(A,I_A) \to (B', I_{B'})$ in $(\ol{A}/R)_{m\text{-}q^{p^m}\text{-crys}}$, where the underlying rings map of $i_1$ is $(p, (p)_{q^{p^m}})$-completely faithfully flat.
\end{prop}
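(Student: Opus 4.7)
The plan is to take $(B', I_{B'})$ to be the coproduct of $(A, I_A)$ and $(B, I_B)$ in the category of $q^{p^m}$-PD pairs, constructed explicitly in Proposition \ref{prod}. By the universal property of the coproduct we immediately obtain the maps $i_1 : (B, I_B) \to (B', I_{B'})$ (the canonical inclusion $B \hookrightarrow B[\ul{\xi}][\phi^{m+1}(\ul{\xi})/(p)_{q^{p^m}}]^{\delta, \wedge}$) and $i_2 : (A, I_A) \to (B', I_{B'})$ (the $q$-Taylor map of level $m$ constructed before Theorem \ref{qpde}), and both are morphisms in the site $(\ol{A}/R)_{m\text{-}q^{p^m}\text{-crys}}$ by the compatibility of the induced maps on $\ol A$-algebras already verified in the proof of Proposition \ref{prod}.

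It then remains to check that the underlying ring map $B \to B'$ is $(p, (p)_{q^{p^m}})$-completely faithfully flat. The plan is to identify $B'$, as a $B$-module, with the classical $(p, (p)_{q^{p^m}})$-adic completion $\wh{B \langle \ul{\xi}\rangle}_{(m), q, \ul{x}}$ of the $m$-$q$-PD polynomial algebra, which by construction is free as a $B$-module on the basis $\{\ul{\xi}^{\{\ul{k}\}_{(m), q}}\}_{\ul{k} \in \N^d}$. Its classical $(p, (p)_{q^{p^m}})$-adic completion is then $(p, (p)_{q^{p^m}})$-completely flat in the sense of Lemma \ref{fmod}, and visibly faithfully flat because the augmentation to $B$ survives after reduction modulo any power of $(p, (p)_{q^{p^m}})$. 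Concretely, the identification proceeds by chaining the second assertion of Theorem \ref{qpde}, which presents $B'$ as $B[\ul{\xi}]\langle \phi^m(\ul{\xi})\rangle^{\wedge}_{q^{p^m}, \ul{x}^{p^m}}$, with the $B$-algebra isomorphism $B[\ul{\xi}]\langle \phi^m(\ul{\xi})\rangle_{q^{p^m}, \ul{x}^{p^m}} \simeq B \langle \ul{\xi}\rangle_{(m), q, \ul{x}}$ provided by Proposition \ref{rg}.

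The main obstacle is the rank-one hypothesis in the second assertion of Theorem \ref{qpde}: an arbitrary object $(B, I_B)$ of the site need not have rank-one $\ul{x}$, and for such $B$ Theorem \ref{qpde} does not directly yield the twisted-divided-power presentation of $B'$. Since only the $B$-module structure is needed for the faithful-flatness conclusion, and Proposition \ref{rg} already supplies the requisite $B$-algebra (hence $B$-module) isomorphism without any rank-one assumption, the plan for bridging the gap is to transfer the module-level identification from the universal rank-one case $B_0 = \Z [ q ]_{(p,q-1)}[\ul{x}]$, where Theorem \ref{qpde} applies on the nose to give $B_0' \simeq \wh{B_0 \langle \ul{\xi}\rangle}_{(m), q, \ul{x}}$, to a general $B$ via the base change along the ring map $\Z [ q ]_{(p,q-1)}[\ul{x}^{p^m}][\phi^m(\ul{\xi})] \to B[\ul{\xi}]$ underlying the definition of $B[\ul{\xi}]\langle \phi^m(\ul{\xi})\rangle_{q^{p^m}, \ul{x}^{p^m}}$, with Lemma \ref{fmod} ensuring that classical completion commutes with the relevant tensor products. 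This base-change step is the main technical point of the argument.
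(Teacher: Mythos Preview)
Your choice of $(B',I_{B'})$ as the coproduct from Proposition~\ref{prod}, together with the maps $i_1,i_2$ it supplies, is exactly what the paper does. The issue is the faithful-flatness step.

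The base-change manoeuvre you propose does not go through. The identification $B[\ul{\xi}]\bigl[\phi^{m+1}(\ul{\xi})/(p)_{q^{p^m}}\bigr]^{\delta,\wedge}\simeq B[\ul{\xi}]\langle\phi^m(\ul{\xi})\rangle_{q^{p^m},\ul{x}^{p^m}}^{\wedge}$ in Theorem~\ref{qpdd} is obtained precisely by base-changing the $q^{p^m}$-PD envelope along the map~(\ref{bcpm}); but that argument uses that $q^{p^m}$-PD envelopes commute with base change of $\delta$-pairs, and~(\ref{bcpm}) is a map of $\delta$-rings only when the $x_i$ are of rank one (as the paper notes right after~(\ref{bcpm})). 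For a general object $(B,I_B)$ of the site, the element $\phi^{m+1}(\xi_i)=(x_i+\xi_i)^{p^{m+1}}-\phi^{m+1}(x_i)$ genuinely depends on the $\delta$-structure of $B$, so the $\delta$-envelope $B[\ul{\xi}]\bigl[\phi^{m+1}(\ul{\xi})/(p)_{q^{p^m}}\bigr]^{\delta,\wedge}$ is not the base change of the rank-one model along a mere ring map. Proposition~\ref{rg} gives you $B[\ul{\xi}]\langle\phi^m(\ul{\xi})\rangle_{q^{p^m},\ul{x}^{p^m}}\simeq B\langle\ul{\xi}\rangle_{(m),q,\ul{x}}$ without the rank-one hypothesis, but that is an isomorphism between two explicitly-defined free $B$-modules and says nothing about $B'$ itself.

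The paper avoids this entirely: it cites \cite[Proposition~5.2]{GLQ23a}, which proves directly that $B[\ul{\xi}]\bigl[\phi^{m+1}(\ul{\xi})/(p)_{q^{p^m}}\bigr]^{\delta,\wedge}$ is $(p,(p)_{q^{p^m}})$-completely faithfully flat over $B$ for any $(p)_{q^{p^m}}$-bounded $\delta$-$R$-algebra $B$, via a regular-sequence\,/\,prismatic-envelope argument that does not require identifying the envelope with a twisted divided-power ring. That external input is the missing idea in your proposal.
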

\begin{proof}
By Proposition 5.2 of \cite{GLQ23a}, we see that $B [\ul{\xi}]\left[\frac{\phi^{m+1}(\ul{\xi})}{(p)_{q^{p^m}}}\right]^{\delta, \wedge}$ is $(p, (p)_{q^{p^m}})$-completely faithfully flat over $B$. So we can simply choose the object in Proposition \ref{prod}, and use the fact that it is the coproduct of $(B,I_B)$ and $(A,I_A)$.
\end{proof}

\begin{rem}
\label{covrm}
For the object in Proposition \ref{prod}, the inclusion \[ K_{(m)} \subset I_B B [\ul{\xi}]\left[\frac{\phi^{m+1}(\ul{\xi})}{(p)_{q^{p^m}}}\right]^{\delta, \wedge} \] does not hold in general. Therefore, Proposition 5.3 of \cite{GLQ23a} would no longer be correct if we used the flat topology in Definition \ref{mqpmc}.
\end{rem}

From now on, if we write $B [\ul{\xi}]\left[\frac{\phi^{m+1}(\ul{\xi})}{(p)_{q^{p^m}}}\right]^{\delta, \wedge} \otimes'_A -$, then the notation $\otimes'$ indicates that we use the level-$m$ $q$-Taylor map $\theta_{(m)}$ (which was introduced before Lemma \ref{qpdf}) for the $A$-structure on the left hand side. 

By Proposition \ref{prod}, we can calculate the functor
\begin{align*}
&j_A^{-1} : \wt{(\ol{A}/R)}_{m\text{-}q^{p^m}\text{-crys}} \to \wt{q^{p^m}\text{-CRYS}_{/A}} \\
&j_{A*} : \wt{q^{p^m}\text{-CRYS}_{/A}} \to \wt{(\ol{A}/R)}_{m\text{-}q^{p^m}\text{-crys}}
\end{align*}
associated to the morphism of topoi $j_A$. By the same argument as in the proof of Proposition 5.25 of \cite{BO78}, we see that for $F \in  \wt{(\ol{A}/R)}_{m\text{-}q^{p^m}\text{-crys}}$, the sheaf $j_A^{-1}(F)$ \nolinebreak sends  an object $(D,I_D)$ of $q^{p^m}\text{-CRYS}_{/A}$ to $F_D$, and for an object $(B,I_B)$ of $(\ol{A}/R)_{m\text{-}q^{p^m}\text{-crys}}$, the sheaf $j_A^{-1}((B,I_B))$ is represented by the object
\[
\theta_{(m)} : (A,I_A) \to \left(B [\ul{\xi}]\left[\frac{\phi^{m+1}(\ul{\xi})}{(p)_{q^{p^m}}}\right]^{\delta, \wedge}, \ol{K_{(m)} + I_B B [\ul{\xi}]\left[\frac{\phi^{m+1}(\ul{\xi})}{(p)_{q^{p^m}}}\right]^{\delta, \wedge}}^{\rm cl}\right)
\]
of $q^{p^m}\text{-CRYS}_{/A}$. Then it follows that for $F' \in  \wt{q^{p^m}\text{-CRYS}_{/A}}$, the sheaf $j_{A*}(F')$ sends an object $(B,I_B)$ of $(\ol{A}/R)_{m\text{-}q^{p^m}\text{-crys}}$ to $F'_{B [\ul{\xi}]\left[\frac{\phi^{m+1}(\ul{\xi})}{(p)_{q^{p^m}}}\right]^{\delta, \wedge}}$. This gives an 
explicit description of $L^{(m)}(M)$:

\begin{lem}
\label{lmmb}
Let $M \in {\bf FMod}^{\wedge}_{(p,(p)_{q^{p^m}})}(A)$.
\begin{enumerate}
\item If $(B,I_B)$ is an object of $(\ol{A}/R)_{m\text{-}q^{p^m}\text{-crys}}$, then
\[
L^{(m)}(M)_{B} = B [\ul{\xi}]\left[\frac{\phi^{m+1}(\ul{\xi})}{(p)_{q^{p^m}}}\right]^{\delta, \wedge} \wh{\otimes}_A' M.
\]
\item If $(B,I_B) \to (C, I_C)$ is a morphism in $(\ol{A}/R)_{m\text{-}q^{p^m}\text{-crys}}$, there exists a canonical isomorphism
\[
C \wh{\otimes}_B L^{(m)}(M)_{B} \xrightarrow{\simeq} L^{(m)}(M)_{C}.
\]
In particular, we have $L^{(m)}(M) \in {\bf CR}((\ol{A}/R)_{m\text{-}q^{p^m}\text{-crys}},\cO^{(m)}_{\ol{A}/R})$.
\end{enumerate}
\end{lem}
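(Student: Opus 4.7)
The proof is essentially a definitional unwinding that uses the explicit descriptions of $\wh{e}^{*}_A$ and $j_{A*}$ recalled just before the lemma. For part 1, by definition $L^{(m)}(M) = j_{A*}\wh{e}^{*}_A(M)$, and the computation in the proof of Proposition \ref{eas} shows that $\wh{e}^{*}_A(M)$ evaluates on any object $(A,I_A)\to(D,I_D)$ of $q^{p^m}\text{-CRYS}_{/A}$ to $D\wh{\otimes}_A M$. The explicit description of $j_{A*}$ given just before the lemma asserts that $(j_{A*}F')_B = F'_{B [\ul{\xi}]\left[\phi^{m+1}(\ul{\xi})/(p)_{q^{p^m}}\right]^{\delta,\wedge}}$, where the relevant $q^{p^m}\text{-CRYS}_{/A}$-object has structural map $A \to B[\ul{\xi}]\left[\phi^{m+1}(\ul{\xi})/(p)_{q^{p^m}}\right]^{\delta,\wedge}$ equal to the level-$m$ $q$-Taylor map $\theta_{(m)}$. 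Substituting $F' = \wh{e}^{*}_A(M)$ and recording the role of $\theta_{(m)}$ by the primed notation $\wh{\otimes}'_A$ yields the formula of part 1.

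For part 2, a morphism $f:(B,I_B)\to(C,I_C)$ in $(\ol{A}/R)_{m\text{-}q^{p^m}\text{-crys}}$ induces, by the functoriality of the coproduct in Proposition \ref{prod}, a morphism of $\delta$-$R$-algebras
\[
B[\ul{\xi}]\left[\phi^{m+1}(\ul{\xi})/(p)_{q^{p^m}}\right]^{\delta,\wedge} \;\longrightarrow\; C[\ul{\xi}]\left[\phi^{m+1}(\ul{\xi})/(p)_{q^{p^m}}\right]^{\delta,\wedge}
\]
compatible with the two $\theta_{(m)}$-maps from $A$. Base-changing along this map, and using the associativity of the classical $(p,(p)_{q^{p^m}})$-completed tensor product (justified by Lemma \ref{fmod}), the canonical linearized transition map $c_f(L^{(m)}(M))$ factors through the natural map
\[
C\wh{\otimes}_B B[\ul{\xi}]\left[\phi^{m+1}(\ul{\xi})/(p)_{q^{p^m}}\right]^{\delta,\wedge} \;\longrightarrow\; C[\ul{\xi}]\left[\phi^{m+1}(\ul{\xi})/(p)_{q^{p^m}}\right]^{\delta,\wedge}.
\]
It therefore suffices to verify that this map is an isomorphism, which is a standard pushout identity: iterating Proposition \ref{prod} expresses both sides as representatives of the coproduct of $(A,I_A)$ and $(C,I_C)$ in $(\ol{A}/R)_{m\text{-}q^{p^m}\text{-crys}}$, via the isomorphism $(C,I_C)\amalg_{(B,I_B)}\bigl((B,I_B)\amalg(A,I_A)\bigr)\simeq(C,I_C)\amalg(A,I_A)$, and the underlying operation on rings is precisely $C\wh{\otimes}_B-$ by the $(p,(p)_{q^{p^m}})$-complete flatness statement of Proposition \ref{mqcov} combined with Lemma \ref{fmod}.2.

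Finally, to conclude that $L^{(m)}(M)$ is a complete crystal in the sense of Definition \ref{cpfcr}, I need $L^{(m)}(M)_B\in{\bf FMod}^{\wedge}_{(p,(p)_{q^{p^m}})}(B)$, which follows by applying Lemma \ref{fmod}.2 to $M$ over $A$ and then to the resulting module over the $(p,(p)_{q^{p^m}})$-completely flat $B$-algebra $B[\ul{\xi}]\left[\phi^{m+1}(\ul{\xi})/(p)_{q^{p^m}}\right]^{\delta,\wedge}$ (flatness again from Proposition \ref{mqcov}). The only delicate point in the whole argument is the bookkeeping of which $A$-algebra structure is being used (always via $\theta_{(m)}$) and verifying that the iterated completed tensor product identifications are compatible with the various pushouts; but once Proposition \ref{prod} is invoked, these are routine applications of universal properties.
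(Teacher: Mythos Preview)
Your argument for part 1 is correct and matches the paper's: both simply unwind the definitions of $j_{A*}$ and $\wh{e}^*_A$ as computed just before the lemma.

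For part 2, your coproduct strategy identifies the right target but leaves a genuine gap. The associativity $(C,I_C)\amalg_{(B,I_B)}\bigl((B,I_B)\amalg(A,I_A)\bigr)\simeq(C,I_C)\amalg(A,I_A)$ is fine, and Proposition~\ref{prod} tells you the underlying ring of the right-hand side is $C[\ul{\xi}]\bigl[\phi^{m+1}(\ul{\xi})/(p)_{q^{p^m}}\bigr]^{\delta,\wedge}$. The problem is your claim that ``the underlying operation on rings is precisely $C\wh{\otimes}_B{-}$''. Pushouts in the $m$-$q^{p^m}$-crystalline site are computed by taking a $q^{p^m}$-PD envelope of the (completed) tensor product, not the tensor product itself. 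So knowing that the crystalline pushout has underlying ring $C[\ul{\xi}][\dots]^{\delta,\wedge}$ does \emph{not} by itself tell you that $C\wh{\otimes}_B B[\ul{\xi}][\dots]^{\delta,\wedge}\to C[\ul{\xi}][\dots]^{\delta,\wedge}$ is an isomorphism; you would still need to check that $C\wh{\otimes}_B B[\ul{\xi}][\dots]^{\delta,\wedge}$ is already a $q^{p^m}$-PD pair (in particular, $(p)_{q^{p^m}}$-bounded), and flatness together with Lemma~\ref{fmod}.2 gives you discreteness of the completed derived tensor product but not this boundedness.

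The paper sidesteps the issue entirely: after observing (as you do) via Lemma~\ref{fmod}.2 and the flatness in Proposition~\ref{mqcov} that $L^{(m)}(M)_B\in{\bf FMod}^{\wedge}_{(p,(p)_{q^{p^m}})}(B)$, it invokes \cite[Lemma~16.10(3)]{BS22}, which is precisely the statement that formation of these $q$-PD envelopes commutes with base change along $\delta$-ring maps. That lemma supplies exactly the missing isomorphism $C\wh{\otimes}_B B[\ul{\xi}][\dots]^{\delta,\wedge}\xrightarrow{\sim} C[\ul{\xi}][\dots]^{\delta,\wedge}$. If you want to avoid the citation, the direct route is to use that $\delta$-envelopes commute with base change by their universal property and then control the completion via flatness---but that is essentially reproving the cited lemma, not bypassing it.
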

\begin{proof}
The first assertion follows from the calculation of $j_{A*}$ above and the definition of $\wh{e}^*_A$ in the proof of Proposition  \ref{eas}, so it follows from Lemma \ref{fmod}.2 that $L^{(m)}(M)_{B} \in {\bf FMod}^{\wedge}_{(p,(p)_{q^{p^m}})}(B)$. Therefore, the second assertion follows from Lemma 16.10(3) of \cite{BS22}.
\end{proof}

The following standard results are useful to calculate the linearizations:

\begin{lem}
\label{elle}
{\rm (cf. \cite[Lemma 4.7]{GLQ23a})} If $E \in {\bf CR}((\ol{A}/R)_{m\text{-}q^{p^m}\text{-crys}},\cO^{(m)}_{\ol{A}/R})$ and $M \in {\bf FMod}^{\wedge}_{(p,(p)_{q^{p^m}})}(A)$, then
\[
E \wh{\otimes}_{\cO^{(m)}_{\ol{A}/R}} L^{(m)}(M) \simeq L^{(m)}(E_A \wh{\otimes}_A M).
\]
\end{lem}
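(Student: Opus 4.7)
The plan is to construct the isomorphism locally, by evaluating both sides at an arbitrary object $(B, I_B)$ of $(\ol{A}/R)_{m\text{-}q^{p^m}\text{-crys}}$ and then verifying functoriality. The key input is the existence of the coproduct computed in Proposition \ref{prod}: set
\[
D := B [\ul{\xi}]\left[\tfrac{\phi^{m+1}(\ul{\xi})}{(p)_{q^{p^m}}}\right]^{\delta,\wedge},
\]
endowed with the natural ideal, so that we have canonical morphisms $i_1 : (B, I_B) \to (D, I_D)$ and $i_2 = \theta_{(m)} : (A, I_A) \to (D, I_D)$ in the site. Note that $E_A \wh{\otimes}_A M$ again lies in ${\bf FMod}^{\wedge}_{(p,(p)_{q^{p^m}})}(A)$ by Lemma \ref{fmod}.1, so the right-hand side is defined.

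By Lemma \ref{lmmb}.1, evaluating at $(B, I_B)$ yields
\[
\bigl( E \wh{\otimes}_{\cO^{(m)}_{\ol{A}/R}} L^{(m)}(M) \bigr)_B \;=\; E_B \wh{\otimes}_B \bigl( D \wh{\otimes}'_A M \bigr),
\qquad
L^{(m)}(E_A \wh{\otimes}_A M)_B \;=\; D \wh{\otimes}'_A (E_A \wh{\otimes}_A M).
\]
Now I would apply the crystal property of $E$ twice. Since $E$ is a complete crystal, the linearized transition map along $i_1$ gives an isomorphism $E_B \wh{\otimes}_B D \simeq E_D$, while the transition map along $i_2$ gives $D \wh{\otimes}'_A E_A \simeq E_D$. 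Chaining these via the associativity of the completed tensor product (using Lemma \ref{fmod} to identify derived and classical completions), one computes
\[
E_B \wh{\otimes}_B (D \wh{\otimes}'_A M) \;\simeq\; (E_B \wh{\otimes}_B D) \wh{\otimes}'_A M \;\simeq\; E_D \wh{\otimes}'_A M \;\simeq\; (D \wh{\otimes}'_A E_A) \wh{\otimes}'_A M \;\simeq\; D \wh{\otimes}'_A (E_A \wh{\otimes}_A M),
\]
which gives the desired identification at each object.

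The main obstacle, and the part requiring real care, is the bookkeeping of the two distinct $A$-algebra structures on $D$: the ``$B$-linear'' one used to form $D \wh{\otimes}'_A M$ via $\theta_{(m)}$, versus the original $A$-action used in $E_A \wh{\otimes}_A M$. One must check that when the crystal isomorphism $D \wh{\otimes}'_A E_A \simeq E_D$ is inserted, the resulting $A$-module structure matches the one along which the outer completed tensor with $M$ is taken. This is where Lemma \ref{fmod} becomes indispensable, since it ensures that the derived completions behave strictly associatively on ${\bf FMod}^{\wedge}_{(p,(p)_{q^{p^m}})}$. Finally, to conclude the sheaf-level isomorphism, I would verify that the chain of isomorphisms above is natural in $(B, I_B)$ with respect to morphisms in $(\ol{A}/R)_{m\text{-}q^{p^m}\text{-crys}}$, which follows from the functoriality of the coproduct $D$ and of the transition maps $c_f(E)$; both sides are sheaves by construction (the right-hand side via Lemma \ref{lmmb}.2), so the local isomorphisms glue to the claimed global one.
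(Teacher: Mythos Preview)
Your proposal is correct and the local computation is identical to the paper's: both evaluate at an object $(B,I_B)$, set $B' = B[\ul{\xi}]\bigl[\phi^{m+1}(\ul{\xi})/(p)_{q^{p^m}}\bigr]^{\delta,\wedge}$, and run the chain $E_B \wh{\otimes}_B B' \wh{\otimes}'_A M \simeq E_{B'} \wh{\otimes}'_A M \simeq B' \wh{\otimes}'_A E_A \wh{\otimes}_A M$ using the crystal property of $E$ along both structure maps. The only difference is packaging: the paper first produces a global map $E \wh{\otimes}_{\cO^{(m)}_{\ol{A}/R}} L^{(m)}(M) \to L^{(m)}(E_A \wh{\otimes}_A M)$ via the identification $j_A^{-1}E = \wh{e}^*_A E_A$ and the adjunction $j_A^{-1}j_{A*} \to \mathrm{Id}$, so naturality is automatic and only the local bijectivity needs checking, whereas you construct the local isomorphisms directly and then invoke functoriality of the coproduct and of the transition maps to glue. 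Both are fine; the adjunction route simply avoids the separate naturality verification you flag at the end.
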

\begin{proof}
We follow the proof of \cite{GLQ23a}. If $(A,I_A) \to (B,I_B)$ is a map of $q^{p^m}$-PD pairs, then
\[
(j_A^{-1} E)_B = E_B = B \wh{\otimes}_A E_A = (\wh{e}^*_A E_A)_B.
\]
So we see that $j_A^{-1} E = \wh{e}^*_A E_A$. By the adjunction map $j_A^{-1} j_{A*} \wh{e}^*_A M \to \wh{e}^*_A M$, we get the map
\begin{align*}
j_A^{-1} (E \wh{\otimes}_{\cO^{(m)}_{\ol{A}/R}} j_{A*} \wh{e}^*_A M) &= j_A^{-1} E \wh{\otimes}_{\cO_{q^{p^m}\text{-CRYS}_{/A}}} j_A^{-1}  j_{A*} \wh{e}^*_A M \\ 
&= \wh{e}^*_A E_A \wh{\otimes}_{\cO_{q^{p^m}\text{-CRYS}_{/A}}} j_A^{-1}  j_{A*} \wh{e}^*_A M \\
&\to \wh{e}^*_A E_A \wh{\otimes}_{\cO_{q^{p^m}\text{-CRYS}_{/A}}} \wh{e}^*_A M \\
&= (\cO_{q^{p^m}\text{-CRYS}_{/A}} \wh{\otimes}_{\ul{A}} \hspace{1pt} \ul{E_A}) \wh{\otimes}_{\cO_{q^{p^m}\text{-CRYS}_{/A}}} (\cO_{q^{p^m}\text{-CRYS}_{/A}} \wh{\otimes}_{\ul{A}} \ul{M})  \\
&= \cO_{q^{p^m}\text{-CRYS}_{/A}} \wh{\otimes}_{\ul{A}}( \ul{E_A} \wh{\otimes}_{\ul{A}} \ul{M}) \\
&= \wh{e}^*_A (E_A \wh{\otimes}_A M). 
\end{align*}
Again by adjunction, we get the natural map
\[
E \wh{\otimes}_{\cO^{(m)}_{\ol{A}/R}} L^{(m)}(M) = E \wh{\otimes}_{\cO^{(m)}_{\ol{A}/R}} j_{A*} \wh{e}^*_A M \to j_{A*}\wh{e}^*_A (E_A \wh{\otimes}_A M) = L^{(m)}(E_A \wh{\otimes}_A M).
\]
It remains to show that this is an isomorphism. It is sufficient to show that for any object $(B,I_B)$  of $(\ol{A}/R)_{m\text{-}q^{p^m}\text{-crys}}$, we have
\[
E_B \wh{\otimes}_B L^{(m)}(M)_B \simeq L^{(m)}(E_A \wh{\otimes}_A M)_B.
\]
We write $B' := B [\ul{\xi}]\left[\frac{\phi^{m+1}(\ul{\xi})}{(p)_{q^{p^m}}}\right]^{\delta, \wedge}$. Since $E \in {\bf CR}((\ol{A}/R)_{m\text{-}q^{p^m}\text{-crys}},\cO^{(m)}_{\ol{A}/R})$, by Lemma \ref{lmmb}.1, we have
\[
E_B \wh{\otimes}_B L^{(m)}(M)_B \simeq E_B \wh{\otimes}_B B' \wh{\otimes}'_A M \simeq E_{B'} \wh{\otimes}'_A M \simeq B' \wh{\otimes}'_A E_A \wh{\otimes}_A M \simeq L^{(m)}(E_A \wh{\otimes}_A M)_B. \qedhere
\]
\end{proof}

\begin{lem}
\label{rija}
{\rm (cf. \cite[Corollary 4.4]{GLQ23a})} We have $R^i j_{A*} E = 0$ for $i > 0$ and $E \in {\bf CR}(q^{p^m}\text{-}{\rm CRYS}_{/A},\cO_{q^{p^m}\text{-}{\rm CRYS}_{/A}})$.
\end{lem}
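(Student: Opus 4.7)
The plan is to reduce the vanishing to an application of Proposition \ref{eas} on a slice site of the source. For any object $(B, I_B)$ of $(\ol{A}/R)_{m\text{-}q^{p^m}\text{-crys}}$, I would invoke the computation of $j_A^{-1}$ recalled before Lemma \ref{lmmb}: the sheaf $j_A^{-1}((B, I_B))$ is representable by the object $\theta_{(m)} : (A, I_A) \to (B', I_{B'})$ of $q^{p^m}\text{-CRYS}_{/A}$, where
\[
(B', I_{B'}) := \left(B[\ul{\xi}]\left[\frac{\phi^{m+1}(\ul{\xi})}{(p)_{q^{p^m}}}\right]^{\delta, \wedge},\ \ol{K_{(m)} + I_B B[\ul{\xi}]\left[\frac{\phi^{m+1}(\ul{\xi})}{(p)_{q^{p^m}}}\right]^{\delta, \wedge}}^{\rm cl}\right).
\]
Via the standard computation of derived pushforward along a morphism of sites using injective resolutions, this identifies $R^i j_{A*} E$ with the sheafification of the presheaf sending $(B, I_B)$ to the cohomology of $E|_{(B', I_{B'})}$ on the slice site of $q^{p^m}\text{-CRYS}_{/A}$ at $(B', I_{B'})$, so it would suffice to show that this presheaf vanishes for $i > 0$.

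Next, I would identify the slice site of $q^{p^m}\text{-CRYS}_{/A}$ at $(B', I_{B'})$ with $q^{p^m}\text{-CRYS}_{/B'}$ (endowed with the analogous flat topology), since an object of the former is a morphism $(A, I_A) \to (D, I_D)$ together with a factorization through $(B', I_{B'})$, equivalently a morphism $(B', I_{B'}) \to (D, I_D)$ of $q^{p^m}$-PD pairs. Under this identification, $E|_{(B', I_{B'})}$ remains a complete crystal, since its canonical linearized transition maps on the slice form a subset of those of $E$ and hence remain isomorphisms. Applying Proposition \ref{eas} with $(B', I_{B'})$ in place of $(A, I_A)$---its proof only uses that the base is the final object of the corresponding site, and so carries over verbatim---I obtain
\[
H^i\left(q^{p^m}\text{-CRYS}_{/B'},\ E|_{(B', I_{B'})}\right) = R^i e_{B'*}\left(E|_{(B', I_{B'})}\right) = 0
\]
for $i > 0$, which gives $R^i j_{A*} E = 0$ as required.

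The main obstacle will be to justify the first step rigorously, namely the identification of $R^i j_{A*} E$ with the sheafification of the slice-cohomology presheaf. This requires verifying that restriction to a localized topos preserves injectivity of abelian sheaves (true because the restriction functor admits an exact left adjoint, given by extension by zero along the localization) and that the representability of $j_A^{-1}((B, I_B))$ by $(B', I_{B'})$ from the discussion before Lemma \ref{lmmb} genuinely reflects an equivalence of the associated localized topoi. Once these formal bookkeeping steps are in place, the vanishing reduces directly to Proposition \ref{eas}.
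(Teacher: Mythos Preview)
Your proposal is correct and follows essentially the same approach as the paper: identify $R^i j_{A*} E$ with the sheafification of $(B,I_B) \mapsto H^i(q^{p^m}\text{-CRYS}_{/B'}, E|_{B'})$ via the representability of $j_A^{-1}((B,I_B))$ by $(B',I_{B'})$, then apply Proposition~\ref{eas} with $B'$ in place of $A$ to obtain the vanishing. The paper's proof is terser and simply asserts the presheaf description, whereas you spell out the formal justifications (slice-site identification, preservation of injectives under localization), but the underlying argument is the same.
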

\begin{proof}
We follow the proof of \cite{GLQ23a}. We write $B' := B [\ul{\xi}]\left[\frac{\phi^{m+1}(\ul{\xi})}{(p)_{q^{p^m}}}\right]^{\delta, \wedge}$. Then the sheaf $R^i j_{A*} E$ is associated to the presheaf $(B,I_B) \mapsto H^i(q^{p^m}\text{-CRYS}_{/B'}, E_{|B'})$. But by considering the case $A=B'$ in  Proposition \ref{eas}, we see  that \[ H^i(q^{p^m}\text{-CRYS}_{/B'}, E_{|B'}) = R^i e_{B'*} E_{|B'} = 0.  \qedhere \]
\end{proof}

\begin{lem}
\label{rglm}
{\rm (cf. \cite[Corollary 4.5]{GLQ23a})}
If $M \in {\bf FMod}^{\wedge}_{(p,(p)_{q^{p^m}})}(A)$, then
\[
R \Gamma ((\ol{A}/R)_{m\text{-}q^{p^m}\text{-crys}}, L^{(m)}(M)) = M.
\]
\end{lem}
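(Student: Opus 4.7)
The plan is to reduce the computation on $(\ol{A}/R)_{m\text{-}q^{p^m}\text{-crys}}$ to the computation on $q^{p^m}\text{-CRYS}_{/A}$ via the morphism of topoi $j_A$, and then invoke the equivalence of Proposition \ref{eas}. First I would apply the Leray spectral sequence for $j_A$ to the sheaf $\wh{e}_A^*(M)$, which gives
\[
R\Gamma((\ol{A}/R)_{m\text{-}q^{p^m}\text{-crys}}, Rj_{A*}\wh{e}_A^*(M)) \simeq R\Gamma(q^{p^m}\text{-CRYS}_{/A}, \wh{e}_A^*(M)).
\]
By the proof of Proposition \ref{eas}, we know that $\wh{e}_A^*(M) = L\wh{e}_A^*(M)$ lies in ${\bf CR}(q^{p^m}\text{-CRYS}_{/A},\cO_{q^{p^m}\text{-CRYS}_{/A}})$, so Lemma \ref{rija} gives $R^i j_{A*}\wh{e}_A^*(M) = 0$ for $i > 0$. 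Therefore $Rj_{A*}\wh{e}_A^*(M) = j_{A*}\wh{e}_A^*(M) = L^{(m)}(M)$, and the left-hand side of the displayed isomorphism is exactly $R\Gamma((\ol{A}/R)_{m\text{-}q^{p^m}\text{-crys}}, L^{(m)}(M))$.

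Next I would compute the right-hand side. Since $\wh{e}_A^*(M)$ is a complete crystal, the vanishing $R^i e_{A*}(\wh{e}_A^*(M)) = 0$ for $i > 0$ established in the proof of Proposition \ref{eas} yields
\[
R\Gamma(q^{p^m}\text{-CRYS}_{/A}, \wh{e}_A^*(M)) = Re_{A*}\wh{e}_A^*(M) = e_{A*}\wh{e}_A^*(M).
\]
Finally, since $(A,I_A)$ is the final object of $q^{p^m}\text{-CRYS}_{/A}$, we have $e_{A*}\wh{e}_A^*(M) = (\wh{e}_A^*(M))_A = A\wh{\otimes}_A M = M$, as already observed at the end of the proof of Proposition \ref{eas}. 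Combining the two steps yields the claim.

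The main obstacle is verifying that the Leray identification and the vanishing of higher direct images are applicable in the derived $(p,(p)_{q^{p^m}})$-complete setting. Both points, however, are taken care of by Lemma \ref{rija} and by the argument inside Proposition \ref{eas} (which in turn uses Lemma \ref{fmod}); no further calculation is required.
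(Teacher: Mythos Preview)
Your proof is correct and follows essentially the same approach as the paper. The paper writes the argument as a chain of equalities using the factorization $e_{\ol{A}/R}\circ j_A = e_A$ together with Lemma~\ref{rija} (to replace $j_{A*}$ by $Rj_{A*}$) and Proposition~\ref{eas} (to identify $Re_{A*}\wh{e}_A^*M$ with $M$); your invocation of the Leray spectral sequence is just another way to package the same derived-functor identity.
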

\begin{proof}
We follow the proof of \cite{GLQ23a}. As in the previous construction, we have the final morphism of topoi
\[
e_{\ol{A}/R} : \wt{(\ol{A}/R)}_{m\text{-}q^{p^m}\text{-crys}} \to \S\text{ets}
\]
which satisfies $e_{\ol{A}/R} \circ j_{A} = e_{A}$. Then by Lemma \ref{rija} and Proposition \ref{eas}, we can calculate that
\begin{align*}
R \Gamma ((\ol{A}/R)_{m\text{-}q^{p^m}\text{-crys}}, L^{(m)}(M)) &= Re_{\ol{A}/R*}L^{(m)}(M) \\
&= Re_{\ol{A}/R*}j_{A*}\wh{e}^*_AM \\
&=Re_{\ol{A}/R*}Rj_{A*}\wh{e}^*_AM \\
&= R(e_{\ol{A}/R*}j_{A*})\wh{e}^*_AM \\
&= Re_{A*}\wh{e}^*_AM \\
&= M.  \qedhere
\end{align*} 
\end{proof}

There also exists a notion of stratification in our setting that reads as follows:

\begin{defi}
\label{mqst}
A \textit{hyper $m$-$q^{p^m}$-stratification} on $M \in {\bf FMod}^{\wedge}_{(p,(p)_{q^{p^m}})}(A)$ is an $\wh{A \langle \ul{\xi} \rangle}_{(m), q, \ul{x}}$-linear isomorphism
\[
\epsilon_M : \wh{A \langle \ul{\xi} \rangle}_{(m), q, \ul{x}} \wh{\otimes}'_A M \simeq M \wh{\otimes}_A \wh{A \langle \ul{\xi} \rangle}_{(m), q, \ul{x}}
\]
satisfying the cocycle condition
\[
(\epsilon_M \wh{\otimes} {\rm Id}_{\wh{A \langle \ul{\xi} \rangle}_{(m), q, \ul{x}}}) \circ ({\rm Id}_{\wh{A \langle \ul{\xi} \rangle}_{(m), q, \ul{x}}} \wh{\otimes}' \epsilon_M) \circ (\delta^1_1  \wh{\otimes}'{\rm Id}_M) = ({\rm Id}_M \wh{\otimes} \delta^1_1) \circ \epsilon_M,
\]
where
\[
\delta^1_1 : \wh{A \langle \ul{\xi} \rangle}_{(m), q, \ul{x}} \to \wh{A \langle \ul{\xi} \rangle}_{(m), q, \ul{x}} \wh{\otimes}'_A \wh{A \langle \ul{\xi} \rangle}_{(m), q, \ul{x}}
\]
is the comultiplication map that will be defined after Proposition \ref{pmar}.
\end{defi}

\begin{prop}
\label{crstr}
The category ${\bf CR}((\ol{A}/R)_{m\text{-}q^{p^m}\text{-crys}},\cO^{(m)}_{\ol{A}/R})$ is equivalent to the category of derived $(p,(p)_{q^{p^m}})$-complete  $(p,(p)_{q^{p^m}})$-completely flat $A$-modules endowed with a hyper $m$-$q^{p^m}$-stratification.
\end{prop}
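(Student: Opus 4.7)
The plan is to follow the standard dictionary between crystals and stratified modules, built on Proposition \ref{prod} (which identifies the self-coproduct of $(A, I_A)$ with itself in $(\ol{A}/R)_{m\text{-}q^{p^m}\text{-crys}}$ as the completed $m$-$q$-PD polynomial algebra $P := \wh{A \langle \ul{\xi} \rangle}_{(m), q, \ul{x}}$ equipped with two structural maps from $A$, namely the natural inclusion $\iota_1$ and the level-$m$ $q$-Taylor map $\iota_2 = \theta_{(m)}$) and on Proposition \ref{mqcov} (which provides the faithfully flat covers needed for descent).

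Forward direction. Given a crystal $E$, I set $M := E_A \in {\bf FMod}^{\wedge}_{(p, (p)_{q^{p^m}})}(A)$. The crystal property applied to $\iota_1$ and $\iota_2$ yields canonical $P$-linear isomorphisms $M \wh{\otimes}_A P \simeq E_P$ and $E_P \simeq P \wh{\otimes}'_A M$, and I take $\epsilon_M$ to be their composition. The cocycle condition is obtained by passing to the triple self-coproduct of $(A, I_A)$ in the site and using that the three pairwise identifications coming from the crystal property are automatically compatible via the comultiplication $\delta^1_1$.

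Reverse direction. Starting from $(M, \epsilon_M)$, I define $E$ by assigning to each object $(B, I_B)$ the module $E_B := B \wh{\otimes}_{A, f} M$, where $f : (A, I_A) \to (B, I_B)$ is a lift of the structure map $\ol{A} \to \ol{B}$ in the site, which exists non-canonically by the argument in the proof of Proposition \ref{prod} using $(p, (p)_{q^{p^m}})$-complete \'etaleness of $R[\ul{X}] \to A$. Well-definedness up to canonical isomorphism proceeds as follows: given two lifts $f_1, f_2$, the universal property of the self-coproduct $P$ yields a morphism $g : P \to B$ in the site with $g \circ \iota_j = f_j$, and pulling $\epsilon_M$ back along $g$ gives a canonical isomorphism $B \wh{\otimes}_{A, f_1} M \simeq B \wh{\otimes}_{A, f_2} M$ whose transitivity for three lifts is exactly the cocycle condition. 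Transition maps along morphisms in the site and the sheaf property on flat covers then follow by the same mechanism together with the equalizer argument already used in the proof of Proposition \ref{eas}.

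The main obstacle is the consistent well-definedness of $E_B$ under the various choices in the reverse direction. Each compatibility reduces to a diagram chase through the universal property in Proposition \ref{prod} and the cocycle condition on $\epsilon_M$, with the subtlety that $f$ is only a morphism of $R$-algebras and not of $\delta$-rings, so the relevant universal properties must be invoked entirely at the level of $q^{p^m}$-PD envelopes as in Theorem \ref{qpde}. Once well-definedness is established, the two functors are quickly seen to be mutually quasi-inverse by direct unwinding of the definitions.
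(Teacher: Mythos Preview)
Your forward direction is fine and matches the standard argument. The reverse direction, however, has a genuine gap at the step where you compare two lifts.

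The lift $f\colon A\to B$ produced by the \'etaleness argument in the proof of Proposition~\ref{prod} is only a morphism of $R$-algebras, not a morphism of $\delta$-rings (there is no reason the chosen lifts of the $x_i$ in $B$ are rank one), and you acknowledge this. But then, given two such lifts $f_1,f_2$, the induced map $A\,\wh{\otimes}_R\,A\to B$ is \emph{not} a morphism of $\delta$-rings either, since the $\delta$-structure on $A\,\wh{\otimes}_R\,A$ is characterized by both inclusions $A\hookrightarrow A\,\wh{\otimes}_R\,A$ being $\delta$-maps. Hence neither the universal property of $P=\wh{A\langle\ul{\xi}\rangle}_{(m),q,\ul{x}}$ as a coproduct in the site (which needs $f_1,f_2$ to be morphisms in the site) nor its description as a $q^{p^m}$-PD envelope in Theorem~\ref{qpde} (which needs a morphism of $\delta$-pairs out of $(A\,\wh{\otimes}_R\,A,\phi^m(J))$) produces the comparison map $g\colon P\to B$ you invoke. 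This is not a technicality one can finesse: unlike the classical crystalline case, an arbitrary object $(B,I_B)$ of $(\ol{A}/R)_{m\text{-}q^{p^m}\text{-crys}}$ need not receive \emph{any} morphism from $(A,I_A)$ in the site; this is exactly why Proposition~\ref{mqcov} only furnishes a cover admitting such a morphism.

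The paper (following Tian) circumvents this by faithfully flat descent rather than by choosing a lift. For each $(B,I_B)$ one passes to the cover $B'=B[\ul{\xi}]\bigl[\tfrac{\phi^{m+1}(\ul{\xi})}{(p)_{q^{p^m}}}\bigr]^{\delta,\wedge}$ of Propositions~\ref{prod} and~\ref{mqcov}, which \emph{does} carry the site morphism $\theta_{(m)}\colon A\to B'$; one sets the value over $B'$ to be $B'\,\wh{\otimes}'_A\,M$, uses $\epsilon_M$ (base-changed along $A\to B'$) to manufacture a descent datum over $B'\,\wh{\otimes}_B\,B'$, and then descends to $B$. Your construction can be repaired by reorganizing the reverse functor around this descent; the remaining verifications (functoriality, sheaf condition, quasi-inverse) then go through as you indicate.
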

\begin{proof}
By using Proposition \ref{prod} and \ref{mqcov}, this can be proved in the same way as in Proposition 4.8 of \cite{Tia23} by replacing bounded prisms with $q^{p^m}$-PD pairs, and putting $\wt{R} = A$ and $\wt{B} = B [\ul{\xi}]\left[\frac{\phi^{m+1}(\ul{\xi})}{(p)_{q^{p^m}}}\right]^{\delta, \wedge}$ in the notation there.
\end{proof}

\section{$q$-analog of the higher de Rham complex}
\label{hple}

In this section, we construct a $q$-analog of the ``de Rham-like'' complex and prove the corresponding Poincar\'{e} lemma, following \cite{Miy15}. 
%If we ignore the differential maps, this complex is isomorphic to the usual $q$-de Rham complex: we call this complex the ``higher $q$-de Rham complex''. Our goal is to prove that this complex computes a direct sum of finitely many copies of the complete crystals.

First, we do some differential calculus related to the $m$-$q^{p^m}$-crystalline theory. Let $(R, I_R) \to (A, I_A)$ be a morphism of $q^{p^m}$-PD pairs with fixed rank one  \'{e}tale coordinates $\ul{x} = (x_1, \dots, x_d)$ in $A$. We also assume that $
I_{A} = \ol{I_R A}^{\rm cl}$. Let $P := A \wh{\otimes}_R A$. Similarly to Section \ref{mqanc},  if we write $- \otimes'_A -$, then the notation $\otimes'$ indicates that we use the right $A$-module structure on the left hand side. For $r \in \N$, let $I(r)$ be the kernel of the surjection 
\[
A^{\wh{\otimes} (r+1)} = \underbrace{A \wh{\otimes}_R \cdots \wh{\otimes}_R A}_{r+1 \text{ times}} \simeq \underbrace{P \wh{\otimes}'_A \cdots  \wh{\otimes}'_A P}_{r \text{ times}} \to A.
\]
Let $P^{(m)}_{A/R,q} (r)$ be the $q^{p^m}$-PD envelope of $(A^{\wh{\otimes} (r+1)}, \phi^m(I(r))A^{\wh{\otimes} (r+1)})$ with the $A$-module structure induced by multiplication to the first factor of $A^{\wh{\otimes} (r+1)}$. By Theorem \ref{qpde} and Proposition \ref{rg}, we have $P^{(m)}_{A/R,q} (1) = \wh{A \langle \ul{\xi} \rangle}_{(m), q, \ul{x}}$. More generally, we have the following result:

\begin{prop}
\label{pmar}
The graded $A$-algebra $P^{(m)}_{A/R,q} (\bullet)$ can be identified with the complete tensor algebra of $\wh{A \langle \ul{\xi} \rangle}_{(m), q, \ul{x}}$ over $A$, namely, we have
\[
P^{(m)}_{A/R,q} (\bullet) = \bigoplus_{r=0}^{\infty} \wh{A \langle \ul{\xi} \rangle}_{(m), q, \ul{x}}^{\wh{\otimes}' r}.
\]
In particular, $P^{(m)}_{A/R,q} (2) = \wh{A \langle \ul{\xi} \rangle}_{(m), q, \ul{x}} \wh{\otimes}'_A \wh{A \langle \ul{\xi} \rangle}_{(m), q, \ul{x}}$.
\end{prop}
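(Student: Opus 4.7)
My plan is to prove Proposition \ref{pmar} by induction on $r$. The case $r=0$ reads $P^{(m)}_{A/R,q}(0) = A$, which is immediate from the definition, and the case $r=1$ is exactly Theorem \ref{qpde} combined with Proposition \ref{rg}. For the inductive step, I identify $A^{\wh{\otimes}(r+1)}$ with $A^{\wh{\otimes} r} \wh{\otimes}_R A$ where the rightmost factor of $A$ is adjoined last; under this identification the multiplication kernel decomposes as
\[
I(r) = J + I(r-1) \cdot A^{\wh{\otimes}(r+1)},
\]
where $J$ is the kernel of the multiplication $A^{\wh{\otimes}(r+1)} \twoheadrightarrow A^{\wh{\otimes} r}$ that merges the two rightmost factors, and $I(r-1)$ is the full multiplication kernel inside $A^{\wh{\otimes} r}$.

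I would then compute the $q^{p^m}$-PD envelope in two stages. First, applying Theorem \ref{qpde} with $B := A^{\wh{\otimes} r}$ (which is $(p)_{q^{p^m}}$-bounded as a completed tensor product of $q^{p^m}$-PD pairs) and with the inclusion $A \hookrightarrow B$ as the rightmost factor, I obtain the $q^{p^m}$-PD envelope with respect to $\phi^m(J)$, and via Proposition \ref{rg} identify it with $A^{\wh{\otimes} r} \wh{\otimes}'_A \wh{A \langle \ul{\xi} \rangle}_{(m),q,\ul{x}}$. Second, I take the $q^{p^m}$-PD envelope with respect to the image of $\phi^m(I(r-1))$ inside this new ring; by the induction hypothesis applied after base change along $A^{\wh{\otimes} r} \to A^{\wh{\otimes} r} \wh{\otimes}'_A \wh{A \langle \ul{\xi} \rangle}_{(m),q,\ul{x}}$, the result is $\wh{A \langle \ul{\xi} \rangle}_{(m),q,\ul{x}}^{\wh{\otimes}'(r-1)} \wh{\otimes}'_A \wh{A \langle \ul{\xi} \rangle}_{(m),q,\ul{x}} = \wh{A \langle \ul{\xi} \rangle}_{(m),q,\ul{x}}^{\wh{\otimes}' r}$.

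The principal technical obstacle will be justifying the two formal manoeuvres above: that a $q^{p^m}$-PD envelope may be computed iteratively with respect to a sum of two ideals, and that it commutes with the relevant base change. The iteration is a formal consequence of the universal property in Definition \ref{pqpmpdenv}: any morphism from $(C, I_1 + I_2)$ to a $q^{p^m}$-PD pair factors uniquely through the envelope of $(C, I_1)$ and then through the envelope of that with respect to the image of $I_2$. For the base change, one needs the $(p,(p)_{q^{p^m}})$-complete flatness of $\wh{A \langle \ul{\xi} \rangle}_{(m),q,\ul{x}}$ over $A$ so that the induction hypothesis transfers, which is available from the theory of $q^{p^m}$-PD envelopes (cf.\ Proposition 5.2 of \cite{GLQ23a}, already used in the proof of Proposition \ref{mqcov}). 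An alternative, perhaps cleaner, route is to verify the universal property of $\wh{A \langle \ul{\xi} \rangle}_{(m),q,\ul{x}}^{\wh{\otimes}' r}$ directly: any morphism $(A^{\wh{\otimes}(r+1)}, \phi^m(I(r)) A^{\wh{\otimes}(r+1)}) \to (D, I_D)$ restricts to $r$ morphisms on the adjacent $P = A \wh{\otimes}_R A$ subpairs, each of which extends uniquely to $\wh{A \langle \ul{\xi} \rangle}_{(m),q,\ul{x}} \to D$ via Theorem \ref{qpde}, and these glue along the shared intermediate copies of $A$ to yield the required map out of the completed tensor algebra.
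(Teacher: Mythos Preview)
Your proposal is correct and both of your suggested routes would work, but the paper's proof is considerably more direct. Rather than an inductive two-stage envelope computation, the paper proceeds in a single step: it first checks that $\wh{A\langle\ul{\xi}\rangle}_{(m),q,\ul{x}}^{\wh{\otimes}' r}$ is $(p)_{q^{p^m}}$-bounded (arguing as in Corollary~7.4 of \cite{GLQ23b}), and then invokes Lemma~4.10 of \cite{GLQ23b} to conclude immediately that this bounded ring is the $q^{p^m}$-PD envelope of $P^{\wh{\otimes}' r} = A^{\wh{\otimes}(r+1)}$ with respect to $\phi^m(I(r))A^{\wh{\otimes}(r+1)}$. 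The compatibility of the product structure is then checked separately, which you do not address.

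Your ``alternative, cleaner route'' via gluing $r$ copies of the universal map from Theorem~\ref{qpde} is essentially the content of the cited Lemma~4.10 (tensor products of $q$-PD pairs are $q$-PD envelopes, given boundedness), so in spirit it matches the paper. Your primary inductive route is more laborious: the iteration-on-ideals argument and the base-change step are both correct in principle, but each needs exactly the kind of boundedness/flatness input that the paper isolates up front. In particular, to apply Theorem~\ref{qpde} with $B = A^{\wh{\otimes} r}$ you need $B$ to be $(p)_{q^{p^m}}$-bounded, which is the same type of verification the paper performs for the tensor power of the envelope itself---so you are not actually saving work by induction. The paper's packaging via the external lemma is therefore both shorter and avoids the iterated-envelope and base-change bookkeeping.
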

\begin{proof}
We can check as in the proof of Corollary 7.4 of \cite{GLQ23b} that $\wh{A \langle \ul{\xi} \rangle}_{(m), q, \ul{x}}^{\wh{\otimes}' r}$ is bounded. Thus, by Lemma 4.10 of \cite{GLQ23b}, it is the $q^{p^m}$-PD envelope of $P^{\wh{\otimes}' r} = A^{\wh{\otimes} (r+1)}$ with respect to the ideal $\phi^m(I(r))A^{\wh{\otimes} (r+1)}$. Thus we have the canonical isomorphism  $P^{(m)}_{A/R,q} (r) \simeq \wh{A \langle \ul{\xi} \rangle}_{(m), q, \ul{x}}^{\wh{\otimes}' r}$ for $r \in \N$. Moreover, by definition, we have the product structure 
\[
P^{(m)}_{A/R,q} (r) \wh{\otimes}'_A P^{(m)}_{A/R,q} (s) \to P^{(m)}_{A/R,q} (r+s)  
\]
induced by the natural morphism 
\[
P^{\wh{\otimes}' r} \wh{\otimes}'_A P^{\wh{\otimes}' s} \to P^{\wh{\otimes}' (r+s)},
\]
which is compatible with the natural morphism 
\[
\wh{A \langle \ul{\xi} \rangle}_{(m), q, \ul{x}}^{\wh{\otimes}' r} \wh{\otimes}'_A \wh{A \langle \ul{\xi} \rangle}_{(m), q, \ul{x}}^{\wh{\otimes}' s} \to \wh{A \langle \ul{\xi} \rangle}_{(m), q, \ul{x}}^{\wh{\otimes}' (r+s)}.
\]
Thus the isomorphisms $P^{(m)}_{A/R,q} (r) \simeq \wh{A \langle \ul{\xi} \rangle}_{(m), q, \ul{x}}^{\wh{\otimes}' r}$ for $r \in \N$ induce the isomorphism
\[
P^{(m)}_{A/R,q} (\bullet) \simeq \bigoplus_{r=0}^{\infty} \wh{A \langle \ul{\xi} \rangle}_{(m), q, \ul{x}}^{\wh{\otimes}' r}
\]
of graded $A$-algebras.
\end{proof}

For $r \in \N$, let $\delta^r_i : P^{(m)}_{A/R,q} (r) \to P^{(m)}_{A/R,q} (r+1) \ (0 \le i \le r+1)$ denote the map of $\delta$-$R$-algebras corresponding to the strictly increasing map of simplices \[ [r] := \{ 0, 1, \dots, r \} \to [r+1] \] that skips $i$, i.e., $\delta^r_i$ is induced by the map $A^{\wh{\otimes} (r+1)} \to A^{\wh{\otimes} (r+2)}$ given by
\[
a_0 \otimes \cdots \otimes a_r \mapsto a_0 \otimes \cdots \otimes a_{i-1} \otimes 1 \otimes a_i \otimes \cdots \otimes a_r.
\]
Then, we can consider the differential morphism $d^r: P^{(m)}_{A/R,q} (r) \to P^{(m)}_{A/R,q} (r+1)$ by 
\begin{equation}
\label{dmqd}
d^r = \sum_{i=0}^{r+1} (-1)^i \delta^r_i,
\end{equation}
and this makes $P^{(m)}_{A/R,q} (\bullet)$ a DGA (differential graded algebra) over $R$.

For $r \in \N$, let $\sigma^r_i : P^{(m)}_{A/R,q} (r) \to P^{(m)}_{A/R,q} (r-1) \ (0 \le i \le r-1)$ denote the map of $\delta$-$R$-algebras corresponding to the surjective nondecreasing map of simplices \[ [r] := \{ 0, 1, \dots, r \} \to [r-1] \] such that $\{ i \} \subset [r-1]$ has two elements, $i$ and $i+1$, in its preimage, i.e., $\sigma^r_i$ is induced by the map $A^{\wh{\otimes} (r+1)} \to A^{\wh{\otimes} (r)}$ given by
\[
a_0 \otimes \cdots \otimes a_r \mapsto a_0 \otimes \cdots \otimes a_{i-1} \otimes a_i a_{i+1} \otimes a_{i+2} \otimes \cdots \otimes a_r.
\]
Then, we can define the sub-DGA $NP^{(m)}_{A/R,q} (\bullet)$ of $P^{(m)}_{A/R,q} (\bullet)$ by
\[
NP^{(m)}_{A/R,q} (r) := \bigcap_{i=0}^{r-1} \Ker (\sigma^r_i).
\]
It is easy to see that $NP^{(m)}_{A/R,q} (1)$ is the $(p, (p)_{q^{p^m}})$-completion of the $A$-module freely generated by $\{ \ul{\xi}^{ \{ \ul{k} \}_{(m), q}} \}_{\ul{k} \in \N^d \backslash \{ 0 \} }$, and the DGA $NP^{(m)}_{A/R,q} (\bullet)$ is isomorphic to the complete tensor algebra of $NP^{(m)}_{A/R,q} (1)$ over $A$, namely, we have
\[
NP^{(m)}_{A/R,q} (\bullet) = \bigoplus_{r=0}^{\infty} NP^{(m)}_{A/R,q} (1)^{\wh{\otimes}' r}.
\]

Next, we make some explicit computations related to the above constructions.

\begin{thm}
\label{mqbt}
For all $\ul{k} \in \N^d$, the morphism $\delta_1^1 : P^{(m)}_{A/R,q} (1) \to P^{(m)}_{A/R,q} (2)$ satisfies
\[
\delta_1^1(\ul{\xi}^{ \{ \ul{k} \}_{(m), q}}) = \sum_{0 \le \ul{k}' \le \ul{k}} \left\langle 
\begin{array}{@{}c@{}}
\ul{k} \\ \ul{k}'
\end{array}
\right\rangle_{(m), q }
\ul{\xi}^{ \{ \ul{k}' \}_{(m), q}} \otimes' \ul{\xi}^{ \{ \ul{k} - \ul{k}'  \}_{(m), q}}.
\]
\end{thm}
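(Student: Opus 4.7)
The plan is to reduce to one variable and then to a $q$-binomial identity in a $\delta$-ring where the relevant denominators make sense. First I would observe that $\delta_1^1$ is a ring homomorphism and that both sides of the claimed formula factor over coordinates via $\ul{\xi}^{\{\ul{k}\}_{(m),q}} = \prod_{i=1}^d \xi_i^{\{k_i\}_{(m),q}}$ together with $\left\langle \begin{array}{@{}c@{}} \ul{k} \\ \ul{k}' \end{array} \right\rangle_{(m),q} = \prod_i \left\langle \begin{array}{@{}c@{}} k_i \\ k_i' \end{array} \right\rangle_{(m),q}$, so it suffices to handle $d=1$. By the defining formula $a_0 \otimes a_1 \mapsto a_0 \otimes 1 \otimes a_1$, $\delta_1^1$ sends $\xi = 1 \otimes x - x \otimes 1$ to $1 \otimes 1 \otimes x - x \otimes 1 \otimes 1$; telescoping through $\pm(1 \otimes x \otimes 1)$ and translating back across $A^{\wh{\otimes} 3} \simeq P \wh{\otimes}'_A P$ yields
\[
\delta_1^1(\xi) = \xi \otimes' 1 + 1 \otimes' \xi,
\]
while $\delta_1^1(x) = x$ since $x$ lies in the first tensor factor.

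Next I would reduce to a twisted-power version of the identity. Proposition \ref{film} and the explicit embedding from Proposition \ref{rg} are both natural in the base, so by passing to the universal case $R = \Z[q]_{(p,q-1)}$, $A = R[x]^{\wedge}$ and embedding into $\Z_{(p)}(q)[x][\xi_1, \xi_2]$, the twisted divided power $\xi^{\{k\}_{(m),q}}$ identifies with $\frac{\xi^{(k)_q}}{(\lfloor k/p^m \rfloor)_{q^{p^m}}!}$. It thus suffices to prove
\[
\delta_1^1(\xi^{(k)_q}) = \sum_{l=0}^{k} \begin{pmatrix} k \\ l \end{pmatrix}_q \xi^{(l)_q} \otimes' \xi^{(k-l)_q},
\]
and then transfer to divided powers by dividing the left-hand side by $(\lfloor k/p^m \rfloor)_{q^{p^m}}!$ and multiplying the coefficient of the $l$-th summand by $(\lfloor l/p^m \rfloor)_{q^{p^m}}!(\lfloor (k-l)/p^m \rfloor)_{q^{p^m}}!$: by Definition \ref{qhbc} the resulting net coefficient is
\[
\begin{pmatrix} k \\ l \end{pmatrix}_q \begin{Bmatrix} k \\ l \end{Bmatrix}_{(m),q}^{-1} = \left\langle \begin{array}{@{}c@{}} k \\ l \end{array} \right\rangle_{(m),q},
\]
which lies in $\Z[q]_{(p,q-1)}$ by Lemma \ref{qhb}.

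For the twisted-power identity, I would introduce the rank-one elements $y_1 := \xi \otimes' 1 + x$ and $y_2 := \xi \otimes' 1 + 1 \otimes' \xi + x$, namely the images of $x$ from the middle and right copies of $A$ in $A^{\wh{\otimes} 3}$. Using $\xi^{(k)_q} = \prod_{j=0}^{k-1}((\xi + x) - q^j x)$ and the multiplicativity of $\delta_1^1$, one obtains $\delta_1^1(\xi^{(k)_q}) = \prod_{j=0}^{k-1}(y_2 - q^j x)$, $\xi^{(l)_q} \otimes' 1 = \prod_{j=0}^{l-1}(y_1 - q^j x)$, and, because the ``$x$'' seen by the right-hand tensor factor is the rank-one element $y_1$ rather than the scalar $x$, $1 \otimes' \xi^{(k-l)_q} = \prod_{j=0}^{k-l-1}(y_2 - q^j y_1)$. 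The identity reduces to
\[
\prod_{j=0}^{k-1}(y_2 - q^j x) = \sum_{l=0}^{k} \begin{pmatrix} k \\ l \end{pmatrix}_q \prod_{j=0}^{l-1}(y_1 - q^j x) \prod_{j=0}^{k-l-1}(y_2 - q^j y_1),
\]
which I would prove by induction on $k$ using the splitting $y_2 - q^k x = q^{k-l}(y_1 - q^l x) + (y_2 - q^{k-l} y_1)$ to attach the new factor to either product in the inductive step, together with the $q$-Pascal identity $\begin{pmatrix} k+1 \\ l \end{pmatrix}_q = q^{k+1-l}\begin{pmatrix} k \\ l-1 \end{pmatrix}_q + \begin{pmatrix} k \\ l \end{pmatrix}_q$ to collapse the resulting two sums into the case $k+1$.

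The main obstacle is the careful bookkeeping of the ``$x$'' in the second tensor factor: under the ring map $a \mapsto 1 \otimes' a$ from the right-hand copy of $\wh{A\langle\xi\rangle}_{(m),q,x}$ the scalar $x$ does not go to $x$ inside $P^{(m)}_{A/R,q}(2)$ but to $y_1 = \xi \otimes' 1 + x$, so the right-hand twisted power must be expanded with $y_1$ in place of $x$; additionally, one must check that the renormalization from twisted powers to twisted divided powers reproduces precisely the higher-level $q$-binomial coefficient of Definition \ref{qhbc}. Once these two identifications are settled, the induction and the $q$-Pascal collapse are routine manipulations with $q$-integers.
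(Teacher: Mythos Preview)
Your proposal is correct and follows essentially the same strategy as the paper: reduce to a base where positive $q$-integers are invertible, establish the comultiplication formula for the twisted powers $\ul{\xi}^{(\ul{k})_q}$ with ordinary $q$-binomial coefficients, and then renormalize via Proposition~\ref{film} to obtain the higher-level $q$-binomial coefficients of Definition~\ref{qhbc}. The only difference is that the paper outsources the twisted-power identity $\delta_1^1(\ul{\xi}^{(\ul{k})_q})=\sum_{\ul{k}'}\binom{\ul{k}}{\ul{k}'}_{q}\,\ul{\xi}^{(\ul{k}')_q}\otimes'\ul{\xi}^{(\ul{k}-\ul{k}')_q}$ to \cite[Theorem~3.5]{LQ18}, whereas you supply a self-contained inductive proof via the splitting $y_2-q^kx=q^{k-l}(y_1-q^lx)+(y_2-q^{k-l}y_1)$ and the symmetric $q$-Pascal identity; your careful identification of the ``$x$'' in the right tensor factor with $y_1$ is exactly the point underlying that cited result.
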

\begin{proof}
As in the proof of Proposition \ref{mr}, we may assume that $R=\Q(q)$ and $A = \Q(q)[\ul{x}]$.
First we consider the elements $\xi_i := 1 \otimes x_i - x_i \otimes 1 \in P$ for $i = 1, \dots, d$ (notice that these would have been called $-\tau_i$ in \cite{Miy15}, and therefore the signs in our formulas will not always agree with those given by Miyatani) and the map
\[
\delta_1^1 : P \to P \wh{\otimes}'_A P, \ \ f \otimes g \mapsto f \otimes 1 \otimes g.
\]
By Theorem 3.5 of \cite{LQ18}, we have
\[
\delta_1^1(\ul{\xi}^{ ( \ul{k} )_{q}}) = \sum_{0 \le \ul{k}' \le \ul{k}} \begin{pmatrix} \ul{k} \\ \ul{k}' \end{pmatrix}_{ q }
\ul{\xi}^{ ( \ul{k}' )_{q}} \otimes' \ul{\xi}^{ ( \ul{k} - \ul{k}'  )_{q}}.
\]
By considering the images of these elements in $\wh{A \langle \ul{\xi} \rangle}_{(m), q, \ul{x}}$ via the map \[ \wt{\theta}_{(m)} : P \to A [\ul{\xi}]\left[\frac{\phi^{m+1}(\ul{\xi})}{(p)_{q^{p^m}}}\right]^{\delta, \wedge} \simeq \wh{A \langle \ul{\xi} \rangle}_{(m), q, \ul{x}} \] before Lemma \ref{qpdf}, we get the equation
\begin{align*}
&\delta_1^1 \left( \left( \prod_{i=1}^d \left(\left\lfloor \frac{k_i}{p^m} \right\rfloor\right)_{q^{p^m}}! \right) \ul{\xi}^{ \{ \ul{k} \}_{(m), q}} \right)  \\ 
= &\sum_{0 \le \ul{k}' \le \ul{k}} \begin{pmatrix} \ul{k} \\ \ul{k}' \end{pmatrix}_{ q }
 \left( \prod_{i=1}^d \left(\left\lfloor \frac{k'_i}{p^m} \right\rfloor\right)_{q^{p^m}}! \right) \ul{\xi}^{ \{ \ul{k}' \}_{(m), q}} \otimes' \left( \prod_{i=1}^d \left(\left\lfloor \frac{k_i-k'_i}{p^m} \right\rfloor\right)_{q^{p^m}}! \right) \ul{\xi}^{ \{ \ul{k} - \ul{k}'  \}_{(m), q}},
\end{align*}
so the result follows.
\end{proof}

\begin{prop}
{\rm (cf. \cite[Proposition 2.18]{Miy15})}
\label{mqbc}
For all $\ul{k} \in \N^d$, the morphism $d^1 : P^{(m)}_{A/R,q} (1) \to P^{(m)}_{A/R,q} (2)$ satisfies
\[
d^1(\ul{\xi}^{ \{ \ul{k} \}_{(m), q}}) = - \sum_{0 < \ul{k}' < \ul{k}} \left\langle 
\begin{array}{@{}c@{}}
\ul{k} \\ \ul{k}'
\end{array}
\right\rangle_{(m), q }
\ul{\xi}^{ \{ \ul{k}' \}_{(m), q}} \otimes' \ul{\xi}^{ \{ \ul{k} - \ul{k}'  \}_{(m), q}}.
\]
\end{prop}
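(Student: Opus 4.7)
The plan is to combine the explicit description of the comultiplication $\delta^1_1$ from Theorem \ref{mqbt} with direct computations of the other two cofaces $\delta^1_0$ and $\delta^1_2$, and then cancel the boundary terms using the normalization $\left\langle \begin{array}{@{}c@{}} \ul{k} \\ 0 \end{array}\right\rangle_{(m),q} = \left\langle \begin{array}{@{}c@{}} \ul{k} \\ \ul{k} \end{array}\right\rangle_{(m),q} = 1$.

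First, I would unpack the definition of $d^1$ from \eqref{dmqd}, namely $d^1 = \delta^1_0 - \delta^1_1 + \delta^1_2$, and compute $\delta^1_0$ and $\delta^1_2$ on the basis element $\ul{\xi}^{\{\ul{k}\}_{(m),q}}$. By construction, the map $\delta^1_0 : A \wh{\otimes}_R A \to A^{\wh{\otimes} 3}$ sends $\xi_i = 1 \otimes x_i - x_i \otimes 1$ to $1 \otimes 1 \otimes x_i - 1 \otimes x_i \otimes 1$, which under the identification in Proposition \ref{pmar} is $1 \otimes' \xi_i$; similarly $\delta^1_2(\xi_i) = \xi_i \otimes' 1$. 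Since both $\delta^1_0$ and $\delta^1_2$ are morphisms of $\delta$-$R$-algebras, by the universal property of the $q^{p^m}$-PD envelope they extend uniquely to morphisms of $q^{p^m}$-PD pairs and hence are compatible with the bases $\ul{\xi}^{\{\ul{k}\}_{(m),q}}$ on both sides (using Proposition \ref{rg} to identify the PD envelopes with the $m$-$q$-PD polynomial algebras). Therefore $\delta^1_0(\ul{\xi}^{\{\ul{k}\}_{(m),q}}) = 1 \otimes' \ul{\xi}^{\{\ul{k}\}_{(m),q}}$ and $\delta^1_2(\ul{\xi}^{\{\ul{k}\}_{(m),q}}) = \ul{\xi}^{\{\ul{k}\}_{(m),q}} \otimes' 1$.

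Next, I substitute these into $d^1 = \delta^1_0 - \delta^1_1 + \delta^1_2$ together with Theorem \ref{mqbt}, obtaining
\[
d^1(\ul{\xi}^{\{\ul{k}\}_{(m),q}}) = 1 \otimes' \ul{\xi}^{\{\ul{k}\}_{(m),q}} + \ul{\xi}^{\{\ul{k}\}_{(m),q}} \otimes' 1 - \sum_{0 \le \ul{k}' \le \ul{k}} \left\langle \begin{array}{@{}c@{}} \ul{k} \\ \ul{k}' \end{array}\right\rangle_{(m),q} \ul{\xi}^{\{\ul{k}'\}_{(m),q}} \otimes' \ul{\xi}^{\{\ul{k}-\ul{k}'\}_{(m),q}}.
\]
The extremal terms $\ul{k}'=0$ and $\ul{k}'=\ul{k}$ of the sum have coefficient $1$ and equal precisely $1\otimes'\ul{\xi}^{\{\ul{k}\}_{(m),q}}$ and $\ul{\xi}^{\{\ul{k}\}_{(m),q}}\otimes' 1$; they therefore cancel the first two terms, leaving only the claimed sum over $0 < \ul{k}' < \ul{k}$ with a minus sign.

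The computation is essentially formal once the comultiplication formula of Theorem \ref{mqbt} is granted; the only subtle point — and the one I would present most carefully — is justifying that $\delta^1_0$ and $\delta^1_2$ act on the divided-power basis elements as described, i.e.\ that they respect the $m$-$q$-divided-power structure so that no extra terms appear. This is handled by invoking the universal property of the $q^{p^m}$-PD envelope (Definition \ref{pqpmpdenv}, Theorem \ref{qpde}) together with Proposition \ref{rg}, which identifies the PD envelope with the $m$-$q$-PD polynomial algebra in which $\ul{\xi}^{\{\ul{k}\}_{(m),q}}$ forms a basis.
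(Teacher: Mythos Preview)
Your proposal is correct and follows essentially the same approach as the paper: decompose $d^1 = \delta^1_0 - \delta^1_1 + \delta^1_2$, compute $\delta^1_0(\ul{\xi}^{\{\ul{k}\}_{(m),q}}) = 1 \otimes' \ul{\xi}^{\{\ul{k}\}_{(m),q}}$ and $\delta^1_2(\ul{\xi}^{\{\ul{k}\}_{(m),q}}) = \ul{\xi}^{\{\ul{k}\}_{(m),q}} \otimes' 1$, apply Theorem~\ref{mqbt} for $\delta^1_1$, and cancel the boundary terms. Your justification of how $\delta^1_0$ and $\delta^1_2$ act on the divided-power basis via the universal property is more explicit than the paper's ``it is easy to see,'' but the argument is the same.
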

\begin{proof}
By definition, the morphism $d^1$ is equal to $\delta^1_0 - \delta^1_1 +\delta^1_2$. It is easy to see that
\[
\delta^1_0 (\ul{\xi}^{ \{ \ul{k} \}_{(m), q}}) = 1 \otimes' \ul{\xi}^{ \{ \ul{k} \}_{(m), q}} \ \ \text{and} \ \ \delta^1_2 (\ul{\xi}^{ \{ \ul{k} \}_{(m), q}}) = \ul{\xi}^{ \{ \ul{k} \}_{(m), q}} \otimes' 1.
\]
Then the result follows from Theorem \ref{mqbt}.
\end{proof}

Now, we calculate the hyper $m$-$q^{p^m}$-stratification on $L^{(m)}(M)_{A}$, which exists by Lemma \ref{lmmb} and Proposition \ref{crstr}. Note that we consider the left $A$-module structure on $L^{(m)}(M)_{A} = \wh{A \langle \ul{\xi} \rangle}_{(m), q, \ul{x}} \wh{\otimes}_A' M$.

\begin{lem}
\label{strind}
{\rm (cf. \cite[Lemma 2.16]{Miy15})}
Let $M \in {\bf FMod}^{\wedge}_{(p,(p)_{q^{p^m}})}(A)$. Then, the hyper $m$-$q^{p^m}$-stratification on $L^{(m)}(M)_{A} = \wh{A \langle \ul{\xi} \rangle}_{(m), q, \ul{x}} \wh{\otimes}_A' M$ is induced by
\begin{align*}
P \wh{\otimes}'_A (P \wh{\otimes}'_A M) &\to (P \wh{\otimes}'_A M) \wh{\otimes}_A P \\
(a \otimes b) \otimes' ((f \otimes g) \otimes' h) &\mapsto ((1 \otimes g) \otimes' h) \otimes (a \otimes bf).
\end{align*}
\end{lem}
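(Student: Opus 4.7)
The hyper $m$-$q^{p^m}$-stratification on $L^{(m)}(M)_A$ arises, via Proposition \ref{crstr}, from the crystal structure of the linearization. The plan is to apply the crystal property to the two canonical morphisms $d_0, d_1 : (A, I_A) \to (\wh{A\langle\ul{\xi}\rangle}_{(m),q,\ul{x}}, \cdot)$ in the site, where $d_0$ comes from the left factor inclusion $A \to P$, $a \mapsto a \otimes 1$, and $d_1 = \theta_{(m)}$ comes from the right factor inclusion $a \mapsto 1 \otimes a$. The crystal property yields linearized transition isomorphisms $c_{d_i} : d_i^* L^{(m)}(M)_A \xrightarrow{\sim} L^{(m)}(M)_{\wh{A\langle\ul{\xi}\rangle}_{(m),q,\ul{x}}}$ for $i = 0, 1$, and the hyper stratification is the composite $\epsilon = c_{d_0}^{-1} \circ c_{d_1}$.

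To make this composite explicit, I would first describe $L^{(m)}(M)_{\wh{A\langle\ul{\xi}\rangle}_{(m),q,\ul{x}}}$: by Lemma \ref{lmmb}.1 applied with $B = \wh{A\langle\ul{\xi}\rangle}_{(m),q,\ul{x}}$ together with Proposition \ref{pmar}, this evaluation identifies canonically with $(P \wh{\otimes}'_A P) \wh{\otimes}'_A M \simeq P^{(m)}_{A/R,q}(2) \wh{\otimes}'_A M$, where the outermost $\wh{\otimes}'_A$ uses the rightmost $A$-factor of $A^{\wh{\otimes} 3}$. Tracing through the construction $L^{(m)}(M) = j_{A*} \wh{e}_A^* M$, the semilinear transition maps underlying $c_{d_0}$ and $c_{d_1}$ are induced by the two natural insertion maps $P \to P \wh{\otimes}'_A P$ sending $a \otimes b \mapsto (a \otimes b) \otimes' 1$ (for $d_0$) and $a \otimes b \mapsto 1 \otimes' (a \otimes b)$ (for $d_1$), at the level of $A^{\wh{\otimes} 2} \hookrightarrow A^{\wh{\otimes} 3}$.

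Composing $c_{d_0}^{-1} \circ c_{d_1}$ at the level of pure tensors then becomes a direct bookkeeping: an element $(a \otimes b) \otimes' ((f \otimes g) \otimes' h)$ of $P \wh{\otimes}'_A (P \wh{\otimes}'_A M)$ maps under $c_{d_1}$ into $(P \wh{\otimes}'_A P) \wh{\otimes}'_A M$, where the balancing in $A^{\wh{\otimes} 3}$ identifies the right factor of the outer $P$ with the left factor of the inner $P$; viewing the same element in $(P \wh{\otimes}'_A M) \wh{\otimes}_A P$ via $c_{d_0}^{-1}$ yields the claimed image $((1 \otimes g) \otimes' h) \otimes (a \otimes bf)$, with the product $bf$ reflecting precisely this middle-position balancing. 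The main obstacle is keeping track of the various $A$-module structures—left versus right factor of $P$, and the $\wh{\otimes}'_A$ convention (which uses $\theta_{(m)}$) versus $\wh{\otimes}_A$ (which uses the left factor)—through the identifications of Proposition \ref{pmar}; once this bookkeeping is settled, the resulting identity extends from the polynomial level to the $(p, (p)_{q^{p^m}})$-completion of the $q^{p^m}$-PD envelope by the universal property of the envelope and by continuity.
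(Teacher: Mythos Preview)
Your approach is essentially the same as the paper's: both factor the stratification as $c_{d_0}^{-1}\circ c_{d_1}$ through the evaluation $L^{(m)}(M)_{P^{(m)}_{A/R,q}(1)}\simeq P^{(m)}_{A/R,q}(2)\wh{\otimes}'_A M$, write down the two transition maps at the level of $A^{\wh{\otimes}2}\to A^{\wh{\otimes}3}$, and compose. The paper records these explicitly as $(a\otimes b)\otimes'((f\otimes g)\otimes' h)\mapsto (a\otimes bf\otimes g)\otimes' h$ for $c_{d_1}$ and $((f\otimes g)\otimes' h)\otimes(a\otimes b)\mapsto (af\otimes b\otimes g)\otimes' h$ for $c_{d_0}$, then reads off the claimed formula.

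One concrete slip to fix: your insertion for $d_0$ is wrong. The semilinear map $L^{(m)}(M)_A\to L^{(m)}(M)_{P^{(m)}(1)}$ over $d_0$ is induced by $(f\otimes g)\mapsto (d_0(f)\otimes g)=(f\otimes 1)\otimes g=f\otimes 1\otimes g$, i.e.\ $\delta^1_1$, \emph{not} $f\otimes g\otimes 1$. With your stated map $(a\otimes b)\mapsto (a\otimes b)\otimes' 1$ the composite does not produce the factor $(a\otimes bf)$ in the target; it is precisely the middle insertion that creates the balancing $bf$. Once you correct this, the rest of your outline goes through verbatim and coincides with the paper's direct computation.
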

\begin{proof}
By the construction of the hyper $m$-$q^{p^m}$-stratification and Lemma \ref{lmmb}, we see that the hyper $m$-$q^{p^m}$-stratification on $L^{(m)}(M)_{A}$ is the composite of the isomorphism
\[
\wh{A \langle \ul{\xi} \rangle}_{(m), q, \ul{x}} \wh{\otimes}'_A (\wh{A \langle \ul{\xi} \rangle}_{(m), q, \ul{x}} \wh{\otimes}'_A M) \to P^{(m)}_{A/R,q} (2) \wh{\otimes}'_A M
\]
which is induced by
$(a \otimes b) \otimes' ((f \otimes g) \otimes' h) \mapsto (a \otimes bf \otimes g) \otimes' h$,
and the inverse of the isomorphism
\[
(\wh{A \langle \ul{\xi} \rangle}_{(m), q, \ul{x}} \wh{\otimes}'_A M) \wh{\otimes}_A \wh{A \langle \ul{\xi} \rangle}_{(m), q, \ul{x}} \to P^{(m)}_{A/R,q} (2) \wh{\otimes}'_A M
\]
which is induced by
$((f \otimes g) \otimes' h)  \otimes (a \otimes b) \mapsto (af \otimes b \otimes g) \otimes' h$.
Then, the assertion follows from the direct computation.
\end{proof}

In order to make explicit calculations later, we will need the following technical map. It is possible to give an explicit formula for this map as in Proposition 5.4 of \cite{GLQ23b}, but we omit it because we will not need it in this article:

\begin{defi}
The $R$-linear \textit{flip map} $\tau : \wh{A \langle \ul{\xi} \rangle}_{(m), q, \ul{x}} \to \wh{A \langle \ul{\xi} \rangle}_{(m), q, \ul{x}}$ is the $R$-algebra automorphism of $\wh{A \langle \ul{\xi} \rangle}_{(m), q, \ul{x}}$ induced by the map 
\begin{align*}
A \wh{\otimes}_R A \to A \wh{\otimes}_R A , \ \ f \otimes g \mapsto g \otimes f.
\end{align*}
\end{defi}

Note that the flip map $\tau$ is the unique $R$-algebra automorphism of $\wh{A \langle \ul{\xi} \rangle}_{(m), q, \ul{x}}$ that satisfies the following conditions:
\begin{align*}
\begin{cases}
f \mapsto \theta_{(m)}(f) \ \ \ \text{for} \ f \in A, \\
\xi \mapsto -\xi.
\end{cases}
\end{align*}
Then, the explicit formula for the hyper $m$-$q^{p^m}$-stratification on $\wh{A \langle \ul{\xi} \rangle}_{(m), q, \ul{x}} = L^{(m)}(A)_{A}$ is given in the following proposition.

\begin{prop}
\label{strao}
{\rm (cf. \cite[Proposition 2.19]{Miy15})}
The hyper $m$-$q^{p^m}$-stratification
\[
\epsilon : \wh{A \langle \ul{\xi} \rangle}_{(m), q, \ul{x}} \wh{\otimes}'_A \wh{A \langle \ul{\xi} \rangle}_{(m), q, \ul{x}} \to \wh{A \langle \ul{\xi} \rangle}_{(m), q, \ul{x}} \wh{\otimes}_A \wh{A \langle \ul{\xi} \rangle}_{(m), q, \ul{x}}
\]
of $\wh{A \langle \ul{\xi} \rangle}_{(m), q, \ul{x}}$ maps $1 \otimes' \ul{\xi}^{ \{ \ul{k} \}_{(m), q}}$ to
\[
\sum_{0 \le \ul{k}' \le \ul{k}} \left\langle 
\begin{array}{@{}c@{}}
\ul{k} \\ \ul{k}'
\end{array}
\right\rangle_{(m), q }
\ul{\xi}^{ \{ \ul{k}' \}_{(m), q}} \otimes \tau(\ul{\xi}^{ \{ \ul{k} - \ul{k}'  \}_{(m), q}}).
\]
\end{prop}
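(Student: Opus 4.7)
The approach is to identify $\epsilon(1 \otimes' -)$ with a twist of the comultiplication $\delta_1^1$ via the flip map $\tau$, so that the desired expansion becomes a direct consequence of Theorem \ref{mqbt}. To this end, I first introduce the auxiliary $R$-linear map
\[
\Phi \colon \wh{A \langle \ul{\xi} \rangle}_{(m), q, \ul{x}} \wh{\otimes}'_A \wh{A \langle \ul{\xi} \rangle}_{(m), q, \ul{x}} \to \wh{A \langle \ul{\xi} \rangle}_{(m), q, \ul{x}} \wh{\otimes}_A \wh{A \langle \ul{\xi} \rangle}_{(m), q, \ul{x}}, \qquad \alpha \otimes' \beta \mapsto \beta \otimes \tau(\alpha).
\]
The balancing relation $(\alpha \tau(c)) \otimes' \beta = \alpha \otimes' (c\beta)$ on the source (for $c \in A$) is sent to $\beta \otimes \tau(\alpha)c = c\beta \otimes \tau(\alpha)$, using $\tau(\alpha\tau(c)) = \tau(\alpha)c$ and that $c \in A$ can be moved freely across $\wh{\otimes}_A$; hence $\Phi$ is well-defined. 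Since $\tau$ commutes with the Frobenius lift $\phi$ (being induced by the symmetric swap on $P = A \wh{\otimes}_R A$), $\Phi$ is a $\delta$-$R$-algebra homomorphism that carries the $q^{p^m}$-PD ideal of its source into that of its target.

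The main claim is that $\Phi \circ \delta_1^1$ and the map $\alpha \mapsto \epsilon(1 \otimes' \alpha)$ agree as $\delta$-$R$-algebra homomorphisms from $\wh{A \langle \ul{\xi} \rangle}_{(m), q, \ul{x}}$ to $\wh{A \langle \ul{\xi} \rangle}_{(m), q, \ul{x}} \wh{\otimes}_A \wh{A \langle \ul{\xi} \rangle}_{(m), q, \ul{x}}$. The target is naturally a $q^{p^m}$-PD pair, realized as $P^{(m)}_{A/R,q}(2)$ via the grouping of $A^{\wh{\otimes} 3}$ dual to that of Proposition \ref{pmar}. By the universal property of the $q^{p^m}$-PD envelope (Theorem \ref{qpde}), it suffices to verify agreement on the image of $P = A \wh{\otimes}_R A$. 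For $f \otimes g \in P$, one computes $\delta_1^1(f) = f \otimes' 1$ and $\delta_1^1(\tau(g)) = 1 \otimes' \tau(g)$, whence $\delta_1^1(f \tau(g)) = f \otimes' \tau(g)$ and $\Phi(\delta_1^1(f \tau(g))) = \tau(g) \otimes \tau(f)$; Lemma \ref{strind} applied with $M = A$ gives $\epsilon(1 \otimes' (f \tau(g))) = (1 \otimes g) \otimes (1 \otimes f) = \tau(g) \otimes \tau(f)$, matching.

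Applying the identity $\epsilon(1 \otimes' \alpha) = \Phi(\delta_1^1(\alpha))$ to $\alpha = \ul{\xi}^{\{\ul{k}\}_{(m),q}}$ and substituting the formula of Theorem \ref{mqbt} yields
\[
\epsilon(1 \otimes' \ul{\xi}^{\{\ul{k}\}_{(m),q}}) = \sum_{0 \le \ul{k}' \le \ul{k}} \left\langle \begin{array}{@{}c@{}} \ul{k} \\ \ul{k}' \end{array} \right\rangle_{(m), q} \ul{\xi}^{\{\ul{k} - \ul{k}'\}_{(m),q}} \otimes \tau(\ul{\xi}^{\{\ul{k}'\}_{(m),q}}).
\]
Reindexing $\ul{k}' \leftrightarrow \ul{k} - \ul{k}'$ and invoking the symmetry $\left\langle \begin{array}{@{}c@{}} \ul{k} \\ \ul{k} - \ul{k}' \end{array} \right\rangle_{(m), q} = \left\langle \begin{array}{@{}c@{}} \ul{k} \\ \ul{k}' \end{array} \right\rangle_{(m), q}$ of the higher-level $q$-binomial coefficients (immediate from their definition and the standard $q$-binomial symmetry) produces the claimed expression. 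The principal technical point is isolating the correct twist map $\Phi$ and verifying both its well-definedness and its $\delta$-ring compatibility; once this is done, the universal property of the $q^{p^m}$-PD envelope reduces the verification to the purely algebraic comparison on $P$ just carried out.
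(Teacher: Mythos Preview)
Your proof is correct and follows essentially the same route as the paper: both introduce the twist $\alpha \otimes' \beta \mapsto \beta \otimes \tau(\alpha)$ (the paper writes it as the inverse $(\tau'')^{-1}$ of an auxiliary map $\tau''$), verify on the level of $P$ that this twist intertwines $\epsilon(1\otimes' -)$ with $\delta_1^1$, and then invoke Theorem~\ref{mqbt}. The only cosmetic difference is that you conclude with a reindexing $\ul{k}'\leftrightarrow \ul{k}-\ul{k}'$ using the symmetry of $\left\langle\begin{smallmatrix}\ul{k}\\\ul{k}'\end{smallmatrix}\right\rangle_{(m),q}$, whereas the paper's order of operations (applying $(\tau'')^{-1}$ after computing $\tau''\circ\epsilon$) yields the stated form directly.
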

\begin{proof}
By Lemma \ref{strind}, the hyper $m$-$q^{p^m}$-stratification is induced by
\[
\epsilon' : P \wh{\otimes}'_A P \to P \wh{\otimes}_A P, \ \ (a \otimes b) \otimes' (f \otimes g) \mapsto (1 \otimes g) \otimes (a \otimes bf).
\]
For later computations, we use the map
\[
\tau'' : \wh{A \langle \ul{\xi} \rangle}_{(m), q, \ul{x}} \wh{\otimes}_A \wh{A \langle \ul{\xi} \rangle}_{(m), q, \ul{x}} \to \wh{A \langle \ul{\xi} \rangle}_{(m), q, \ul{x}} \wh{\otimes}'_A \wh{A \langle \ul{\xi} \rangle}_{(m), q, \ul{x}}
\]
induced by the map
\[
\tau' : P \wh{\otimes}_A P \to P \wh{\otimes}'_A P, \ \ (a \otimes b) \otimes (f \otimes g) \mapsto (g \otimes af) \otimes' (1 \otimes b).
\]
For elements $y,z \in \wh{A \langle \ul{\xi} \rangle}_{(m), q, \ul{x}}$, it is easy to see that $(\tau'')^{-1}$ is given by $y \otimes' z \mapsto z \otimes \tau(y)$. If we define the map $p_2$ by
\[
p_2 : P \to P \wh{\otimes}'_A P, \ \ (a \otimes b) \mapsto (1 \otimes 1) \otimes' (a \otimes b),
\]
then we can check that $\tau' \circ \epsilon' \circ p_2$ is equal to $\delta^1_1$. So, by Theorem \ref{mqbt}, we see that 
\[
(\tau'' \circ \epsilon)(1 \otimes' \ul{\xi}^{ \{ \ul{k} \}_{(m), q}}) = \sum_{0 \le \ul{k}' \le \ul{k}} \left\langle 
\begin{array}{@{}c@{}}
\ul{k} \\ \ul{k}'
\end{array}
\right\rangle_{(m), q }
\ul{\xi}^{ \{ \ul{k} - \ul{k}'  \}_{(m), q}} \otimes' \ul{\xi}^{ \{ \ul{k}' \}_{(m), q}}.
\]
Applying the map $(\tau'')^{-1}$, we get the result.
\end{proof}

Based on the explicit description of linearizations in Lemma \ref{lmmb}, we can also construct a DGA $LP^{(m)}_{A/R,q} (\bullet) := P^{(m)}_{A/R,q} (\bullet+1)$ over $A$ and its sub-DGA $LNP^{(m)}_{A/R,q} (\bullet)$ by
\[
LNP^{(m)}_{A/R,q} (r) := \bigcap_{i=1}^{r} \Ker (\sigma^{r+1}_i).
\]
Here, the differential morphism $d^r : LP^{(m)}_{A/R,q} (r) \to LP^{(m)}_{A/R,q} (r+1)$ is defined by
\begin{equation}
\label{lpdr}
d^r = \sum_{i=1}^{r+2} (-1)^{i+1} \delta^{r+1}_i.
\end{equation}
Note that in these definitions, we exclude the map $\sigma^{r+1}_0$ and the map $\delta^{r+1}_0$ respectively.

By  Lemma \ref{lmmb} and Proposition \ref{crstr}, each $LP^{(m)}_{A/R,q} (r) = L^{(m)}(P^{(m)}_{A/R,q} (r))_A$ for $r \in \N$ has a hyper $m$-$q^{p^m}$-stratification.

\begin{lem}
\label{lpstr}
{\rm (cf. \cite[Lemma 2.20]{Miy15})}
For each $r \in \N$, the differential morphism $d^r : LP^{(m)}_{A/R,q} (r) \to LP^{(m)}_{A/R,q} (r+1)$ is compatible with the hyper $m$-$q^{p^m}$-stratification on both sides.
\end{lem}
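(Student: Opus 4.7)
The plan is to reduce to showing that each face map $\delta^{r+1}_i$ for $i \in \{1,\ldots,r+2\}$ appearing in the alternating-sum formula $d^r = \sum_{i=1}^{r+2}(-1)^{i+1}\delta^{r+1}_i$ is individually compatible with the hyper $m$-$q^{p^m}$-stratifications on its source and target. This reduction is legitimate because compatibility with a given stratification is a $\Z$-linear condition on morphisms.

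For the easier range $i \geq 2$, I would invoke the functoriality of the linearization. Under the identification $LP^{(m)}_{A/R,q}(r) = L^{(m)}(P^{(m)}_{A/R,q}(r))_A$ coming from Lemma \ref{lmmb} and Proposition \ref{pmar}, the underlying face $\delta^r_{i-1} : P^{(m)}_{A/R,q}(r) \to P^{(m)}_{A/R,q}(r+1)$ inserts a $1$ at a position $\geq 1$, so it preserves the $0$-th factor and is therefore an $A$-module homomorphism for the first-factor action. Hence $L^{(m)}(\delta^r_{i-1})$ is a morphism of complete crystals, and by Proposition \ref{crstr} its realization at $A$ automatically commutes with the hyper $m$-$q^{p^m}$-stratifications. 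A direct check on simple tensors in $P^{(m)}_{A/R,q}(1)\wh{\otimes}'_A P^{(m)}_{A/R,q}(r)$ then identifies $L^{(m)}(\delta^r_{i-1})_A$ with $\delta^{r+1}_i$.

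The case $i = 1$ is the main obstacle, since $\delta^r_0$ inserts $1$ in position $0$ and hence fails to be $A$-linear for the first-factor action, so the functoriality shortcut is unavailable. For this case I would verify the compatibility directly, using the explicit formula of Lemma \ref{strind} applied both to $M = P^{(m)}_{A/R,q}(r)$ and to $M = P^{(m)}_{A/R,q}(r+1)$. Evaluating both sides of the square
\[
(\delta^{r+1}_1 \wh{\otimes} \mathrm{id}) \circ \epsilon_{LP^{(m)}_{A/R,q}(r)} \;=\; \epsilon_{LP^{(m)}_{A/R,q}(r+1)} \circ (\mathrm{id} \wh{\otimes}' \delta^{r+1}_1)
\]
on a simple tensor $(a \otimes b) \otimes' (f_0 \otimes f_1 \otimes \cdots \otimes f_{r+1})$, both routes produce $(1 \otimes 1 \otimes f_1 \otimes \cdots \otimes f_{r+1}) \otimes (a \otimes b f_0)$; extending by bilinearity and passing to the completion yields the compatibility.

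Combining the two ranges gives stratification-compatibility of every summand of $d^r$, and hence of $d^r$ itself. The computational core of the argument is the single direct verification carried out for $\delta^{r+1}_1$.
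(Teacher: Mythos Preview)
Your proposal is correct and follows essentially the same strategy as the paper: reduce to the individual face maps $\delta^{r+1}_i$ for $i=1,\dots,r+2$ and verify compatibility for each. The only difference is that the paper does the direct check uniformly for all $i$ using the explicit formula of Lemma~\ref{strind} (the same computation you carry out for $i=1$ works verbatim for every $i\ge 1$), whereas you split off the range $i\ge 2$ via functoriality of $L^{(m)}$; both routes are valid, and the paper's uniform verification is marginally shorter.
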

\begin{proof}
By Lemma \ref{strind}, the hyper $m$-$q^{p^m}$-stratification on $LP^{(m)}_{A/R,q} (r)$ is induced by
\begin{align*}
P \wh{\otimes}'_A A^{\wh{\otimes} (r+2)} &\to A^{\wh{\otimes} (r+2)} \wh{\otimes}_A P \\
(a \otimes b) \otimes' (f \otimes g \otimes h_1 \otimes \cdots \otimes h_r)
 &\mapsto (1 \otimes g \otimes h_1 \otimes \cdots \otimes h_r) \otimes (a \otimes bf).
\end{align*}
We can check that the hyper $m$-$q^{p^m}$-stratification is compatible with the morphism $\delta^{r+1}_i$ for $i = 1, \dots , r+2$. Therefore, the assertion follows from the definition (\ref{lpdr}) of $d^r$.
%(note that we do not need to consider $\delta^{r+1}_0$, which is the only one that is not compatible with the morphism above).
\end{proof}

Now, we introduce the higher $q$-de Rham complex. We denote by $\textbf{1}_i$ the element $(0, \dots, 0, 1, 0, \dots, 0)$ in $\N^d$, where 1 sits in the $i$-th entry. Let $\breve{K}^{(m)}_{A/R,q} (\bullet)$ be the DG-ideal of $NP^{(m)}_{A/R,q} (\bullet)$ generated by the $(p,(p)_{q^{p^m}})$-completion of the free $A$-module with basis $\ul{\xi}^{ \{ \ul{k} \}_{(m), q}}$, where $\ul{k} \in \N^d \backslash \{ 0, p^m \textbf{1}_1, \dots, p^m \textbf{1}_d \} $. We can check that the ideal $\wh{A \langle \ul{\xi} \rangle}_{(m), q, \ul{x}} \wh{\otimes}'_A \breve{K}^{(m)}_{A/R,q} (\bullet) := \bigoplus_{r=0}^{\infty} \wh{A \langle \ul{\xi} \rangle}_{(m), q, \ul{x}} \wh{\otimes}'_A \breve{K}^{(m)}_{A/R,q} (r)$ of $LNP^{(m)}_{A/R,q} (\bullet)$ is also a DG-ideal.

\begin{defi}
\label{mqdr}
We define the \textit{higher $q$-de Rham complex} $\breve{\Omega}^{(m)}_{A/R,q}  (\bullet)$ as the quotient of $NP^{(m)}_{A/R,q} (\bullet)$ by the DG-ideal $\breve{K}^{(m)}_{A/R,q} (\bullet)$, and define the \textit{linearized higher $q$-de Rham complex} $L \breve{\Omega}^{(m)}_{A/R,q}  (\bullet)$ as the quotient of $LNP^{(m)}_{A/R,q} (\bullet)$ by the DG-ideal $\wh{A \langle \ul{\xi} \rangle}_{(m), q, \ul{x}} \wh{\otimes}'_A \breve{K}^{(m)}_{A/R,q} (\bullet)$.
\end{defi}

Recall that, for $0 \le k < 2p^m$, we have $\xi_i^{ \{ k \}_{(m),q}} = \xi_i^{(k)_q}$ via the map (\ref{filma}). So we shall often denote the images of $\xi_i^{ \{ p^m \}_{(m),q}}$ via the natural surjection \[ NP^{(m)}_{A/R,q} (1) \to \breve{\Omega}^{(m)}_{A/R,q}  (1) \] by $\ol{\xi}_i^{(p^m)_q}$ for $i = 1, \dots, d$ if no confusion arises.

\begin{prop}
\label{drmo}
{\rm (cf. \cite[Proposition 3.2]{Miy15})}
\begin{enumerate}
\item $\breve{\Omega}^{(m)}_{A/R,q}  (1)$ is a free $A$-module of rank $d$ with basis $\{ \ol{\xi}_i^{(p^m)_q} \}_{i=1, \dots, d}$.
\item $\breve{\Omega}^{(m)}_{A/R,q}  (\bullet)$ is isomorphic to the exterior algebra of $\breve{\Omega}^{(m)}_{A/R,q}  (1)$ as a graded $A$-module.
\item The differential map $d^r$ sends the section $\ul{\xi}^{ \{ \ul{k} \}_{(m), q}} \otimes' \ol{\xi}_{i_1}^{(p^m)_q} \wedge' \cdots \wedge' \ol{\xi}_{i_r}^{(p^m)_q}$ of $L \breve{\Omega}^{(m)}_{A/R,q}  (r)$, where $\ul{k} = (k_1, \dots, k_d) \in \N^d$, to the section
\[
\sum_{\substack{i=1, \dots, d \\ k_i \ge p^m}}  \left\langle 
\begin{array}{@{}c@{}}
k_i \\ p^m
\end{array}
\right\rangle_{(m), q }
\ul{\xi}^{ \{ \ul{k} - p^m {\bf 1}_i \}_{(m), q}} \otimes' \ol{\xi}_i^{(p^m)_q} \wedge' \ol{\xi}_{i_1}^{(p^m)_q} \wedge' \cdots \wedge' \ol{\xi}_{i_r}^{(p^m)_q}.
\]
\end{enumerate}
\end{prop}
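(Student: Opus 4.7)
The plan is to prove the three parts in sequence. Part (1) will be immediate from the construction: $NP^{(m)}_{A/R,q}(1)$ is the completed free $A$-module on $\{\ul{\xi}^{\{\ul{k}\}_{(m), q}}\}_{\ul{k} \neq 0}$, and quotienting by the degree-one part of the DG-ideal kills exactly those basis elements with $\ul{k} \notin \{p^m\textbf{1}_i\}$. For Part (2), since $NP^{(m)}_{A/R,q}(\bullet)$ is the complete tensor algebra over $A$ on $NP^{(m)}_{A/R,q}(1)$, it suffices to compute the additional relations coming from $d^1$ applied to the generators of $\breve{K}^{(m)}_{A/R,q}(1)$. Using Proposition \ref{mqbc}, only summands of $d^1(\ul{\xi}^{\{\ul{k}\}_{(m),q}})$ with both $\ul{k}'$ and $\ul{k} - \ul{k}'$ equal to some $p^m\textbf{1}_l$ survive in the quotient, which forces $\ul{k}$ to be either $2p^m\textbf{1}_i$ or $p^m\textbf{1}_i + p^m\textbf{1}_j$ with $i \neq j$. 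A direct computation from Definition \ref{qhbc} should give coefficient $1$ for the anti-commutativity relations and coefficient $\langle 2p^m, p^m\rangle_{(m),q}$ for the squaring relations; the latter I expect to be a unit in $\Z[q]_{(p,q-1)}$, shown by $(p^m l)_q = (p^m)_q (l)_{q^{p^m}}$ and Claim \ref{clmus} as in the proof of Lemma \ref{qhb}, computing $\binom{2p^m}{p^m}_q = u \cdot (2)_{q^{p^m}}$ for a unit $u$ while $\{2p^m, p^m\}_{(m),q} = (2)_{q^{p^m}}$, giving $\langle 2p^m, p^m\rangle_{(m),q} = u$. These will then be exactly the defining relations of the exterior algebra.

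For Part (3), under the decomposition $LP^{(m)}_{A/R,q}(r) = \wh{A\langle\ul{\xi}\rangle}_{(m),q,\ul{x}} \wh{\otimes}'_A P^{(m)}_{A/R,q}(r)$, $\delta^{r+1}_1$ acts as $\delta^1_1 \wh{\otimes}'_A \mathrm{id}$ on the linearization factor and $\delta^{r+1}_i$ for $i \geq 2$ acts as $\mathrm{id} \wh{\otimes}'_A \delta^r_{i-1}$ on the forms factor, so $d^r = (\delta^1_1 \wh{\otimes}'_A \mathrm{id}) + \mathrm{id} \wh{\otimes}'_A (d^r_P - \delta^r_0)$. I would then apply this to $\ul{\xi}^{\{\ul{k}\}_{(m),q}} \otimes' \tilde{\omega}$ with $\tilde{\omega} = \xi_{i_1}^{\{p^m\}_{(m),q}} \otimes' \cdots \otimes' \xi_{i_r}^{\{p^m\}_{(m),q}}$. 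The $d^r_P(\tilde{\omega})$ contribution should vanish modulo the DG-ideal, since each $d^1(\xi_{i_k}^{\{p^m\}_{(m),q}})$ lies in $\breve{K}^{(m)}_{A/R,q}(2)$ (all its summands involve indices strictly between $0$ and $p^m$, hence bad) and so Leibniz gives $d^r_P(\tilde{\omega}) \in \breve{K}^{(m)}_{A/R,q}(r+1)$; and the $\delta^r_0$-contribution $\ul{\xi}^{\{\ul{k}\}_{(m),q}} \otimes' 1 \otimes' \tilde{\omega}$ cancels precisely the $\ul{k}' = \ul{k}$ summand of $\delta^1_1(\ul{\xi}^{\{\ul{k}\}_{(m),q}})$. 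What remains descends to $L\breve{\Omega}^{(m)}_{A/R,q}(r+1)$ only when $\ul{k} - \ul{k}' = p^m\textbf{1}_i$ for some $i$, which forces $k_i \geq p^m$, with coefficient $\langle \ul{k}, \ul{k}-p^m\textbf{1}_i\rangle_{(m),q} = \langle k_i, p^m\rangle_{(m),q}$ by the $k' \leftrightarrow k - k'$ symmetry of the higher-level $q$-binomial coefficients, giving the required formula.

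The principal obstacles will be verifying that $\langle 2p^m, p^m\rangle_{(m),q}$ is a unit (for Part (2)) and carefully tracking the cancellation between the $\delta^r_0$ and $\delta^{r+1}_1$ contributions in Part (3), since individually these terms do not lie in $LNP^{(m)}_{A/R,q}(r+1)$ even though their sum does.
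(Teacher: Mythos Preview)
Your proposal is correct and follows essentially the same approach as the paper. Parts (1) and (2) are argued identically: identify the relations coming from $\ol{d^1(\ul{\xi}^{\{\ul{k}\}_{(m),q}})}$, observe they are nontrivial only for $\ul{k}=2p^m\mathbf{1}_i$ or $\ul{k}=p^m\mathbf{1}_i+p^m\mathbf{1}_j$, and deduce the exterior-algebra relations. The only difference is in verifying that $\left\langle\begin{smallmatrix}2p^m\\p^m\end{smallmatrix}\right\rangle_{(m),q}$ is a unit: the paper invokes Lemma~\ref{ipmq}, which reduces modulo $q-1$ to the classical statement $\left\langle\begin{smallmatrix}k\\p^m\end{smallmatrix}\right\rangle_{(m)}\equiv 1\pmod p$ from Miyatani, whereas you compute the unit directly via Claim~\ref{clmus} and the identity $(p^ml)_q=(p^m)_q(l)_{q^{p^m}}$. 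Your route is more self-contained but specific to $k=2p^m$; the paper's lemma gives invertibility of $\left\langle\begin{smallmatrix}k\\p^m\end{smallmatrix}\right\rangle_{(m),q}$ for all $k\ge p^m$, which is reused in Lemma~\ref{fplq}. For Part (3), the paper simply says the formula follows from (\ref{lpdr}) and Theorem~\ref{mqbt}; your decomposition $d^r=(\delta^1_1\wh\otimes'_A\mathrm{id})+\mathrm{id}\wh\otimes'_A(d^r_P-\delta^r_0)$ together with the cancellation and the observation $d^1(\xi_{i_k}^{\{p^m\}_{(m),q}})\in\breve K^{(m)}_{A/R,q}(2)$ is exactly what that sentence unpacks to.
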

\begin{proof}
We follow the proof of \cite{Miy15}. By Definition \ref{mqdr} and Proposition \ref{mqbc}, as an $A$-module, $\breve{\Omega}^{(m)}_{A/R,q}  (r)$ is generated by the elements 
\[
\ol{\xi}_{i_1}^{(p^m)_q} \otimes' \ol{\xi}_{i_2}^{(p^m)_q} \otimes' \cdots \otimes' \ol{\xi}_{i_r}^{(p^m)_q},
\]
subject to the relations generated by 
\[
0 =\ol{d^1(\ul{\xi}^{ \{ \ul{k} \}_{(m), q}})} = - \sum_{\substack{0 < \ul{k}' < \ul{k} \\ \ul{k}' = p^m \textbf{1}_i \\ \ul{k} - \ul{k}' = p^m \textbf{1}_j}} \left\langle 
\begin{array}{@{}c@{}}
\ul{k} \\ \ul{k}'
\end{array}
\right\rangle_{(m), q }
\ol{\xi}_{i}^{(p^m)_q} \otimes' \ol{\xi}_{j}^{(p^m)_q}
\]
for $\ul{k} \in \N^d \backslash \{ 0, p^m \textbf{1}_1, \dots, p^m \textbf{1}_d \}$.
The above relations are trivial except for the cases $\ul{k} = 2p^m \textbf{1}_i$ and the cases $\ul{k} = p^m \textbf{1}_i + p^m \textbf{1}_j$ with $i \ne j$.

For the cases $\ul{k} = 2p^m \textbf{1}_i$, the above relations give
\[
\ol{\xi}_{i}^{(p^m)_q} \otimes' \ol{\xi}_{i}^{(p^m)_q} = 0
\]
because the coefficient $\left\langle 
\begin{array}{@{}c@{}}
2p^m \\ p^m
\end{array}
\right\rangle_{(m), q } $ is invertible in $\Z [ q ]_{(p,q-1)}$ by Lemma \ref{ipmq} below. 

For the cases $\ul{k} = p^m \textbf{1}_i + p^m \textbf{1}_j$ with $i \ne j$, the above relations give
\[
\ol{\xi}_{i}^{(p^m)_q} \otimes' \ol{\xi}_{j}^{(p^m)_q} + \ol{\xi}_{j}^{(p^m)_q} \otimes' \ol{\xi}_{i}^{(p^m)_q} = 0,
\]
so the first and the second assertions follow.  Now, the third assertion follows from (\ref{lpdr}) and the formula of Theorem \ref{mqbt}. 
\end{proof}

\begin{lem}
\label{ipmq}
If $k \ge p^m$, then the element $\left\langle 
\begin{array}{@{}c@{}}
k \\ p^m
\end{array}
\right\rangle_{(m), q } $ in $\Z [ q ]_{(p,q-1)}$ is congruent to $1$ modulo $(p, q-1)$.
\end{lem}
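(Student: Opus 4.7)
The plan is to reduce the claim to a statement about the classical level-$m$ binomial coefficients modulo $p$, by first reducing modulo $q-1$. Consider the surjection $\Z[q]_{(p,q-1)} \twoheadrightarrow \Z_{(p)}$ given by $q \mapsto 1$. Under this map, $(n)_q \mapsto n$ for every $n \in \N$, so $\begin{pmatrix} k \\ p^m \end{pmatrix}_q$ and $\begin{Bmatrix} k \\ p^m \end{Bmatrix}_{(m),q}$ reduce to their classical analogs. Writing $k = p^m r + s$ with $r \ge 1$ and $0 \le s < p^m$, we are in the ``no-carry'' case of Definition \ref{qhbc}, so $\begin{Bmatrix} k \\ p^m \end{Bmatrix}_{(m),q} = (r)_{q^{p^m}}$. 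By Lemma \ref{qhb} the element $\left\langle \begin{array}{@{}c@{}} k \\ p^m \end{array} \right\rangle_{(m),q}$ genuinely lies in $\Z[q]_{(p,q-1)}$, and the defining relation
\[
\left\langle \begin{array}{@{}c@{}} k \\ p^m \end{array} \right\rangle_{(m),q} \cdot (r)_{q^{p^m}} = \begin{pmatrix} k \\ p^m \end{pmatrix}_q,
\]
valid in $\Q(q)$ and hence in $\Z[q]_{(p,q-1)}$, reduces modulo $q-1$ to an identity in $\Z_{(p)}$. Since $r \ne 0$ in $\Q$, this forces the reduction of $\left\langle \begin{array}{@{}c@{}} k \\ p^m \end{array} \right\rangle_{(m),q}$ to be $\binom{k}{p^m}/r \in \Z_{(p)}$, which is exactly the classical higher-level binomial coefficient $\left\langle \begin{array}{@{}c@{}} k \\ p^m \end{array} \right\rangle_{(m)}$ (an element of $\Z_{(p)}$ by Lemma \ref{hb}).

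It thus remains to show $\binom{k}{p^m}/r \equiv 1 \pmod p$. In the numerator $\prod_{i=0}^{p^m-1}(p^m r + s - i)$ of $\binom{k}{p^m}$, the unique multiple of $p^m$ is $p^m r$, occurring at index $i = s$; factoring it out and cancelling against $r$ and against $p^m = p^m!/(p^m-1)!$ gives
\[
\frac{\binom{k}{p^m}}{r} = \frac{1}{(p^m-1)!}\prod_{\substack{0 \le i \le p^m - 1 \\ i \ne s}}(p^m r + s - i).
\]
Reducing mod $p$ replaces each $p^m r + s - i$ by $s - i$, and grouping the positive and negative factors yields
\[
\frac{\binom{k}{p^m}}{r} \equiv \frac{(-1)^{p^m - 1 - s}\, s!\, (p^m - 1 - s)!}{(p^m - 1)!} = \frac{(-1)^{p^m - 1 - s}}{\binom{p^m - 1}{s}} \pmod p.
\]
The standard congruence $\binom{p^m - 1}{s} \equiv (-1)^s \pmod p$ then gives $(-1)^{p^m - 1}$, which equals $1$ in $\F_p$ in all cases: $p^m - 1$ is even for odd $p$, and $-1 \equiv 1 \pmod 2$ for $p = 2$.

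The main subtlety I anticipate is justifying the reduction step when $p \mid r$. In that case $(r)_{q^{p^m}}$ is not a unit in $\Z[q]_{(p,q-1)}$, so naive division makes no sense in the localization. However, Lemma \ref{qhb} already asserts that the quotient $\binom{k}{p^m}_q / (r)_{q^{p^m}}$ is an honest element of $\Z[q]_{(p,q-1)}$, so its image modulo $(q-1)$ is unambiguously defined as an element of $\Z_{(p)}$; the identity displayed above then pins down its value to be $\binom{k}{p^m}/r$, as desired. Once this foundational point is settled, the remaining mod-$p$ computation is elementary.
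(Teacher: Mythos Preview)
Your proof is correct and follows the same two-step strategy as the paper: first reduce modulo $q-1$ to obtain the classical higher-level coefficient $\left\langle\begin{smallmatrix}k\\p^m\end{smallmatrix}\right\rangle_{(m)}$, then show this is $\equiv 1 \pmod p$. The only difference is that the paper dispatches the second step by citing Lemma~3.3 of \cite{Miy15}, whereas you prove that lemma directly via the explicit mod-$p$ computation of $\binom{k}{p^m}/r$; your version is thus more self-contained, and your handling of the subtlety when $p \mid r$ (using that the element already lies in $\Z[q]_{(p,q-1)}$ before reducing) is exactly right.
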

\begin{proof}
The element $\left\langle 
\begin{array}{@{}c@{}}
k \\ p^m
\end{array}
\right\rangle_{(m), q } $ is congruent to $\left\langle 
\begin{array}{@{}c@{}}
k \\ p^m
\end{array}
\right\rangle_{(m)} $ modulo $q-1$. By Lemma 3.3 of \cite{Miy15}, the element $\left\langle 
\begin{array}{@{}c@{}}
k \\ p^m
\end{array}
\right\rangle_{(m)} $ is congruent to 1 modulo $p$, so we get the result.
\end{proof}

We prove now the formal Poincar\'{e} lemma for the linearized higher $q$-de Rham complex $L \breve{\Omega}^{(m)}_{A/R,q}  (\bullet)$.

\begin{lem}
\label{fplq}
{\rm (cf. \cite[Lemma 3.4]{Miy15})}
The linearized higher $q$-de Rham complex $L \breve{\Omega}^{(m)}_{A/R,q}  (\bullet)$ is a resolution of the direct sum of $p^{md}$ copies of $A$, i.e., the  $A$-linear map
\begin{align*}
\iota': \bigoplus_{\ul{k} \in \cB^{(m)}_d} A e_{\ul{k}} \to L \breve{\Omega}^{(m)}_{A/R,q}  (\bullet) , \ e_{\ul{k}} \mapsto \ul{\xi}^{ \{ \ul{k}\}_{(m), q}}
\end{align*}
is a quasi-isomorphism, where $\cB^{(m)}_d$ denotes the set of elements $\ul{k} = (k_1, \dots, k_d) \in \N^d$ such that $0 \le k_i < p^m$ for $i = 1, \dots, d$.
\end{lem}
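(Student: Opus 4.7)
The strategy is to decompose $L \breve{\Omega}^{(m)}_{A/R, q}(\bullet)$ as a direct sum of sub-complexes indexed by the residues $\ul{s} \in \cB^{(m)}_d$, and to show that each summand is quasi-isomorphic to a single copy of $A$ concentrated in degree zero. For each $\ul{s} \in \cB^{(m)}_d$, let $C^{\bullet}_{\ul{s}}$ denote the $(p, q-1)$-adic closure in $L \breve{\Omega}^{(m)}_{A/R, q}(\bullet)$ of the $A$-submodule spanned by the basis elements $\ul{\xi}^{\{p^m \ul{l} + \ul{s}\}_{(m), q}} \otimes' \ol{\xi}_{i_1}^{(p^m)_q} \wedge' \cdots \wedge' \ol{\xi}_{i_r}^{(p^m)_q}$ for $\ul{l} \in \N^d$ and $1 \le i_1 < \cdots < i_r \le d$. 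The differential formula of Proposition \ref{drmo}.3 only alters $\ul{k}$ by subtracting some $p^m \textbf{1}_i$, so it preserves the residue $\ul{s}$; hence each $C^{\bullet}_{\ul{s}}$ is a subcomplex, $L \breve{\Omega}^{(m)}_{A/R, q}(\bullet) = \bigoplus_{\ul{s}} C^{\bullet}_{\ul{s}}$ as complexes, and $\iota'(A e_{\ul{s}}) \subset C^0_{\ul{s}}$. It thus suffices to show each $C^{\bullet}_{\ul{s}}$ is quasi-isomorphic to $A \cdot \ul{\xi}^{\{\ul{s}\}_{(m), q}}$ in degree zero.

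Next, I identify $C^{\bullet}_{\ul{s}}$ with the $(p, q-1)$-adically completed tensor product over $A$ of $d$ two-term complexes $C^{\bullet}_{(i), s_i}$. The $i$-th factor has $C^{0}_{(i), s_i}$ equal to the $(p, q-1)$-adic completion of $\bigoplus_{l \ge 0} A \cdot \xi_i^{\{p^m l + s_i\}_{(m), q}}$, $C^{1}_{(i), s_i} = C^{0}_{(i), s_i} \cdot \ol{\xi}_i^{(p^m)_q}$, and differential sending $\xi_i^{\{p^m l + s_i\}_{(m), q}}$ to $\left\langle \begin{array}{@{}c@{}} p^m l + s_i \\ p^m \end{array} \right\rangle_{(m), q} \xi_i^{\{p^m(l - 1) + s_i\}_{(m), q}} \otimes \ol{\xi}_i^{(p^m)_q}$ for $l \ge 1$ (and zero for $l = 0$). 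This identification follows from the multiplicative convention $\ul{\xi}^{\{\ul{k}\}_{(m), q}} = \prod_i \xi_i^{\{k_i\}_{(m), q}}$ together with the additive, index-by-index form of the differential in Proposition \ref{drmo}.3. By Lemma \ref{ipmq} (combined with Remarks \ref{rqpd}.1 and \ref{pqpmrem} identifying the relevant topologies), each such coefficient with $l \ge 1$ is a unit in $A$; so $d_{(i)}$ is surjective with kernel $A \cdot \xi_i^{\{s_i\}_{(m), q}}$, and $C^{\bullet}_{(i), s_i}$ is a resolution of $A \xi_i^{\{s_i\}_{(m), q}}$ by $(p, (p)_{q^{p^m}})$-completely flat $A$-modules.

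To conclude, I apply the Künneth formula for the completed tensor product of the $C^{\bullet}_{(i), s_i}$. Since each such complex consists of modules in ${\bf FMod}^{\wedge}_{(p,(p)_{q^{p^m}})}(A)$, Lemma \ref{fmod} identifies completed tensor products with their derived versions, and Künneth then yields $C^{\bullet}_{\ul{s}} \simeq A \cdot \ul{\xi}^{\{\ul{s}\}_{(m), q}}$ concentrated in degree zero. Summing over $\ul{s} \in \cB^{(m)}_d$ recovers the image of $\iota'$, proving the lemma. The main obstacle will be making the tensor product decomposition and the Künneth isomorphism rigorous in the $(p, q-1)$-adically completed setting; if these prove delicate, they can be bypassed by constructing an explicit contracting homotopy on each $C^{\bullet}_{\ul{s}}$ obtained by rescaling the basis $\ul{\xi}^{\{p^m \ul{l} + \ul{s}\}_{(m), q}}$ by units (supplied by Lemma \ref{ipmq}) so that the differential becomes the standard divided-power de Rham differential, for which a classical Poincar\'{e}-type homotopy exists.
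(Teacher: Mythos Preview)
Your proof is correct and follows essentially the same approach as the paper: reduce to the one-variable case using the invertibility of $\left\langle\begin{smallmatrix} k \\ p^m \end{smallmatrix}\right\rangle_{(m),q}$ from Lemma~\ref{ipmq}, and pass to general $d$ via a completed tensor-product/K\"unneth argument justified by the fact that all terms lie in ${\bf FMod}^{\wedge}_{(p,(p)_{q^{p^m}})}(A)$. The paper simply tensors the full $(d-1)$-variable complex with the one-variable complex and inducts, whereas you first split by residue classes $\ul{s}\in\cB^{(m)}_d$ and then tensor the one-variable residue pieces; this extra splitting is harmless but unnecessary, since the residue decomposition is already implicit in the one-variable computation.
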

\begin{proof}
We follow the proof of \cite{Miy15}. We first consider the case $d = 1$. Then we need to show that the sequence
\[
0 \to \bigoplus_{k_1 = 0}^{p^m-1} A e_{k_1} \to \wh{A \langle \xi_1 \rangle}_{(m), q, x_1} \xrightarrow{d^0} \wh{A \langle \xi_1 \rangle}_{(m), q, x_1} \ol{\xi}_{1}^{(p^m)_q} \to 0
\]
is exact. We can check that 
\[
d^0(\xi_1^{ \{ k \}_{(m), q}}) = 
\begin{cases}
0 \ \ \ \  \text{if} \ k < p^m, \\
\\
\left\langle 
\begin{array}{@{}c@{}}
k \\ p^m
\end{array}
\right\rangle_{(m), q } \xi_1^{ \{ k-p^m \}_{(m), q}} \otimes' \ol{\xi}_{1}^{(p^m)_q}
\ \ \ \  \text{if} \ k \ge p^m.
\end{cases}
\] 
Since $\left\langle 
\begin{array}{@{}c@{}}
k \\ p^m
\end{array}
\right\rangle_{(m), q }$ is invertible in $A$ by Lemma \ref{ipmq}, the desired exactness follows.

Now we consider a general $d$. In this case, the morphism $\iota'$ is the complete tensor product of that for the $(d-1)$-dimensional case and that for the $1$-dimensional case. Since each term of these complexes belongs to ${\bf FMod}^{\wedge}_{(p,(p)_{q^{p^m}})}(A)$, the desired exactness follows by induction on $d$.
\end{proof}

For arguments later, we need to slightly modify the map $\iota'$ in Lemma \ref{fplq}:

\begin{prop}
\label{tfpl}
{\rm (cf. \cite[Proposition 3.5]{Miy15})}
Let
\[
(- \ul{x})^{{(\ul{k})}_q} := \prod_{i=1}^d \prod_{j=0}^{k_i-1} (-q^j x_i).
\]
Using the isomorphism
\[
\beta : \bigoplus_{\ul{k} \in \cB^{(m)}_d} A e_{\ul{k}} \to \bigoplus_{\ul{k} \in \cB^{(m)}_d} A e_{\ul{k}}, \ e_{\ul{k}} \mapsto \sum_{0 \le \ul{k}' \le \ul{k}} \begin{pmatrix} \ul{k} \\ \ul{k}' \end{pmatrix}_{ q }(- \ul{x})^{{(\ul{k}')}_q} \cdot e_{\ul{k} - \ul{k}'},
\]
we define the morphism
\[
\iota := \iota' \circ \beta^{-1} : \bigoplus_{\ul{k} \in \cB^{(m)}_d} A e_{\ul{k}} \to L \breve{\Omega}^{(m)}_{A/R,q} (\bullet).
\]
Then, this is a quasi-isomorphism.
\end{prop}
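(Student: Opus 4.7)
The plan is very short because the statement is almost a formality once we combine it with Lemma \ref{fplq}. The idea is that $\iota = \iota' \circ \beta^{-1}$, where $\iota'$ has already been shown to be a quasi-isomorphism. Since a quasi-isomorphism composed with an automorphism of the source (a complex concentrated in degree $0$) is again a quasi-isomorphism, it is enough to verify that $\beta$ is an $A$-linear automorphism of $\bigoplus_{\ul{k} \in \cB^{(m)}_d} A e_{\ul{k}}$.

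To prove invertibility of $\beta$, I would proceed as follows. First, note that $\cB^{(m)}_d$ is finite, and order its elements by (say) the partial order $\ul{k}'' \le \ul{k}$ componentwise, refined to a total order compatible with $|\ul{k}'|$. In the defining formula
\[
\beta(e_{\ul{k}}) = \sum_{0 \le \ul{k}' \le \ul{k}} \begin{pmatrix} \ul{k} \\ \ul{k}' \end{pmatrix}_{q} (-\ul{x})^{{(\ul{k}')}_q} \cdot e_{\ul{k}-\ul{k}'},
\]
the term corresponding to $\ul{k}' = 0$ is $e_{\ul{k}}$ (since $\binom{\ul{k}}{0}_q = 1$ and the empty product $(-\ul{x})^{(\ul{0})_q} = 1$), while every other term involves $e_{\ul{k}''}$ with $\ul{k}'' = \ul{k} - \ul{k}' < \ul{k}$. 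Hence, with respect to the chosen order, the matrix of $\beta$ is lower unitriangular, so it is invertible over $A$.

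Combining this with Lemma \ref{fplq}, the composition $\iota = \iota' \circ \beta^{-1}$ is a quasi-isomorphism, since it factors as the $A$-linear automorphism $\beta^{-1}$ of a complex concentrated in degree zero followed by the quasi-isomorphism $\iota'$. There is no substantive obstacle; the only thing worth commenting is that the map $\beta$ is introduced precisely so that $\iota$ has a better behavior (for instance, with respect to the stratification) that will matter later, which is why the author prefers $\iota$ to $\iota'$ despite their being related by a trivial change of basis.
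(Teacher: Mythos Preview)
Your proof is correct and follows essentially the same approach as the paper, which simply states that the result is a direct consequence of Lemma \ref{fplq} since $\beta$ is an isomorphism. You have additionally supplied the (straightforward) justification that $\beta$ is invertible via the unitriangular matrix argument, which the paper leaves implicit.
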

\begin{proof}
This is a direct consequence of Lemma \ref{fplq} since $\beta$ is an isomorphism.
\end{proof}

Next, we consider the corresponding results for the $m$-$q^{p^m}$-crystalline site. Let
\[
F_A := \bigoplus_{\ul{k} \in \cB^{(m)}_d} A e_{\ul{k}}.
\]
We endow it with the hyper $m$-$q^{p^m}$-stratification
\[
\epsilon_F : \wh{A \langle \ul{\xi} \rangle}_{(m), q, \ul{x}} \wh{\otimes}'_A F_A \simeq F_A \wh{\otimes}_A \wh{A \langle \ul{\xi} \rangle}_{(m), q, \ul{x}}, \ y \otimes' a e_{\ul{k}} \mapsto e_{\ul{k}} \otimes \theta_{(m)}(a) y .
\]
By Proposition \ref{crstr}, the $A$-module $F_A$ endowed with its hyper $m$-$q^{p^m}$-stratification $\epsilon_F$ uniquely corresponds to $F \in {\bf CR}((\ol{A}/R)_{m\text{-}q^{p^m}\text{-crys}},\cO^{(m)}_{\ol{A}/R})$. The complete crystal $F$ is isomorphic to the direct sum of $p^{md}$ copies of the structure sheaf $\cO^{(m)}_{\ol{A}/R}$.

\begin{lem}
\label{plst}
{\rm (cf. \cite[Lemma 3.6]{Miy15})}
The $A$-linear morphism \[ \iota: F_A \to L \breve{\Omega}^{(m)}_{A/R,q} (0) = \wh{A \langle \ul{\xi} \rangle}_{(m), q, \ul{x}} \] defined in Proposition \ref{tfpl} is compatible with the hyper $m$-$q^{p^m}$-stratification on both sides.
\end{lem}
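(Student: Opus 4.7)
The plan is to reduce to a pointwise check and then exploit that the twist $\beta$ in the definition of $\iota$ is engineered to produce horizontal sections for $\epsilon$. Both compositions in the compatibility square
\[
\begin{tikzcd}
\wh{A \langle \ul{\xi} \rangle}_{(m), q, \ul{x}} \wh{\otimes}'_A F_A \arrow[r, "\epsilon_F"]\arrow[d, "\mathrm{id} \wh{\otimes}' \iota"'] & F_A \wh{\otimes}_A \wh{A \langle \ul{\xi} \rangle}_{(m), q, \ul{x}} \arrow[d, "\iota \wh{\otimes} \mathrm{id}"] \\
\wh{A \langle \ul{\xi} \rangle}_{(m), q, \ul{x}} \wh{\otimes}'_A \wh{A \langle \ul{\xi} \rangle}_{(m), q, \ul{x}} \arrow[r, "\epsilon"] & \wh{A \langle \ul{\xi} \rangle}_{(m), q, \ul{x}} \wh{\otimes}_A \wh{A \langle \ul{\xi} \rangle}_{(m), q, \ul{x}}
\end{tikzcd}
\]
are $\wh{A \langle \ul{\xi} \rangle}_{(m), q, \ul{x}}$-linear in the left factor, so it suffices to test commutativity on the generators $1 \otimes' e_{\ul{k}}$ with $\ul{k} \in \cB^{(m)}_d$. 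By the defining formula for $\epsilon_F$ we have $\epsilon_F(1 \otimes' e_{\ul{k}}) = e_{\ul{k}} \otimes 1$, so the entire lemma reduces to the identity $\epsilon(1 \otimes' \iota(e_{\ul{k}})) = \iota(e_{\ul{k}}) \otimes 1$.

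First I would compute $\iota(e_{\ul{k}})$ in closed form. Since $\ul{k} \in \cB^{(m)}_d$ means $k_i < p^m$ for each $i$, we have $\iota'(e_{\ul{k}}) = \ul{\xi}^{\{\ul{k}\}_{(m),q}} = \ul{\xi}^{(\ul{k})_q}$. Writing $\xi_i^{(k)_q} = \prod_{j=0}^{k-1}((x_i+\xi_i) - q^j x_i)$ and applying the $q$-binomial theorem (valid because $x_i$ and the rank-one element $x_i + \xi_i$ commute in the commutative ring $\wh{A \langle \ul{\xi} \rangle}_{(m), q, \ul{x}}$) gives
\[
\xi_i^{(k)_q} = \sum_{j=0}^{k} (-1)^j q^{\binom{j}{2}} \begin{pmatrix} k \\ j \end{pmatrix}_q x_i^j (x_i + \xi_i)^{k-j} = \sum_{j=0}^{k} \begin{pmatrix} k \\ j \end{pmatrix}_q (-x_i)^{(j)_q} (x_i + \xi_i)^{k-j},
\]
where the second equality uses the sign convention $(-x_i)^{(j)_q} = (-1)^j q^{\binom{j}{2}} x_i^j$ from Proposition \ref{tfpl}. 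Taking the product over $i = 1, \dots, d$ yields
\[
\ul{\xi}^{(\ul{k})_q} = \sum_{0 \le \ul{k}' \le \ul{k}} \begin{pmatrix} \ul{k} \\ \ul{k}' \end{pmatrix}_q (-\ul{x})^{(\ul{k}')_q} \prod_{i=1}^d (x_i + \xi_i)^{k_i - k'_i}.
\]
Defining $\gamma : F_A \to \wh{A \langle \ul{\xi} \rangle}_{(m), q, \ul{x}}$ by $\gamma(e_{\ul{k}}) = \prod_i (x_i + \xi_i)^{k_i} = \theta_{(m)}(\ul{x}^{\ul{k}})$, the display above says exactly $\iota' = \gamma \circ \beta$, so $\iota = \iota' \circ \beta^{-1} = \gamma$, and therefore
\[
\iota(e_{\ul{k}}) = \theta_{(m)}(\ul{x}^{\ul{k}}).
\]

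With this identification in hand, the horizontality we need is essentially tautological from Lemma \ref{strind}. Applied with $M = A$, that lemma says $\epsilon$ is induced on $P \wh{\otimes}'_A P$ by $(a \otimes b) \otimes' (f \otimes g) \mapsto (1 \otimes g) \otimes (a \otimes bf)$, followed by the surjection $P \twoheadrightarrow P^{(m)}_{A/R,q}(1) = \wh{A \langle \ul{\xi} \rangle}_{(m), q, \ul{x}}$ which sends $1 \otimes z \mapsto \theta_{(m)}(z)$. Lifting $1 \otimes' \theta_{(m)}(z)$ to $(1 \otimes 1) \otimes' (1 \otimes z)$ and applying the formula gives $(1 \otimes z) \otimes (1 \otimes 1)$, which projects to $\theta_{(m)}(z) \otimes 1$. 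Taking $z = \ul{x}^{\ul{k}}$ yields $\epsilon(1 \otimes' \iota(e_{\ul{k}})) = \iota(e_{\ul{k}}) \otimes 1$, as required.

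The only non-routine ingredient is the $q$-binomial expansion that identifies $\iota(e_{\ul{k}})$ with $\theta_{(m)}(\ul{x}^{\ul{k}})$; once this is recognized, the proof essentially reduces to the structural observation that images of $1 \otimes A \subset P$ are horizontal for $\epsilon$. The role of the correction $\beta$ in Proposition \ref{tfpl} is precisely to pass from the natural basis $\ul{\xi}^{(\ul{k})_q}$ of $\iota'$ to the horizontal basis $\theta_{(m)}(\ul{x}^{\ul{k}})$, so no genuinely higher-level subtlety intervenes at this step.
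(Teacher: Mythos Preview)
Your argument is correct and takes a genuinely different, more conceptual route than the paper. The paper works with the twisted basis $\beta(e_{\ul{k}})$ of $F_A$, pushes it through both legs of the square using the explicit formula of Proposition~\ref{strao} for $\epsilon$, and then performs a lengthy chain of $q$-binomial manipulations (equations (\ref{ssttrr})--(\ref{kstr})) involving the flip map $\tau$ to check that the two sides agree. You instead observe, via a single application of the $q$-binomial theorem, that the composite $\iota = \iota' \circ \beta^{-1}$ has the closed form $\iota(e_{\ul{k}}) = \theta_{(m)}(\ul{x}^{\ul{k}})$; once this is known, horizontality is immediate from the description of $\epsilon$ in Lemma~\ref{strind}, since elements coming from $1 \otimes A \subset P$ are sent to themselves tensored with $1$. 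Your approach reveals the structural meaning of the correction $\beta$ (it converts the $\iota'$-basis into the $\theta_{(m)}$-horizontal basis) and avoids all the intermediate identities; the paper's computation, while longer, has the minor advantage of not requiring one to recognise this closed form in advance and of double-checking the explicit formula for $\epsilon$ from Proposition~\ref{strao}.
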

\begin{proof}
We follow the proof of \cite{Miy15} although the computation here is more complicated.
We need to show that the diagram
\[
\begin{tikzcd}
\wh{A \langle \ul{\xi} \rangle}_{(m), q, \ul{x}} \wh{\otimes}'_A F_A \arrow[r,"1 \otimes' \iota"]\arrow[d,"\epsilon_F"] & \wh{A \langle \ul{\xi} \rangle}_{(m), q, \ul{x}} \wh{\otimes}'_A \wh{A \langle \ul{\xi} \rangle}_{(m), q, \ul{x}} \arrow[d,"\epsilon"] \\
F_A \wh{\otimes}_A \wh{A \langle \ul{\xi} \rangle}_{(m), q, \ul{x}} \arrow[r,"\iota \otimes 1"] & \wh{A \langle \ul{\xi} \rangle}_{(m), q, \ul{x}} \wh{\otimes}_A \wh{A \langle \ul{\xi} \rangle}_{(m), q, \ul{x}}
\end{tikzcd}
\]
is commutative, where the explicit formula for $\epsilon$ is given in Proposition \ref{strao}.
We consider the basis of $\wh{A \langle \ul{\xi} \rangle}_{(m), q, \ul{x}} \wh{\otimes}'_A F_A$ as an $\wh{A \langle \ul{\xi} \rangle}_{(m), q, \ul{x}}$-module consisting of elements
\begin{equation}
\label{sstr}
1 \otimes' \left( \sum_{0 \le \ul{k}' \le \ul{k}} \begin{pmatrix} \ul{k} \\ \ul{k}' \end{pmatrix}_{ q } (- \ul{x})^{{(\ul{k}')}_q} \cdot e_{\ul{k} - \ul{k}'} \right)
\end{equation}
indexed by $\ul{k} \in \cB^{(m)}_d$.
First note that, for $\ul{k},\ul{k}' \in \cB^{(m)}_d$ such that $\ul{k}' \le \ul{k}$, since $k'_i \le k_i < p^m$ for $i = 1, \dots, d$, we have $\left\langle
\begin{array}{@{}c@{}}
\ul{k} \\ \ul{k}'
\end{array}
\right\rangle_{(m), q } = \begin{pmatrix} \ul{k} \\ \ul{k}' \end{pmatrix}_{ q }$. Now, for $\ul{k} \in \cB^{(m)}_d$, the element (\ref{sstr}) is sent by $1 \otimes' \iota$ to $1 \otimes' \ul{\xi}^{ \{ \ul{k}\}_{(m), q}}$, and its image under $\epsilon$ is
\[
\sum_{0 \le \ul{k}' \le \ul{k}} \begin{pmatrix} \ul{k} \\ \ul{k}' \end{pmatrix}_{ q }
\ul{\xi}^{ \{ \ul{k}' \}_{(m), q}} \otimes \tau(\ul{\xi}^{ \{ \ul{k} - \ul{k}' \}_{(m), q}}),
\]
which is the image of
\begin{equation}
\label{ssttrr}
\sum_{0 \le \ul{k}' \le \ul{k}} \begin{pmatrix} \ul{k} \\ \ul{k}' \end{pmatrix}_{ q }
\left( \sum_{0 \le \ul{k}'' \le \ul{k}'} \begin{pmatrix} \ul{k}' \\ \ul{k}'' \end{pmatrix}_{ q } (- \ul{x})^{{(\ul{k}'')}_q} \cdot e_{\ul{k}' - \ul{k}''} \right) \otimes \tau(\ul{\xi}^{ \{ \ul{k} - \ul{k}' \}_{(m), q}})
\end{equation}
under $\iota \otimes 1$.
On the other hand, the image of (\ref{sstr}) under $\epsilon_F$ is the element
\begin{equation}
\label{bstr}
\sum_{0 \le \ul{k}' \le \ul{k}} \begin{pmatrix} \ul{k} \\ \ul{k}' \end{pmatrix}_{ q } e_{\ul{k} - \ul{k}'} \otimes \theta_{(m)}((- \ul{x})^{{(\ul{k}')}_q}).
\end{equation}
Therefore, we are reduced to proving that the two elements (\ref{ssttrr}) and (\ref{bstr}) are identical.

The element (\ref{ssttrr}) is equal to
\begin{equation}
\label{cstr}
\sum_{0 \le \ul{l} \le \ul{k}} \sum_{\ul{k} - \ul{l} \le \ul{k}' \le \ul{k}} \begin{pmatrix} \ul{k} \\ \ul{k}' \end{pmatrix}_{ q } \begin{pmatrix} \ul{k}' \\ \ul{l}-\ul{k}+\ul{k}' \end{pmatrix}_{ q } (- \ul{x})^{{(\ul{l}-\ul{k}+\ul{k}')}_q} \cdot e_{\ul{k} - \ul{l}} \otimes \tau(\ul{\xi}^{ \{ \ul{k} - \ul{k}' \}_{(m), q}})
\end{equation}
by the change of variables $\ul{l} = \ul{k} - \ul{k}' + \ul{k}''$. We can check that
\[
\begin{pmatrix} \ul{k} \\ \ul{k}' \end{pmatrix}_{ q } \begin{pmatrix} \ul{k}' \\ \ul{l}-\ul{k}+\ul{k}' \end{pmatrix}_{ q } = \begin{pmatrix} \ul{k} \\ \ul{l} \end{pmatrix}_{ q } \begin{pmatrix} \ul{l} \\ \ul{k}-\ul{k}' \end{pmatrix}_{ q }.
\]
So, the element (\ref{cstr}) is equal to
\begin{equation}
\label{dstr}
\sum_{0 \le \ul{l} \le \ul{k}} \begin{pmatrix} \ul{k} \\ \ul{l} \end{pmatrix}_{ q } e_{\ul{k} - \ul{l}} \otimes \left( \sum_{\ul{k} - \ul{l} \le \ul{k}' \le \ul{k}} \begin{pmatrix} \ul{l} \\ \ul{k}-\ul{k}' \end{pmatrix}_{ q } (- \ul{x})^{{(\ul{l}-\ul{k}+\ul{k}')}_q} \cdot \tau(\ul{\xi}^{ \{ \ul{k} - \ul{k}' \}_{(m), q}}) \right).
\end{equation}
By the change of variables $\ul{l}' = \ul{k} - \ul{k'}$, the element of $\wh{A \langle \ul{\xi} \rangle}_{(m), q, \ul{x}}$ in the brackets of (\ref{dstr}) is equal to
\begin{equation}
\label{estr}
\sum_{0 \le \ul{l}' \le \ul{l}} \begin{pmatrix} \ul{l} \\ \ul{l}' \end{pmatrix}_{ q } (- \ul{x})^{{(\ul{l}-\ul{l}')}_q} \cdot \tau(\ul{\xi}^{ \{ \ul{l}' \}_{(m), q}}).
\end{equation}
Comparing with (\ref{bstr}), we are reduced to proving that the element (\ref{estr}) and the element
\begin{equation}
\label{fstr}
\theta_{(m)}((- \ul{x})^{{(\ul{l})}_q})
\end{equation}
are identical. First, as elements of $P$, if we set $a= -1 \otimes x_i, \ b= x_i \otimes 1$ for $i = 1, \dots, d$ in Proposition \ref{qbfm} below, we see that
\begin{equation}
\label{gstr}
\prod_{i=1}^d \prod_{j=0}^{l'_i-1}(x_i \otimes 1 - 1 \otimes q^j x_i) = \sum_{0 \le \ul{l}'' \le \ul{l}'} \begin{pmatrix} \ul{l}' \\ \ul{l}'' \end{pmatrix}_{ q } (1 \otimes (- \ul{x})^{{(\ul{l}'')}_q})(\ul{x} \otimes 1)^{\ul{l}'-\ul{l}''}.
\end{equation}
By considering the images of these elements in $\wh{A \langle \ul{\xi} \rangle}_{(m), q, \ul{x}}$ via the map \[ \wt{\theta}_{(m)} : P \to \wh{A \langle \ul{\xi} \rangle}_{(m), q, \ul{x}} \] before Lemma \ref{qpdf}, we get the equation
\begin{equation}
\label{hstr}
\tau(\ul{\xi}^{ \{ \ul{l}' \}_{(m), q}}) = \tau \left( \left( \prod_{i=1}^d \left(\left\lfloor \frac{l'_i}{p^m} \right\rfloor\right)_{q^{p^m}}! \right) \ul{\xi}^{ \{ \ul{l}' \}_{(m), q}} \right) = \sum_{0 \le \ul{l}'' \le \ul{l}'} \begin{pmatrix} \ul{l}' \\ \ul{l}'' \end{pmatrix}_{ q } \theta_{(m)}((- \ul{x})^{{(\ul{l}'')}_q})\ul{x} ^{\ul{l}'-\ul{l}''},
\end{equation}
where the first equality holds because $l'_i \le l_i \le k_i < p^m$.
By applying it to (\ref{estr}), we get the element
\begin{align}
\label{istr}
&\sum_{0 \le \ul{l}' \le \ul{l}} \sum_{0 \le \ul{l}'' \le \ul{l}'} \begin{pmatrix} \ul{l} \\ \ul{l}' \end{pmatrix}_{ q } \begin{pmatrix} \ul{l}' \\ \ul{l}'' \end{pmatrix}_{ q } (- \ul{x})^{{(\ul{l}-\ul{l}')}_q} \cdot \theta_{(m)}((- \ul{x})^{{(\ul{l}'')}_q})\ul{x}^{\ul{l}'-\ul{l}''} \notag \\
=&\sum_{0 \le \ul{l}'' \le \ul{l}} \theta_{(m)}((- \ul{x})^{{(\ul{l}'')}_q}) \left( \sum_{\ul{l}'' \le \ul{l}' \le \ul{l}} \begin{pmatrix} \ul{l} \\ \ul{l}' \end{pmatrix}_{ q } \begin{pmatrix} \ul{l}' \\ \ul{l}'' \end{pmatrix}_{ q } (- \ul{x})^{{(\ul{l}-\ul{l}')}_q} \cdot \ul{x}^{\ul{l}'-\ul{l}''} \right).
\end{align}
So, it remains to show that, for $\ul{l}'' < \ul{l}$ (that is, if there exists some $i$ such that $l''_i < l_i$),
\begin{equation}
\label{istra}
\sum_{\ul{l}'' \le \ul{l}' \le \ul{l}} \begin{pmatrix} \ul{l} \\ \ul{l}' \end{pmatrix}_{ q } \begin{pmatrix} \ul{l}' \\ \ul{l}'' \end{pmatrix}_{ q } (- \ul{x})^{{(\ul{l}-\ul{l}')}_q} \cdot \ul{x}^{\ul{l}'-\ul{l}''} = 0.
\end{equation}
As usual, we may assume that $A = \Z [ q ]_{(p,q-1)}[\ul{x}]$.
We can check that
\[
\begin{pmatrix} \ul{l} \\ \ul{l}' \end{pmatrix}_{ q } \begin{pmatrix} \ul{l}' \\ \ul{l}'' \end{pmatrix}_{ q } = \begin{pmatrix} \ul{l}-\ul{l}'' \\ \ul{l}- \ul{l}' \end{pmatrix}_{ q } \begin{pmatrix} \ul{l} \\ \ul{l}'' \end{pmatrix}_{ q },
\]
so it remains to show that, for $\ul{l}'' < \ul{l}$,
\begin{equation}
\label{istrab}
\sum_{\ul{l}'' \le \ul{l}' \le \ul{l}} \begin{pmatrix} \ul{l}-\ul{l}'' \\ \ul{l}- \ul{l}' \end{pmatrix}_{ q } (- \ul{x})^{{(\ul{l}-\ul{l}')}_q} \cdot \ul{x}^{\ul{l}'-\ul{l}''} = 0.
\end{equation}
If we set $a= -x_i \otimes 1, \ b= x_i \otimes 1$ in Proposition \ref{qbfm} below, we can check that
\begin{equation}
\label{kstr}
\prod_{i=1}^d \prod_{j=0}^{l_i-l''_i-1}(-q^j x_i \otimes 1 + x_i \otimes 1 ) = \sum_{\ul{l}'' \le \ul{l}' \le \ul{l}} \begin{pmatrix} \ul{l}-\ul{l}'' \\ \ul{l}- \ul{l}' \end{pmatrix}_{ q } (- \ul{x})^{{(\ul{l}-\ul{l}')}_q} \cdot \ul{x}^{\ul{l}'-\ul{l}''}.
\end{equation}
Notice that, for $i$ such that $l''_i < l_i$, there exists a term $- x_i \otimes 1 + x_i \otimes 1 $ on the left hand side, so (\ref{kstr}) is equal to 0 and the result follows.
\end{proof}

The following quantum binomial formula was used above.

\begin{prop}
\label{qbfm}
{\rm  (\cite[Proposition 2.14]{LQ15})}
Assume that $C$ is a commutative ring with a fixed $q \in C$. Then, we have for all $k \in \N$ and $a, b \in C$,
\[
\prod_{j=0}^{k-1}(q^j a + b) = \sum_{k'=0}^k q^{\frac{k'(k'-1)}{2}} \begin{pmatrix} k \\ k' \end{pmatrix}_{ q } a^{k'} b^{k-k'}.
\]
\end{prop}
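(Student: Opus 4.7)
The plan is to prove the identity by induction on $k$, using the Pascal recursion from Definition \ref{qbco}. The base case $k = 0$ is immediate: the empty product equals $1$, and the right-hand side reduces to the single term with $k' = 0$, namely $\binom{0}{0}_q a^0 b^0 = 1$.

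For the inductive step, assuming the formula for $k-1$, I would peel off the factor $(q^{k-1}a + b)$ from the product on the right,
\[
\prod_{j=0}^{k-1}(q^j a + b) = \left(\prod_{j=0}^{k-2}(q^j a + b)\right)(q^{k-1} a + b),
\]
and distribute. This yields two sums. The sum arising from multiplication by $q^{k-1}a$, after reindexing $k' \mapsto k'-1$, becomes
\[
\sum_{k'=1}^{k} q^{\frac{(k'-1)(k'-2)}{2} + (k-1)} \begin{pmatrix} k-1 \\ k'-1 \end{pmatrix}_{q} a^{k'} b^{k-k'},
\]
while the sum arising from multiplication by $b$ is
\[
\sum_{k'=0}^{k-1} q^{\frac{k'(k'-1)}{2}} \begin{pmatrix} k-1 \\ k' \end{pmatrix}_{q} a^{k'} b^{k-k'}.
\]

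The next step is a short exponent bookkeeping: a direct calculation confirms $\tfrac{(k'-1)(k'-2)}{2} + (k-1) = \tfrac{k'(k'-1)}{2} + (k - k')$, so the coefficient of $a^{k'} b^{k-k'}$ in the combined expression is
\[
q^{\frac{k'(k'-1)}{2}} \left( q^{k-k'} \begin{pmatrix} k-1 \\ k'-1 \end{pmatrix}_{q} + \begin{pmatrix} k-1 \\ k' \end{pmatrix}_{q} \right).
\]
The parenthesized quantity equals $\binom{k}{k'}_q$ by the alternative form of the $q$-Pascal identity; this second form follows from Definition \ref{qbco} together with the symmetry $\binom{k}{k'}_q = \binom{k}{k-k'}_q$, which itself is obtained by a parallel induction on $k$.

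There is no substantial obstacle here; the only task is the careful manipulation of the exponent of $q$ and the invocation of the correct variant of the $q$-Pascal rule. The result is also already recorded as Proposition 2.14 of \cite{LQ15}, so in the exposition one can either give the short induction above or simply cite that reference.
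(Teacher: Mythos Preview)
Your induction argument is correct and is the standard proof of the quantum binomial formula. The paper itself does not give a proof of this proposition; it is stated with a direct citation to \cite[Proposition~2.14]{LQ15} and used as a black box in the proof of Lemma~\ref{plst}. So there is nothing to compare against beyond noting that your argument is exactly the expected one and matches what one finds in \cite{LQ15}. The only minor point worth flagging is the appeal to the ``alternative'' $q$-Pascal identity $\binom{k}{k'}_q = q^{k-k'}\binom{k-1}{k'-1}_q + \binom{k-1}{k'}_q$: deriving it from Definition~\ref{qbco} via the symmetry $\binom{k}{k'}_q = \binom{k}{k-k'}_q$ is fine, but since the symmetry is not stated in the paper you should either prove both simultaneously by induction (as you indicate) or, more simply, reduce to the universal case $C = \Z[q]$ and invoke Proposition~\ref{qiff} in $\Q(q)$, where both identities are immediate from the factorial formula.
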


We are now ready to prove the $q$-analog of the higher Poincar\'{e} lemma in \cite{Miy15}.

\begin{prop}
\label{mqpl}
{\rm (cf. \cite[Theorem 3.7]{Miy15})}
If $E \in {\bf CR}((\ol{A}/R)_{m\text{-}q^{p^m}\text{-crys}},\cO^{(m)}_{\ol{A}/R})$, then 
\[
E \wh{\otimes}_{\cO^{(m)}_{\ol{A}/R}} L^{(m)}(\breve{\Omega}^{(m)}_{A/R,q}  (\bullet))
\]
 forms a complex of $\cO^{(m)}_{\ol{A}/R}$-modules that resolves the direct sum of $p^{md}$ copies of $E$.
\end{prop}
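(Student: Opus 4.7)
The plan is to lift the quasi-isomorphism $\iota$ from Proposition \ref{tfpl} to a morphism of complexes of complete crystals via the equivalence of Proposition \ref{crstr}, tensor with $E$, and then verify that the resulting augmented complex of sheaves is exact at every object of the site.

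First, each term $\breve{\Omega}^{(m)}_{A/R,q}(r)$ is a finite free $A$-module by Proposition \ref{drmo}, so it lies in ${\bf FMod}^{\wedge}_{(p,(p)_{q^{p^m}})}(A)$, and Lemma \ref{lmmb} shows that $L^{(m)}(\breve{\Omega}^{(m)}_{A/R,q}(r))$ is a complete crystal. The differentials of $L\breve{\Omega}^{(m)}_{A/R,q}(\bullet)$ are compatible with the hyper $m$-$q^{p^m}$-stratifications (by the same argument as Lemma \ref{lpstr}, since this complex is the quotient of $LNP^{(m)}_{A/R,q}(\bullet)$ by a DG-ideal preserved by the stratifications). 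By Lemma \ref{plst}, the augmentation $\iota$ is also compatible with the hyper stratifications, so Proposition \ref{crstr} promotes the $A$-linear data to a morphism of complexes of complete crystals $F \to L^{(m)}(\breve{\Omega}^{(m)}_{A/R,q}(\bullet))$. Tensoring with $E$ over $\cO^{(m)}_{\ol{A}/R}$ produces the augmentation
\[
E^{\oplus p^{md}} = E \wh{\otimes}_{\cO^{(m)}_{\ol{A}/R}} F \to E \wh{\otimes}_{\cO^{(m)}_{\ol{A}/R}} L^{(m)}(\breve{\Omega}^{(m)}_{A/R,q}(\bullet)).
\]

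Next I would show this augmented complex of sheaves is exact object-wise. For a general object $(B, I_B)$, apply Proposition \ref{mqcov} to find a cover $(B,I_B) \to (B', I_{B'})$ together with a morphism $(A,I_A) \to (B', I_{B'})$ in the site. By the crystal property applied to $B \to B'$, the evaluation of the complex at $B'$ is the completed base change along $B \to B'$ of the evaluation at $B$, and since $B \to B'$ is $(p,(p)_{q^{p^m}})$-completely faithfully flat, exactness at $B$ is equivalent to exactness at $B'$. Using the crystal property now applied to $A \to B'$ (which is the $q$-Taylor map $\theta_{(m)}$), the evaluation at $B'$ is the completed base change along $\theta_{(m)}$ of the evaluation at $A$, reducing the problem to the case $B = A$. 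At $A$, Lemma \ref{elle} identifies the complex with $E_A \wh{\otimes}_A L\breve{\Omega}^{(m)}_{A/R,q}(\bullet)$ augmented by $E_A \wh{\otimes}_A \iota$. Since $\iota$ is a quasi-isomorphism whose terms lie in ${\bf FMod}^{\wedge}_{(p,(p)_{q^{p^m}})}(A)$ (using Proposition \ref{drmo} and the completely flatness of $\wh{A \langle \ul{\xi}\rangle}_{(m), q, \ul{x}}$ over $A$ coming from Proposition 5.2 of \cite{GLQ23a}, which was invoked in Proposition \ref{mqcov}), tensoring with the completely flat $E_A$ preserves exactness by Lemma \ref{fmod}. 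The same flatness argument ensures the quasi-isomorphism propagates under the base change along $\theta_{(m)}$ to $B'$.

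The principal obstacle will be keeping careful track of the two distinct $A$-module structures on $B'$—the direct one via $A \to B \to B'$ and the $q$-Taylor structure $\theta_{(m)}$ given by Proposition \ref{prod}—and verifying that the crystal property of $E \wh{\otimes}_{\cO^{(m)}_{\ol{A}/R}} L^{(m)}(\breve{\Omega}^{(m)}_{A/R,q}(r))$ really yields the expected completed base-change formulas in both directions, together with the completely flatness of the relevant factors needed to propagate exactness through the completed tensor products.
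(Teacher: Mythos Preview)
Your proposal is correct and follows essentially the same route as the paper: construct the augmented complex of crystals via Lemma \ref{lpstr}, Lemma \ref{plst}, and Proposition \ref{crstr}, tensor with $E$, then use Proposition \ref{mqcov} and faithful flatness to reduce to the case $B=A$, where the result follows by tensoring the quasi-isomorphism $\iota$ of Proposition \ref{tfpl} with $E_A$. Your invocation of Lemma \ref{elle} at $A$ is a slight detour (the paper simply uses the crystal property directly to identify the evaluation at $A$), and your two-step reduction $B \to B' \leftarrow A$ is spelled out more carefully than in the paper, but the argument is the same.
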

\begin{proof}
We follow the proof of \cite{Miy15}. 
By Lemma \ref{lpstr}, each differential of  $L \breve{\Omega}^{(m)}_{A/R,q}  (\bullet)$ is compatible with the hyper $m$-$q^{p^m}$-stratification on each term. So, by Proposition \ref{crstr}, we have a complex $L^{(m)}(\breve{\Omega}^{(m)}_{A/R,q}  (\bullet))$ of $\cO^{(m)}_{\ol{A}/R}$-modules that gives $L \breve{\Omega}^{(m)}_{A/R,q}  (\bullet)$ when we evaluate at $A$. Also, by Lemma \ref{plst}, we have an $\cO^{(m)}_{\ol{A}/R}$-linear map \[ F \to L^{(m)}(\breve{\Omega}^{(m)}_{A/R,q}  (0)). \] By taking the complete tensor product with $E$ over $\cO^{(m)}_{\ol{A}/R}$, we get an $\cO^{(m)}_{\ol{A}/R}$-linear map
\[
E \wh{\otimes}_{\cO^{(m)}_{\ol{A}/R}} F \to E \wh{\otimes}_{\cO^{(m)}_{\ol{A}/R}} L^{(m)}(\breve{\Omega}^{(m)}_{A/R,q}  (\bullet)).
\]
We need to show that, for any object $(B,I_B)$  of $(\ol{A}/R)_{m\text{-}q^{p^m}\text{-crys}}$, the map
\[
\bigoplus_{\ul{k} \in \cB^{(m)}_d} E_B e_{\ul{k}}  \to E_B \wh{\otimes}_{B} L^{(m)}(\breve{\Omega}^{(m)}_{A/R,q}  (\bullet))_B
\]
is a quasi-isomorphism. By Proposition \ref{mqcov}, we can find a map \[ i_1: (B,I_B) \to (B', I_{B'}) \]  and a map \[ i_2 :(A,I_A) \to (B', I_{B'}) \] in $(\ol{A}/R)_{m\text{-}q^{p^m}\text{-crys}}$, where the underlying rings map of $i_1$ is $(p, (p)_{q^{p^m}})$-completely faithfully flat. Therefore, we are reduced to proving that the map
\[
\bigoplus_{\ul{k} \in \cB^{(m)}_d} E_A e_{\ul{k}}  \to E_A \wh{\otimes}_{A} L \breve{\Omega}^{(m)}_{A/R,q}  (\bullet)
\]
is a quasi-isomorphism, which follows because it is the complete tensor product of the quasi-isomorphism $\iota$ in Proposition \ref{tfpl} with $E_A \in {\bf FMod}^{\wedge}_{(p,(p)_{q^{p^m}})}(A)$.
\end{proof}

\begin{cor}
\label{gspl}
If $E \in {\bf CR}((\ol{A}/R)_{m\text{-}q^{p^m}\text{-crys}},\cO^{(m)}_{\ol{A}/R})$, then we have a quasi-isomorphism
\[
\left( R \Gamma ((\ol{A}/R)_{m\text{-}q^{p^m}\text{-crys}}, E) \right)^{\oplus p^{md}} \xrightarrow{\simeq} E_A \wh{\otimes}_A \breve{\Omega}^{(m)}_{A/R,q}  (\bullet).
\]
\end{cor}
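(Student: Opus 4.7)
The plan is to deduce this corollary from Proposition \ref{mqpl} by applying the derived global sections functor $R\Gamma((\ol{A}/R)_{m\text{-}q^{p^m}\text{-crys}}, -)$ and using the previously established properties of the linearization functor, namely Lemma \ref{elle} and Lemma \ref{rglm}.

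First, by Proposition \ref{mqpl}, the complex $E \wh{\otimes}_{\cO^{(m)}_{\ol{A}/R}} L^{(m)}(\breve{\Omega}^{(m)}_{A/R,q}(\bullet))$ is a resolution of $E^{\oplus p^{md}}$ in the category of $\cO^{(m)}_{\ol{A}/R}$-modules. Applying $R\Gamma$ to this resolution yields a quasi-isomorphism
\[
R \Gamma((\ol{A}/R)_{m\text{-}q^{p^m}\text{-crys}}, E)^{\oplus p^{md}} \xrightarrow{\simeq} R\Gamma\bigl((\ol{A}/R)_{m\text{-}q^{p^m}\text{-crys}}, E \wh{\otimes}_{\cO^{(m)}_{\ol{A}/R}} L^{(m)}(\breve{\Omega}^{(m)}_{A/R,q}(\bullet))\bigr).
\]
Next, by Lemma \ref{elle}, there is a termwise identification $E \wh{\otimes}_{\cO^{(m)}_{\ol{A}/R}} L^{(m)}(\breve{\Omega}^{(m)}_{A/R,q}(r)) \simeq L^{(m)}(E_A \wh{\otimes}_A \breve{\Omega}^{(m)}_{A/R,q}(r))$ for each $r$. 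Since $\breve{\Omega}^{(m)}_{A/R,q}(r)$ is a finite free $A$-module by Proposition \ref{drmo}, the complete tensor product $E_A \wh{\otimes}_A \breve{\Omega}^{(m)}_{A/R,q}(r)$ belongs to ${\bf FMod}^{\wedge}_{(p,(p)_{q^{p^m}})}(A)$, so Lemma \ref{rglm} applies and shows that $R\Gamma$ of each term equals $E_A \wh{\otimes}_A \breve{\Omega}^{(m)}_{A/R,q}(r)$ concentrated in degree $0$.

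Because each term of the complex is thus $\Gamma$-acyclic, the hypercohomology spectral sequence $E_2^{p,q} = H^p\bigl(R^q\Gamma(E \wh{\otimes}_{\cO^{(m)}_{\ol{A}/R}} L^{(m)}(\breve{\Omega}^{(m)}_{A/R,q}(\bullet)))\bigr)$ degenerates, so $R\Gamma$ of the complex may be computed termwise and is quasi-isomorphic to the complex of global sections $E_A \wh{\otimes}_A \breve{\Omega}^{(m)}_{A/R,q}(\bullet)$. Composing with the previous quasi-isomorphism gives the desired one. The only nontrivial step in this argument is the termwise computation of $R\Gamma$ on a complex, which is the main technical point but follows from the standard spectral sequence argument once the $\Gamma$-acyclicity of each term has been verified via Lemma \ref{rglm}.
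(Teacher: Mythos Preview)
Your proof is correct and follows essentially the same approach as the paper: apply $R\Gamma$ to the resolution of Proposition \ref{mqpl}, use Lemma \ref{elle} to rewrite each term as a linearization, and then invoke Lemma \ref{rglm} to compute $R\Gamma$ of each linearized term. The paper phrases the last step as ``by the calculations as in the proof of Lemma \ref{rglm}'' rather than via an explicit spectral sequence argument, but the content is the same.
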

\begin{proof}
We follow the notations in Section \ref{mqanc}. Then we have
\begin{align*}
\left( R \Gamma ((\ol{A}/R)_{m\text{-}q^{p^m}\text{-crys}}, E) \right)^{\oplus p^{md}} &\simeq Re_{\ol{A}/R*} (E^{\oplus p^{md}}) \\
&\simeq Re_{\ol{A}/R*} ( E \wh{\otimes}_{\cO^{(m)}_{\ol{A}/R}} L^{(m)} ( \breve{\Omega}^{(m)}_{A/R,q}  (\bullet)) ) .
\end{align*}
By Lemma \ref{elle}, this is quasi-isomorphic to \[ Re_{\ol{A}/R*} (L^{(m)}(E_A \wh{\otimes}_A \breve{\Omega}^{(m)}_{A/R,q}  (\bullet))). \] Then,  by  the  calculations as in the proof of Lemma \ref{rglm}, this is quasi-isomorphic to \[ E_A \wh{\otimes}_A \breve{\Omega}^{(m)}_{A/R,q}  (\bullet). \qedhere \] 
\end{proof}

\section{$q$-analog of the jet complex}
\label{qanj}

In this section, we construct the $q$-analog of the  ``jet complex of order $p^m$'' and the corresponding Poincar\'{e} lemma, following \cite{LQ01}. %This complex is more accurate in the sense that, it computes the complete $q$-crystals directly, not the several copies of the complete $q$-crystals as we computed in Section \ref{hple}. Unfortunately, this complex is not bounded, and as the proof of Proposition 1.4 of \cite{LQ01} is not correct (which was pointed out in the Introduction of \cite{Miy15}), its local freeness is yet to be proved. These problems prevent us from proving the cohomological boundedness and the base change, as in the classical case. But if we limited to the Poincar\'{e} lemma, we are able to prove it by modify the proof of Theorem 4.7 of \cite{LQ01} appropriately.

As in Section \ref{hple}, let $(R, I_R) \to (A, I_A)$ be a morphism of $q^{p^m}$-PD pairs with fixed rank one  \'{e}tale coordinates $\ul{x} = (x_1, \dots, x_d)$ in $A$. We also assume that $
I_{A} = \ol{I_R A}^{\rm cl}$. We consider the DGA $P^{(m)}_{A/R,q} (\bullet)$ and its sub-DGA $NP^{(m)}_{A/R,q} (\bullet)$. Let $K^{(m)}_{A/R,q} (\bullet)$ be the DG-ideal of $NP^{(m)}_{A/R,q} (\bullet)$ generated by the $(p,(p)_{q^{p^m}})$-completion of the free $A$-module with basis $\ul{\xi}^{ \{ \ul{k} \}_{(m), q}}$, where $| \ul{k} | \ge p^m + 1$.

\begin{defi}
\label{mqjc}
We define the \textit{$q$-jet complex} (of order $p^m$) $\Omega^{(m)}_{A/R,q}  (\bullet)$ as the quotient of $NP^{(m)}_{A/R,q} (\bullet)$ by the DG-ideal $K^{(m)}_{A/R,q} (\bullet)$.
\end{defi}

In this section, we sometimes omit to write the index $r$ of the differential $d^r$ of $P^{(m)}_{A/R,q} (\bullet), NP^{(m)}_{A/R,q} (\bullet), \Omega^{(m)}_{A/R,q}  (\bullet)$ and simply denote the differential by $d^{\bullet}$. For elements $y,z$ of these DGA, possibly of different degrees, Leibniz rule holds:
\begin{align}
\label{leiba}
d^{\bullet}(y \otimes' z) = d^{\bullet}y \otimes' z + (-1)^{\deg(y)} y \otimes' d^{\bullet}z.
\end{align}
Also note that if $y,z \in P^{(m)}_{A/R,q} (1)$ and $f \in A$, then we have the formula for moving coefficients inside products:
\begin{align}
\label{movc}
y \otimes' fz = \theta_{(m)}(f) y \otimes' z = fy \otimes' z + d^{0}(f)  y \otimes' z.
\end{align}
As before, if $0 \le k_i < 2p^m$ for $i = 1, \dots, d$, then we have $\ul{\xi}^{ \{ \ul{k} \}_{(m),q} } = \ul{\xi}^{ (\ul{k})_{q} }$ via the map (\ref{filma}). When $0 < | \ul{k} | \le p^m$, we shall often denote the image of $\ul{\xi}^{ \{ \ul{k} \}_{(m),q} }$ via the natural surjection $NP^{(m)}_{A/R,q} (1) \to \Omega^{(m)}_{A/R,q}  (1)$ by $(\ul{dx})^{ (\ul{k})_{q} }$, and for convenience, when $| \ul{k} | > p^m$, we set $(\ul{dx})^{ (\ul{k})_{q} }= 0$.
Then  $\Omega^{(m)}_{A/R,q}  (1)$ is a free $A$-module with basis $(\ul{dx})^{ (\ul{k})_{q} }$, where $0 < | \ul{k} | \le p^m$. 
%Note that for $| \ul{k} | > p^m$, we have $(\ul{dx})^{ \{ \ul{k} \}_{(m),q} } = 0$, so such terms will never play a role in our calculations. 
%To lighten the notations, when considering the elements of $\Omega^{(m)}_{A/R,q}  (\bullet)$, we will therefore not make explicit the conditions of the type $0 < | \ul{k} | \le p^m$. 

The description of $\Omega^{(m)}_{A/R,q}  (r)$ for $r > 1$ is more complicated. 
%We use the tensor notation for the DGA structure (as we will discuss later, using the wedge product would be misleading). 
By Proposition \ref{mqbc} and Definition \ref{mqjc}, we can check that $\Omega^{(m)}_{A/R,q} (\bullet)$ is the quotient of the tensor algebra of $\Omega^{(m)}_{A/R,q} (1)$ by the ideal generated by the images of the $d^{1}(\ul{\xi}^{ \{ \ul{k} \}_{(m),q} })$ with $p^m < | \ul{k} | \le 2p^m$. So for the degree 2 part, $\Omega^{(m)}_{A/R,q} (2)$ is generated by $(\ul{dx})^{ (\ul{k}_{(1)})_{q} } \otimes' (\ul{dx})^{ (\ul{k}_{(2)})_{q} }$ with $\ul{k}_{(1)},\ul{k}_{(2)} \in \N^d$ and $0 < | \ul{k}_{(1)} |, | \ul{k}_{(2)} | \le p^m$ subject to the relations
\begin{equation}
\label{dxdx}
\sum_{0 < \ul{k}' < \ul{k}} \left\langle
\begin{array}{@{}c@{}}
\ul{k} \\ \ul{k}'
\end{array}
\right\rangle_{(m), q }
(\ul{dx})^{ ( \ul{k} - \ul{k}' )_{q}} \otimes' (\ul{dx})^{ ( \ul{k}' )_{q}} = 0 \ \ \ \ \text{with} \ p^m < | \ul{k} | \le 2p^m.
\end{equation}
For the degree $r > 2$ parts, $\Omega^{(m)}_{A/R,q} (r)$ is generated by the $(\ul{dx})^{ (\ul{k}_{(1)})_{q} } \otimes' (\ul{dx})^{ (\ul{k}_{(2)})_{q} } \otimes' \cdots \otimes' (\ul{dx})^{ (\ul{k}_{(r)})_{q} } $ with $0 < | \ul{k}_{(1)} |, \dots, | \ul{k}_{(r)} | \le p^m$. The relations are given by tensoring on the right and on the left of (\ref{dxdx}) by the elements of the form $(\ul{dx})^{ (\ul{k}_{(1)})_{q} } \otimes' (\ul{dx})^{ (\ul{k}_{(2)})_{q} } \otimes' \cdots \otimes' (\ul{dx})^{ (\ul{k}_{(j)})_{q} } $.
Note that in general, the $q$-jet complex may not be bounded above.

As in Section \ref{hple}, we can also consider the DGA $LP^{(m)}_{A/R,q} (\bullet)$ and its sub-DGA $LNP^{(m)}_{A/R,q} (\bullet)$. We can check that the ideal \[ \wh{A \langle \ul{\xi} \rangle}_{(m), q, \ul{x}} \wh{\otimes}'_A K^{(m)}_{A/R,q} (\bullet) := \bigoplus_{r=0}^{\infty} \wh{A \langle \ul{\xi} \rangle}_{(m), q, \ul{x}} \wh{\otimes}'_A K^{(m)}_{A/R,q} (r) \] of $LNP^{(m)}_{A/R,q} (\bullet)$ is a DG-ideal. Then we can get the linearization of the $q$-jet complex:

\begin{defi}
\label{lmqjc}
We define the \textit{linearized $q$-jet complex} (of order $p^m$) $L\Omega^{(m)}_{A/R,q} (\bullet)$ as the quotient of $LNP^{(m)}_{A/R,q} (\bullet)$ by the DG-ideal $\wh{A \langle \ul{\xi} \rangle}_{(m), q, \ul{x}} \wh{\otimes}'_A K^{(m)}_{A/R,q} (\bullet)$.
\end{defi}

As before, we sometimes simply write $d^{\bullet}$ for the differential of $LP^{(m)}_{A/R,q} (\bullet),$ \linebreak $LNP^{(m)}_{A/R,q} (\bullet),$ $L\Omega^{(m)}_{A/R,q} (\bullet)$. Let $a \in \wh{A \langle \ul{\xi} \rangle}_{(m), q, \ul{x}}$ and  $z \in \Omega^{(m)}_{A/R,q} (r)$. For the differential $d^{\bullet}$ of $L\Omega^{(m)}_{A/R,q} (\bullet)$, Leibniz rule holds:
\begin{align}
\label{leibb}
d^{\bullet}(a \otimes' z) = d^{0}a \otimes' z + a \otimes' d^{r}z,
\end{align}
where
\begin{align}
\label{lindif}
d^0 : \wh{A \langle \ul{\xi} \rangle}_{(m), q, \ul{x}} \to \wh{A \langle \ul{\xi} \rangle}_{(m), q, \ul{x}} \wh{\otimes}'_A \Omega^{(m)}_{A/R,q} (1)
\end{align}
is the differential map of the linearized complex as in (\ref{lpdr}), and
\begin{align}
\label{usdif}
d^r : \Omega^{(m)}_{A/R,q} (r) \to \Omega^{(m)}_{A/R,q} (r+1)
\end{align}
is the differential map of the usual complex as in (\ref{dmqd}).

There is an identification 
\begin{align*}
L\Omega^{(m)}_{A/R,q} (1) \wh{\otimes}'_{\wh{A \langle \ul{\xi} \rangle}_{(m), q, \ul{x}}} L\Omega^{(m)}_{A/R,q} (1)  &\simeq L\Omega^{(m)}_{A/R,q} (2) \\
(a \otimes' y) \otimes' (b \otimes' z) &\mapsto (\delta_1^1(b)(a \otimes' y)) \otimes' z,
\end{align*}
where this time the notation $\wh{\otimes}'$ indicates that, for the $\wh{A \langle \ul{\xi} \rangle}_{(m), q, \ul{x}}$-module structure on the left hand side, we use the map $\delta_1^1 : P^{(m)}_{A/R,q} (1) \to P^{(m)}_{A/R,q} (2)$ in Theorem \ref{mqbt} and the multiplication rule induced by the ring structure on $P^{(m)}_{A/R,q} (2)$.
Then we have the following formula for moving coefficients inside products: if $\omega,\eta \in L\Omega^{(m)}_{A/R,q} (1)$ and $y \in \wh{A \langle \ul{\xi} \rangle}_{(m), q, \ul{x}}$, then
\begin{equation}
\label{mclo}
\omega \otimes' y \eta = y \omega \otimes' \eta + ((d^0 y) \omega) \otimes' \eta,
\end{equation}
where $d^0$ is the differential map in (\ref{lindif}), and the multiplication $(d^0 y) \omega$ is the one induced by the ring structure on $P^{(m)}_{A/R,q} (2)$.

We shall write $(\ul{d \xi})^{ ( \ul{k} )_{q}}:= 1 \otimes' (\ul{dx})^{ (\ul{k})_{q} }$ as an element of $L\Omega^{(m)}_{A/R,q} (1)$. Note that when $| \ul{k} | > p^m$, we have $(\ul{d \xi})^{ ( \ul{k} )_{q}}= 0$. For the differential $d^{\bullet}$ of $L\Omega^{(m)}_{A/R,q} (\bullet)$, we have the following formulas:
\begin{align}
d^{0}(\ul{\xi}^{ \{ \ul{k} \}_{(m),q} }) =
&\sum_{0 < \ul{k}' \le \ul{k}} \left\langle
\begin{array}{@{}c@{}}
\ul{k} \\ \ul{k}'
\end{array}
\right\rangle_{(m), q }
\ul{\xi}^{ \{ \ul{k} - \ul{k}' \}_{(m),q}} (\ul{d \xi})^{ ( \ul{k}' )_{q}}, \label{dxik} \\
d^{1}((\ul{d \xi})^{ ( \ul{k} )_{q}}) =
- &\sum_{0 < \ul{k}' < \ul{k}} \left\langle
\begin{array}{@{}c@{}}
\ul{k} \\ \ul{k}'
\end{array}
\right\rangle_{(m), q }
(\ul{d \xi})^{ ( \ul{k} - \ul{k}' )_{q}} \otimes' (\ul{d \xi})^{ ( \ul{k}' )_{q}}. \label{ddxi}
\end{align}
As before, we can consider \[ (\ul{d \xi})^{ (\ul{k}_{(1)})_{q} } \otimes' (\ul{d \xi})^{ (\ul{k}_{(2)})_{q} } \otimes' \cdots \otimes' (\ul{d \xi})^{ (\ul{k}_{(r)})_{q} } \] with $0 < | \ul{k}_{(1)} |, \dots, | \ul{k}_{(r)} | \le p^m$ as a set of generators of $L\Omega^{(m)}_{A/R,q} (r)$ over $\wh{A \langle \ul{\xi} \rangle}_{(m), q, \ul{x}}$. The relations are given by tensoring on the right and on the left of
\begin{equation}
\label{dxid}
\sum_{0 < \ul{k}' < \ul{k}} \left\langle
\begin{array}{@{}c@{}}
\ul{k} \\ \ul{k}'
\end{array}
\right\rangle_{(m), q }
(\ul{d \xi})^{ ( \ul{k} - \ul{k}' )_{q}} \otimes' (\ul{d \xi})^{ ( \ul{k}' )_{q}} = 0 \ \ \ \ \text{with} \ p^m < | \ul{k} | \le 2p^m
\end{equation}
by the elements of the form $(\ul{d \xi})^{ (\ul{k}_{(1)})_{q} } \otimes' (\ul{d \xi})^{ (\ul{k}_{(2)})_{q} } \otimes' \cdots \otimes' (\ul{d \xi})^{ (\ul{k}_{(j)})_{q} } $.

To prove the formal Poincar\'{e} lemma for the $q$-analog of the jet complex, we need to consider a technical homotopy map called the integration of differential forms. We write $\ul{x} = (x_1, \dots , x_d)$ as before and we set $\wh{\ul{x}} = (x_1, \dots , x_{d-1})$. We also write $\ul{\xi} = (\xi_1, \dots , \xi_d)$ and $\wh{\ul{\xi}} = (\xi_1, \dots , \xi_{d-1})$. When we consider the multi-index $\ul{k}$, it will belong to  $\N^d \ \text{or} \ \N^{d-1}$, and when it is a $d$-tuple $\ul{k} = (k_1, \dots, k_d)$, we will write $\wh{\ul{k}} = (k_1, \dots, k_{d-1})$. Moreover, given a $(d-1)$-tuple $\wh{\ul{k}}$ and $n \in \N$, we will denote by $(\wh{\ul{k}}, n)$ the $d$-tuple obtained by concatenating $\wh{\ul{k}}$ and $n$.

We define the following homotopy map $h$. We will show in Proposition \ref{wede} that $h$ is well-defined, and in Proposition \ref{hcond} that $h$ is unique.

\begin{prop}
\label{holo}
{\rm (cf. \cite[Proposition and Definition 2.1]{LQ01})}
There exists a unique $A$-linear map $h: L\Omega^{(m)}_{A/R,q} (r) \to L\Omega^{(m)}_{A/R,q} (r-1)$ such that
{\fontsize{8pt}{10pt}\selectfont
\begin{align*}
&h \left[ \wh{\ul{\xi}}^{ \{ \ul{k} \}_{(m),q}} \xi_d^{ \{ k \}_{(m),q}} (\ul{d \wh{\xi}})^{ (\ul{k}_{(1)})_{q} } \otimes' \cdots \otimes' (\ul{d \wh{\xi}})^{ (\ul{k}_{(s-1)})_{q} } \otimes' (\ul{d \wh{\xi}})^{ (\ul{k}_{(s)})_{q} }(d \xi_d)^{(l)_q} \otimes' (\ul{d \xi})^{ (\ul{k}_{(s+1)})_{q} } \otimes' \cdots \otimes' (\ul{d \xi})^{ (\ul{k}_{(r)})_{q} } \right] \\
=&\begin{cases}
0 \ \ \ \ \text{if} \ (1) \ s=r \ \text{and} \ l=0 \ \text{or} \ (2) \ l \ne 0 \ \text{and} \ \ul{k}_{(s)} \ne 0 \ \text{or} \ (3) \ p^m \nmid k \\
(-1)^{s-1} \left\langle
\begin{array}{@{}c@{}}
k+l \\ k
\end{array}
\right\rangle_{(m), q }^{-1} \wh{\ul{\xi}}^{ \{ \ul{k} \}_{(m),q}} (\ul{d \wh{\xi}})^{ (\ul{k}_{(1)})_{q} } \otimes' \cdots \otimes' (\ul{d \wh{\xi}})^{ (\ul{k}_{(s-1)})_{q} } \otimes' \xi_d^{ \{ k+l \}_{(m),q}} (\ul{d \xi})^{ (\ul{k}_{(s+1)})_{q} } \otimes' \cdots \otimes' (\ul{d \xi})^{ (\ul{k}_{(r)})_{q} } \\ \ \ \ \ \text{if} \ \ul{k}_{(s)} = 0, \ p^m \mid k \ \text{and either} \ (1) \ 0<l<p^m \ \text{or} \ (2) \ l = p^m \ \text{and} \ s = r.
\end{cases}
\end{align*}
}

\noindent
In the above equation and in the sequel of the article, if $\omega \in L\Omega^{(m)}_{A/R,q} (1)$ and $y \in \wh{A \langle \ul{\xi} \rangle}_{(m), q, \ul{x}}$, then, following {\rm (\ref{mclo})}, we set 
\begin{equation}
\label{omy}
\omega \otimes' y := y \omega  + (d^0 y) \omega.
\end{equation}
So by {\rm (\ref{dxik})}, in the case $l = p^m \ \text{and} \ s = r$, the above equation means the following:
\begin{align*}
&h \left[ \wh{\ul{\xi}}^{ \{ \ul{k} \}_{(m),q}} \xi_d^{ \{ k \}_{(m),q}} (\ul{d \wh{\xi}})^{ (\ul{k}_{(1)})_{q} } \otimes' \cdots \otimes' (\ul{d \wh{\xi}})^{ (\ul{k}_{(r-1)})_{q} } \otimes' (d \xi_d)^{(p^m)_q} \right] \\
=&
(-1)^{r-1}
\sum_{0 \le k' < p^m}
 \left\langle
\begin{array}{@{}c@{}}
k+p^m \\ k
\end{array}
\right\rangle_{(m), q }^{-1}
 \left\langle
\begin{array}{@{}c@{}}
k+p^m \\ k'
\end{array}
\right\rangle_{(m), q } \\
&\times
 \wh{\ul{\xi}}^{ \{ \ul{k} \}_{(m),q}} (\ul{d \wh{\xi}})^{ (\ul{k}_{(1)})_{q} } \otimes' \cdots \otimes' (\ul{d \wh{\xi}})^{ (\ul{k}_{(r-2)})_{q} } \otimes' \xi_d^{ \{ k+p^m - k' \}_{(m),q}}  (\ul{d \wh{\xi}})^{ (\ul{k}_{(r-1)})_{q} } (d \xi_d)^{ ( k' )_{q}}.
\end{align*}
\end{prop}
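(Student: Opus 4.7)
The plan is to adapt Proposition and Definition 2.1 of \cite{LQ01} to the present $q$-setting. The strategy is to exhibit an $A$-module spanning set of $L\Omega^{(m)}_{A/R,q}(r)$ on which the formula is directly defined, and then check that the resulting assignment descends to the quotient and determines $h$ uniquely.

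First I would present $L\Omega^{(m)}_{A/R,q}(r)$ as an $A$-module via the tensor factorization
\[
\ul{\xi}^{\{\ul{k}\}_{(m),q}} = \wh{\ul{\xi}}^{\{\wh{\ul{k}}\}_{(m),q}} \cdot \xi_d^{\{k_d\}_{(m),q}}
\]
on the coefficient ring together with the observation that each generator $(\ul{d\xi})^{(\ul{k}_{(j)})_q}$ of $L\Omega^{(m)}_{A/R,q}(1)$ factors as a product of its $\wh{\ul{\xi}}$-part and its $\xi_d$-part (understood to vanish whenever the total degree exceeds $p^m$). This presents $L\Omega^{(m)}_{A/R,q}(r)$ as spanned over $A$ by elements of the exact shape appearing in the statement, with $s$ being the first tensor position whose $(\ul{d\xi})^{(\ul{k}_{(s)})_q}$-factor has a nonzero $d\xi_d$-component. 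The spanning observation yields uniqueness of $h$ at once.

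Next I would extend the formula $A$-linearly to this spanning set and verify that it vanishes on the defining relations of the quotient. Via the identification
\[
L\Omega^{(m)}_{A/R,q}(r) \simeq \wh{A\langle \ul{\xi}\rangle}_{(m),q,\ul{x}} \wh{\otimes}'_A \Omega^{(m)}_{A/R,q}(r),
\]
the only non-trivial relations are those of (\ref{dxid}), placed at an arbitrary tensor position and flanked by arbitrary monomials on each side. Applying $h$ to these relations and collecting terms, the vanishing reduces to $q$-Vandermonde-type identities for the twisted higher-level binomials $\left\langle \begin{array}{c} \ul{k} \\ \ul{k}' \end{array} \right\rangle_{(m),q}$ of Lemma \ref{qhb}. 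The inversions $\left\langle \begin{array}{c} k+l \\ k \end{array} \right\rangle_{(m),q}^{-1}$ appearing in the formula make sense because in the non-vanishing cases one has $p^m \mid k$ and $0 < l \le p^m$, which forces this binomial to unwind to a unit in $\Z[q]_{(p,q-1)}$ by Lemma \ref{gcdj} together with the analysis in the proof of Lemma \ref{qhb}.

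The principal obstacle is the case analysis in the well-definedness check. One must separate subcases by the location of the pivot $s$ relative to the position of the inserted relation (\ref{dxid}), by whether the multi-index $\ul{k}$ of that relation involves the $\xi_d$-component, and by whether splitting the inserted terms produces $(d\xi_d)^{(l)_q}$-factors that interact with the pivot. Even classically, \cite{LQ01} did not exhaust every subcase, so part of the content of the present article is to fill these gaps; moreover, in the $q$-context the twisted-binomial manipulations are more delicate than their classical counterparts, so I would rederive the needed identities from scratch using Proposition \ref{qiff} and Proposition \ref{nnp}, keeping the combinatorial bookkeeping as explicit as possible throughout.
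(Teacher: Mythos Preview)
Your proposal misses the central subtlety of the statement. The formula in the proposition does \emph{not} specify $h$ on every element of your spanning set: when $\ul{k}_{(s)}=0$, $p^m\mid k$, $l=p^m$ and $s<r$, none of the listed cases applies. Hence your assertion that ``the spanning observation yields uniqueness of $h$ at once'' is false; there is a genuinely undetermined case, and the content of the proposition is precisely that this missing value is forced by the relations \eqref{dxid}.

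The paper handles this as follows. First, Proposition~\ref{hcond} isolates the relations \eqref{dxid} with $l_d\ge p^m$ and shows that, assuming $h$ takes the prescribed values on the covered generators, compatibility with these relations is equivalent to the recursive formulas \eqref{cdb} (for $l''>0$) and \eqref{cdc} (for $l''=0$); the latter determines the uncovered generators by descending induction on $s$. This is what gives uniqueness. Second, Proposition~\ref{wede} checks well-definedness by verifying that the map so extended annihilates \emph{all} the relations \eqref{rela}, not merely those used to pin down the missing values. That verification splits into the case \eqref{cdf} where the inserted relation sits at position $s$ and the case \eqref{cdg} where it sits to the right of a $(d\xi_d)^{(p^m)_q}$ pivot; the key non-formal step in the second case is an identity obtained from $d^{\bullet}\circ d^{\bullet}=0$ applied to $1\otimes'\ul{\xi}^{\{\ul{u}\}_{(m),q}}$ with $\ul{u}=(\wh{\ul{l}},l_d+p^m)$, see \eqref{cdl}. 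Your sketch (``collect terms and reduce to $q$-Vandermonde-type identities'') does not distinguish these two roles of the relations, and in particular does not explain how the uncovered generators acquire their values before one can even begin checking compatibility.
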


Note that when $p^m \mid k$ and $0 < l \le p^m$, we can show that
$\left\langle 
\begin{array}{@{}c@{}}
k+l \\ k
\end{array}
\right\rangle_{(m), q }$
in the formula above is invertible in $\Z [ q ]_{(p,q-1)}$ by the same argument as in the proof of Lemma \ref{ipmq}.

%(Add the examples or the remarks about integrating differentials?)

We first check that, if $h$ is well-defined, then we have the following formula for moving coefficients inside products:

\begin{lem}
\label{xiin}
{\rm (cf. \cite[Lemma 2.3]{LQ01})}
We always have
{\fontsize{8pt}{10pt}\selectfont
\begin{align*}
&h \left[ \wh{\ul{\xi}}^{ \{ \ul{k}  \}_{(m),q}} \xi_d^{ \{ k  \}_{(m),q}} (\ul{d \wh{\xi}})^{ (\ul{k}_{(1)})_{q} } \otimes' \cdots \otimes' (\ul{d \wh{\xi}})^{ (\ul{k}_{(s-1)})_{q} } \otimes' (\ul{d \wh{\xi}})^{ (\ul{k}_{(s)})_{q} }(d \xi_d)^{(l)_q} \otimes' (\ul{d \xi})^{ (\ul{k}_{(s+1)})_{q} } \otimes' \cdots \otimes' (\ul{d \xi})^{ (\ul{k}_{(r)})_{q} } \right] \\
= &h \left[ \wh{\ul{\xi}}^{ \{ \ul{k}  \}_{(m),q}}  (\ul{d \wh{\xi}})^{ (\ul{k}_{(1)})_{q} } \otimes' \cdots \otimes' (\ul{d \wh{\xi}})^{ (\ul{k}_{(s-1)})_{q} } \otimes' \xi_d^{ \{ k  \}_{(m),q}} (\ul{d \wh{\xi}})^{ (\ul{k}_{(s)})_{q} }(d \xi_d)^{(l)_q} \otimes' (\ul{d \xi})^{ (\ul{k}_{(s+1)})_{q} } \otimes' \cdots \otimes' (\ul{d \xi})^{ (\ul{k}_{(r)})_{q} } \right].
\end{align*}
}
\end{lem}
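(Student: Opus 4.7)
The plan is to iterate the moving-coefficient rule (\ref{mclo}) to migrate $\xi_d^{\{k\}_{(m),q}}$ from the outermost coefficient of the LHS rightward across $s-1$ successive $\otimes'$ boundaries until it lands as a coefficient of the $s$-th tensor factor, yielding the RHS modulo correction terms. Rearranging (\ref{mclo}) as $y\omega \otimes' \eta = \omega \otimes' y\eta - ((d^0 y)\omega)\otimes' \eta$ and applying this successively with $y = \xi_d^{\{k\}_{(m),q}}$ for $j = 1, 2, \dots, s-1$, I obtain
\[
\text{LHS} = \text{RHS} - \sum_{j=1}^{s-1} C_j,
\]
where the correction term $C_j$ is obtained from the LHS by removing the outer factor $\xi_d^{\{k\}_{(m),q}}$ and replacing the $j$-th tensor factor $\omega_j := (\ul{d\wh{\xi}})^{(\ul{k}_{(j)})_q}$ by $(d^0 \xi_d^{\{k\}_{(m),q}})\,\omega_j$, keeping every other tensor slot unchanged. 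Applying $h$ to both sides, the lemma reduces to showing $h(C_j) = 0$ for each $j \in \{1,\dots,s-1\}$.

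To see this vanishing, I expand $C_j$ using (\ref{dxik}):
\[
d^0(\xi_d^{\{k\}_{(m),q}}) = \sum_{0 < k' \le k} \left\langle \begin{array}{@{}c@{}} k \\ k' \end{array} \right\rangle_{(m),q} \xi_d^{\{k-k'\}_{(m),q}} (d\xi_d)^{(k')_q}.
\]
Each resulting summand is a basis element in the form of Proposition \ref{holo}'s input: the external coefficient becomes $\wh{\ul{\xi}}^{\{\ul{k}\}_{(m),q}} \xi_d^{\{k-k'\}_{(m),q}}$ (using the commutativity of $\xi_d$ with the other $\xi_i$ in $\wh{A\langle\ul{\xi}\rangle}_{(m),q,\ul{x}}$), slots $1, \dots, j-1$ remain purely $\ul{d\wh{\xi}}$-forms, and the $j$-th slot becomes the hybrid $(\ul{d\wh{\xi}})^{(\ul{k}_{(j)})_q}(d\xi_d)^{(k')_q}$. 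In particular, position $j$ is now the \emph{first} slot carrying a $d\xi_d$ factor. Since $\omega_j \in \Omega^{(m)}_{A/R,q}(1)$ forces $\ul{k}_{(j)} \ne 0$ and the summation condition gives $k' > 0$, the second ``zero'' case of Proposition \ref{holo} applies with the roles of $s$ and $l$ played by $j$ and $k'$, so $h$ annihilates the summand. Summing over $k'$ and $j$ yields $\sum_{j=1}^{s-1} h(C_j) = 0$, hence $h(\text{LHS}) = h(\text{RHS})$.

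The main obstacle is making the interpretation of $(d^0 \xi_d^{\{k\}_{(m),q}})\,\omega_j$ as a single degree-one element of $L\Omega^{(m)}_{A/R,q}(1)$ compatible with Proposition \ref{holo}'s standard input format; this rests on the commutativity of $\xi_d$ with the variables $\xi_1, \dots, \xi_{d-1}$ in $\wh{A\langle\ul{\xi}\rangle}_{(m),q,\ul{x}}$ and on the identification of the product $(d\xi_d)^{(k')_q}(\ul{d\wh{\xi}})^{(\ul{k}_{(j)})_q}$ inside the quotient $\Omega^{(m)}_{A/R,q}(1)$, vanishing automatically when $|\ul{k}_{(j)}| + k' > p^m$. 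Once this identification is in place, the $h$-vanishing is immediate from Proposition \ref{holo}'s formula, with essentially no further computation required.
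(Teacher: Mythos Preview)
Your overall strategy coincides with the paper's: migrate $\xi_d^{\{k\}_{(m),q}}$ across $\otimes'$ boundaries via (\ref{mclo}) and show that the corrections are annihilated by $h$. The decomposition $\text{LHS}=\text{RHS}-\sum_{j=1}^{s-1}C_j$ is correct. The gap is in your treatment of $h(C_j)$ for $j\ge 2$.

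After expanding $d^0(\xi_d^{\{k\}_{(m),q}})$, the correction $C_j$ has the factor $\xi_d^{\{k-k'\}_{(m),q}}$ sitting at the $j$-th tensor slot:
\[
\wh{\ul{\xi}}^{\{\ul{k}\}_{(m),q}}(\ul{d\wh{\xi}})^{(\ul{k}_{(1)})_q}\otimes'\cdots\otimes'(\ul{d\wh{\xi}})^{(\ul{k}_{(j-1)})_q}\otimes'\xi_d^{\{k-k'\}_{(m),q}}(\ul{d\wh{\xi}})^{(\ul{k}_{(j)})_q}(d\xi_d)^{(k')_q}\otimes'\cdots.
\]
Your claim that this is already in the standard input format of Proposition~\ref{holo} with external coefficient $\wh{\ul{\xi}}^{\{\ul{k}\}_{(m),q}}\xi_d^{\{k-k'\}_{(m),q}}$ is not justified. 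Commutativity of $\xi_d$ with the $\xi_i$ ($i<d$) inside the ring $\wh{A\langle\ul{\xi}\rangle}_{(m),q,\ul{x}}$ is not the issue; the issue is moving $\xi_d^{\{k-k'\}_{(m),q}}$ leftward across the $j-1$ intervening $\otimes'$ boundaries, and each such move is governed by (\ref{mclo}) and produces further nonzero corrections (since $d^0(\xi_d^{\{k-k'\}_{(m),q}})\cdot(\ul{d\wh{\xi}})^{(\ul{k}_{(j')})_q}$ is generally nonzero). In effect you are invoking the lemma itself, for the smaller index $j$, to put $C_j$ into standard form.

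The repair is to argue by induction on $s$, which is precisely what the paper does: move $\xi_d^{\{k\}_{(m),q}}$ a single step, from position $s$ to position $s-1$; the main term is handled by the induction hypothesis (case $l=0$), and for the correction one again invokes the induction hypothesis (now with $s-1,\,k-k',\,k'$ in place of $s,\,k,\,l$) to bring $\xi_d^{\{k-k'\}_{(m),q}}$ to the outer position, after which Proposition~\ref{holo} case~(2) gives zero. In your framework the equivalent fix is to process $C_1,C_2,\dots$ in order, using the already-established instances of the lemma for $j'<j$ to justify the coefficient move inside $C_j$.
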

\begin{proof}
We follow the proof of \cite{LQ01}.
We proceed by induction on $s$. The case $s=1$ is trivial. Using the formula (\ref{mclo}) for moving coefficients across the tensor products, we have
{\fontsize{9pt}{10pt}\selectfont
\begin{align*}
&h \left[ \wh{\ul{\xi}}^{ \{ \ul{k}  \}_{(m),q}}  (\ul{d \wh{\xi}})^{ (\ul{k}_{(1)})_{q} } \otimes' \cdots \otimes' (\ul{d \wh{\xi}})^{ (\ul{k}_{(s-2)})_{q} } \otimes' (\ul{d \wh{\xi}})^{ (\ul{k}_{(s-1)})_{q} } \otimes' \xi_d^{ \{ k  \}_{(m),q}} (\ul{d \wh{\xi}})^{ (\ul{k}_{(s)})_{q} }(d \xi_d)^{(l)_q} \otimes' \cdots \right] \\
=&h \left[ \wh{\ul{\xi}}^{ \{ \ul{k}  \}_{(m),q}}  (\ul{d \wh{\xi}})^{ (\ul{k}_{(1)})_{q} } \otimes' \cdots \otimes' (\ul{d \wh{\xi}})^{ (\ul{k}_{(s-2)})_{q} } \otimes' \xi_d^{ \{ k  \}_{(m),q}} (\ul{d \wh{\xi}})^{ (\ul{k}_{(s-1)})_{q} }\otimes' (\ul{d \wh{\xi}})^{ (\ul{k}_{(s)})_{q} }(d \xi_d)^{(l)_q} \otimes' \cdots \right] \\
+ &h \left[ \wh{\ul{\xi}}^{ \{ \ul{k}  \}_{(m),q}}  (\ul{d \wh{\xi}})^{ (\ul{k}_{(1)})_{q} } \otimes' \cdots \otimes' (\ul{d \wh{\xi}})^{ (\ul{k}_{(s-2)})_{q} } \otimes' d^{0}(\xi_d^{ \{ k  \}_{(m),q}}) (\ul{d \wh{\xi}})^{ (\ul{k}_{(s-1)})_{q} }\otimes' (\ul{d \wh{\xi}})^{ (\ul{k}_{(s)})_{q} }(d \xi_d)^{(l)_q} \otimes' \cdots \right] .
\end{align*}
}

\noindent
For the first term, we can apply the induction hypothesis with the case $l=0$. So it suffices to show for the second term that, if $\ul{k}_{(s-1)} \ne 0$, then
\[
h \left[ \wh{\ul{\xi}}^{ \{ \ul{k}  \}_{(m),q}}  (\ul{d \wh{\xi}})^{ (\ul{k}_{(1)})_{q} } \otimes' \cdots \otimes' (\ul{d \wh{\xi}})^{ (\ul{k}_{(s-2)})_{q} } \otimes' d^{0}(\xi_d^{ \{ k  \}_{(m),q}}) (\ul{d \wh{\xi}})^{ (\ul{k}_{(s-1)})_{q} }\otimes'  \cdots \right] = 0.
\]
By (\ref{dxik}), we have
\[
d^{0}(\xi_d^{ \{ k  \}_{(m),q}}) =
\sum_{0 < k' \le k} \left\langle 
\begin{array}{@{}c@{}}
k \\ k'
\end{array}
\right\rangle_{(m), q }
\xi_d^{ \{ k - k'  \}_{(m),q}}  (d \xi_d)^{ ( k' )_{q}},
\]
so it is enough to show that for $k' \ne 0$, we have
{\fontsize{11pt}{10pt}\selectfont
\[
h \left[ \wh{\ul{\xi}}^{ \{ \ul{k}  \}_{(m),q}}  (\ul{d \wh{\xi}})^{ (\ul{k}_{(1)})_{q} } \otimes' \cdots \otimes' (\ul{d \wh{\xi}})^{ (\ul{k}_{(s-2)})_{q} } \otimes' \xi_d^{ \{ k - k'  \}_{(m),q}} (\ul{d \wh{\xi}})^{ (\ul{k}_{(s-1)})_{q} } (d \xi_d)^{ ( k' )_{q}} \otimes'  \cdots \right] = 0.
\]
}

\noindent
By induction hypothesis, this is
\[
h \left[ \wh{\ul{\xi}}^{ \{ \ul{k}  \}_{(m),q}} \xi_d^{ \{ k - k'  \}_{(m),q}}  (\ul{d \wh{\xi}})^{ (\ul{k}_{(1)})_{q} } \otimes' \cdots \otimes' (\ul{d \wh{\xi}})^{ (\ul{k}_{(s-2)})_{q} } \otimes'  (\ul{d \wh{\xi}})^{ (\ul{k}_{(s-1)})_{q} } (d \xi_d)^{ ( k' )_{q}} \otimes'  \cdots \right].
\]
Since $k' \ne 0$ and $\ul{k}_{(s-1)} \ne 0$, it is 0 by the definition of $h$.
\end{proof}

\begin{prop}
\label{hcond}
{\rm (cf. \cite[Proposition 2.4]{LQ01})}
If the map $h$ is well-defined, then $h$ is unique. More precisely, for $\ul{l} \in \N^d$ such that $p^m < | \ul{l} | \le 2p^m$ and $l_d \ge p^m$, we write $l = l_d, l'' = l_d - p^m$. Then, for the condition
{\fontsize{9pt}{10pt}\selectfont
\begin{equation}
\label{cda}
\sum_{0 < \ul{l}' < \ul{l}} \left\langle 
\begin{array}{@{}c@{}}
\ul{l} \\ \ul{l}'
\end{array}
\right\rangle_{(m), q }
h \left[ \wh{\ul{\xi}}^{ \{ \ul{k}  \}_{(m),q}} \xi_d^{ \{ k  \}_{(m),q}} (\ul{d \wh{\xi}})^{ (\ul{k}_{(1)})_{q} } \otimes' \cdots \otimes' (\ul{d \wh{\xi}})^{ (\ul{k}_{(s-1)})_{q} } \otimes' (\ul{d \xi})^{ ( \ul{l} - \ul{l}'  )_{q}} \otimes' (\ul{d \xi})^{ ( \ul{l}' )_{q}} \otimes' \cdots \right] = 0
\end{equation}}

\noindent
to be satisfied, it is necessary and sufficient that both the condition

{\fontsize{9pt}{10pt}\selectfont
\begin{align}
\label{cdb}
&h \left[ \wh{\ul{\xi}}^{ \{ \ul{k}  \}_{(m),q}} \xi_d^{ \{ k  \}_{(m),q}} (\ul{d \wh{\xi}})^{ (\ul{k}_{(1)})_{q} } \otimes' \cdots \otimes' (\ul{d \wh{\xi}})^{ (\ul{k}_{(s-1)})_{q} } \otimes' (d \xi_d)^{ (p^m  )_{q}} \otimes' (\ul{d \wh{\xi}})^{ ( \wh{\ul{l}} )_{q}} (d \xi_d)^{ (l''  )_{q}} \otimes' \cdots \right] \notag \\
= &- \sum_{l'' < l' \le p^m} \left\langle 
\begin{array}{@{}c@{}}
p^m+l'' \\ l''
\end{array}
\right\rangle_{(m), q }^{-1}
\left\langle 
\begin{array}{@{}c@{}}
p^m+l'' \\ l'
\end{array}
\right\rangle_{(m), q } \notag \\
&\times h \left[ \wh{\ul{\xi}}^{ \{ \ul{k}  \}_{(m),q}} \xi_d^{ \{ k  \}_{(m),q}} (\ul{d \wh{\xi}})^{ (\ul{k}_{(1)})_{q} } \otimes' \cdots \otimes' (\ul{d \wh{\xi}})^{ (\ul{k}_{(s-1)})_{q} } \otimes' (d \xi_d)^{ (p^m+l''-l'  )_{q}} \otimes' (\ul{d \wh{\xi}})^{ ( \wh{\ul{l}} )_{q}} (d \xi_d)^{ (l'  )_{q}} \otimes' \cdots \right],
\end{align}}

\noindent
when $l'' > 0$, and the condition

{\fontsize{9pt}{10pt}\selectfont
\begin{align}
\label{cdc}
&h \left[ \wh{\ul{\xi}}^{ \{ \ul{k}  \}_{(m),q}} \xi_d^{ \{ k  \}_{(m),q}} (\ul{d \wh{\xi}})^{ (\ul{k}_{(1)})_{q} } \otimes' \cdots \otimes' (\ul{d \wh{\xi}})^{ (\ul{k}_{(s-1)})_{q} } \otimes' (d \xi_d)^{ (p^m  )_{q}} \otimes' (\ul{d \wh{\xi}})^{ ( \wh{\ul{l}} )_{q}}  \otimes' \cdots \right] \notag \\
= &-h \left[ \wh{\ul{\xi}}^{ \{ \ul{k}  \}_{(m),q}} \xi_d^{ \{ k  \}_{(m),q}} (\ul{d \wh{\xi}})^{ (\ul{k}_{(1)})_{q} } \otimes' \cdots \otimes' (\ul{d \wh{\xi}})^{ (\ul{k}_{(s-1)})_{q} } \otimes' (\ul{d \wh{\xi}})^{ ( \wh{\ul{l}} )_{q}} \otimes' (d \xi_d)^{ (p^m  )_{q}}   \otimes' \cdots \right] \notag \\
&- \sum_{0 < l' < p^m} \left\langle 
\begin{array}{@{}c@{}}
p^m \\ l'
\end{array}
\right\rangle_{(m), q } \notag \\
&\times h \left[ \wh{\ul{\xi}}^{ \{ \ul{k}  \}_{(m),q}} \xi_d^{ \{ k  \}_{(m),q}} (\ul{d \wh{\xi}})^{ (\ul{k}_{(1)})_{q} } \otimes' \cdots \otimes' (\ul{d \wh{\xi}})^{ (\ul{k}_{(s-1)})_{q} } \otimes' (d \xi_d)^{ (p^m-l'  )_{q}} \otimes' (\ul{d \wh{\xi}})^{ ( \wh{\ul{l}} )_{q}} (d \xi_d)^{ (l'  )_{q}} \otimes' \cdots \right],
\end{align}}

\noindent
when $l'' = 0$, are satisfied.
\end{prop}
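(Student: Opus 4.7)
The plan is to prove the equivalence of (\ref{cda}) with (\ref{cdb}) (case $l'' > 0$) and with (\ref{cdc}) (case $l'' = 0$); the uniqueness of $h$ will then follow from the equivalence. I would expand the sum in (\ref{cda}) over $\ul{l}' = (\wh{\ul{l}'}, l')$ with $0 < \ul{l}' < \ul{l} = (\wh{\ul{l}}, p^m + l'')$, apply the defining formula of Proposition \ref{holo} to each summand, and use Lemma \ref{xiin} to handle the coefficient $\xi_d^{\{k\}_{(m),q}}$. Most summands vanish by the conditional clauses of Proposition \ref{holo}: those with both $\wh{\ul{l}'} \neq 0$ and $l' \neq 0$ fall into clause (2), while others are eliminated by clauses (1) and (3) after applying Lemma \ref{xiin}. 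The surviving contributions are then indexed by $\ul{l}' = (\wh{\ul{l}}, l')$ for $0 \le l' \le l''$ and by $\ul{l}' = (0, l')$ for $l'' \le l' \le p^m$.

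The key step is to isolate the summand $\ul{l}' = (\wh{\ul{l}}, l'')$ (complement $(0, p^m)$), whose image under $h$ carries coefficient $\left\langle \begin{array}{@{}c@{}} p^m + l'' \\ l'' \end{array} \right\rangle_{(m), q}$ by the multi-index product decomposition of Definition \ref{qhbc}, and equals the left-hand side of (\ref{cdb}) or (\ref{cdc}) after inverting that coefficient. Solving (\ref{cda}) for this isolated term directly yields (\ref{cdb}) when $l'' > 0$. When $l'' = 0$, the additional boundary summand $\ul{l}' = (\wh{\ul{l}}, 0)$ now lies in the range of the sum and contributes, via Lemma \ref{xiin}, the extra term $h[\cdots \otimes' (\ul{d\wh\xi})^{(\wh{\ul{l}})_q} \otimes' (d\xi_d)^{(p^m)_q} \otimes' \cdots]$ with coefficient $1$, which after being transferred to the right-hand side produces exactly (\ref{cdc}). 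The converse direction (from (\ref{cdb}) or (\ref{cdc}) back to (\ref{cda})) is immediate by reversing this identification.

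Uniqueness of $h$ then follows inductively: (\ref{cdb}) and (\ref{cdc}) express $h$ on any expression whose leading $d\xi_d$-block sits in the $s$-th slot with full weight $p^m$ in terms of $h$ on expressions where either the $d\xi_d$-weight at position $s$ is strictly smaller or the $d\xi_d$-block has been displaced to position $s+1$. Combined with Lemma \ref{xiin} to move $\xi_d$-coefficients across tensor products, this gives a terminating reduction of any value of $h$ to the basic cases already stipulated in Proposition \ref{holo}. The main obstacle is the combinatorial bookkeeping of the sum decomposition of (\ref{cda}) — verifying which $\ul{l}'$ give nonvanishing contributions after the manipulations of Lemma \ref{xiin}, and matching each surviving $\left\langle \begin{array}{@{}c@{}} \ul{l} \\ \ul{l}' \end{array} \right\rangle_{(m), q}$ with the coefficient $\left\langle \begin{array}{@{}c@{}} p^m + l'' \\ l' \end{array} \right\rangle_{(m), q}$ appearing in (\ref{cdb}) and (\ref{cdc}), which requires careful exploitation of the product structure of the higher-level $q$-binomial coefficients.
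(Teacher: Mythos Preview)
Your overall strategy matches the paper's: expand the sum in (\ref{cda}), discard the vanishing summands using the clauses of Proposition~\ref{holo}, isolate the summand whose $s$-th slot is $(d\xi_d)^{(p^m)_q}$, and solve for it. However, there is a systematic bookkeeping error that invalidates the details as written.

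The problem is a confusion between the summation index $\ul{l}'$ and its complement $\ul{l}-\ul{l}'$. In (\ref{cda}) it is $(\ul{d\xi})^{(\ul{l}-\ul{l}')_q}$ that occupies the $s$-th slot, so the quantities that Proposition~\ref{holo} calls ``$\ul{k}_{(s)}$'' and ``$l$'' are $\wh{\ul{l}}-\wh{\ul{l}'}$ and $l-l'$, not $\wh{\ul{l}'}$ and $l'$. Hence clause~(2) kills the terms with $\wh{\ul{l}}-\wh{\ul{l}'}\neq 0$ and $l-l'\neq 0$, and the survivors are those with $\wh{\ul{l}'}=\wh{\ul{l}}$ or $l'=l$, exactly as in the paper --- not those with $\wh{\ul{l}'}=0$ or $l'=0$ as you wrote. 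This propagates: your stated ranges ``$\ul{l}'=(\wh{\ul{l}},l')$ for $0\le l'\le l''$'' and ``$\ul{l}'=(0,l')$ for $l''\le l'\le p^m$'' are wrong; the correct surviving family is $\ul{l}'=(\wh{\ul{l}},l')$ with $l''\le l'\le p^m$ (from $\wh{\ul{l}'}=\wh{\ul{l}}$ together with $l-l'\le p^m$), plus, only when $l''=0$, the single extra term $\ul{l}'=(0,p^m)$ coming from the case $l'=l$. In particular, the ``additional boundary summand'' you describe in the $l''=0$ case is not $\ul{l}'=(\wh{\ul{l}},0)$ --- that is the very term you already isolated --- but $\ul{l}'=(0,p^m)$, whose $s$-th slot is $(\ul{d\wh\xi})^{(\wh{\ul{l}})_q}$ and whose $(s+1)$-th slot is $(d\xi_d)^{(p^m)_q}$; your description has the two slots exchanged. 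Once this swap is corrected the argument goes through and is identical to the paper's.
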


\begin{proof}
We follow the proof of \cite{LQ01}. First note that the statement gives a definition  of $h$ for all generators: for $l'' > 0$, we can get the definition for the generators of the type
\[
\wh{\ul{\xi}}^{ \{ \ul{k}  \}_{(m),q}} \xi_d^{ \{ k  \}_{(m),q}} (\ul{d \wh{\xi}})^{ (\ul{k}_{(1)})_{q} } \otimes' \cdots \otimes' (\ul{d \wh{\xi}})^{ (\ul{k}_{(s-1)})_{q} } \otimes' (d \xi_d)^{ (p^m  )_{q}} \otimes' (\ul{d \wh{\xi}})^{ ( \wh{\ul{l}} )_{q}} (d \xi_d)^{ (l''  )_{q}} \otimes' \cdots
\]
by using (\ref{cdb}), and for $l'' = 0$, we can get the definition for the generators of the type
\[
\wh{\ul{\xi}}^{ \{ \ul{k}  \}_{(m),q}} \xi_d^{ \{ k  \}_{(m),q}} (\ul{d \wh{\xi}})^{ (\ul{k}_{(1)})_{q} } \otimes' \cdots \otimes' (\ul{d \wh{\xi}})^{ (\ul{k}_{(s-1)})_{q} } \otimes' (d \xi_d)^{ (p^m  )_{q}} \otimes' (\ul{d \wh{\xi}})^{ ( \wh{\ul{l}} )_{q}}  \otimes' \cdots
\]
by using (\ref{cdc}) and descending induction on $s$.

Now, for $p^m < | \ul{l} | \le 2p^m$ and $l \ge p^m$, we consider the condition (\ref{cda}). We write $l' = l'_d$. By checking the term $(\ul{d \xi})^{ ( \ul{l} - \ul{l}'  )_{q}}$, we see that all the terms in the sum are 0 except in two mutually exclusive cases, $l' = l$ or $\wh{\ul{l'}}=\wh{\ul{l}}$. Thus, we are only left with these two sums. When $l > p^m$, there is only one sum related to the case $\wh{\ul{l'}}=\wh{\ul{l}}$:
{\fontsize{9pt}{10pt}\selectfont
\begin{align*}
&\sum_{0 \le l' < l} \left\langle 
\begin{array}{@{}c@{}}
l \\ l'
\end{array}
\right\rangle_{(m), q } \\
&\times h \left[ \wh{\ul{\xi}}^{ \{ \ul{k}  \}_{(m),q}} \xi_d^{ \{ k  \}_{(m),q}} (\ul{d \wh{\xi}})^{ (\ul{k}_{(1)})_{q} } \otimes' \cdots \otimes' (\ul{d \wh{\xi}})^{ (\ul{k}_{(s-1)})_{q} } \otimes' (d \xi_d)^{ ( l - l'  )_{q}} \otimes' (\ul{d \wh{\xi}})^{ ( \wh{\ul{l}} )_{q}} (d \xi_d)^{ (l'  )_{q}} \otimes' \cdots \right]
 = 0.
\end{align*} }

\noindent
Since all terms are $0$ unless $l'' \le l' \le p^m$, we get the formula (\ref{cdb}).

When $l = p^m$, note that since $| \ul{l} | > p^m$, we have $\wh{\ul{l}} \ne 0$. By the sum
{\fontsize{9pt}{10pt}\selectfont
\begin{align*}
&h \left[ \wh{\ul{\xi}}^{ \{ \ul{k}  \}_{(m),q}} \xi_d^{ \{ k  \}_{(m),q}} (\ul{d \wh{\xi}})^{ (\ul{k}_{(1)})_{q} } \otimes' \cdots \otimes' (\ul{d \wh{\xi}})^{ (\ul{k}_{(s-1)})_{q} } \otimes' (\ul{d \wh{\xi}})^{ ( \wh{\ul{l}} )_{q}} \otimes' (d \xi_d)^{ (p^m  )_{q}}   \otimes' \cdots \right] \\
+ &\sum_{0 \le l' < p^m} \left\langle 
\begin{array}{@{}c@{}}
p^m \\ l'
\end{array}
\right\rangle_{(m), q } \\ 
&\times h \left[ \wh{\ul{\xi}}^{ \{ \ul{k}  \}_{(m),q}} \xi_d^{ \{ k  \}_{(m),q}} (\ul{d \wh{\xi}})^{ (\ul{k}_{(1)})_{q} } \otimes' \cdots \otimes' (\ul{d \wh{\xi}})^{ (\ul{k}_{(s-1)})_{q} } \otimes' (d \xi_d)^{ (p^m-l'  )_{q}} \otimes' (\ul{d \wh{\xi}})^{ ( \wh{\ul{l}} )_{q}} (d \xi_d)^{ (l'  )_{q}} \otimes' \cdots \right] = 0,
\end{align*}}

\noindent
we get the formula (\ref{cdc}).
\end{proof}

\begin{rem}
\label{kspm}
{\rm (cf. \cite[Remark 2.5]{LQ01})}
We follow the remark and make a small correction of \cite{LQ01}.
We can check by the definition of $h$ in Proposition \ref{holo} that, unless $l=0$ or
\[
\ul{k}_{(s)} = 0, p^m \mid k, l = p^m \ \text{and} \ s<r,
\]
we have
{\fontsize{8pt}{10pt}\selectfont
\begin{align}
\label{rema}
&h \left[ \wh{\ul{\xi}}^{ \{ \ul{k}  \}_{(m),q}} \xi_d^{ \{ k  \}_{(m),q}} (\ul{d \wh{\xi}})^{ (\ul{k}_{(1)})_{q} } \otimes' \cdots \otimes' (\ul{d \wh{\xi}})^{ (\ul{k}_{(s-1)})_{q} } \otimes' (\ul{d \wh{\xi}})^{ (\ul{k}_{(s)})_{q} }(d \xi_d)^{(l)_q} \otimes' (\ul{d \xi})^{ (\ul{k}_{(s+1)})_{q} } \otimes' \cdots \otimes' (\ul{d \xi})^{ (\ul{k}_{(r)})_{q} } \right] \notag \\
=&
(-1)^{s-1} \notag \\
\times  &\wh{\ul{\xi}}^{ \{ \ul{k}  \}_{(m),q}}  (\ul{d \wh{\xi}})^{ (\ul{k}_{(1)})_{q} } \otimes' \cdots \otimes' (\ul{d \wh{\xi}})^{ (\ul{k}_{(s-1)})_{q} } \otimes' h \left[ \xi_d^{ \{ k  \}_{(m),q}}(\ul{d \wh{\xi}})^{ (\ul{k}_{(s)})_{q} }(d \xi_d)^{(l)_q} \right] (\ul{d \xi})^{ (\ul{k}_{(s+1)})_{q} } \otimes' \cdots \otimes' (\ul{d \xi})^{ (\ul{k}_{(r)})_{q} }.
\end{align}
}

\noindent
(The condition ``unless $l=0$'' was missing in \cite{LQ01}.)

We can also check that we always have
{\fontsize{8pt}{10pt}\selectfont
\begin{align}
\label{remb}
&h \left[ \wh{\ul{\xi}}^{ \{ \ul{k}  \}_{(m),q}} \xi_d^{ \{ k  \}_{(m),q}} (\ul{d \wh{\xi}})^{ (\ul{k}_{(1)})_{q} } \otimes' \cdots \otimes' (\ul{d \wh{\xi}})^{ (\ul{k}_{(s-1)})_{q} } \otimes' (\ul{d \wh{\xi}})^{ (\ul{k}_{(s)})_{q} }(d \xi_d)^{(l)_q} \otimes' (\ul{d \xi})^{ (\ul{k}_{(s+1)})_{q} } \otimes' \cdots \otimes' (\ul{d \xi})^{ (\ul{k}_{(r)})_{q} } \right] \notag \\
=&
(-1)^{s-1} \notag \\
\times  &\wh{\ul{\xi}}^{ \{ \ul{k}  \}_{(m),q}}  (\ul{d \wh{\xi}})^{ (\ul{k}_{(1)})_{q} } \otimes' \cdots \otimes' (\ul{d \wh{\xi}})^{ (\ul{k}_{(s-1)})_{q} } \otimes' h \left[ \xi_d^{ \{ k  \}_{(m),q}}(\ul{d \wh{\xi}})^{ (\ul{k}_{(s)})_{q} }(d \xi_d)^{(l)_q} \otimes' (\ul{d \xi})^{ (\ul{k}_{(s+1)})_{q} } \otimes' \cdots \otimes' (\ul{d \xi})^{ (\ul{k}_{(r)})_{q} } \right] .
\end{align}
}

\noindent
since when $l=p^m$ and $s<r$, we can apply the formula (\ref{cdb}), (\ref{cdc}).

Moreover, if $l \ne 0$, then by (\ref{cdb}), we have:
{\fontsize{7.8pt}{10pt}\selectfont
\begin{align}
\label{remc}
&h \left[ \wh{\ul{\xi}}^{ \{ \ul{k}  \}_{(m),q}} \xi_d^{ \{ k  \}_{(m),q}} (\ul{d \wh{\xi}})^{ (\ul{k}_{(1)})_{q} } \otimes' \cdots \otimes' (\ul{d \wh{\xi}})^{ (\ul{k}_{(s-1)})_{q} } \otimes' (d \xi_d)^{ (p^m  )_{q}} \otimes' (\ul{d \wh{\xi}})^{ ( \ul{k}_{(s+1)})_{q}} (d \xi_d)^{ (l  )_{q}} \otimes' \cdots \otimes' (\ul{d \xi})^{ (\ul{k}_{(r)})_{q} } \right] \notag \\
=&
(-1)^{s-1} \notag \\
\times &\wh{\ul{\xi}}^{ \{ \ul{k}  \}_{(m),q}}  (\ul{d \wh{\xi}})^{ (\ul{k}_{(1)})_{q} } \otimes' \cdots \otimes' (\ul{d \wh{\xi}})^{ (\ul{k}_{(s-1)})_{q} } \otimes' h \left[ \xi_d^{ \{ k  \}_{(m),q}} (d \xi_d)^{ (p^m  )_{q}} \otimes' (\ul{d \wh{\xi}})^{ ( \ul{k}_{(s+1)})_{q}} (d \xi_d)^{ (l  )_{q}} \right] \otimes' \cdots \otimes' (\ul{d \xi})^{ (\ul{k}_{(r)})_{q} }.
\end{align}}
\end{rem}

\begin{prop}
\label{wede}
{\rm (cf. \cite[Proposition 2.6]{LQ01})}
The map $h$ is well-defined.
\end{prop}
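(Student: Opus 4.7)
The plan is to verify that the rules of Proposition~\ref{holo} respect every relation defining $L\Omega^{(m)}_{A/R,q}(\bullet)$ inside the tensor algebra of $L\Omega^{(m)}_{A/R,q}(1)$, and I argue by induction on the number $d$ of variables. The base case $d=1$ is essentially Proposition~\ref{hcond}: every relation involves $\ul{l}=l_1$ with $p^m<l_1\le 2p^m$, so $l_1\ge p^m$, and the formulas (\ref{cdb})--(\ref{cdc}) are precisely what force condition (\ref{cda}) to hold. Beforehand one has to check that those two formulas consistently determine $h$ on generators containing a $(d\xi_1)^{(p^m)_q}$ factor when iterated; this is a descending induction on the slot $s$, using the invertibility in $\Z[q]_{(p,q-1)}$ of the higher-level $q$-binomial $\left\langle{p^m+l''\atop l''}\right\rangle_{(m),q}$ provided by Lemma~\ref{ipmq}, so that the recursion terminates in a unique value.

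For the inductive step from dimension $d-1$ to $d$, Lemma~\ref{xiin} lets me absorb every $\xi_d^{\{k\}_{(m),q}}$ coefficient to the front, so it suffices to check that $h$ vanishes on each element obtained by inserting the relation
\[
\sum_{0<\ul{l}'<\ul{l}}\left\langle{\ul{l}\atop \ul{l}'}\right\rangle_{(m),q}(\ul{d\xi})^{(\ul{l}-\ul{l}')_q}\otimes'(\ul{d\xi})^{(\ul{l}')_q}=0,\qquad p^m<|\ul{l}|\le 2p^m,
\]
at an arbitrary pair of adjacent positions $(s,s+1)$ of a generator. I then split the argument by the value of $l_d$. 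If $l_d\ge p^m$, the vanishing is exactly condition (\ref{cda}), which by Proposition~\ref{hcond} is equivalent to (\ref{cdb}) and (\ref{cdc}), hence holds by construction. If $l_d=0$, only $\ul{d\wh{\xi}}$-factors appear in the inserted sum; the pull-through formulas (\ref{rema})--(\ref{remb}) of Remark~\ref{kspm} then push $h$ through the outer tensor slots, reducing the desired vanishing to the same statement in dimension $d-1$, which is the induction hypothesis.

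The genuinely new case, which I expect to be the main obstacle, is $0<l_d<p^m$. Here I would split $\ul{l}'=(\wh{\ul{l}}',l_d')$ according to $l_d'\in\{0,\ldots,l_d\}$ and apply the definition of $h$ term by term. The central wedge slot then becomes $(\ul{d\wh{\xi}})^{(\wh{\ul{l}}-\wh{\ul{l}}')_q}(d\xi_d)^{(l_d-l_d')_q}$, and the conditions on $\ul{k}_{(s)}$ and $l$ imposed in Proposition~\ref{holo} force most contributions to vanish: the survivors correspond to $\wh{\ul{l}}'=0$ or $\wh{\ul{l}}'=\wh{\ul{l}}$, together with interactions with a $(d\xi_d)^{(p^m)_q}$ factor on the right that are controlled by (\ref{remc}). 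A higher-level $q$-Pascal-type identity for $\left\langle{\ul{l}\atop\ul{l}'}\right\rangle_{(m),q}$, obtained by the factorisation into ordinary $q$-binomials used in the proof of Lemma~\ref{qhb}, should then collapse the remaining terms to zero. Careful bookkeeping of signs, the positional shift of the residual $\xi_d^{\{k+l\}_{(m),q}}$ factor, and the separate treatment of the endpoints $l_d'=0$ and $l_d'=l_d$ is where the technical weight of the argument sits.
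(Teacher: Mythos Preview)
Your proposal has a genuine gap. You treat the relation as if it were always inserted at the first slot past the pure $\wh{\ul{\xi}}$ prefix (this is the situation of condition (\ref{cda}) and hence of Proposition~\ref{hcond}), but a second, logically independent case must be verified: the relation may be inserted at positions $(s+r,s+r+1)$ with $r\ge 1$, \emph{after} a factor $(d\xi_d)^{(l'')_q}$ has already appeared at slot $s$. This is the paper's case (\ref{cdg}), and neither Proposition~\ref{hcond} nor your induction on $d$ touches it. Your base case $d=1$ is already incomplete for the same reason: in one variable every $\ul{l}$ has $l_d>p^m$, but the relation can still sit at slot $(j,j+1)$ with $j>1$, and Proposition~\ref{hcond} says nothing about that configuration. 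Your reduction for $l_d=0$ does not escape this either, since when the relation (with $l_d=0$) sits to the right of a $(d\xi_d)^{(l'')_q}$, the pull-through formula (\ref{remb}) only strips off the $\wh{\ul{\xi}}$ prefix and leaves $h$ applied to a suffix that still begins with $(d\xi_d)^{(l'')_q}$ and still contains the relation; nothing has been reduced to dimension $d-1$.

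The paper handles (\ref{cdg}) as follows. When $0<l''<p^m$ one uses (\ref{rema}) to factor $h$ past the first slot, so the relation lands outside $h$ and is zero. When $l''=p^m$, a descending induction on $r$ via (\ref{cdc}) and (\ref{remc}) reduces to $r=1$, i.e.\ to
\[
\sum_{0<\ul{l}'<\ul{l}}\left\langle{\ul{l}\atop\ul{l}'}\right\rangle_{(m),q}
h\bigl[\cdots\otimes'(d\xi_d)^{(p^m)_q}\otimes'(\ul{d\xi})^{(\ul{l}-\ul{l}')_q}\otimes'(\ul{d\xi})^{(\ul{l}')_q}\otimes'\cdots\bigr]=0,
\]
and this last identity is extracted from $d^\bullet\circ d^\bullet=0$ applied to $\ul{\xi}^{\{\ul{u}\}_{(m),q}}$ with $\ul{u}=(\wh{\ul{l}},l_d+p^m)$: the resulting equality (\ref{cdl}) has one side vanishing term-by-term by the already established (\ref{cdf}), isolating exactly the single term $w_d=l_d$ on the other side (whose coefficient is invertible by Lemma~\ref{ipmq}). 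This cohomological trick is the missing ingredient in your argument; without it the well-definedness check is incomplete. Your sketch for $0<l_d<p^m$ in (\ref{cdf}) is separately too vague, but the paper's computation there does ultimately rest on the binomial rearrangement $\left\langle{l\atop l'}\right\rangle_{(m),q}\left\langle{k+l-l'\atop k}\right\rangle_{(m),q}^{-1}=\left\langle{k+l\atop k}\right\rangle_{(m),q}^{-1}\left\langle{k+l\atop l'}\right\rangle_{(m),q}$, so your instinct about a Pascal-type identity is correct in spirit.
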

\begin{proof}
We follow the proof of \cite{LQ01}.
We need to show that the definition of $h$ is stable under the relations induced by
\begin{align}
\label{rela}
\sum_{0 < \ul{l}' < \ul{l}} \left\langle 
\begin{array}{@{}c@{}}
\ul{l} \\ \ul{l}'
\end{array}
\right\rangle_{(m), q }
(\ul{d \xi})^{ ( \ul{l} - \ul{l}'  )_{q}} \otimes' (\ul{d \xi})^{ ( \ul{l}' )_{q}} = 0
\end{align}
for $p^m < | \ul{l} | \le 2p^m$. More precisely, 
after excluding the trivial case, we need to consider the following two cases whenever $p^m \mid k$ and $p^m < | \ul{l} | \le 2p^m$: 
{\fontsize{9pt}{10pt}\selectfont
\begin{equation}
\label{cdf}
\sum_{0 < \ul{l}' < \ul{l}} \left\langle 
\begin{array}{@{}c@{}}
\ul{l} \\ \ul{l}'
\end{array}
\right\rangle_{(m), q }
h \left[ \wh{\ul{\xi}}^{ \{ \ul{k}  \}_{(m),q}} \xi_d^{ \{ k  \}_{(m),q}} (\ul{d \wh{\xi}})^{ (\ul{k}_{(1)})_{q} } \otimes' \cdots \otimes' (\ul{d \wh{\xi}})^{ (\ul{k}_{(s-1)})_{q} } \otimes' (\ul{d \xi})^{ ( \ul{l} - \ul{l}'  )_{q}} \otimes' (\ul{d \xi})^{ ( \ul{l}' )_{q}} \otimes' \cdots \right] = 0,
\end{equation}}

\noindent
and
{\fontsize{9pt}{10pt}\selectfont
\begin{align}
\label{cdg}
&\sum_{0 < \ul{l}' < \ul{l}} \left\langle 
\begin{array}{@{}c@{}}
\ul{l} \\ \ul{l}'
\end{array}
\right\rangle_{(m), q } \notag \\
&\times h \left[ \wh{\ul{\xi}}^{ \{ \ul{k}  \}_{(m),q}} \xi_d^{ \{ k  \}_{(m),q}} (\ul{d \wh{\xi}})^{ (\ul{k}_{(1)})_{q} } \otimes' \cdots \otimes' (\ul{d \wh{\xi}})^{ (\ul{k}_{(s-1)})_{q} } \otimes' (d \xi_d)^{ (l''  )_{q}} \otimes' \cdots \otimes' (\ul{d \xi})^{ ( \ul{l} - \ul{l}'  )_{q}} \otimes' (\ul{d \xi})^{ ( \ul{l}' )_{q}} \otimes' \cdots \right] = 0,
\end{align}}

\noindent
where $(d \xi_d)^{ (l''  )_{q}}$ is the $s$-th term and $(\ul{d \xi})^{ ( \ul{l} - \ul{l}'  )_{q}}$ is the $(s+r)$-th term for a positive integer $r$.

We first show the case (\ref{cdf}). As in the proof of Proposition \ref{hcond}, we write $l = l_d$ and $l' = l'_d$. The case $l = 0$ is trivial by using (\ref{remb}). The case $l \ge p^m$ was done in Proposition \ref{hcond}. So it suffices to consider the case $0 < l < p^m$. As in the proof of Proposition \ref{hcond}, we are left with two sums:
{\fontsize{8pt}{10pt}\selectfont
\begin{align*}
&\sum_{0 \le \wh{\ul{l'}} < \wh{\ul{l}}} \left\langle 
\begin{array}{@{}c@{}}
\wh{\ul{l}} \\ \wh{\ul{l'}}
\end{array}
\right\rangle_{(m), q }
h \left[ \wh{\ul{\xi}}^{ \{ \ul{k}  \}_{(m),q}} \xi_d^{ \{ k  \}_{(m),q}} (\ul{d \wh{\xi}})^{ (\ul{k}_{(1)})_{q} } \otimes' \cdots \otimes' (\ul{d \wh{\xi}})^{ (\ul{k}_{(s-1)})_{q} } \otimes' (\ul{d \wh{\xi}})^{ (\wh{\ul{l}} - \wh{\ul{l'}})_{q} } \otimes' (\ul{d \wh{\xi}})^{ ( \wh{\ul{l'}} )_{q}} (d \xi_d)^{ (l  )_{q}} \otimes' \cdots \right] \\
+ &\sum_{0 \le l' < l} \left\langle 
\begin{array}{@{}c@{}}
l \\ l'
\end{array}
\right\rangle_{(m), q }
h \left[ \wh{\ul{\xi}}^{ \{ \ul{k}  \}_{(m),q}} \xi_d^{ \{ k  \}_{(m),q}} (\ul{d \wh{\xi}})^{ (\ul{k}_{(1)})_{q} } \otimes' \cdots \otimes' (\ul{d \wh{\xi}})^{ (\ul{k}_{(s-1)})_{q} } \otimes' (d \xi_d)^{ ( l - l'  )_{q}} \otimes' (\ul{d \wh{\xi}})^{ ( \wh{\ul{l}} )_{q}} (d \xi_d)^{ (l'  )_{q}} \otimes' \cdots \right].
\end{align*} }

\noindent
For the first sum, it is non-trivial only when $\wh{\ul{l'}} = 0$, and we have the following:
{\fontsize{9pt}{10pt}\selectfont
\begin{align}
\label{cdi}
&h \left[ \wh{\ul{\xi}}^{ \{ \ul{k}  \}_{(m),q}} \xi_d^{ \{ k  \}_{(m),q}} (\ul{d \wh{\xi}})^{ (\ul{k}_{(1)})_{q} } \otimes' \cdots \otimes' (\ul{d \wh{\xi}})^{ (\ul{k}_{(s-1)})_{q} } \otimes' (\ul{d \wh{\xi}})^{ (\wh{\ul{l}})_{q} } \otimes' (d \xi_d)^{ (l  )_{q}} \otimes' \cdots \right] \notag \\
= &(-1)^s \left\langle 
\begin{array}{@{}c@{}}
k+l \\ k
\end{array}
\right\rangle_{(m), q }^{-1}
\wh{\ul{\xi}}^{ \{ \ul{k}  \}_{(m),q}} (\ul{d \wh{\xi}})^{ (\ul{k}_{(1)})_{q} } \otimes' \cdots \otimes' (\ul{d \wh{\xi}})^{ (\ul{k}_{(s-1)})_{q} } \otimes' (\ul{d \wh{\xi}})^{ (\wh{\ul{l}})_{q} } \otimes' \xi_d^{ \{ k+l  \}_{(m),q}} \cdots \notag \\
= &(-1)^s \left\langle 
\begin{array}{@{}c@{}}
k+l \\ k
\end{array}
\right\rangle_{(m), q }^{-1}
\wh{\ul{\xi}}^{ \{ \ul{k}  \}_{(m),q}} (\ul{d \wh{\xi}})^{ (\ul{k}_{(1)})_{q} } \otimes' \cdots \otimes' (\ul{d \wh{\xi}})^{ (\ul{k}_{(s-1)})_{q} } \otimes' \xi_d^{ \{ k+l  \}_{(m),q}} (\ul{d \wh{\xi}})^{ (\wh{\ul{l}})_{q} } \otimes'  \cdots  \notag \\
+ &(-1)^s \left\langle 
\begin{array}{@{}c@{}}
k+l \\ k
\end{array}
\right\rangle_{(m), q }^{-1}
\wh{\ul{\xi}}^{ \{ \ul{k}  \}_{(m),q}} (\ul{d \wh{\xi}})^{ (\ul{k}_{(1)})_{q} } \otimes' \cdots \otimes' (\ul{d \wh{\xi}})^{ (\ul{k}_{(s-1)})_{q} } \otimes' d^{\bullet}(\xi_d^{ \{ k+l  \}_{(m),q}}) (\ul{d \wh{\xi}})^{ (\wh{\ul{l}})_{q} } \otimes'  \cdots.
\end{align} }

\noindent
Now we consider the second sum. We get
{\fontsize{9pt}{10pt}\selectfont
\begin{align}
\label{cdh}
(-1)^{s-1} &\sum_{0 \le l' < l} \left\langle 
\begin{array}{@{}c@{}}
l \\ l'
\end{array}
\right\rangle_{(m), q }
\left\langle 
\begin{array}{@{}c@{}}
k+l-l' \\ k
\end{array}
\right\rangle_{(m), q }^{-1} \notag \\
&\times \wh{\ul{\xi}}^{ \{ \ul{k}  \}_{(m),q}} (\ul{d \wh{\xi}})^{ (\ul{k}_{(1)})_{q} } \otimes' \cdots \otimes' (\ul{d \wh{\xi}})^{ (\ul{k}_{(s-1)})_{q} } \otimes' \xi_d^{ \{ k+l-l' \}_{(m),q}}  (\ul{d \wh{\xi}})^{ (\wh{\ul{l}})_{q} } (d \xi_d)^{ (l'  )_{q}} \otimes'  \cdots.
\end{align} }

\noindent
We can replace the sum over $0 \le l' < l$ by that over $0 \le l' \le k+l$, because if $l \le l' \le k+l$, we have $| (\wh{\ul{l}}, l') | \ge | \ul{l} | > p^m$ and so
\[
(\ul{d \wh{\xi}})^{ (\wh{\ul{l}})_{q} } (d \xi_d)^{ (l'  )_{q}} = 0.
\]
We can check that
\[
\left\langle 
\begin{array}{@{}c@{}}
l \\ l'
\end{array}
\right\rangle_{(m), q }
\left\langle 
\begin{array}{@{}c@{}}
k+l-l' \\ k
\end{array}
\right\rangle_{(m), q }^{-1} = 
\left\langle 
\begin{array}{@{}c@{}}
k+l \\ k
\end{array}
\right\rangle_{(m), q }^{-1}
\left\langle 
\begin{array}{@{}c@{}}
k+l \\ l'
\end{array}
\right\rangle_{(m), q },
\]
and (\ref{cdh}) can be split into the $l'=0$ part and the sum for $0 < l' \le k+l$:
{\fontsize{9pt}{10pt}\selectfont
\begin{align*}
&(-1)^{s-1} \left\langle 
\begin{array}{@{}c@{}}
k+l \\ k
\end{array}
\right\rangle_{(m), q }^{-1} 
\wh{\ul{\xi}}^{ \{ \ul{k}  \}_{(m),q}} (\ul{d \wh{\xi}})^{ (\ul{k}_{(1)})_{q} } \otimes' \cdots \otimes' (\ul{d \wh{\xi}})^{ (\ul{k}_{(s-1)})_{q} } \otimes' \xi_d^{ \{ k+l  \}_{(m),q}} (\ul{d \wh{\xi}})^{ (\wh{\ul{l}})_{q} } \otimes'  \cdots \\
+ &(-1)^{s-1} \left\langle 
\begin{array}{@{}c@{}}
k+l \\ k
\end{array}
\right\rangle_{(m), q }^{-1} \cdot
\sum_{0 < l' \le k+l} \left\langle 
\begin{array}{@{}c@{}}
k+l \\ l'
\end{array}
\right\rangle_{(m), q } \cdots \otimes' \xi_d^{ \{ k+l-l' \}_{(m),q}}  (\ul{d \wh{\xi}})^{ (\wh{\ul{l}})_{q} } (d \xi_d)^{ (l'  )_{q}} \otimes'  \cdots.
\end{align*} }

\noindent
By (\ref{dxik}), this is equal to
{\fontsize{9pt}{10pt}\selectfont
\begin{align}
\label{cdj}
&(-1)^{s-1} \left\langle 
\begin{array}{@{}c@{}}
k+l \\ k
\end{array}
\right\rangle_{(m), q }^{-1} 
\wh{\ul{\xi}}^{ \{ \ul{k}  \}_{(m),q}} (\ul{d \wh{\xi}})^{ (\ul{k}_{(1)})_{q} } \otimes' \cdots \otimes' (\ul{d \wh{\xi}})^{ (\ul{k}_{(s-1)})_{q} } \otimes' \xi_d^{ \{ k+l  \}_{(m),q}} (\ul{d \wh{\xi}})^{ (\wh{\ul{l}})_{q} } \otimes'  \cdots  \notag \\
+ &(-1)^{s-1} \left\langle 
\begin{array}{@{}c@{}}
k+l \\ k
\end{array}
\right\rangle_{(m), q }^{-1}
\wh{\ul{\xi}}^{ \{ \ul{k}  \}_{(m),q}} (\ul{d \wh{\xi}})^{ (\ul{k}_{(1)})_{q} } \otimes' \cdots \otimes' (\ul{d \wh{\xi}})^{ (\ul{k}_{(s-1)})_{q} } \otimes' d^{\bullet}(\xi_d^{ \{ k+l  \}_{(m),q}}) (\ul{d \wh{\xi}})^{ (\wh{\ul{l}})_{q} } \otimes'  \cdots.
\end{align} }

\noindent
Adding (\ref{cdi}) and (\ref{cdj}), we get 0 as expected.

Now we consider the case (\ref{cdg}). If $0 < l'' < p^m$, then we can use $(\ref{rema})$ and get 0 by the relation $(\ref{rela})$. So we assume that $l'' = p^m$. For the case $r>1$, we focus on the $(s+1)$-st term $(\ul{d \wh{\xi}})^{ (\ul{k}_{(s+1)})_{q} }(d \xi_d)^{ (l'''  )_{q}}$. If $l''' > 0$, then we can use  $(\ref{remc})$ and get 0 by the relation $(\ref{rela})$. If $l''' = 0$, then we can apply the formula (\ref{cdc}) and, again by using $(\ref{rema})$, we are able to apply induction on $r$. So it suffices to consider the case $l'' = p^m, r=1$  whenever $p^m \mid k$ and $p^m < | \ul{l} | \le 2p^m$:
{\fontsize{9pt}{10pt}\selectfont
\begin{align}
\label{cdk}
&\sum_{0 < \ul{l}' < \ul{l}} \left\langle 
\begin{array}{@{}c@{}}
\ul{l} \\ \ul{l}'
\end{array}
\right\rangle_{(m), q } \notag \\
&\times h \left[ \wh{\ul{\xi}}^{ \{ \ul{k}  \}_{(m),q}} \xi_d^{ \{ k  \}_{(m),q}} (\ul{d \wh{\xi}})^{ (\ul{k}_{(1)})_{q} } \otimes' \cdots \otimes' (\ul{d \wh{\xi}})^{ (\ul{k}_{(s-1)})_{q} } \otimes' (d \xi_d)^{ (p^m)_{q}} \otimes' (\ul{d \xi})^{ ( \ul{l} - \ul{l}'  )_{q}} \otimes' (\ul{d \xi})^{ ( \ul{l}' )_{q}} \otimes' \cdots \right] = 0.
\end{align}}

\noindent
We consider the $d$-tuple $\ul{u} = (\wh{\ul{l}}, l_d + p^m)$ and the elements
\begin{align*}
1 \otimes' \ul{\xi}^{ \{ \ul{u}  \}_{(m),q}} &\in LP^{(m)}_{A/R,q} (1), \\ d^{\bullet}(d^{\bullet}(1 \otimes' \ul{\xi}^{ \{ \ul{u}  \}_{(m),q}})) &\in LP^{(m)}_{A/R,q} (3).
\end{align*}
Since the differential satisfies $d^{\bullet} \circ d^{\bullet}=0$, by the formulas similar to (\ref{ddxi}) and (\ref{leiba}), we  have
\begin{align*}
&\sum_{0 < \ul{w} < \ul{u}} 
\left\langle 
\begin{array}{@{}c@{}}
\ul{u} \\ \ul{w}
\end{array}
\right\rangle_{(m), q }
d^{\bullet}(1 \otimes' \ul{\xi}^{ \{ \ul{u} - \ul{w}  \}_{(m),q}}) \otimes' (1 \otimes' \ul{\xi}^{ \{ \ul{w}  \}_{(m),q}})  \\
=
&\sum_{0 < \ul{w} < \ul{u}} 
\left\langle 
\begin{array}{@{}c@{}}
\ul{u} \\ \ul{w}
\end{array}
\right\rangle_{(m), q }
(1 \otimes' \ul{\xi}^{ \{ \ul{u} - \ul{w}  \}_{(m),q}}) \otimes' d^{\bullet}(1 \otimes' \ul{\xi}^{ \{ \ul{w}  \}_{(m),q}}).
\end{align*}
By considering the images of these elements in $L\Omega^{(m)}_{A/R,q}  (\bullet)$, we see that
{\fontsize{9pt}{10pt}\selectfont
\begin{align}
\label{cdl}
&\sum_{0 < \ul{w} < \ul{u}} 
\left\langle 
\begin{array}{@{}c@{}}
\ul{u} \\ \ul{w}
\end{array}
\right\rangle_{(m), q }
h \left[ \wh{\ul{\xi}}^{ \{ \ul{k}  \}_{(m),q}} \xi_d^{ \{ k  \}_{(m),q}} (\ul{d \wh{\xi}})^{ (\ul{k}_{(1)})_{q} } \otimes' \cdots \otimes' (\ul{d \wh{\xi}})^{ (\ul{k}_{(s-1)})_{q} } \otimes' d^{\bullet}((\ul{d \xi})^{ ( \ul{u} - \ul{w}  )_{q}}) \otimes'  (\ul{d \xi})^{ ( \ul{w}  )_{q}} \otimes' \cdots \right] \notag \\
=&\sum_{0 < \ul{w} < \ul{u}} 
\left\langle 
\begin{array}{@{}c@{}}
\ul{u} \\ \ul{w}
\end{array}
\right\rangle_{(m), q }
h \left[ \wh{\ul{\xi}}^{ \{ \ul{k}  \}_{(m),q}} \xi_d^{ \{ k  \}_{(m),q}} (\ul{d \wh{\xi}})^{ (\ul{k}_{(1)})_{q} } \otimes' \cdots \otimes' (\ul{d \wh{\xi}})^{ (\ul{k}_{(s-1)})_{q} } \otimes' (\ul{d \xi})^{ ( \ul{u} - \ul{w}  )_{q}}  \otimes' d^{\bullet}((\ul{d \xi})^{ ( \ul{w}  )_{q}})  \otimes' \cdots \right].
\end{align}}

\noindent
We now consider the terms in the first sum. If $| \ul{w} | > p^m$, then by $(\ul{d \xi})^{ ( \ul{w}  )_{q}}$ we get 0. If not, then $| \ul{u} - \ul{w} | > p^m$, so by using (\ref{ddxi}) for $d^{\bullet}((\ul{d \xi})^{ ( \ul{u} - \ul{w}  )_{q}})$ and applying (\ref{cdf}), we see that the corresponding term is 0. Thus the first sum is equal to 0 and we have
{\fontsize{9pt}{10pt}\selectfont
\begin{align*}
\sum_{0 < \ul{w} < \ul{u}} 
\left\langle 
\begin{array}{@{}c@{}}
\ul{u} \\ \ul{w}
\end{array}
\right\rangle_{(m), q }
h \left[ \wh{\ul{\xi}}^{ \{ \ul{k}  \}_{(m),q}} \xi_d^{ \{ k  \}_{(m),q}} (\ul{d \wh{\xi}})^{ (\ul{k}_{(1)})_{q} } \otimes' \cdots \otimes' (\ul{d \wh{\xi}})^{ (\ul{k}_{(s-1)})_{q} } \otimes' (\ul{d \xi})^{ ( \ul{u} - \ul{w}  )_{q}}  \otimes' d^{\bullet}((\ul{d \xi})^{ ( \ul{w}  )_{q}})  \otimes' \cdots \right]=0.
\end{align*}}

\noindent
We write $u = u_d$ and $w = w_d$. As before, all the terms are 0 unless $w = u$ or $\wh{\ul{w}} = \wh{\ul{u}}$, so we have
{\fontsize{7pt}{10pt}\selectfont
\begin{align*}
&\sum_{0 \le \wh{\ul{w}} < \wh{\ul{u}}} \left\langle 
\begin{array}{@{}c@{}}
\wh{\ul{u}} \\ \wh{\ul{w}}
\end{array}
\right\rangle_{(m), q }
h \left[ \wh{\ul{\xi}}^{ \{ \ul{k}  \}_{(m),q}} \xi_d^{ \{ k  \}_{(m),q}} (\ul{d \wh{\xi}})^{ (\ul{k}_{(1)})_{q} } \otimes' \cdots \otimes' (\ul{d \wh{\xi}})^{ (\ul{k}_{(s-1)})_{q} } \otimes' (\ul{d \wh{\xi}})^{ (\wh{\ul{u}} - \wh{\ul{w}})_{q} } \otimes' d^{\bullet}((\ul{d \wh{\xi}})^{ ( \wh{\ul{w}} )_{q}} (d \xi_d)^{ (u  )_{q}}) \otimes' \cdots \right] \\
+ &\sum_{0 \le w < u} \left\langle 
\begin{array}{@{}c@{}}
u \\ w
\end{array}
\right\rangle_{(m), q } \\
&\times 
h \left[ \wh{\ul{\xi}}^{ \{ \ul{k}  \}_{(m),q}} \xi_d^{ \{ k  \}_{(m),q}} (\ul{d \wh{\xi}})^{ (\ul{k}_{(1)})_{q} } \otimes' \cdots \otimes' (\ul{d \wh{\xi}})^{ (\ul{k}_{(s-1)})_{q} } \otimes' (d \xi_d)^{ ( u - w  )_{q}} \otimes' d^{\bullet}((\ul{d \wh{\xi}})^{ ( \wh{\ul{u}} )_{q}} (d \xi_d)^{ (w  )_{q}}) \otimes' \cdots \right] = 0.
\end{align*} }

\noindent
For the first sum, if $| (\wh{\ul{w}},u) | > p^m$, then we can use (\ref{ddxi}) for $d^{\bullet}((\ul{d \wh{\xi}})^{ ( \wh{\ul{w}} )_{q}} (d \xi_d)^{ (u  )_{q}})$  and apply (\ref{cdf}). So all the terms in the first sum are 0 unless $u = p^m$  and $\wh{\ul{w}} = 0$. But in this case, we have $l_d = 0$ and so $| \wh{\ul{u}} - \wh{\ul{w}} | = |\wh{\ul{l}}| > p^m$, so by the factor $(\ul{d \wh{\xi}})^{ (\wh{\ul{u}} - \wh{\ul{w}})_{q} }$ we also get 0. For the second sum, if $ u - w  > p^m$, then by the factor $(d \xi_d)^{ ( u - w  )_{q}}$ we get 0. If $0 <  u - w  < p^m$, then we have $| (\wh{\ul{u}}, w) | > |\ul{l}| > p^m$. So after using $(\ref{rema})$, we can use (\ref{ddxi}) for $d^{\bullet}((\ul{d \wh{\xi}})^{ ( \wh{\ul{u}} )_{q}} (d \xi_d)^{ (w  )_{q}})$ and get 0 by the relation $(\ref{rela})$. So we are left with the term
$u-w = p^m$, namely, $w = l$. Since $\left\langle 
\begin{array}{@{}c@{}}
p^m+l \\ l
\end{array}
\right\rangle_{(m), q }$ is invertible in $\Z [ q ]_{(p,q-1)}$ by Lemma \ref{ipmq},  we get the formula (\ref{cdk}) and the result follows.
\end{proof}

We also need the following lemma.

\begin{lem}
{\rm (cf. \cite[Proposition 3.1]{LQ01})}
\label{exla}
Let $P$ be an element of the form  $(\ul{d \xi})^{ (\ul{k}_{(1)})_{q} } \otimes' \cdots \otimes' (\ul{d \xi})^{ (\ul{k}_{(j)})_{q} }$. Then for $0 < l < p^m,$ we have
\[
h \left[ d^{\bullet}( \xi_d^{ \{ k  \}_{(m),q}} (\ul{d \wh{\xi}})^{ (\ul{l} )_{q}} (d \xi_d)^{ (l  )_{q}}) \otimes' P \right] = h  d^{\bullet}( \xi_d^{ \{ k  \}_{(m),q}} (\ul{d \wh{\xi}})^{ (\ul{l} )_{q}} (d \xi_d)^{ (l  )_{q}}) \otimes' P.
\]
\end{lem}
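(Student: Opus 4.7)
The plan is to expand $d^{\bullet}$ by means of the Leibniz rule (\ref{leibb}), and then to verify, summand by summand, that the homotopy $h$ acts only on the two tensor factors arising from the expansion of $d^{\bullet}$, so that the factor $P$ sitting on the right is left untouched.

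Concretely, I would first write
\begin{align*}
&d^{\bullet}\bigl(\xi_d^{\{k\}_{(m),q}}\,(\ul{d\wh{\xi}})^{(\ul{l})_q}(d\xi_d)^{(l)_q}\bigr) \\
&\qquad = d^{0}(\xi_d^{\{k\}_{(m),q}})\otimes' (\ul{d\wh{\xi}})^{(\ul{l})_q}(d\xi_d)^{(l)_q} + \xi_d^{\{k\}_{(m),q}} \otimes' d^{1}\bigl((\ul{d\wh{\xi}})^{(\ul{l})_q}(d\xi_d)^{(l)_q}\bigr),
\end{align*}
and then expand the two terms on the right using (\ref{dxik}) and (\ref{ddxi}). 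Every resulting summand has the shape $\omega_{1}\otimes'\omega_{2}$ with $\omega_{1}\in L\Omega^{(m)}_{A/R,q}(1)$ and $\omega_{2}\in\Omega^{(m)}_{A/R,q}(1)$, and an inspection of these formulas shows that the leftmost tensor factor containing a non-trivial $(d\xi_d)^{(*)_q}$ is either the first or the second one.

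Tensoring each summand with $P$ on the right gives a tensor of length $2+j$ to which I would apply $h$. By the explicit description in Proposition \ref{holo}, the value of $h$ on such a generator is controlled by the index $s$ at which a non-trivial $(d\xi_d)$-factor first occurs: the tensor positions $s+1,\dots,r$ of the input reappear essentially unchanged in the output, the only modification being that the new scalar $\xi_d^{\{k+l\}_{(m),q}}$ produced by the integration is absorbed into the first of them. In our situation $s\in\{1,2\}$, so the factors that make up $P$, which lie at positions $3,\dots,2+j$, are unaffected. Combining this locality with Lemma \ref{xiin} and the convention (\ref{omy}) for moving scalars across $\otimes'$, one obtains termwise the identity
\[
h[\omega_{1}\otimes'\omega_{2}\otimes' P] = h[\omega_{1}\otimes'\omega_{2}]\otimes' P,
\]
and summing over the expansion of $d^{\bullet}$ yields the asserted equality.

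The main subtlety is the bookkeeping near certain boundary cases. For the $d^{0}$-contributions with $k'=p^{m}$, the first tensor factor becomes $\xi_d^{\{k-p^{m}\}_{(m),q}}(d\xi_d)^{(p^{m})_q}$, and the condition ``$s=r$'' required by Proposition \ref{holo} for the $s=1$ branch fails; one therefore has to move the action to $s=2$ by invoking the reduction formulas (\ref{cdb}) and (\ref{cdc}) of Proposition \ref{hcond}. The $d^{1}$-contributions whose leading tensor factor is a pure-$\wh{\xi}$ monomial require an analogous argument. In each such case, however, $P$ plays no role in the reduction, and the locality of $h$ on the first two tensor positions continues to give the desired factorisation.
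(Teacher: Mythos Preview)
Your proposal is correct and follows essentially the same route as the paper's proof: expand $d^{\bullet}$ via the Leibniz rule into the $d^{0}(\xi_d^{\{k\}_{(m),q}})$-part and the $d^{1}((\ul{d\wh{\xi}})^{(\ul{l})_q}(d\xi_d)^{(l)_q})$-part, and then verify term by term, using the formulas of Remark~\ref{kspm}, that $P$ can be pulled outside of $h$. Two small remarks: for the boundary case $k'=p^m$ the paper invokes (\ref{remc}) directly (valid precisely because $l\neq 0$), and the formula (\ref{cdc}) you cite is never needed here since the second tensor factor always carries $(d\xi_d)^{(l)_q}$ with $l>0$; likewise, for the $d^{1}$-terms with a pure-$\wh{\xi}$ leading factor no reduction via (\ref{cdb})/(\ref{cdc}) is required, since $0<l<p^m$ already puts you in the range of (\ref{rema}) with $s=2$.
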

\begin{proof}
We follow the proof of \cite{LQ01}.
Since $h$ is $A$-linear and
\[
d^{\bullet}( \xi_d^{ \{ k  \}_{(m),q}} (\ul{d \wh{\xi}})^{ (\ul{l} )_{q}} (d \xi_d)^{ (l  )_{q}}) = d^{\bullet}( \xi_d^{ \{ k  \}_{(m),q}}) \otimes' (\ul{d \wh{\xi}})^{ (\ul{l} )_{q}} (d \xi_d)^{ (l  )_{q}} + \xi_d^{ \{ k  \}_{(m),q}} d^{\bullet}((\ul{d \wh{\xi}})^{ (\ul{l} )_{q}} (d \xi_d)^{ (l  )_{q}}),
\]
it suffices to show that we can move the $P$ outside in \[h \left[  d^{\bullet}( \xi_d^{ \{ k  \}_{(m),q}}) \otimes' (\ul{d \wh{\xi}})^{ (\ul{l} )_{q}} (d \xi_d)^{ (l  )_{q}} \otimes' P \right]\] and \[h \left[ \xi_d^{ \{ k  \}_{(m),q}} d^{\bullet}((\ul{d \wh{\xi}})^{ (\ul{l} )_{q}} (d \xi_d)^{ (l  )_{q}}) \otimes' P \right].\]

First, we compute
\[
d^{\bullet}( \xi_d^{ \{ k  \}_{(m),q}}) \otimes' (\ul{d \wh{\xi}})^{ (\ul{l} )_{q}} (d \xi_d)^{ (l  )_{q}} = 
\sum_{0 < k' \le k} \left\langle 
\begin{array}{@{}c@{}}
k \\ k'
\end{array}
\right\rangle_{(m), q }
\xi_d^{ \{ k - k'  \}_{(m),q}}  (d \xi_d)^{ ( k' )_{q}} \otimes' (\ul{d \wh{\xi}})^{ (\ul{l} )_{q}} (d \xi_d)^{ (l  )_{q}}.
\]
For the terms $0 < k' < p^m$, we can use (\ref{rema}), and for the term $k' = p^m$, since $l \ne 0$, we can use (\ref{remc}) to move the $P$ outside in $h \left[  d^{\bullet}( \xi_d^{ \{ k  \}_{(m),q}}) \otimes' (\ul{d \wh{\xi}})^{ (\ul{l} )_{q}} (d \xi_d)^{ (l  )_{q}} \otimes' P \right]$. 

Next, we compute
{\fontsize{9pt}{10pt}\selectfont
\[
d^{\bullet}((\ul{d \wh{\xi}})^{ (\ul{l} )_{q}} (d \xi_d)^{ (l  )_{q}}) = - \sum_{0 < (\ul{l}' , l') < (\ul{l} , l)} 
\left\langle 
\begin{array}{@{}c@{}}
\ul{l} \\ \ul{l}'
\end{array}
\right\rangle_{(m), q }
\left\langle 
\begin{array}{@{}c@{}}
l \\ l'
\end{array}
\right\rangle_{(m), q }
(\ul{d \wh{\xi}})^{ ( \ul{l} - \ul{l}' )_{q}} (d \xi_d)^{ (l - l'  )_{q}} \otimes' (\ul{d \wh{\xi}})^{ (\ul{l}' )_{q}} (d \xi_d)^{ (l'  )_{q}}.
\]
}

\noindent
As in the proof of Proposition \ref{hcond}, we are left with two sums for the element \linebreak $h \left[ \xi_d^{ \{ k  \}_{(m),q}} d^{\bullet}((\ul{d \wh{\xi}})^{ (\ul{l} )_{q}} (d \xi_d)^{ (l  )_{q}}) \otimes' P \right]$:
\begin{align*}
&- \sum_{0 \le \ul{l}'  < \ul{l} } 
\left\langle 
\begin{array}{@{}c@{}}
\ul{l} \\ \ul{l}'
\end{array}
\right\rangle_{(m), q }
h \left[ \xi_d^{ \{ k  \}_{(m),q}}
(\ul{d \wh{\xi}})^{ ( \ul{l} - \ul{l}' )_{q}}  \otimes' (\ul{d \wh{\xi}})^{ (\ul{l}' )_{q}}  (d \xi_d)^{ (l  )_{q}} \otimes' P \right] \\
&- \sum_{0 \le l'  < l }  \left\langle 
\begin{array}{@{}c@{}}
l \\ l'
\end{array}
\right\rangle_{(m), q }
h \left[ \xi_d^{ \{ k  \}_{(m),q}}
(d \xi_d)^{ (l - l'  )_{q}} \otimes' (\ul{d \wh{\xi}})^{ (\ul{l} )_{q}}  (d \xi_d)^{ (l'  )_{q}} \otimes' P \right].
\end{align*}
Since in the first sum, we have $0 < l < p^m$, and in the second sum, we have $0 < l-l' < p^m$, we can use (\ref{rema}) to move the $P$ outside in $h \left[ \xi_d^{ \{ k  \}_{(m),q}} d^{\bullet}((\ul{d \wh{\xi}})^{ (\ul{l} )_{q}} (d \xi_d)^{ (l  )_{q}}) \otimes' P \right]$.
\end{proof}

\begin{prop}
\label{proj}
{\rm (cf. \cite[Proposition 3.2]{LQ01})}
We have $h \circ d^{\bullet}+  d^{\bullet} \circ h = \mathrm{Id} - \pi$,  where $\pi$ is the projector that sends $\xi_i^{ \{ k_i  \}_{(m),q}}$ and $(d \xi_i)^{ (l_i  )_{q}}$ to itself if $i < d$ and to $0$ if $i=d$.
\end{prop}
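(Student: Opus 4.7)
The plan is to verify the homotopy identity by a direct computation on a basis of $L\Omega^{(m)}_{A/R,q}(r)$, since both $h \circ d^{\bullet} + d^{\bullet} \circ h$ and $\mathrm{Id} - \pi$ are $A$-linear. A typical generator has the form
\[
\omega = \wh{\ul{\xi}}^{ \{ \ul{k}  \}_{(m),q}} \xi_d^{ \{ k  \}_{(m),q}} (\ul{d \wh{\xi}})^{ (\ul{k}_{(1)})_{q} } \otimes' \cdots \otimes' (\ul{d \wh{\xi}})^{ (\ul{k}_{(s)})_{q} }(d \xi_d)^{(l)_q} \otimes' (\ul{d \xi})^{ (\ul{k}_{(s+1)})_{q} } \otimes' \cdots \otimes' (\ul{d \xi})^{ (\ul{k}_{(r)})_{q} },
\]
with $s$ the first index at which $d\xi_d$ may appear. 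The first step is to split according to whether $\omega$ is pure in the hatted variables (the case $k=0$, $l=0$, every $\ul{k}_{(j)}$ having zero $d$-th component), for which $\pi(\omega) = \omega$, or not, for which $\pi(\omega) = 0$.

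In the pure-hatted case, the Leibniz rule (\ref{leibb}) together with the explicit formulas (\ref{dxik}) and (\ref{ddxi}) show that $d^{\bullet}(\omega)$ has no factor of $\xi_d$ nor $d\xi_d$, so every term falls into the vanishing clauses of Proposition \ref{holo}; combined with $h(\omega)=0$ (clause (1) of Proposition \ref{holo}), this gives $(h d^{\bullet}+ d^{\bullet} h)(\omega) = 0 = \omega - \pi(\omega)$. In the non-pure case, one subdivides further according to whether the combinatorial conditions of Proposition \ref{holo} for $h(\omega) \neq 0$ are met ($\ul{k}_{(s)} = 0$, $p^m \mid k$, and either $0 < l < p^m$ or ($l=p^m$ and $s=r$)) or not. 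When $h(\omega) = 0$, all of $\omega$ must come from $h d^{\bullet}(\omega)$; expanding $d^{\bullet}(\omega)$ via (\ref{leibb}), (\ref{leiba}), and differentiating $\xi_d^{\{k\}_{(m),q}}$ by (\ref{dxik}) and the form part by (\ref{ddxi}), one then applies $h$ term-by-term, using (\ref{rema})--(\ref{remc}) to peel off the trailing tensor factors and Lemma \ref{xiin} to move coefficients past the tensor product. Exactly one term survives---the one in which differentiation produces a $(d\xi_d)^{(l')_q}$ that is subsequently reabsorbed by $h$ into $\xi_d^{\{k+l'\}_{(m),q}}$---and it reconstructs $\omega$ precisely because the relevant higher-level $q$-binomial coefficients telescope against their inverses appearing in the definition of $h$. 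All other surviving terms cancel in pairs via the relations (\ref{dxid}), exactly as in the derivation of (\ref{cdb}) and (\ref{cdc}).

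When $h(\omega) \neq 0$, one computes $d^{\bullet}(h(\omega))$ using Lemma \ref{exla} to separate the action of $d^{\bullet}$ on the head $\xi_d^{\{k+l\}_{(m),q}} (\ul{d\wh{\xi}})^{(\ul{k}_{(s-1)})_q}$ from the trailing tensor factors, then expands via Leibniz and (\ref{dxik}), (\ref{ddxi}). The sum $h d^{\bullet}(\omega) + d^{\bullet} h(\omega)$ is then reorganized using the higher-level $q$-binomial identity
\[
\left\langle \begin{array}{@{}c@{}} k \\ k' \end{array} \right\rangle_{(m), q } \left\langle \begin{array}{@{}c@{}} k+l \\ k \end{array} \right\rangle_{(m), q }^{-1} = \left\langle \begin{array}{@{}c@{}} k+l \\ k' \end{array} \right\rangle_{(m), q } \left\langle \begin{array}{@{}c@{}} k+l-k' \\ k \end{array} \right\rangle_{(m), q }^{-1},
\]
which is immediate from Definition \ref{qhbc}. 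This identity produces the telescopic cancellation that kills every term except the one reconstructing $\omega$.

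The main obstacle is the bookkeeping in the subcase $\ul{k}_{(s)} = 0$, $p^m \mid k$, $0 < l \le p^m$: here $h$ merges $\xi_d^{\{k\}_{(m),q}}$ with $(d\xi_d)^{(l)_q}$ into $\xi_d^{\{k+l\}_{(m),q}}$, and $d^{\bullet}$ applied to this merged element produces a whole family of new differential-form factors by (\ref{dxik}), each of which must be matched, with the correct sign and $q$-binomial coefficient, against a corresponding term of $h d^{\bullet}(\omega)$. The argument follows the blueprint of \cite[Proposition 3.2]{LQ01}, but at each coefficient one must substitute the $q$-analogs of the higher-level binomial coefficients and use Lemma \ref{ipmq} (to ensure invertibility of the denominators) together with the cancellation rules already exploited in the proof of Proposition \ref{wede}.
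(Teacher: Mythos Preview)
Your outline follows the same strategy as the paper: verify the identity on generators, use Lemma~\ref{xiin}, formulas (\ref{rema})--(\ref{remc}), and Lemma~\ref{exla} to peel off the prefix $Q$ and the tail $P$, and then reduce to a direct one-variable computation governed by the $q$-binomial identity you state. The paper makes this reduction explicit, first proving
\[
(h d^{\bullet}+d^{\bullet}h)\bigl[Q \otimes' \xi_d^{\{k\}_{(m),q}}(\ul{d\wh{\xi}})^{(\ul{l})_q}(d\xi_d)^{(l)_q}\otimes' P\bigr] = Q \otimes' (h d^{\bullet}+d^{\bullet}h)\bigl[\xi_d^{\{k\}_{(m),q}}(\ul{d\wh{\xi}})^{(\ul{l})_q}(d\xi_d)^{(l)_q}\bigr]\otimes' P
\]
on generators chosen with either $s=r$ or $0<l<p^m$, and then reducing to $d=1$; your sketch only gestures at this step. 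One small inaccuracy: in that reduction the ``other surviving terms'' do not cancel via the relations~(\ref{dxid}) but simply by matching the signs $(-1)^s$ and $(-1)^{s-1}$ coming from the Leibniz expansion against those coming from (\ref{remb}) and the $d^{\bullet}$ applied to $Q\otimes' h[\cdots]P$.
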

\begin{proof}
We follow the proof and make some corrections of \cite{LQ01}. It is enough to check the identity on a set of generators of $L\Omega^{(m)}_{A/R,q}  (r)$. By using (\ref{mclo}) and (\ref{dxik}) repeatedly, we can choose the generators of the form
\[
Q \otimes' \xi_d^{ \{ k  \}_{(m),q}} (\ul{d \wh{\xi}})^{ (\ul{l} )_{q}} (d \xi_d)^{ (l  )_{q}} \otimes' P
\]
with $Q := \wh{\ul{\xi}}^{ \{ \ul{k}  \}_{(m),q}}  (\ul{d \wh{\xi}})^{ (\ul{k}_{(1)})_{q} } \otimes' \cdots \otimes' (\ul{d \wh{\xi}})^{ (\ul{k}_{(s-1)})_{q} }, P:= (\ul{d \xi})^{ (\ul{k}_{(s+1)})_{q} } \otimes' \cdots \otimes' (\ul{d \xi})^{ (\ul{k}_{(r)})_{q} }$ and, either $s=r$ or $0 < l < p^m$. By using Lemma \ref{xiin},  (\ref{rema}) and (\ref{remb}), we have
{ \allowdisplaybreaks
{\fontsize{9pt}{10pt}\selectfont
\begin{align*}
&(h d^{\bullet}+  d^{\bullet}  h) \left[ Q \otimes' \xi_d^{ \{ k  \}_{(m),q}} (\ul{d \wh{\xi}})^{ (\ul{l} )_{q}} (d \xi_d)^{ (l  )_{q}} \otimes' P \right] \\
= &h  d^{\bullet} \left[ Q \otimes' \xi_d^{ \{ k  \}_{(m),q}} (\ul{d \wh{\xi}})^{ (\ul{l} )_{q}} (d \xi_d)^{ (l  )_{q}} \otimes' P \right] + d^{\bullet}  h \left[ Q \otimes' \xi_d^{ \{ k  \}_{(m),q}} (\ul{d \wh{\xi}})^{ (\ul{l} )_{q}} (d \xi_d)^{ (l  )_{q}} \otimes' P \right] \\
= &h   \left[ d^{\bullet} Q \otimes' \xi_d^{ \{ k  \}_{(m),q}} (\ul{d \wh{\xi}})^{ (\ul{l} )_{q}} (d \xi_d)^{ (l  )_{q}} \otimes' P \right] + (-1)^{s-1} h   \left[  Q \otimes' d^{\bullet}(\xi_d^{ \{ k  \}_{(m),q}} (\ul{d \wh{\xi}})^{ (\ul{l} )_{q}} (d \xi_d)^{ (l  )_{q}}) \otimes' P \right] \\
+ &(-1)^s h   \left[  Q \otimes' \xi_d^{ \{ k  \}_{(m),q}} (\ul{d \wh{\xi}})^{ (\ul{l} )_{q}} (d \xi_d)^{ (l  )_{q}} \otimes' d^{\bullet}P \right] + (-1)^{s-1} d^{\bullet} \left[ Q \otimes' h(\xi_d^{ \{ k  \}_{(m),q}} (\ul{d \wh{\xi}})^{ (\ul{l} )_{q}} (d \xi_d)^{ (l  )_{q}}) P \right] \\
= &(-1)^s  d^{\bullet} Q \otimes' h   \left[ \xi_d^{ \{ k  \}_{(m),q}} (\ul{d \wh{\xi}})^{ (\ul{l} )_{q}} (d \xi_d)^{ (l  )_{q}} \right] P +  Q \otimes' h   \left[   d^{\bullet}(\xi_d^{ \{ k  \}_{(m),q}} (\ul{d \wh{\xi}})^{ (\ul{l} )_{q}} (d \xi_d)^{ (l  )_{q}}) \otimes' P \right] \\
- &Q \otimes' h \left[ \xi_d^{ \{ k  \}_{(m),q}} (\ul{d \wh{\xi}})^{ (\ul{l} )_{q}} (d \xi_d)^{ (l  )_{q}} \right] d^{\bullet}P  + (-1)^{s-1} d^{\bullet}  Q \otimes' h \left[ \xi_d^{ \{ k  \}_{(m),q}} (\ul{d \wh{\xi}})^{ (\ul{l} )_{q}} (d \xi_d)^{ (l  )_{q}} \right] P \\
+ &Q \otimes' d^{\bullet}h \left[ \xi_d^{ \{ k  \}_{(m),q}} (\ul{d \wh{\xi}})^{ (\ul{l} )_{q}} (d \xi_d)^{ (l  )_{q}} \right] \otimes' P + Q \otimes' h \left[ \xi_d^{ \{ k  \}_{(m),q}} (\ul{d \wh{\xi}})^{ (\ul{l} )_{q}} (d \xi_d)^{ (l  )_{q}} \right] d^{\bullet} P \\
= &Q \otimes' h   \left[   d^{\bullet}(\xi_d^{ \{ k  \}_{(m),q}} (\ul{d \wh{\xi}})^{ (\ul{l} )_{q}} (d \xi_d)^{ (l  )_{q}}) \otimes' P \right] + Q \otimes' d^{\bullet}h \left[ \xi_d^{ \{ k  \}_{(m),q}} (\ul{d \wh{\xi}})^{ (\ul{l} )_{q}} (d \xi_d)^{ (l  )_{q}} \right] \otimes' P.
\end{align*}
}}

\noindent
Then, by Lemma \ref{exla}, we see that

{\fontsize{9pt}{10pt}\selectfont
\begin{align*}
(h  d^{\bullet}+  d^{\bullet}  h) \left[ Q \otimes' \xi_d^{ \{ k  \}_{(m),q}} (\ul{d \wh{\xi}})^{ (\ul{l} )_{q}} (d \xi_d)^{ (l  )_{q}} \otimes' P \right] = Q \otimes' (h  d^{\bullet}+  d^{\bullet}  h) \left[ \xi_d^{ \{ k  \}_{(m),q}} (\ul{d \wh{\xi}})^{ (\ul{l} )_{q}} (d \xi_d)^{ (l  )_{q}} \right] \otimes' P,
\end{align*}}

\noindent
so we may assume that $r \le 1$. We need to show that
\[
(h  d^{\bullet}+  d^{\bullet}  h) \left[ \xi_d^{ \{ k  \}_{(m),q}} (\ul{d \wh{\xi}})^{ (\ul{l} )_{q}} (d \xi_d)^{ (l  )_{q}} \right] = \xi_d^{ \{ k  \}_{(m),q}} (\ul{d \wh{\xi}})^{ (\ul{l} )_{q}} (d \xi_d)^{ (l  )_{q}}
\]
unless $k=l=0$ in which case we get 0. We write $\omega := \xi_d^{ \{ k  \}_{(m),q}} (d \xi_d)^{ (l  )_{q}}$ and compute $(h  d^{\bullet}+  d^{\bullet}  h) \left[ \omega (\ul{d \wh{\xi}})^{ (\ul{l} )_{q}} \right]$. First, we assume that $l \ne 0$ and $\ul{l} \ne 0$. By using (\ref{leibb}) and making the same computation as in the proof of Proposition \ref{hcond}, we have
{\fontsize{9pt}{10pt}\selectfont
\begin{align*}
h  d^{\bullet} \left[ \omega (\ul{d \wh{\xi}})^{ (\ul{l} )_{q}} \right] &= \sum_{0 < k'  \le k }  \left\langle 
\begin{array}{@{}c@{}}
k \\ k'
\end{array}
\right\rangle_{(m), q }
h \left[ \xi_d^{ \{ k-k'  \}_{(m),q}}
(d \xi_d)^{ (k'  )_{q}} \otimes' (\ul{d \wh{\xi}})^{ (\ul{l} )_{q}}  (d \xi_d)^{ (l  )_{q}}  \right] \\
&- \sum_{0 \le \ul{l}'  < \ul{l} } 
\left\langle 
\begin{array}{@{}c@{}}
\ul{l} \\ \ul{l}'
\end{array}
\right\rangle_{(m), q }
h \left[ \xi_d^{ \{ k  \}_{(m),q}}
(\ul{d \wh{\xi}})^{ ( \ul{l} - \ul{l}' )_{q}}  \otimes' (\ul{d \wh{\xi}})^{ (\ul{l}' )_{q}}  (d \xi_d)^{ (l  )_{q}} \right] \\
&- \sum_{0 \le l'  < l }  \left\langle 
\begin{array}{@{}c@{}}
l \\ l'
\end{array}
\right\rangle_{(m), q }
h \left[ \xi_d^{ \{ k  \}_{(m),q}}
(d \xi_d)^{ (l - l'  )_{q}} \otimes' (\ul{d \wh{\xi}})^{ (\ul{l} )_{q}}  (d \xi_d)^{ (l'  )_{q}}  \right].
\end{align*}
}

\noindent
For the term $k' = p^m$ in the first sum, we can use (\ref{cdb}), (\ref{rema}), and for the rest, we can use (\ref{rema}) to get
{\fontsize{9pt}{10pt}\selectfont
\begin{align*}
&h  d^{\bullet} \left[ \omega (\ul{d \wh{\xi}})^{ (\ul{l} )_{q}} \right]  \\ 
&= 
\sum_{0 < k'  \le k }  \left\langle 
\begin{array}{@{}c@{}}
k \\ k'
\end{array}
\right\rangle_{(m), q }
h \left[ \xi_d^{ \{ k-k'  \}_{(m),q}}
(d \xi_d)^{ (k'  )_{q}} \otimes' (d \xi_d)^{ (l  )_{q}} \right] (\ul{d \wh{\xi}})^{ (\ul{l} )_{q}}  + (\ul{d \wh{\xi}})^{ (\ul{l} )_{q}} \otimes' h \left[ \xi_d^{ \{ k  \}_{(m),q}} (d \xi_d)^{ (l  )_{q}} \right] \\
&- \sum_{0 < l'  < l }  \left\langle 
\begin{array}{@{}c@{}}
l \\ l'
\end{array}
\right\rangle_{(m), q }
h \left[ \xi_d^{ \{ k  \}_{(m),q}}
(d \xi_d)^{ (l - l'  )_{q}} \otimes' (d \xi_d)^{ (l'  )_{q}}  \right] (\ul{d \wh{\xi}})^{ (\ul{l} )_{q}} - h \left[ \xi_d^{ \{ k  \}_{(m),q}} (d \xi_d)^{ (l  )_{q}} \right] (\ul{d \wh{\xi}})^{ (\ul{l} )_{q}}.
\end{align*}
}

\noindent
The following calculations are not correct in \cite{LQ01} (but the end result is), so we make corrections here. By using (\ref{leibb}), we have
\[
h  d^{\bullet}(\omega)(\ul{d \wh{\xi}})^{ (\ul{l} )_{q}}+(\ul{d \wh{\xi}})^{ (\ul{l} )_{q}} \otimes' h (\omega) - h (\omega) (\ul{d \wh{\xi}})^{ (\ul{l} )_{q}},
\]
and by the formula (\ref{omy}), this is equal to
\[
h  d^{\bullet}(\omega)(\ul{d \wh{\xi}})^{ (\ul{l} )_{q}}+ d^{\bullet}h(\omega) (\ul{d \wh{\xi}})^{ (\ul{l} )_{q}}.
\]
On the other hand, by the definition of $h$, we can check that $d^{\bullet} h   \left[ \omega (\ul{d \wh{\xi}})^{ (\ul{l} )_{q}} \right] = 0$.  So we see that 
\[
(h  d^{\bullet}+  d^{\bullet}  h) \left[ \omega (\ul{d \wh{\xi}})^{ (\ul{l} )_{q}} \right] = (h  d^{\bullet}+  d^{\bullet}  h) (\omega) (\ul{d \wh{\xi}})^{ (\ul{l} )_{q}}.
\]
Obviously, this formula still holds when $\ul{l} = 0$.

Now we consider the case $l = 0$ and $\ul{l} \ne 0$ in order to move $(\ul{d \wh{\xi}})^{ (\ul{l} )_{q}}$ outside. By using (\ref{leibb}), we have
{\fontsize{9pt}{10pt}\selectfont
\begin{align}
\label{llnz}
&h  d^{\bullet}\left[ \xi_d^{ \{ k  \}_{(m),q}}  (\ul{d \wh{\xi}})^{ (\ul{l} )_{q}} \right] \notag \\
=& \sum_{0 < k'  \le k }  \left\langle 
\begin{array}{@{}c@{}}
k \\ k'
\end{array}
\right\rangle_{(m), q }
h \left[ \xi_d^{ \{ k-k'  \}_{(m),q}}
(d \xi_d)^{ (k'  )_{q}} \otimes' (\ul{d \wh{\xi}})^{ (\ul{l} )_{q}}   \right] 
- \sum_{0 < \ul{l}'  < \ul{l} } 
\left\langle 
\begin{array}{@{}c@{}}
\ul{l} \\ \ul{l}'
\end{array}
\right\rangle_{(m), q }
h \left[ \xi_d^{ \{ k  \}_{(m),q}}
(\ul{d \wh{\xi}})^{ ( \ul{l} - \ul{l}' )_{q}}  \otimes' (\ul{d \wh{\xi}})^{ (\ul{l}' )_{q}}   \right].
\end{align}
}

\noindent
The second sum is equal to 0 by the definition of $h$. The calculations for the first sum is not obvious as in the proof of \cite{LQ01}, so we give them explicitly. If $k' \ne p^m$, then we can move $(\ul{d \wh{\xi}})^{ (\ul{l} )_{q}}$ outside by using (\ref{rema}). For the term $k' = p^m$, we may calculate it by using (\ref{cdc}):
\begin{align*}
&h \left[ \xi_d^{ \{ k-p^m  \}_{(m),q}}
(d \xi_d)^{ (p^m  )_{q}} \otimes' (\ul{d \wh{\xi}})^{ (\ul{l} )_{q}}   \right]  \\
= &-h \left[ \xi_d^{ \{ k-p^m  \}_{(m),q}}
(\ul{d \wh{\xi}})^{ (\ul{l} )_{q}}  \otimes' (d \xi_d)^{  (p^m  )_{q}}   \right] \\
&- \sum_{0 < l'  < p^m }
\left\langle 
\begin{array}{@{}c@{}}
p^m \\ l' 
\end{array}
\right\rangle_{(m), q }
h \left[ \xi_d^{ \{ k-p^m  \}_{(m),q}}
(d \xi_d)^{ (p^m - l'  )_{q}} \otimes' (\ul{d \wh{\xi}})^{ (\ul{l} )_{q}} (d \xi_d)^{ (l'  )_{q}}  \right].
\end{align*}
By excluding the trivial case, we may assume that $k>0$ and $p^m \mid k$. Then we can use the definition of $h$ and the formula (\ref{omy}) to get
{\fontsize{9pt}{10pt}\selectfont
\begin{align}
\label{ssts}
= 
&\left\langle 
\begin{array}{@{}c@{}}
k \\ p^m 
\end{array}
\right\rangle_{(m), q }^{-1}
(\ul{d \wh{\xi}})^{ (\ul{l} )_{q}} \otimes' \xi_d^{ \{ k  \}_{(m),q}} 
- \sum_{0 < l'  < p^m }
\left\langle 
\begin{array}{@{}c@{}}
p^m \\ l' 
\end{array}
\right\rangle_{(m), q }
\left\langle 
\begin{array}{@{}c@{}}
k-l' \\ k-p^m
\end{array}
\right\rangle_{(m), q }^{-1}
\xi_d^{ \{ k-l'  \}_{(m),q}}
(\ul{d \wh{\xi}})^{ (\ul{l} )_{q}} (d \xi_d)^{ (l'  )_{q}} \notag \\
=&\left\langle 
\begin{array}{@{}c@{}}
k \\ p^m 
\end{array}
\right\rangle_{(m), q }^{-1}
\xi_d^{ \{ k  \}_{(m),q}}  (\ul{d \wh{\xi}})^{ (\ul{l} )_{q}} +
\left\langle 
\begin{array}{@{}c@{}}
k \\ p^m 
\end{array}
\right\rangle_{(m), q }^{-1}
\cdot
\sum_{0 < l''  \le k }
\left\langle 
\begin{array}{@{}c@{}}
k \\ l''
\end{array}
\right\rangle_{(m), q }
\xi_d^{ \{ k-l''  \}_{(m),q}}
(\ul{d \wh{\xi}})^{ (\ul{l} )_{q}} (d \xi_d)^{ (l''  )_{q}} \notag \\
- &\sum_{0 < l'  < p^m }
\left\langle 
\begin{array}{@{}c@{}}
p^m \\ l' 
\end{array}
\right\rangle_{(m), q }
\left\langle 
\begin{array}{@{}c@{}}
k-l' \\ k-p^m
\end{array}
\right\rangle_{(m), q }^{-1}
\xi_d^{ \{ k-l'  \}_{(m),q}}
(\ul{d \wh{\xi}})^{ (\ul{l} )_{q}} (d \xi_d)^{ (l'  )_{q}}.
\end{align}
}

\noindent
We can check that
\[
\left\langle 
\begin{array}{@{}c@{}}
k \\ p^m 
\end{array}
\right\rangle_{(m), q }^{-1}
\left\langle 
\begin{array}{@{}c@{}}
k \\ l'
\end{array}
\right\rangle_{(m), q }=
\left\langle 
\begin{array}{@{}c@{}}
p^m \\ l' 
\end{array}
\right\rangle_{(m), q }
\left\langle 
\begin{array}{@{}c@{}}
k-l' \\ k-p^m
\end{array}
\right\rangle_{(m), q }^{-1},
\]
and since $(\ul{d \wh{\xi}})^{ (\ul{l} )_{q}} (d \xi_d)^{ (l''  )_{q}}$ is 0 when $p^m \le l'' \le k$, we can cancel the sums in (\ref{ssts}), so we get
{\fontsize{9pt}{10pt}\selectfont
\begin{align*}
h \left[ \xi_d^{ \{ k-p^m  \}_{(m),q}}
(d \xi_d)^{ (p^m  )_{q}} \otimes' (\ul{d \wh{\xi}})^{ (\ul{l} )_{q}}   \right] = 
\left\langle 
\begin{array}{@{}c@{}}
k \\ p^m 
\end{array}
\right\rangle_{(m), q }^{-1}
\xi_d^{ \{ k  \}_{(m),q}}  (\ul{d \wh{\xi}})^{ (\ul{l} )_{q}}
= h \left[ \xi_d^{ \{ k-p^m  \}_{(m),q}}
(d \xi_d)^{ (p^m  )_{q}}  \right] (\ul{d \wh{\xi}})^{ (\ul{l} )_{q}}.
\end{align*}
}

\noindent
For (\ref{llnz}), after moving $(\ul{d \wh{\xi}})^{ (\ul{l} )_{q}}$ outside in the first sum, we have
\[
h  d^{\bullet}\left[ \xi_d^{ \{ k  \}_{(m),q}}  (\ul{d \wh{\xi}})^{ (\ul{l} )_{q}} \right] = h  d^{\bullet} ( \xi_d^{ \{ k  \}_{(m),q}} )  (\ul{d \wh{\xi}})^{ (\ul{l} )_{q}}.
\]
On the other hand, by the definition of $h$, we have \[ d^{\bullet} h  \left[ \xi_d^{ \{ k  \}_{(m),q}}  (\ul{d \wh{\xi}})^{ (\ul{l} )_{q}} \right] = 0 = d^{\bullet} h  ( \xi_d^{ \{ k  \}_{(m),q}} ) (\ul{d \wh{\xi}})^{ (\ul{l} )_{q}}. \] So we get
\[
(h  d^{\bullet} + d^{\bullet} h )\left[ \xi_d^{ \{ k  \}_{(m),q}}  (\ul{d \wh{\xi}})^{ (\ul{l} )_{q}} \right] = (h  d^{\bullet} + d^{\bullet} h ) ( \xi_d^{ \{ k  \}_{(m),q}} )  (\ul{d \wh{\xi}})^{ (\ul{l} )_{q}}.
\]

By the arguments above, we are reduced to showing the equation when  $d=1$:
\[
(h  d^{\bullet} + d^{\bullet} h )\left[ \xi^{ \{ k  \}_{(m),q}}  (d \xi)^{ (l )_{q}} \right] = \xi^{ \{ k  \}_{(m),q}}  (d \xi)^{ (l )_{q}}.
\]
The case $k = l = 0$ is trivial. For the case $k \ne 0$ and $l=0$, by (\ref{dxik}), we have
%\begin{eg}
%\label{intd}
%\begin{enumerate}
%\item For $r = 1$ and $n = 1$, we get
%\[
%h(\xi^{ \{ k  \}_{(m),q}}  (d \xi)^{ (l )_{q}}) = 
%\begin{cases}
%\left\langle 
%\begin{array}{@{}c@{}}
%k+l \\ k
%\end{array}
%\right\rangle_{(m), q }^{-1} \xi^{ \{ k+l  \}_{(m),q}} \ &\text{if} \ k=0 \\
%0 &\text{otherwise}.
%\end{cases}
%\]
%This is the formula for the $q$-analog of integrating differentials of level $m$ as we can easily verify: we have
\[
h  d^{\bullet} \left[ \xi^{ \{ k  \}_{(m),q}} \right] = \sum_{0 < k' \le k} \left\langle 
\begin{array}{@{}c@{}}
k \\ k'
\end{array}
\right\rangle_{(m), q }
h \left[ \xi^{ \{ k - k'  \}_{(m),q}}  (d \xi)^{ ( k' )_{q}} \right].
\]
We write $k = p^mr + s$ with $0 < s \le p^m$. Then, by the definition of $h$, we see that $h \left[ \xi^{ \{ k - k'  \}_{(m),q}}  (d \xi)^{ ( k' )_{q}} \right] = 0$ unless $p^m \mid (k - k')$ and $0 < k' \le p^m$, namely, $k' = s$. So we have
{\fontsize{9pt}{10pt}\selectfont
\[
(h  d^{\bullet} + d^{\bullet} h ) \left[ \xi^{ \{ k  \}_{(m),q}} \right] =
\left\langle 
\begin{array}{@{}c@{}}
k \\ s
\end{array}
\right\rangle_{(m), q }
h \left[ \xi^{ \{ k - s  \}_{(m),q}}  (d \xi)^{ ( s )_{q}} \right]
=
\left\langle 
\begin{array}{@{}c@{}}
k \\ s
\end{array}
\right\rangle_{(m), q }
\left\langle 
\begin{array}{@{}c@{}}
k \\ s
\end{array}
\right\rangle_{(m), q }^{-1}
\xi^{ \{ k - s + s  \}_{(m),q}} = \xi^{ \{ k   \}_{(m),q}}.
\]}
%\end{enumerate}
%\end{eg}

\noindent
Now we assume that $l \ne 0$. If $k = 0$, then we can check that
\begin{align*}
(h  d^{\bullet} + d^{\bullet} h )\left[  (d \xi)^{ (l )_{q}} \right] &= h  d^{\bullet}\left[  (d \xi)^{ (l )_{q}} \right] + d^{\bullet} h \left[  (d \xi)^{ (l )_{q}} \right] \\
&= -\sum_{0< l' < l}
\left\langle 
\begin{array}{@{}c@{}}
l \\ l' 
\end{array}
\right\rangle_{(m), q }
h \left[  (d \xi)^{ (l- l' )_{q}} \otimes' (d \xi)^{ (l' )_{q}} \right] + d^{\bullet}(\xi^{ \{ l  \}_{(m),q}} ) \\
&= -\sum_{0< l' < l}
\left\langle 
\begin{array}{@{}c@{}}
l \\ l' 
\end{array}
\right\rangle_{(m), q }
\xi^{ \{ l - l' \}_{(m),q}}(d \xi)^{ (l' )_{q}}  \\
&\phantom{=} \ + 
\sum_{0< l'' \le l}
\left\langle 
\begin{array}{@{}c@{}}
l \\ l'' 
\end{array}
\right\rangle_{(m), q }
\xi^{ \{ l - l'' \}_{(m),q}}(d \xi)^{ (l'' )_{q}} = (d \xi)^{ (l )_{q}}.
\end{align*}
Therefore, we are left with the case $k \ne 0$ and $l \ne 0$. We have
\begin{align*}
h  d^{\bullet}\left[  \xi^{ \{ k  \}_{(m),q}}  (d \xi)^{ (l )_{q}} \right] &= h  \left[  d^{\bullet} \xi^{ \{ k  \}_{(m),q}} \otimes' (d \xi)^{ (l )_{q}} \right] + h  \left[   \xi^{ \{ k  \}_{(m),q}}  d^{\bullet}((d \xi)^{ (l )_{q}}) \right] \\
&= \sum_{0< k' \le k}
\left\langle 
\begin{array}{@{}c@{}}
k \\ k' 
\end{array}
\right\rangle_{(m), q }
h  \left[  \xi^{ \{ k - k'  \}_{(m),q}} (d \xi)^{ (k' )_{q}} \otimes'  (d \xi)^{ (l )_{q}} \right] \\
&\phantom{=} \ -\sum_{0< l' < l}
\left\langle 
\begin{array}{@{}c@{}}
l \\ l' 
\end{array}
\right\rangle_{(m), q }
h \left[ \xi^{ \{ k  \}_{(m),q}} (d \xi)^{ (l- l' )_{q}} \otimes' (d \xi)^{ (l' )_{q}} \right].
\end{align*}
First, we assume that $p^m \nmid k$, and we write $k = p^mr + s$ with $0 < s < p^m$. Then the second sum is 0, and for the first sum, all terms are 0 unless $k' = s$. Thus, we see that
\[
h  d^{\bullet}\left[  \xi^{ \{ k  \}_{(m),q}}  (d \xi)^{ (l )_{q}} \right] = 
\left\langle 
\begin{array}{@{}c@{}}
k \\ s
\end{array}
\right\rangle_{(m), q }
\left\langle 
\begin{array}{@{}c@{}}
k \\ s
\end{array}
\right\rangle_{(m), q }^{-1}
\xi^{ \{ k  \}_{(m),q}}  (d \xi)^{ (l )_{q}}=\xi^{ \{ k  \}_{(m),q}}  (d \xi)^{ (l )_{q}}.
\]
Since in this case $d^{\bullet} h  \left[  \xi^{ \{ k  \}_{(m),q}}  (d \xi)^{ (l )_{q}} \right] = 0$, we get the equation as desired.

Now, we are left with the case $p^m \mid k$ and $k \ne 0$. We have 
\begin{align}
\label{dbhx}
d^{\bullet} h  \left[  \xi^{ \{ k  \}_{(m),q}}  (d \xi)^{ (l )_{q}} \right] &=  
d^{\bullet}\left[
\left\langle 
\begin{array}{@{}c@{}}
k+l \\ k
\end{array}
\right\rangle_{(m), q }^{-1} 
\xi^{ \{ k+l  \}_{(m),q}} \right] \notag \\
&= 
\left\langle 
\begin{array}{@{}c@{}}
k+l \\ k
\end{array}
\right\rangle_{(m), q }^{-1}  \cdot
\sum_{0< k' \le p^m}
\left\langle 
\begin{array}{@{}c@{}}
k+l \\ k' 
\end{array}
\right\rangle_{(m), q }
 \xi^{ \{ k+l - k'  \}_{(m),q}} (d \xi)^{ (k' )_{q}}
\end{align}
and
\begin{align*}
h d^{\bullet}   \left[  \xi^{ \{ k  \}_{(m),q}}  (d \xi)^{ (l )_{q}} \right] &=  
\sum_{0< k' \le p^m}
\left\langle 
\begin{array}{@{}c@{}}
k \\ k' 
\end{array}
\right\rangle_{(m), q }
h\left[  \xi^{ \{ k- k'  \}_{(m),q}} (d \xi)^{ (k' )_{q}} \otimes'  (d \xi)^{ (l )_{q}} \right] \\
&\phantom{=} \ -\sum_{0< l' < l}
\left\langle 
\begin{array}{@{}c@{}}
l \\ l' 
\end{array}
\right\rangle_{(m), q }
h \left[ \xi^{ \{ k  \}_{(m),q}} (d \xi)^{ (l- l' )_{q}} \otimes' (d \xi)^{ (l' )_{q}} \right] \\
&= 
\left\langle 
\begin{array}{@{}c@{}}
k \\ p^m
\end{array}
\right\rangle_{(m), q }
h\left[  \xi^{ \{ k- p^m  \}_{(m),q}} (d \xi)^{ (p^m )_{q}} \otimes'  (d \xi)^{ (l )_{q}} \right] \\
&\phantom{=} \ -\sum_{0< l' < l}
\left\langle 
\begin{array}{@{}c@{}}
l \\ l' 
\end{array}
\right\rangle_{(m), q }
\left\langle 
\begin{array}{@{}c@{}}
k+l-l' \\ k
\end{array}
\right\rangle_{(m), q }^{-1}
\xi^{ \{ k+l-l'  \}_{(m),q}} (d \xi)^{ (l' )_{q}}.
\end{align*}
By the formula (\ref{cdb}), we see that:
\begin{align*}
&h\left[  \xi^{ \{ k- p^m  \}_{(m),q}} (d \xi)^{ (p^m )_{q}} \otimes'  (d \xi)^{ (l )_{q}} \right] \\
= &- \sum_{l< l'' \le p^m }
\left\langle 
\begin{array}{@{}c@{}}
p^m + l \\ l
\end{array}
\right\rangle_{(m), q }^{-1}
\left\langle 
\begin{array}{@{}c@{}}
p^m + l \\ l''
\end{array}
\right\rangle_{(m), q }
h\left[  \xi^{ \{ k- p^m  \}_{(m),q}} (d \xi)^{ (p^m + l - l'' )_{q}} \otimes'  (d \xi)^{ (l'' )_{q}} \right] \\
= &- \sum_{l< l'' \le p^m }
\left\langle 
\begin{array}{@{}c@{}}
p^m + l \\ l
\end{array}
\right\rangle_{(m), q }^{-1}
\left\langle 
\begin{array}{@{}c@{}}
p^m + l \\ l''
\end{array}
\right\rangle_{(m), q }
\left\langle 
\begin{array}{@{}c@{}}
k + l - l'' \\ k-p^m
\end{array}
\right\rangle_{(m), q }^{-1}
 \xi^{ \{ k + l - l''  \}_{(m),q}}  (d \xi)^{ (l'' )_{q}}.
\end{align*}
Now, we can check that
\[
\left\langle 
\begin{array}{@{}c@{}}
k \\ p^m
\end{array}
\right\rangle_{(m), q }
\left\langle 
\begin{array}{@{}c@{}}
p^m + l \\ l
\end{array}
\right\rangle_{(m), q }^{-1}
\left\langle 
\begin{array}{@{}c@{}}
p^m + l \\ l''
\end{array}
\right\rangle_{(m), q }
\left\langle 
\begin{array}{@{}c@{}}
k + l - l'' \\ k-p^m
\end{array}
\right\rangle_{(m), q }^{-1}
=
\left\langle 
\begin{array}{@{}c@{}}
k+l \\ k
\end{array}
\right\rangle_{(m), q }^{-1}
\left\langle 
\begin{array}{@{}c@{}}
k + l \\ l''
\end{array}
\right\rangle_{(m), q },
\]
so it follows that
\begin{align*}
&\left\langle 
\begin{array}{@{}c@{}}
k \\ p^m
\end{array}
\right\rangle_{(m), q }
h\left[  \xi^{ \{ k- p^m  \}_{(m),q}} (d \xi)^{ (p^m )_{q}} \otimes'  (d \xi)^{ (l )_{q}} \right] \\
=
&-
\left\langle 
\begin{array}{@{}c@{}}
k+l \\ k
\end{array}
\right\rangle_{(m), q }^{-1}
\cdot
\sum_{l< l'' \le p^m }
\left\langle 
\begin{array}{@{}c@{}}
k + l \\ l''
\end{array}
\right\rangle_{(m), q }
\xi^{ \{ k + l - l''  \}_{(m),q}}  (d \xi)^{ (l'' )_{q}}.
\end{align*}
Since we can also check that
\[
\left\langle 
\begin{array}{@{}c@{}}
l \\ l' 
\end{array}
\right\rangle_{(m), q }
\left\langle 
\begin{array}{@{}c@{}}
k+l-l' \\ k
\end{array}
\right\rangle_{(m), q }^{-1} = \left\langle 
\begin{array}{@{}c@{}}
k+l \\ k
\end{array}
\right\rangle_{(m), q }^{-1}
\left\langle 
\begin{array}{@{}c@{}}
k + l \\ l'
\end{array}
\right\rangle_{(m), q },
\]
we have
\begin{align}
\label{hdbx}
h d^{\bullet}   \left[  \xi^{ \{ k  \}_{(m),q}}  (d \xi)^{ (l )_{q}} \right] = 
-
\left\langle 
\begin{array}{@{}c@{}}
k+l \\ k
\end{array}
\right\rangle_{(m), q }^{-1}
\cdot
\sum_{\substack{0< l' \le p^m \\ l' \ne l}}
\left\langle 
\begin{array}{@{}c@{}}
k + l \\ l'
\end{array}
\right\rangle_{(m), q }
\xi^{ \{ k + l - l'  \}_{(m),q}}  (d \xi)^{ (l' )_{q}}.
\end{align}
Adding (\ref{dbhx}) and (\ref{hdbx}), we get
\[
(h d^{\bullet} +  d^{\bullet} h) \left[  \xi^{ \{ k  \}_{(m),q}}  (d \xi)^{ (l )_{q}} \right] = \xi^{ \{ k  \}_{(m),q}}  (d \xi)^{ (l )_{q}},
\]
so the result follows.
\end{proof}

Now, we prove the formal Poincar\'{e} Lemma for the linearized $q$-jet complex \linebreak $L\Omega^{(m)}_{A/R,q}  (\bullet)$.

\begin{thm}
\label{qjfp}
{\rm (cf. \cite[Theorem 3.3]{LQ01})}
The linearized $q$-jet complex $L\Omega^{(m)}_{A/R,q}  (\bullet)$ is a resolution of $A$.
\end{thm}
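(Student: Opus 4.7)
The plan is to contract the complex $L\Omega^{(m)}_{A/R,q}(\bullet)$ onto its degree-$0$ summand $A$ by iterating the homotopy $h$ constructed in Proposition \ref{holo}. Although $h$ and the projector $\pi$ of Proposition \ref{proj} are defined using the distinguished variable $x_d$, the construction of the $m$-$q$-PD polynomial algebra $\wh{A\langle\ul{\xi}\rangle}_{(m),q,\ul{x}}$, of the $q$-jet complex and of its linearization is symmetric in the coordinates $x_1,\dots,x_d$. Hence, by relabeling, one obtains for each $i\in\{1,\dots,d\}$ an $A$-linear homotopy $h_i : L\Omega^{(m)}_{A/R,q}(r)\to L\Omega^{(m)}_{A/R,q}(r-1)$ and a projector $\pi_i$ killing $\xi_i^{\{k\}_{(m),q}}$ (for $k>0$) and $(d\xi_i)^{(l)_q}$ (for $l>0$), with
\[
h_i \circ d^{\bullet} + d^{\bullet} \circ h_i = \mathrm{Id} - \pi_i.
\]

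Next, I would apply $d^{\bullet}$ on both sides of this identity and use $(d^{\bullet})^2=0$ to conclude that each $\pi_i$ is a chain map. Since the $\pi_i$'s act on disjoint sets of generators, they commute pairwise, and therefore the composite
\[
\Pi := \pi_1\circ \pi_2\circ \cdots \circ \pi_d
\]
is a chain endomorphism whose image lies in degree $0$ and equals $A \subset \wh{A\langle\ul{\xi}\rangle}_{(m),q,\ul{x}}$ (since $\Pi$ kills every positive power of every $\xi_i$ and every $d\xi_i$). The key step is then to show that $\Pi$ is chain-homotopic to $\mathrm{Id}$ via the telescope
\[
H := \sum_{i=1}^d \pi_1\circ \cdots \circ \pi_{i-1}\circ h_i;
\]
a direct computation using that each $\pi_j$ is a chain map, and exploiting their commutativity, yields $H\circ d^{\bullet}+d^{\bullet}\circ H = \mathrm{Id} - \Pi$. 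Since $\Pi$ restricted to degree $0$ is a retraction of the natural inclusion $A\hookrightarrow L\Omega^{(m)}_{A/R,q}(0)$, the existence of such an $H$ forces this inclusion to be a quasi-isomorphism, which is precisely the claim that $L\Omega^{(m)}_{A/R,q}(\bullet)$ is a resolution of $A$.

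The main obstacle is to make precise the symmetry argument producing the homotopies $h_i$ for $i<d$, since the construction in Proposition \ref{holo} is written asymmetrically with $x_d$ playing a distinguished role. One can either argue abstractly by noting that all ingredients of the construction—the $m$-$q$-PD envelope, the decomposition $\ul{\xi}^{\{\ul{k}\}_{(m),q}}=\prod_i \xi_i^{\{k_i\}_{(m),q}}$, the differential $d^{\bullet}$ of (\ref{dmqd}) and (\ref{lpdr}), and the multiplication rule of Proposition \ref{mr}—are invariant under permutations of $(x_1,\dots,x_d)$, so conjugating the construction of Proposition \ref{holo} by the transposition $i\leftrightarrow d$ yields $h_i$ with the analogous defining formulas and, by Proposition \ref{proj} applied in the transposed setting, with the desired homotopy identity. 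A variant of the same argument phrased as induction on $d$ is also possible: the base case $d=0$ gives $L\Omega^{(m)}_{A/R,q}(\bullet)=A$ in degree $0$, and in the inductive step the subcomplex $\ker(\xi_d,\,d\xi_d)\subset L\Omega^{(m)}_{A/R,q}(\bullet)$ receives a homotopy equivalence from the whole complex via $h$, reducing the statement to the $(d-1)$-variable case.
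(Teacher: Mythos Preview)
Your proposal is correct and takes essentially the same approach as the paper: iterate the homotopy identity of Proposition~\ref{proj} over the $d$ coordinates. The paper packages this as your ``variant'' (induction on $d$), preceded by a base change to $A = R[X_1,\dots,X_d]^\wedge$ so that the image of $\pi$ can be identified with $A \wh{\otimes}_{A'} L\Omega^{(m)}_{A'/R,q}(\bullet)$ for $A' = R[X_1,\dots,X_{d-1}]^\wedge$; your telescope formulation sidesteps this reduction by composing directly down to $\Pi$ with image $A$ (and in fact the commutativity of the $\pi_i$ is not needed for the telescope identity---only that each $\pi_i$ is a chain map).
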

\begin{proof}
We follow the proof of \cite{LQ01}. 
Since the canonical map \[ A \wh{\otimes}_{R[\ul{X}]^{\wedge}} L\Omega^{(m)}_{R[\ul{X}]^{\wedge}/R,q}(\bullet) \xrightarrow{\simeq} L\Omega^{(m)}_{A/R,q}(\bullet) \] is an isomorphism, it is enough to show the assertion for $A = R[X_1, \dots, X_{d}]^{\wedge}$. We proceed by induction on $d$. We need to show that for $A' = R[X_1, \dots, X_{d-1}]^{\wedge}$,
 the map of complexes $A \wh{\otimes}_{A'} L\Omega^{(m)}_{A'/R,q}(\bullet) \to L\Omega^{(m)}_{A/R,q}(\bullet)$ is a quasi-isomorphism. We check that this map is actually a homotopy equivalence.
Since this map, which is just the inclusion $\xi_i^{ \{ k_i  \}_{(m),q}} \mapsto \xi_i^{ \{ k_i  \}_{(m),q}}$ and $(d \xi_i)^{ (l_i  )_{q}} \mapsto (d \xi_i)^{ (l_i  )_{q}}$ for $i = 1, \dots, d-1$, has an obvious retraction map given by
 $\xi_d^{ \{ k_d  \}_{(m),q}} \mapsto 0$ and $(d \xi_d)^{ (l_d  )_{q}}  \mapsto 0$, it suffices to show that the projector $\pi$ on  $L\Omega^{(m)}_{A/R,q}(\bullet)$ in Proposition $\ref{proj}$ is homotopic to the identity. So we need to construct a homotopy $h$ on $L\Omega^{(m)}_{A/R,q}(\bullet)$ such that $h \circ d^{\bullet}+  d^{\bullet} \circ h = \mathrm{Id} - \pi$,  which was done in Proposition $\ref{proj}$.
\end{proof}

We are now ready to prove the Poincar\'{e} Lemma for the $q$-jet complex.

\begin{prop}
\label{jcpl}
{\rm (cf. \cite[Theorem 4.7]{LQ01})}
If $E \in {\bf CR}((\ol{A}/R)_{m\text{-}q^{p^m}\text{-crys}},\cO^{(m)}_{\ol{A}/R})$,  then 
\[
E \wh{\otimes}_{\cO^{(m)}_{\ol{A}/R}} L^{(m)}(\Omega^{(m)}_{A/R,q} (\bullet))
\]
 forms a complex of $\cO^{(m)}_{\ol{A}/R}$-modules that resolves $E$.
\end{prop}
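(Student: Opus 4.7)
The plan is to follow the template of the proof of Proposition \ref{mqpl} for the higher $q$-de Rham complex, using the formal Poincaré lemma (Theorem \ref{qjfp}) in place of Proposition \ref{tfpl}. First, one needs to verify that each differential of the linearized $q$-jet complex $L\Omega^{(m)}_{A/R,q}(\bullet)$ is compatible with the hyper $m$-$q^{p^m}$-stratification on each term. This should be proved exactly as in Lemma \ref{lpstr}: the stratification is induced by the same formula $(a \otimes b) \otimes' (f \otimes g \otimes h_1 \otimes \cdots \otimes h_r) \mapsto (1 \otimes g \otimes h_1 \otimes \cdots \otimes h_r) \otimes (a \otimes bf)$, and this manifestly commutes with each $\delta^{r+1}_i$ for $i \geq 1$, hence with $d^r$; the fact that we quotient by $\wh{A \langle \ul{\xi} \rangle}_{(m), q, \ul{x}} \wh{\otimes}'_A K^{(m)}_{A/R,q}(\bullet)$ is compatible with the stratification since the ideal is defined by the same combinatorial conditions on both sides.

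Via Proposition \ref{crstr}, this compatibility produces a complex $L^{(m)}(\Omega^{(m)}_{A/R,q}(\bullet))$ of $\cO^{(m)}_{\ol{A}/R}$-modules whose evaluation at $A$ recovers $L\Omega^{(m)}_{A/R,q}(\bullet)$. Next, from the unit map $A \to \wh{A\langle \ul{\xi}\rangle}_{(m),q,\ul{x}} = L\Omega^{(m)}_{A/R,q}(0)$ (which is trivially compatible with the hyper stratification since $A$ carries the trivial one), we obtain an $\cO^{(m)}_{\ol{A}/R}$-linear augmentation $\cO^{(m)}_{\ol{A}/R} \to L^{(m)}(\Omega^{(m)}_{A/R,q}(0))$. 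Tensoring with $E$ over $\cO^{(m)}_{\ol{A}/R}$ yields a morphism
\[
E \to E \wh{\otimes}_{\cO^{(m)}_{\ol{A}/R}} L^{(m)}(\Omega^{(m)}_{A/R,q}(\bullet)),
\]
and we must show this is a quasi-isomorphism.

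To check this, for any object $(B, I_B)$ of $(\ol{A}/R)_{m\text{-}q^{p^m}\text{-crys}}$, we need that $E_B \to E_B \wh{\otimes}_B L^{(m)}(\Omega^{(m)}_{A/R,q}(\bullet))_B$ is a quasi-isomorphism. By Proposition \ref{mqcov}, we can find a faithfully flat cover reducing to the case $(B, I_B) = (A, I_A)$, so we must check that $E_A \to E_A \wh{\otimes}_A L\Omega^{(m)}_{A/R,q}(\bullet)$ is a quasi-isomorphism. This is the complete tensor product of $E_A \in {\bf FMod}^{\wedge}_{(p,(p)_{q^{p^m}})}(A)$ with the quasi-isomorphism $A \to L\Omega^{(m)}_{A/R,q}(\bullet)$ provided by Theorem \ref{qjfp}, and hence is itself a quasi-isomorphism since derived $(p,(p)_{q^{p^m}})$-completed tensor product preserves acyclicity in this flat context by Lemma \ref{fmod}.

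The main obstacle is really already handled upstream in Theorem \ref{qjfp} and Proposition \ref{proj}: the bulk of the work in the jet case lies in the construction of the homotopy $h$ and verifying $hd + dh = \mathrm{Id} - \pi$, which reduces the Poincaré lemma to induction on $d$. Once that formal lemma is in hand, the remaining step — passing from a resolution of $A$ to a resolution of $E$ in the topos — is purely formal and runs exactly parallel to the higher $q$-de Rham case treated in Proposition \ref{mqpl}, with the simplification that no direct sum of $p^{md}$ copies appears because $L\Omega^{(m)}_{A/R,q}(\bullet)$ resolves $A$ itself (Theorem \ref{qjfp}) rather than $A^{\oplus p^{md}}$.
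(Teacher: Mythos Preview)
Your proposal is correct and follows essentially the same approach as the paper's proof: both invoke Lemma \ref{lpstr} to get the complex of crystals, reduce via Proposition \ref{mqcov} to the case $B=A$, and then conclude by tensoring the formal Poincar\'e lemma (Theorem \ref{qjfp}) with $E_A$. The paper's version is slightly terser, simply pointing to the proof of Proposition \ref{mqpl} for the reduction steps, whereas you spell out the stratification compatibility and the flatness argument more explicitly.
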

\begin{proof}
First, as in the proof of Proposition \ref{mqpl}, we have a complex $L^{(m)}(\Omega^{(m)}_{A/R,q} (\bullet))$ of $\cO^{(m)}_{\ol{A}/R}$-modules that gives $L\Omega^{(m)}_{A/R,q}  (\bullet)$ when we evaluate at $A$ by Lemma \ref{lpstr}. 
We need to show that, for any object $(B,I_B)$  of $(\ol{A}/R)_{m\text{-}q^{p^m}\text{-crys}}$, the map
\[
E_B \to E_B \wh{\otimes}_B L^{(m)}(\Omega^{(m)}_{A/R,q} (\bullet))_B
\]
is a quasi-isomorphism.
As in the proof of Proposition \ref{mqpl}, we may assume that $B=A$. Then by Theorem \ref{qjfp}, we see that 
\[
A \to L\Omega^{(m)}_{A/R,q}  (\bullet)
\]
is a quasi-isomorphism. By the complete tensor product of this map with \[ E_A \in {\bf FMod}^{\wedge}_{(p,(p)_{q^{p^m}})}(A), \] we get the desired quasi-isomorphism.
\end{proof}

\begin{cor}
\label{jcgs}
If $E \in {\bf CR}((\ol{A}/R)_{m\text{-}q^{p^m}\text{-crys}},\cO^{(m)}_{\ol{A}/R})$, then we have a quasi-isomorphism
\[
 R \Gamma ((\ol{A}/R)_{m\text{-}q^{p^m}\text{-crys}}, E) \xrightarrow{\simeq} E_A \wh{\otimes}_A \Omega^{(m)}_{A/R,q} (\bullet).
\]
\end{cor}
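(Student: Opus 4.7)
The plan is to prove Corollary \ref{jcgs} by exactly mirroring the argument used for Corollary \ref{gspl}, simply replacing the higher $q$-de Rham complex $\breve{\Omega}^{(m)}_{A/R,q}(\bullet)$ by the $q$-jet complex $\Omega^{(m)}_{A/R,q}(\bullet)$ and the formal Poincar\'e lemma (Proposition \ref{mqpl}) by its jet-complex counterpart (Proposition \ref{jcpl}). No new ingredients beyond what is already in Sections \ref{mqanc}--\ref{qanj} are needed.

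First I would rewrite $R\Gamma((\ol{A}/R)_{m\text{-}q^{p^m}\text{-crys}}, E) = Re_{\ol{A}/R*}E$ and use Proposition \ref{jcpl} to identify $E$, in the derived category of $\cO^{(m)}_{\ol{A}/R}$-modules, with the complex $E \wh{\otimes}_{\cO^{(m)}_{\ol{A}/R}} L^{(m)}(\Omega^{(m)}_{A/R,q}(\bullet))$. Thus
\[
R\Gamma((\ol{A}/R)_{m\text{-}q^{p^m}\text{-crys}}, E) \simeq Re_{\ol{A}/R*}\bigl(E \wh{\otimes}_{\cO^{(m)}_{\ol{A}/R}} L^{(m)}(\Omega^{(m)}_{A/R,q}(\bullet))\bigr).
\]
Next I would apply Lemma \ref{elle} termwise to move the tensor product inside the linearization, obtaining a quasi-isomorphism with $Re_{\ol{A}/R*}\bigl(L^{(m)}(E_A \wh{\otimes}_A \Omega^{(m)}_{A/R,q}(\bullet))\bigr)$. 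Finally, the calculation used to prove Lemma \ref{rglm}, namely $Re_{\ol{A}/R*} \circ L^{(m)} = Re_{A*} \circ \wh{e}_A^* = \mathrm{Id}$ on ${\bf FMod}^{\wedge}_{(p,(p)_{q^{p^m}})}(A)$, applied termwise to the complex $E_A \wh{\otimes}_A \Omega^{(m)}_{A/R,q}(\bullet)$, collapses the right-hand side to $E_A \wh{\otimes}_A \Omega^{(m)}_{A/R,q}(\bullet)$ itself, giving the desired quasi-isomorphism.

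The only subtlety is making sure Lemma \ref{rglm} applies in each degree, i.e., that each $E_A \wh{\otimes}_A \Omega^{(m)}_{A/R,q}(r)$ belongs to ${\bf FMod}^{\wedge}_{(p,(p)_{q^{p^m}})}(A)$; this follows from Lemma \ref{fmod}.1 together with the fact that $\Omega^{(m)}_{A/R,q}(r)$ is, by construction via the tensor algebra on the free $A$-module $\Omega^{(m)}_{A/R,q}(1)$ modulo the DG-ideal $K^{(m)}_{A/R,q}(\bullet)$, in ${\bf FMod}^{\wedge}_{(p,(p)_{q^{p^m}})}(A)$. A minor issue is that the jet complex is typically unbounded above, in contrast to the higher $q$-de Rham complex; however, since $Re_{\ol{A}/R*}$ commutes with the totalization used here and the collapse happens termwise via Lemma \ref{rija} (which gives $R^i j_{A*} = 0$ for $i>0$), the argument still goes through without modification. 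This is the main point to keep in mind, but it is routine given the formalism already developed.
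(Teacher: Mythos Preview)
Your proposal is correct and follows exactly the same route as the paper's proof: apply Proposition \ref{jcpl} to replace $E$ by $E \wh{\otimes}_{\cO^{(m)}_{\ol{A}/R}} L^{(m)}(\Omega^{(m)}_{A/R,q}(\bullet))$, then Lemma \ref{elle} to move the tensor inside the linearization, and finally the computation of Lemma \ref{rglm} to collapse $Re_{\ol{A}/R*} \circ L^{(m)}$ to the identity termwise. Your side remark justifying that $\Omega^{(m)}_{A/R,q}(r) \in {\bf FMod}^{\wedge}_{(p,(p)_{q^{p^m}})}(A)$ via the quotient description is not quite a proof (quotients need not preserve flatness), but the paper does not address this point either and treats it as implicit.
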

\begin{proof}
We follow the notations in Section \ref{mqanc}. Then we have
\begin{align*}
R \Gamma ((\ol{A}/R)_{m\text{-}q^{p^m}\text{-crys}}, E) &\simeq Re_{\ol{A}/R*} E \\
&\simeq Re_{\ol{A}/R*} ( E \wh{\otimes}_{\cO^{(m)}_{\ol{A}/R}} L^{(m)}(\Omega^{(m)}_{A/R,q} (\bullet)) ) .
\end{align*}
By Lemma \ref{elle}, this is quasi-isomorphic to \[ Re_{\ol{A}/R*} (L^{(m)}(E_A \wh{\otimes}_A \Omega^{(m)}_{A/R,q} (\bullet))). \] Then, by the  calculations as in the proof of Lemma \ref{rglm}, this is quasi-isomorphic to $E_A \wh{\otimes}_A \Omega^{(m)}_{A/R,q} (\bullet)$.
\end{proof}

\phantomsection
\bibliographystyle{alpha}
\bibliography{myrefs}
\addcontentsline{toc}{section}{References}
\nocite{*}

\end{document}